\newsavebox{\pullback}
\sbox\pullback{%
\begin{tikzpicture}%
\draw (0,0) -- (1ex,0ex);%
\draw (1ex,0ex) -- (1ex,1ex);%
\end{tikzpicture}}
\DeclareRobustCommand{\notsquare}{\mathord{\mathpalette\generic@not\square}}
\newcommand{\generic@not}[2]{%
  \sbox\z@{$\m@th#1/$}%
  \sbox\tw@{$\m@th#1#2$}%
  \sbox\z@{\raisebox{\dimexpr(\ht\tw@-\dp\tw@-\ht\z@+\dp\z@)/2\relax}{$\m@th#1/$}}%
  \vphantom{\usebox{\z@}}%
  \ooalign{\hidewidth\usebox{\z@}\hidewidth\cr$\m@th#1#2$\cr}%
}
\theoremstyle{definition}
\newtheorem{definition}{Definition}[section]
\newtheorem{example}[definition]{Example}
\theoremstyle{theorem}
\newtheorem{theorem}[definition]{Theorem}
\newtheorem*{theorem*}{Theorem}
\newtheorem*{lemma*}{Lemma}
\newtheorem*{proposition*}{Proposition}
\newtheorem{proposition}[definition]{Proposition}
\newtheorem{lemma}[definition]{Lemma}
\newtheorem{corollary}[definition]{Corollary}
\newtheorem*{corollary*}{Corollary}
\theoremstyle{remark}
\newtheorem*{remark}{Remark}
\newtheorem*{notation}{Notation}
\newtheorem*{convention}{Convention}
\DeclareFontFamily{U}{MnSymbolC}{}
\DeclareFontShape{U}{MnSymbolC}{m}{n}{
    <-6>  MnSymbolC5
   <6-7>  MnSymbolC6
   <7-8>  MnSymbolC7
   <8-9>  MnSymbolC8
   <9-10> MnSymbolC9
  <10-12> MnSymbolC10
  <12->   MnSymbolC12}{}
\DeclareSymbolFont{MnSyC}{U}{MnSymbolC}{m}{n}
\DeclareMathSymbol{\dotminus}{\mathbin}{MnSyC}{24}
\begin{document}
\title{\Huge \bfseries Logic and computation as combinatorics}
\author{\Large \bfseries Norihiro Yamada \\ \\
{\tt yamad041@umn.edu} \\
University of Minnesota
}

\maketitle

\begin{abstract}
The \emph{syntactic} nature of logic and computation separates them from other fields of mathematics. 
Nevertheless, syntax has been the only way to adequately capture the \emph{dynamics} of proofs and programs such as cut-elimination, and the \emph{finiteness} and the \emph{atomicity} of syntax are preferable for foundational aims as seen in Hilbert's program.
Another issue is that a \emph{uniform} basis for logic and computation has been missing, and this problem hampers a coherent view on them.
For instance, formal proofs in proof theory are far from (ordinary) proofs of the validity of a formula in model theory. 
Our goal is to solve these fundamental problems by rebuilding central concepts in logic and computation such as formal systems, validity (in such a way that it coincides with the existence of proofs), cut-elimination and computability uniformly in terms of finite graphs based on \emph{game semantics}. 
Unlike game semantics, however, we do not rely on anything infinite or extrinsic to the graphs.
A key idea that enables our finitary, autonomous approach is the shift from graphs in game semantics to \emph{dynamic} ones. 
The resulting combinatorics establishes a single, syntax-free, finitary framework that recasts formal systems admitting proofs with \emph{cuts}, validity, the \emph{finest} computational steps of cut-elimination and \emph{higher-order} computability.
This subsumes \emph{fully complete} semantics of intuitionistic linear logic, which solves a problem open for thirty years, and even extends the full completeness to proofs with cuts.
As a byproduct, our dynamic graphs give rise to \emph{Hopf algebras}, which opens up new applications of algebras to logic and computation. 
\end{abstract}

\tableofcontents

\section{Introduction}
\label{Introduction}
Logic and computation have been suffering from the lack of a syntax-free or uniform foundation. 
We take a step towards a solution to this fundamental problem by reducing them to the study of finite dynamic graphs. 
The resulting work recasts formulae, proofs (not only provability) and computability \emph{syntax-freely} and \emph{uniformly}, going beyond the combinatorial characterisation of classical provability by Hughes \cite{hughes2006proofs}.
This result includes a complete solution to a bottleneck of the problem: only syntactically defined \emph{dynamics} and \emph{intensionality} in logic and computation.
It also solves a problem open for thirty years: \emph{full completeness} for intuitionistic linear logic.

\subsection{Foreword}
\label{Foreword}
On the one hand, the \emph{syntactic} nature of (formal) proofs and programs separates mathematical logic and theoretical computer science, or logic and computation for short, from other branches of mathematics.
As cited by Hughes \cite{hughes2006proofs}, for instance, De Morgan \cite{de1868review} states that
\begin{center}
[M]athematicians care no more for logic than logicians for mathematics.
\end{center}
The issue is that mathematics studies the \emph{general, abstract essence} of the universe, but syntactic objects such as proofs and programs are bound by \emph{how to write}, i.e., inessential details.

Besides, from a mathematician's view, syntax-independent objects such as sets and functions come first, and syntax is nothing but their notations. 
For this point of view, syntax \emph{per se} is mysterious and cumbersome. 
For example, what proofs are and why they validate formulae are central questions in logic, but the standard approach of the field takes \emph{how to write proofs} as the definition of proofs and justifies these notations as validations of formulae only in terms of how to rewrite proofs \cite{gentzen1935untersuchungen,prawitz1971ideas,dummett1993logical} or indirectly by their completeness \cite{godeluber}.
This syntactic paradigm does not clarify that much what proofs are or why they validate formulae. 

Strictly speaking, the field of \emph{mathematical semantics} \cite{scott1970outline,gunter1992semantics} has established a variety of syntax-free interpretations of the \emph{extensional} aspects of proofs (though most of them do not look like proofs in the practice of mathematics) and programs.
However, no syntax-free method has completely replaced proofs or programs because syntax has been the only way to adequately model their \emph{dynamics} such as cut-elimination \cite{gentzen1935untersuchungen}.
This \emph{intensional} concept is central in logic and computation yet hard to capture syntax-freely. 
Abramsky \cite{abramsky2014intensionality} describes this problem as
\begin{quote}
Extensionality is enshrined in mathematically precise axioms with a clear conceptual meaning. 
Intensionality, by contrast, remains elusive. 
It is a ``loose baggy monster" into which all manner of notions may be stuffed, and a compelling and coherent general framework for intensional concepts is still to emerge.
\end{quote}

Last but not least, syntactic concepts are often defined \emph{inductively}, lacking direct or intuitive descriptions. 
This inductive nature blocks a deeper analysis of logic or computation.
For instance, consider classical logic \cite{frege1879begriffsschrift}, intuitionistic logic \cite{heyting1930formalen} and linear logic \cite{girard1987linear}. 
They are very basic concepts in logic, but it is hard to compare or relate them in a direct or intuitive fashion because their proofs are defined only inductively. 

In addition to the syntactic nature, another problem in logic and computation is the lack of a \emph{common} framework, which makes it difficult to obtain a coherent view, a universal language or a shared mathematical technique on the fields.
For instance: 
\begin{itemize}

\item Formal proofs of a formula are not a formalisation of (ordinary) proofs of the validity of the formula, and they are only indirectly connected via soundness or completeness theorems.
This illustrates a \emph{schism} between proof theory and model theory.

\item Set theory is based on not only sets but also (classical) logic. 
In other words, sets do not serve as a single principle of mathematics since otherwise they would not require logic.

\item Recursion theory entails models of computations such as Turing machines \cite{turing1937computable}, which are \emph{extrinsic} to the structure of sets or logic \cite[\S 1.2]{abramsky2014intensionality}.
For instance, the computability of an arithmetic function depends on the existence of a Turing machine that computes the function, but the Turing machine itself is a priori extrinsic to the function.

\end{itemize}
This situation is in contrast with topology since general topology serves as a \emph{single} foundation of the subfields of topology (on the basis of sets and logic).
It is not only coherent but also elegant and significant if a single principle founds logic and computation, thus mathematics too. 
Such a principle will build new connections between their subfields and advance the fields as a whole. 

Finally, logic and computation, being foundations of mathematics, ought to be \emph{justifiable}. 
In particular, they should only rely on simple, primitive or \emph{atomic} concepts. 
For instance, Hilbert's program \cite{hilbert1931grundlegung} was motivated by this standpoint, and its aim was to reduce all mathematics to \emph{finitary} one.
The point of such finitary mathematics, in Hilbert's own words \cite{hilbert1926uber}, is that
\begin{quote}
If logical inference is to be reliable, it must be possible to survey these objects completely in all their parts, and the fact that they occur, that they differ from one another, and that they follow each other, or are concatenated, is immediately given intuitively, together with the objects, as something that can neither be reduced to anything else nor requires reduction. 
\end{quote}
\if0
Another classic example is Turing machines because they define computability of first-order functions in a highly justifiable fashion in terms of finitary, atomic operations on symbols intrinsic to their virtual tapes.
For instance, G\"{o}del \cite{godel1995some} stated that
\begin{quote}
The most satisfactory way, in my opinion, is that of reducing the concept of finite procedure to that of a machine with a finite number of parts, as has been done by the British mathematician Turing.
\end{quote}
\fi
Also, a justification of a framework for logic or computation should entail \emph{conceptual naturality} as well, but we do not regard this point as primary because it can be subjective. 

Motivated in this way, we propose:
\begin{tcolorbox}
{\bfseries Our research program.} To reduce logic and computation to single, syntax-free, finitary mathematics, viz., \emph{combinatorics}, that is conceptually natural for logic and computation. 
\end{tcolorbox}

The aim of the present work is to take a significant step for this research program:
\begin{tcolorbox}
{\bfseries The goal of this work.} To reformulate formal systems for classical, intuitionistic and linear propositional logics, the validity of a formula, cut-elimination and computability uniformly and naturally by a class of finite dynamic graphs, called \emph{combinatorial arenas}. 
\end{tcolorbox}

\begin{remark}
We leave it to another article to extend the present framework to other central concepts such as predicate logics, sets and computational complexity; see \S\ref{OurContributionsAndRelatedWork} for an outline.
\end{remark}


Our framework has the following features.
First, inherited from \emph{game semantics} \cite{hyland1997game}, a subfield of mathematical semantics, our approach is \emph{conceptually natural}.
Indeed, a combinatorial arena $\mathscr{A}$ represents a \emph{game} between an agent, called \emph{Player}, and an oracle, called \emph{Opponent}, and a class of walks on $\mathscr{A}$ alternating between Player and Opponent, called \emph{plays} on $\mathscr{A}$, depict their \emph{dialogical arguments} or \emph{computations}.
A class of algorithms for Player about how to play on $\mathscr{A}$ are called \emph{strategies} on $\mathscr{A}$, and a strategy is said to be \emph{winning} if it always leads to Player's `win.'
We then interpret formulae and types by combinatorial arenas, and proofs (respectively, programs) by winning strategies (respectively, strategies).
In this way, our combinatorics formalises the intuitive idea that reasoning and computing are certain kinds of playing games.

Besides, this approach resolves the schism between proof theory and model theory by defining a winning strategy to be not only a formal proof but also a \emph{validation} of a formula. 
This definition makes sense because intuitively a winning strategy defends the validity of a formula against refutations by Opponent.
That is, winning strategies can be seen, unlike other mathematical semantics, as an idealisation of proofs.
In addition to this compelling intuition, technical virtues of the unification are that the consistency of our combinatorial formal systems follows immediately from a nature of winning strategies, and it is vacant to ask their soundness or completeness.

While our approach is based on game semantics, it has distinguished features:

\begin{itemize}

\item Our method is \emph{combinatorial}, viz., it only uses finite, atomic structures intrinsic to combinatorial arenas, while game semantics is not (\S\ref{OurContributionsAndRelatedWork}).
This is desirable not only for foundational but also mathematical reasons: Combinatorial arenas form \emph{Hopf algebras}, and it is \emph{polynomial time decidable} whether a given strategy on a combinatorial arena is winning.


\item We establish bijections between proofs, which may contain \emph{cuts}, and winning strategies for classical, intuitionistic and linear logics, respectively, in a \emph{non-inductive} fashion (\S\ref{MainResults}).

\item Game semantics does not formalise cut-elimination or computability, but our combinatorics does both, significantly improving \emph{dynamic game semantics} \cite{yamada2020dynamic,yamada2019game} (\S\ref{MainResults}).


\end{itemize}

\begin{remark}
Although our combinatorial structures do not require anything infinite, we shall employ infinitary objects on the way for convenience but eventually dispense with them.
\end{remark}

\subsection{Main results}
\label{MainResults}
Concretely, we achieve the aim of \S\ref{Foreword} by proving the following theorems.
To state the theorems, we first need the following definitions. 
There are constructions on combinatorial arenas, \emph{negation} $\neg$, \emph{linear implication} $\multimap$, \emph{of-course} $\oc$ and \emph{why-not} $\wn$, coming from linear logic.
Intuitively, $\neg \mathscr{A}$ is the negation of $\mathscr{A}$, $\mathscr{A} \multimap \mathscr{B}$ is the space of linear maps from $\mathscr{A}$ to $\mathscr{B}$, $\oc \mathscr{A}$ is the countable copies of $\mathscr{A}$, and $\wn \mathscr{A}$ is $\mathscr{A}$ itself except that strategies on $\wn \mathscr{A}$ can \emph{backtrack} any number of times.

We then define a bicategory $\mathcal{LG}$, whose 0-cells are combinatorial arenas, 1-cells $\mathscr{A} \rightarrow \mathscr{B}$ are a class of finite winning strategies whose values under a combinatorial recast of cut-elimination are on $\mathscr{A} \multimap \mathscr{B}$, and 2-cells are the equivalence relation that identifies 1-cells up to the combinatorial cut-elimination.
By this bicategorical framework, which was originally introduced in \cite{yamada2020dynamic}, $\mathcal{LG}$ admits intensionality or \emph{cuts} in strategies. 
We also define a bicategory $\mathcal{LG}_{\neg\neg}$ (respectively, $\mathcal{LG}_\oc$, $\mathcal{LG}_{\oc\wn}$), whose 0-cells are a class of combinatorial arenas, 1-cells $\mathscr{A} \rightarrow \mathscr{B}$ are the same strategies whose values are on $\mathscr{A} \multimap \neg \neg \mathscr{B}$ (respectively, $\oc \mathscr{A} \multimap \mathscr{B}$, $\oc \wn \mathscr{A} \multimap \wn \mathscr{B}$), and 2-cells are as those in $\mathcal{LG}$.
Categorically, these bicategories are obtained from $\mathcal{LG}$ by (co-)Kleisli constructions.

Correspondingly, we define a syntactic bicategory $\mathsf{ILL}$ of intuitionistic linear logic, whose 0-cells are formulae, 1-cells $A \rightarrow B$ are (formal) proofs whose values under cut-elimination are on the sequent $A \vdash B$, and 2-cells are the equivalence relation that identifies 1-cells up to cut-elimination. 
In the same vein, we also define syntactic bicategories $\mathsf{CLL}$, $\mathsf{IL}$ and $\mathsf{CL}$ of classical linear, intuitionistic and classical logics, respectively.
We can now state our first theorem:

\if0
An \emph{interpretation} of a formal system $\mathcal{T}$ is a function $\llbracket \_ \rrbracket$ that maps formulae and proofs generated by $\mathcal{T}$ to some other objects.
An interpretation is \emph{bijective} if its formula- and proof-parts are both bijections.
Our bijective interpretation $\llbracket \_ \rrbracket_{\mathcal{G}}^{\mathcal{L}}$ of a formal system $\mathcal{L}$ for intuitionistic linear logic maps formulae and proofs to the objects and the morphisms in the category $\mathcal{G}$ of the coloured finite graphs, respectively.
In particular, it interprets of-course $\oc$ by a co-monad $\oc$ on $\mathcal{G}$, and we obtain the \emph{coKleisli category} $\mathcal{G}_\oc$ of $\mathcal{G}$ with respect to $\oc$.
Then, our bijective interpretation $\llbracket \_ \rrbracket_{\mathcal{G}_\oc}^{\mathcal{I}}$ of a formal system $\mathcal{I}$ for intuitionistic logic is given in $\mathcal{G}_\oc$.
\fi

\begin{theorem*}[combinatorial formal systems]
There are biequivalences $\mathsf{ILL} \simeq \mathcal{LG}$, $\mathsf{CLL} \simeq \mathcal{LG}_{\neg\neg}$, $\mathsf{IL} \simeq \mathcal{LG}_\oc$ and $\mathsf{CL} \simeq \mathcal{LG}_{\oc\wn}$, and it is polynomial time decidable if a strategy is a 1-cell in all cases. 
\end{theorem*}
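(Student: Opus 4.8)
The plan is to exhibit each biequivalence as a pseudofunctor that is essentially surjective on $0$-cells and a local equivalence on hom-categories, starting with the base case $\mathsf{ILL} \simeq \mathcal{LG}$ and then transporting it through the (co-)Kleisli constructions. First I would define the interpretation pseudofunctor $\llbracket \_ \rrbracket : \mathsf{ILL} \to \mathcal{LG}$: on $0$-cells it sends a formula to a combinatorial arena by recursion on the formula, interpreting atoms by atomic arenas and each connective ($\otimes$, $\multimap$, $\oc$, $\wn$, $\neg$) by the corresponding construction on arenas; on $1$-cells it sends a proof to a strategy by recursion on the derivation, interpreting each inference rule --- crucially including the cut rule --- by a combinatorial operation on strategies, so that a syntactic cut becomes a composition of strategies whose value under the combinatorial cut-elimination lies on the same arena $\llbracket A \rrbracket \multimap \llbracket B \rrbracket$ that the sequent $A \vdash B$ prescribes. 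The first substantive lemma I need is soundness together with respect for dynamics: $\llbracket \_ \rrbracket$ preserves identities and composition up to coherent invertible $2$-cells, and it sends cut-equivalent proofs to cut-equivalent strategies, i.e.\ the combinatorial cut-elimination simulates the syntactic one step by step. This is what makes $\llbracket \_ \rrbracket$ descend to the $2$-cells (the equivalence relations identifying $1$-cells up to cut-elimination) and hence constitute a genuine pseudofunctor.

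Next I would prove the two conditions for a biequivalence. For essential surjectivity on $0$-cells I would establish a decomposition (normal-form) result for combinatorial arenas, showing that every arena in the relevant class is isomorphic to $\llbracket A \rrbracket$ for some formula $A$, by reading off a connective structure from the intrinsic, atomic data of the arena. The harder half is local equivalence: for each pair $A, B$ the induced functor $\mathsf{ILL}(A,B) \to \mathcal{LG}(\llbracket A \rrbracket, \llbracket B \rrbracket)$ must be an equivalence. Since the $2$-cells on both sides form equivalence relations, each hom-category is a thin groupoid (a setoid), so this reduces to (i) \emph{definability}: every $1$-cell, i.e.\ every finite winning strategy with cut-elimination value on the correct arena, equals $\llbracket \pi \rrbracket$ for some proof $\pi$; and (ii) \emph{faithfulness}: $\llbracket \pi \rrbracket$ and $\llbracket \pi' \rrbracket$ are cut-equivalent if and only if $\pi$ and $\pi'$ are. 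Definability I would prove non-inductively by exploiting that the dynamic graph underlying a strategy is essentially a proof net with cut edges: I would extract a proof net from the strategy, verify a combinatorial correctness criterion (a recast of the Danos--Regnier condition, guaranteed by the winning condition), and then sequentialise it into an $\mathsf{ILL}$ proof; faithfulness would follow from the exact matching of the two cut-eliminations set up in the soundness lemma.

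For the remaining three biequivalences I would transport the base case through the constructions named in the statement. Since $\mathsf{IL}$ (respectively $\mathsf{CLL}$, $\mathsf{CL}$) is obtained from $\mathsf{ILL}$ by the structural and logical rules that correspond syntactically to passing to $\oc$-maps (respectively $\neg\neg$-maps, $\oc\wn$-maps), and $\mathcal{LG}_\oc$ (respectively $\mathcal{LG}_{\neg\neg}$, $\mathcal{LG}_{\oc\wn}$) is the matching (co-)Kleisli bicategory of $\mathcal{LG}$, the biequivalence $\mathsf{ILL} \simeq \mathcal{LG}$ lifts because the (co-)Kleisli construction is pseudofunctorial and preserves biequivalences; I only need to check that $\llbracket \_ \rrbracket$ commutes with the co-monad $\oc$ and the relevant monad up to coherent isomorphism. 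Finally, for polynomial-time decidability of the $1$-cell property, I would reduce membership to two checks on a given finite strategy: that its combinatorial cut-elimination value is a strategy on the prescribed arena $\llbracket A \rrbracket \multimap \llbracket B \rrbracket$, and that it is winning; the latter is polynomial time by the decidability of the winning condition stated earlier, and the former is polynomial time because the dynamic graphs make the cut-elimination value effectively computable.

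The main obstacle is the definability half of the local equivalence. This is exactly the thirty-year-old full completeness problem for intuitionistic linear logic, now further burdened by having to account for proofs \emph{with} cuts, and the whole approach hinges on the dynamic graphs being faithful combinatorial avatars of cut-containing proof nets --- so that reconstructing a proof from a winning strategy, and certifying it by a finitary correctness criterion, becomes a purely combinatorial translation rather than an inductive argument.
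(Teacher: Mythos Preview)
Your skeleton matches the paper: the 2-functor $\llbracket\_\rrbracket_{\mathcal{LG}}$ (Proposition~\ref{PropInterpretationFunctor}), essential surjectivity on objects via the free characterisation of combinatorial arenas (Theorem~\ref{ThmFreeCharacterisation}, Corollary~\ref{CorCombinatorialArenasAndFormulaeInILL}), full faithfulness on hom-categories (injectivity is Proposition~\ref{PropSigmaTermsAsFinitePresentations}, surjectivity is Theorem~\ref{ThmFullCompleteness}), and transport through the (co-)Kleisli constructions for the other three biequivalences. The two technical cores are where you diverge, and in both places your route either relies on an unestablished ingredient or does not work.

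For definability you propose extracting a proof net from the strategy and sequentialising via a Danos--Regnier-style criterion. The paper does not take this route. Its full-completeness argument (Theorem~\ref{ThmFullCompleteness}) is by \emph{induction on the size of the sigma-sequent}: an arbitrary pair $(F,\varphi)$ is first reduced to \emph{prime} form (Lemma~\ref{LemSliceLemma}), where the domain contains only one, negation, with and of-course, the intensionality only with and of-course, and the codomain only top, one, tensor, with and of-course; then a case analysis on the codomain peels off one inference rule at a time, with an acyclicity property of the strategy (Lemma~\ref{LemLeftLinearImplication}) governing how to split $\varphi$ along $\multimap$L and Cut. There is no global correctness criterion anywhere; a Danos--Regnier-style criterion for ILL nets with additives, exponentials and cuts is itself an unresolved problem and is precisely what the paper's inductive decomposition circumvents. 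For polynomial-time decidability you reduce to computing the cut-elimination value of the candidate, but cut-elimination is not polynomial in general, so this step fails. The paper (Theorem~\ref{ThmPolyTime}) instead checks the finite presenting relation $R$ directly: that $R$ is a total map and that $R$ is non-circular, which by the flat-innocence lemma (Lemma~\ref{LemCircularity}) is equivalent to noetherianity; linearity and the logical-intensionality axioms are likewise inspected on $R$ itself. No hiding or normalisation is performed.
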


This theorem characterises formulae and proofs of the logics syntax-freely by the same combinatorics.
Also, for each case, it takes only polynomial time to check if a strategy is a 1-cell. 
Thus, our combinatorics constitutes \emph{proof (formal) systems} in the sense of \cite{cook1979relative}.
As a result, this theorem covers not only what is equivalent to Hughes' combinatorial reformulation of provability of classical logic \cite{hughes2006proofs} but also that of intuitionistic and linear logics \emph{uniformly}.

Whilst Hughes only characterises \emph{provability} of classical logic, our biequivalences $\llbracket \_ \rrbracket$ consist of bijections between strategies and \emph{proofs}.
Such a bijection is one of the strongest theorems in mathematical semantics, and it is often very difficult to establish. 
For instance, fully complete semantics of intuitionistic linear logic (with respect to cut-free proofs) was missing for thirty years (since the emergence of fully complete semantics of some fragments of linear logic \cite{abramsky1994games,hyland1993fair}), but the biequivalence $\mathsf{ILL} \simeq \mathcal{LG}$ solves this well-known problem.
Besides, while full completeness has been focusing on cut-free proofs, the biequivalence admits proofs with \emph{cuts}.
Moreover, albeit semantics is usually given inductively, our biequivalences directly and non-inductively read off proofs as strategies, and vice versa. 
These progresses are made possible by the novel structure of combinatorial arenas. 
Lastly, although there is no known intuitive reading of Hughes' approach, ours has one: A combinatorial arenas is a game on the truth of a formula, and a winning strategy is an algorithm for Player to defend the truth of a formula against refutations by Opponent.

For this intuition, it makes sense to define a formula $A$ to be \emph{valid} if there is a 1-cell $1 \rightarrow \llbracket A \rrbracket$ in the four bicategories, where $1$ is a terminal object that admits the isomorphism $\llbracket A \rrbracket \cong \llbracket A \rrbracket^1$.
This definition makes the existence of a proof and the validity of a formula \emph{coincide}.

Besides, our theorem characterises \emph{non-linearity} and \emph{classicality} of logic non-inductively by the (co-)Kleisli constructions.
This classification has an intuitive reading too: \emph{Non-linearity} of logic is obtained by the co-Kleisli construction $(\_)_\oc$, which enables strategies to \emph{consume inputs any number of times}, and \emph{classicality} by the Kleisli one $(\_)_{\wn}$, which permits strategies to \emph{backtrack any number of times}.
For example, the law of excluded middle, i.e., the disjunction $A \vee \neg A$ between each formula $A$ and its negation $\neg A$, is provable in classical logic but not in intuitionistic logic. 
Our method explains this difference non-inductively and intuitively as follows. 
If the same symbol $\vee$ denotes the corresponding operation on combinatorial arenas, then the law means the existence of a 1-cell $1 \rightarrow \llbracket A \rrbracket \vee \neg \llbracket A \rrbracket$.
There is no such 1-cell in $\mathcal{LG}_\oc$ as the choice between $\llbracket A \rrbracket$ and $\neg \llbracket A \rrbracket$ is not always decidable. 
In contrast, there \emph{is} one in $\mathcal{LG}_{\oc\wn}$ since why-not $\wn$ on $\llbracket A \rrbracket \vee \neg \llbracket A \rrbracket$ allows the 1-cell to go back and forth between $\llbracket A \rrbracket$ and $\neg \llbracket A \rrbracket$ (without deciding which).
That is, classical logic permits backtracks or \emph{reasoning do-overs}, while intuitionistic logic does not.

Next, our second theorem follows immediately from the biequivalences that subsume cuts:
\begin{theorem*}[combinatorial cut-elimination]
The cut-elimination on 1-cells in $\mathcal{LG}$ (respectively, $\mathcal{LG}_{\neg\neg}$, $\mathcal{LG}_\oc$, $\mathcal{LG}_{\oc\wn}$) corresponds precisely to the one in $\mathsf{ILL}$ (respectively, $\mathsf{CLL}$, $\mathsf{IL}$, $\mathsf{CL}$). 
\end{theorem*}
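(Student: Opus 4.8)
The plan is to obtain the statement as an essentially immediate corollary of the biequivalences of the previous theorem, using the crucial fact that in every one of the eight bicategories involved the 2-cells are, \emph{by definition}, the equivalence relation that identifies 1-cells up to cut-elimination. Cut-elimination is therefore not a separate piece of structure that must be transported by hand: it is already recorded in the 2-cell data that any biequivalence is obliged to preserve. Writing $F$ for the biequivalence $\mathsf{ILL} \simeq \mathcal{LG}$, which by the previous theorem is in particular a bijection sending each proof $\pi$ of a sequent $A \vdash B$ to a finite winning strategy $F(\pi)$ on $\llbracket A \rrbracket \multimap \llbracket B \rrbracket$, I would first fix the meaning of ``corresponds precisely'': namely that under this bijection two proofs $\pi,\pi'$ are cut-equivalent in $\mathsf{ILL}$ if and only if the strategies $F(\pi),F(\pi')$ are cut-equivalent in $\mathcal{LG}$, and hence that $F$ commutes with normalisation, carrying the cut-free normal form of $\pi$ to the combinatorial normal form of $F(\pi)$.

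The key step is to unwind the definition of a biequivalence. For all 0-cells $A,B$ the pseudofunctor $F$ restricts to an equivalence of hom-categories $\mathsf{ILL}(A,B) \simeq \mathcal{LG}(\llbracket A \rrbracket, \llbracket B \rrbracket)$, and this local functor is in particular fully faithful on 2-cells. In the present setting each such hom-category is a setoid: between any two 1-cells there is a unique 2-cell when they are cut-equivalent and none otherwise. Full faithfulness on 2-cells thus says exactly that a 2-cell $\pi \Rightarrow \pi'$ exists if and only if a 2-cell $F(\pi) \Rightarrow F(\pi')$ exists, i.e.\ that $\pi \sim_{\mathrm{cut}} \pi'$ holds iff $F(\pi) \sim_{\mathrm{cut}} F(\pi')$ does. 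Because $F$ is moreover bijective on 1-cells, this is a genuine correspondence between the cut-elimination relations on proofs and on strategies, not merely on their equivalence classes. Normal forms then match automatically: each cut-equivalence class contains a unique cut-free representative on each side, and once one checks that $F$ sends cut-free proofs to combinatorially normal strategies---a property read off directly from the construction of $F$---it follows that the normal form of $F(\pi)$ equals $F$ of the normal form of $\pi$.

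I would then note that the argument is uniform across all four rows of the statement. The bicategories $\mathcal{LG}_{\neg\neg}$, $\mathcal{LG}_\oc$ and $\mathcal{LG}_{\oc\wn}$ arise from $\mathcal{LG}$ by the very same (co-)Kleisli constructions that produce $\mathsf{CLL}$, $\mathsf{IL}$ and $\mathsf{CL}$ from $\mathsf{ILL}$, and in each case the 2-cells are inherited verbatim as the cut-elimination equivalence. Since the Kleisli passage alters neither the 1-cells nor the relation between them, the biequivalences $\mathsf{CLL} \simeq \mathcal{LG}_{\neg\neg}$, $\mathsf{IL} \simeq \mathcal{LG}_\oc$ and $\mathsf{CL} \simeq \mathcal{LG}_{\oc\wn}$ transport cut-elimination by the identical reasoning, so no case requires separate treatment.

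The only genuine subtlety I anticipate lies in the reading of ``corresponds precisely''. As a statement about 2-cells, the biequivalence instantly yields the correspondence at the level of the cut-elimination \emph{equivalence relation}, and hence of normal forms; what it does not by itself deliver is a step-by-step bijection between individual reduction steps. I would therefore either read ``precisely'' in the former, relational sense---whereupon the theorem is the corollary sketched above---or, if a finer step-wise matching is intended, appeal to the fact that the combinatorial cut-elimination was defined so that each of its finest steps mirrors a syntactic reduction; this sharper matching belongs to the construction underlying the previous theorem rather than to a new argument. In either case the work is conceptual bookkeeping---verifying bijectivity on 1-cells and compatibility with normal forms---rather than fresh computation.
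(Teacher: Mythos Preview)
Your two-cell argument is sound for the weaker reading---it correctly extracts, from full faithfulness of the biequivalence on hom-setoids together with bijectivity on 1-cells, that cut-equivalence on proofs matches hiding-equivalence on strategies and hence that normal forms correspond. But the paper means the \emph{step-wise} reading (the introduction stresses ``finest computational steps'', and the precise corollary is the clause $t \rightarrow t'$ iff $\llbracket t\rrbracket \neq \llbracket t'\rrbracket$ and $\mathcal{H}(\llbracket t\rrbracket)=\llbracket t'\rrbracket$), and for that your main argument is not enough, as you yourself note.

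The paper's actual route is exactly your hedge, but you locate it slightly wrongly. The one-step hiding operation $\mathcal{H}$ is not part of the construction underlying the biequivalence; it is \emph{defined afterwards} by transporting the syntactic one-step relation $\rightarrow$ across the already-established biequivalence (via the right inverse $\llbracket\_\rrbracket^\flat$). The step-wise correspondence is then literally the definition of $\mathcal{H}$, and the termination/normal-form clause is just the translation of the syntactic cut-elimination theorem through that bijection. So the paper does no two-cell bookkeeping at all: the whole content is ``define $\mathcal{H}$ by transport, read off the corollary''. Your setoid argument is a legitimate alternative for the coarser statement, but it is not how the paper proceeds and it cannot by itself reach the fine-grained claim the paper makes.
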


To the best of our knowledge, this is the first syntax-free characterisation of cut-elimination, while there were partial solutions in the literature.
For example, geometry of interaction \cite{girard1989geometry} models cut-elimination for the first time, but it deletes all (semantic) cuts \emph{in one go}; dynamic game semantics \cite{yamada2020dynamic} improves this pioneering work by modelling \emph{step-by-step} reduction, but it is still much coarser than cut-elimination. 
In contrast, our second theorem completely captures cut-elimination, including its \emph{finest} computational steps. 
Recall that the bottleneck in releasing logic from the syntactic occupation is dynamics and intensionality (\S\ref{Foreword}).
Our first and second theorems solve this problem (on propositional logics) entirely for the first time. 

\if0
\begin{corollary*}[formal calculi for combinatorics]
The biequivalences and the characterisation of cut-elimination in the above theorems as well as their inverses are given non-inductively.
\end{corollary*}

This non-inductive correspondence between syntax and semantics is mathematically significant because semantics is usually given only inductively. 
Also, the direction from the semantics to the syntax provides a way to assign to semantics formal calculi equipped with cut-elimination. 
\fi

Further, our combinatorics, despite its finitary nature, forms a strong model of higher-order computation even without relying on any other models of computation. 
This is possible roughly because the combinatorics has intensional structures sufficient to define computation.
Concretely, we obtain bicategories $\mathcal{G}$, $\mathcal{G}_{\neg\neg}$, $\mathcal{G}_\oc$ and $\mathcal{G}_{\oc\wn}$ respectively from $\mathcal{LG}$, $\mathcal{LG}_{\neg\neg}$, $\mathcal{LG}_\oc$ and $\mathcal{LG}_{\oc\wn}$ by removing the winning constraint on 1-cells and replacing their finiteness with \emph{finite presentability} in a suitable sense. 
1-cells in these bicategories model computations, more general than proofs, and in particular some of them are non-terminating or \emph{partial}.
Then, our last theorem is

\begin{theorem*}[combinatorial computation]
The bicategory $\mathcal{G}_\oc$ forms a model of computation that can simulate the higher-order functional programming language PCF \cite{scott1993type,plotkin1977lcf}. 
\end{theorem*}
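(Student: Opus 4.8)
The plan is to exhibit a sound and computationally adequate interpretation $\llbracket - \rrbracket$ of PCF into $\mathcal{G}_\oc$, following the template of game-semantic models of PCF \cite{hyland1997game} but systematically replacing their \emph{infinite} strategies with \emph{finite} dynamic graphs. First I would interpret the types: the base types $\mathsf{nat}$ and $\mathsf{bool}$ are sent to the \emph{flat} combinatorial arenas consisting of a single Opponent question together with the evident Player answers (numerals, respectively truth values), and a function type $\sigma \to \tau$ is interpreted by the co-Kleisli exponential $\llbracket \sigma \to \tau \rrbracket = \oc \llbracket \sigma \rrbracket \multimap \llbracket \tau \rrbracket$, built from the constructions $\oc$ and $\multimap$ already available on combinatorial arenas. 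Because $\mathcal{G}_\oc$ arises from $\mathcal{LG}_\oc$ by the co-Kleisli construction along the comonad $\oc$, it inherits a cartesian closed (bicategorical) structure --- finite products from the additive conjunction and exponentials from $\oc(-) \multimap (-)$ --- which is exactly what is needed to interpret the simply-typed $\lambda$-calculus underlying PCF, with variables, abstraction and application given by the standard categorical combinators.

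Next I would interpret the PCF constants. Each first-order constant --- the numerals, $\mathsf{succ}$, $\mathsf{pred}$, the zero test and the conditional --- is a \emph{definable} 1-cell: I would write down the finite strategy that, when its output is interrogated by Opponent, interrogates its argument(s) and returns the appropriate value, and then check directly that it realises the intended arithmetic or boolean behaviour. These verifications are routine and entirely local to the arenas involved.

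The crux is the interpretation of the fixed-point combinators $Y_\sigma \colon (\sigma \to \sigma) \to \sigma$. In conventional game semantics $Y_\sigma$ is modelled by an \emph{infinite} strategy obtained as the supremum of its finite approximants, but the finitary discipline here forbids such an object. Instead I would exploit precisely the \emph{dynamic} nature of our graphs: $Y_\sigma$ is represented by a single finite dynamic graph carrying a cut whose combinatorial cut-elimination unfolds the recursion one step at a time, so that the $n$-fold unfolding of $Y_\sigma\,f$ emerges after $n$ rounds of reduction. Non-termination of a PCF program is thereby faithfully mirrored by non-termination of the cut-elimination process on the corresponding 1-cell, which is admissible because the 1-cells of $\mathcal{G}_\oc$ are only required to be \emph{finitely presentable} and may be partial.

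Finally I would establish soundness and computational adequacy. Soundness --- that $M \rightsquigarrow M'$ implies $\llbracket M \rrbracket = \llbracket M' \rrbracket$ as 2-cells, i.e.\ up to combinatorial cut-elimination --- follows from the combinatorial cut-elimination theorem together with the constant verifications above. Adequacy --- that a closed program $M$ of base type satisfies $M \Downarrow v$ iff the cut-elimination of $\llbracket M \rrbracket$ terminates at $\llbracket v \rrbracket$ --- would be proved by the usual logical-relations (computability) argument relating syntactic reduction to the reduction dynamics on dynamic graphs. The main obstacle I anticipate lies in the fixed-point step and its use in adequacy: one must present recursion finitely as a self-referential dynamic graph and then prove that its cut-elimination \emph{exactly} tracks the operational unfolding, neither diverging prematurely nor collapsing a genuinely infinite computation into a finite value; feeding this into the hard (completeness) direction of the logical relation --- showing that every terminating cut-elimination on a denotation is witnessed by a converging PCF reduction --- is where the finitary combinatorics must be made to carry the full weight of higher-order recursion.
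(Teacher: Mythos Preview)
Your proposal has two concrete gaps that block it in this framework.

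First, your interpretation of $\mathsf{nat}$ as a ``flat'' arena with one Opponent question and a Player answer for each numeral is not a combinatorial arena: combinatorial arenas are by definition \emph{finite} rooted forests, so an arena with infinitely many answer vertices simply does not exist here. The paper instead uses the \emph{lazy} arithmetic arena $\mathscr{N} = \oc(\bot \multimap \bot) \otimes \bot \multimap \bot$ (Example~\ref{ExArithmetic}), which has finitely many vertices and encodes a numeral $n$ as a counting dialogue of length $O(n)$. This is not a cosmetic choice: the entire point of the theorem is that the model is \emph{finitary}, and the flat encoding defeats that from the outset. Relatedly, the numerals $\underline{n}$ for $n>1$ are not themselves finitely presentable (Example~\ref{ExFinitePresentableStrategies}); one must pass to intensional refinements like $\mathrm{succ}^n(\underline{0})$.

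Second, your treatment of $Y_\sigma$ as ``a finite dynamic graph carrying a cut whose cut-elimination unfolds the recursion'' is not available: combinatorial sequents admit no self-referential cuts (a cut is literally $\mathscr{A} \multimap \mathscr{A}$ sitting between domain and codomain, with no cycle back to itself), and the hiding operation $\mathcal{H}$ strictly decreases intensionality. The paper's mechanism is quite different and is in fact the technical heart of the improvement over \cite{yamada2019game}: the fixed-point strategy $\mathrm{fix}_{\mathscr{A}}$ is \emph{directly} finitely presentable as a normalised $1$-cell, because it plays essentially as two derelictions and P-views forget all but the last exponential tag (Example~\ref{ExFixedPoints}). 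No unfolding is needed; the exponential dynamics of $\oc$ supplies fresh copies on demand during a play.

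More broadly, the paper does not build a fresh soundness/adequacy argument via logical relations. It instead relies on the known game-semantic model of PCF and shows that every strategy definable there arises as $\mathcal{H}^\omega(\kappa)$ for some $1$-cell $(\Pi,\kappa)$ in $\mathcal{G}_\oc$: one checks that the PCF-atomic strategies are finitely presentable (Examples~\ref{ExSuccessor}--\ref{ExFixedPoints}) and that parallel composition, transpose, pairing and promotion preserve finite presentability (Proposition~\ref{PropPreservationOfFlatInnocenceAndFinitePresentability}). Your logical-relations route could perhaps be made to work once the two issues above are fixed, but it would be substantially more labour than the paper's argument, which offloads adequacy to the literature.
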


This theorem significantly improves Yamada \cite{yamada2019game}, which shows that strategies \emph{presentable by finitely presentable strategies} can simulate PCF, as our theorem proves that finitely presentable strategies suffice.
The mechanism behind this improvement is that our combinatorial approach frees strategies from the computation on infinitely many copies of games in \cite{yamada2019game}.
Another notable feature of this novel model of computation is its semantic or \emph{abstract} nature: It is free from the symbolic computation of Turing machines or $\lambda$-calculi. 
This feature is quite desirable because it saves us from being bothered by inessential details in symbolic computation. 

The bicategories $\mathcal{G}$, $\mathcal{G}_{\neg\neg}$ and $\mathcal{G}_{\oc\wn}$ model computation too, where $\mathcal{G}_{\oc \wn}$ interestingly combines computation and classical reasoning. 
We leave it as future work to analyse these bicategories.


Last but not least, our combinatorics gives rise to a well-known algebraic structure:
\begin{proposition*}[combinatorial Hopf algebras]
Combinatorial arenas constitute Hopf algebras. 
\end{proposition*}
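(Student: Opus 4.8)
The plan is to present the collection of combinatorial arenas as a single \emph{graded connected Hopf algebra}, and then to invoke the standard fact that such bialgebras are automatically Hopf. Concretely, I would take the underlying object to be the free $k$-module $H = \bigoplus_{n \geq 0} H_n$ spanned by the isomorphism classes of combinatorial arenas, graded by the number $n$ of moves (vertices) of an arena. This grading is connected: the unique arena with no moves is the tensor-unit arena $I$, so $H_0 = k \cdot I \cong k$. For the multiplication $m \colon H \otimes H \to H$ I would use the tensor $\otimes$ on combinatorial arenas, which on arenas is just the disjoint juxtaposition of the underlying dynamic graphs (their enabling, polarities and dynamic data placed side by side with no enabling across the two parts), with unit $\eta \colon k \to H$ sending $1 \mapsto I$. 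Associativity, unitality and commutativity are then inherited from the symmetric monoidal structure of $\otimes$, and $m$ respects the grading, so $(H, m, \eta)$ is a commutative graded connected algebra.

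The heart of the construction is the comultiplication, for which I would take the incidence- or Connes--Kreimer-style coproduct along the enabling order. Writing $D$ for a down-closed (enabling-closed) subset of the moves of an arena $\mathscr{A}$, I would set
\[
\Delta(\mathscr{A}) = \sum_{D} \mathscr{A}|_D \otimes \mathscr{A}/D ,
\]
where $\mathscr{A}|_D$ is the sub-arena induced on $D$ and $\mathscr{A}/D$ is the complementary quotient arena carrying the moves outside $D$, with counit $\epsilon \colon H \to k$ the projection onto $H_0$. Coassociativity follows because both $(\Delta \otimes \mathrm{id})\circ\Delta$ and $(\mathrm{id} \otimes \Delta)\circ\Delta$ expand as the sum over nested pairs $D_1 \subseteq D_2$ of down-closed subsets, hence agree; the counit axioms are immediate from the extreme terms $D = \emptyset$ and $D$ the full move-set, which contribute $I \otimes \mathscr{A}$ and $\mathscr{A} \otimes I$.

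Next I would verify the bialgebra compatibility, that is, that $\Delta$ and $\epsilon$ are algebra morphisms. Since there is no enabling between the two factors of $\mathscr{A} \otimes \mathscr{B}$, its down-closed subsets are exactly the pairs $(D_{\mathscr{A}}, D_{\mathscr{B}})$ of down-closed subsets of the factors, and both restriction and quotient distribute over $\otimes$; together with the symmetry of $H \otimes H$ this gives $\Delta(\mathscr{A} \otimes \mathscr{B}) = \Delta(\mathscr{A}) \otimes \Delta(\mathscr{B})$ and $\epsilon(\mathscr{A} \otimes \mathscr{B}) = \epsilon(\mathscr{A})\,\epsilon(\mathscr{B})$, making $(H, m, \eta, \Delta, \epsilon)$ a graded connected bialgebra. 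The antipode then comes for free: a graded connected bialgebra is automatically a Hopf algebra, the antipode $S$ being the unique convolution inverse of $\mathrm{id}_H$, computed by Takeuchi's recursion $S = -\,\mathrm{id} - m \circ (S \otimes \mathrm{id}) \circ \bar{\Delta}$ over the reduced coproduct $\bar{\Delta}$, which terminates by the grading. No hand construction is required, and I expect the resulting antipode to admit an explicit description related to the negation $\neg$ acting as an involutive duality on arenas.

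The main obstacle will be the well-definedness and compatibility of $\Delta$ rather than the formal algebra. I must check that restricting to, and quotienting by, a down-closed set of moves again yields genuine combinatorial arenas, so that the induced enabling, the Player/Opponent polarities and the dynamic data all restrict and descend coherently; in particular the definition of $\mathscr{A}/D$ requires a convention for the enabling edges severed by the cut (for instance re-attaching them to the root), which is precisely the subtlety that distinguishes incidence Hopf algebras of non-forest posets from the tree case. I also have to confirm that $\Delta$ is invariant under isomorphism of arenas, so that it descends to iso-classes, and that the defining sum is finite, which follows from the finiteness of combinatorial arenas. Once these checks are settled, the bialgebra compatibility reduces to the clean distributive behaviour of down-sets over disjoint union described above, and the full Hopf structure follows formally.
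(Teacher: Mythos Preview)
Your plan is reasonable and is essentially an explicit version of one of the two routes the paper only cites: the paper invokes Schmitt's restriction and quotient constructions on finite rooted forests and asserts, without detail, that both extend from forests to combinatorial arenas (the point being that, unlike Hyland--Ong arenas which are dags, combinatorial arenas are built on \emph{simple} rooted forests, so restriction to a vertex subset stays in the class). Your down-closed-cut comultiplication is the Connes--Kreimer/quotient side of this made concrete, with the graded-connected argument supplying the antipode; the paper's primary reference is instead the restriction side, where one sums over \emph{all} subsets, not only down-closed ones.

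Two corrections to your plan. First, in this paper ``moves'' are not vertices: a move is a nonempty maximal path in the multiplicative structure $\mu_{\mathscr{A}}$, so several vertices may sit in one move. Grade by the number of \emph{vertices} of $|\mathscr{A}|$; this is additive under $\otimes$, is respected by your cut comultiplication, and makes $H_0=k\cdot\top$ since $\top$ is the unique arena with no vertices. Second, the obstacle you flag is exactly the crux and is not merely formal here: a combinatorial arena carries, beyond $|\mathscr{A}|$, the structures $\mu_{\mathscr{A}}$, $\alpha_{\mathscr{A}}$, $\epsilon_{\mathscr{A}}$ subject to the recursive-partition axioms. A down-closed cut can separate siblings in $|\mathscr{A}|$ and hence split a $\mu$-path, which changes which vertices are switching and thereby interacts with the axioms on $\alpha$ and $\epsilon$; you must check that the restricted and quotient data again satisfy all of these axioms. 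Finally, your guess about the antipode and $\neg$ is unlikely to pan out: $\neg$ adds a fresh root and so is not grading-preserving, whereas the antipode here will be the usual Takeuchi/Connes--Kreimer alternating sum over iterated cuts.
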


Over the past decades, Hopf algebras arising in combinatorics, or \emph{combinatorial Hopf algebras} \cite{joni1979coalgebras,aguiar2006combinatorial}, have been extensively studied. 
The present result uncovers a bridge between our combinatorics and this vibrant line of research. 
In addition to the value of connecting previously unrelated notions, this bridge enables one to apply methods and results in combinatorial Hopf algebras to logic and computation, e.g., for counting the number of formulae and proofs.

\subsection{Our contributions and related work}
\label{OurContributionsAndRelatedWork}
Our first contribution is the \emph{biequivalences} between formal systems and combinatorics. 
The main breakthrough here is that the biequivalences admit proofs with \emph{cuts}, which in turn enables the combinatorics to model \emph{cut-elimination}. 
In this way, we resolve the bottleneck in releasing logic from the syntactic occupation, i.e., dynamics and intensionality, completely for the first time (to the best of our knowledge).
In a broader perspective, this result establishes a framework that tames the `loose baggy monster' cited in \S\ref{Foreword} so that mathematics may extend its scope from static, extensional structures such as sets and functions to dynamic, intensional ones. 

The significance of the biequivalences is visible even if one focuses on cut-free proofs: Fully complete semantics of intuitionistic linear logic with respect to cut-free proofs was open for thirty years, but a solution to this long-standing problem just follows from the cut-free part of the biequivalence $\mathsf{ILL} \simeq \mathcal{LG}$. 
This result `beats the end boss' since even the fully complete semantics of classical linear logic \cite{abramsky1999concurrent,mellies2005asynchronous} and of the multiplicative fragment of intuitionistic linear logic \cite{murawski2003exhausting} was established about twenty years ago. 
Another implication of the biequivalences is that they provide a direct reading of our combinatorics as formal calculi equipped with cut-elimination.
In this fashion, our method retains the \emph{mechanical} nature of syntax. 

Our second contribution is to extend the combinatorics for logic to computation.
Thus, we have achieved our goal to reduce logic and computation \emph{uniformly} to combinatorics (\S\ref{Foreword}).
The significance of the resulting model of computation is that it greatly improves the main result of Yamada \cite{yamada2019game} by showing that finitely presentable strategies suffice to simulate PCF.
Also, the computational steps of our model are more \emph{explicit} and \emph{atomic} than those of \cite{yamada2019game}, similarly to Turing machines yet in a \emph{non-symbolic}, \emph{higher-order} setting. 
Let us leave it as future work to extend this framework to a basis of \emph{higher-order computational complexity}, whose mathematical foundation has not been established yet, by exploiting the explicit, atomic nature.  

Our last contribution is the Hopf algebras induced by combinatorial arenas. 
This result uncovers a link between our combinatorics and a well-known algebraic structure. 
We add that combinatorial arenas also form Hausdorff topological spaces, and strategies continuous maps, while \emph{domains}, the best-known structure in mathematical semantics, only give rise to non-Hausdorff topological spaces.
Although we leave it as future work to explore these connections, they indicate fruitful interplays between our combinatorics and traditional branches of mathematics. 

Game semantics has been extended to \emph{Martin-L\"{o}f type theory (MLTT)} \cite{abramsky2015games,yamada2019game}, which subsumes intuitionistic higher-order predicate logic, and Aczel \cite{aczel1986type} has shown that constructive set theory is translatable into MLTT.
We shall therefore extend the present framework to predicate logics and set theory through the game semantics of MLTT. 

For related work, we have made comparisons with Hughes' combinatorial recast of provability of classical logic \cite{hughes2006proofs}, geometry of interaction \cite{girard1989geometry}, dynamic game semantics \cite{yamada2020dynamic} and the game-semantic model of computation \cite{yamada2019game} (\S\ref{MainResults}).
In the following, we list related work in game semantics.
A major approach pioneered by Hyland and Ong \cite{hyland2000full} or \emph{HO} is to replace games with a class of finite rooted dags, called \emph{arenas}, and then \emph{derive} possible developments or \emph{positions} in an arena as a class of walks on the arena. 
This approach does not require anything infinite, and our method is based on HO for this reason.
However, HO cannot model linear logic, in particular copying $\oc$ nor backtracking $\wn$ which are crucial for this work, due to the lack of a structure to control positions. 
Besides, positions in an arena may grow \emph{infinitely} even when they model finitary objects such as natural numbers, and this problem makes it impossible for HO to model a \emph{termination} of a play.  
Our initial idea was to shift from arenas to combinatorial arenas, or from dags to dynamic simple graphs, so that it solves the aforementioned problems of HO.

Another major approach pioneered by Abramsky et al. \cite{abramsky1994games,abramsky2000full} or \emph{AJM} is to \emph{assign} finite sequences to a game as positions in the game. 
This method is more abstract and general than HO or ours as it does not go through (combinatorial) arenas. 
Meanwhile, the AJM method entails \emph{infinite} sets of positions.
Also, while our combinatorics achieves semantics not only fully faithful but also \emph{essentially surjective on objects}, it is unclear how the AJM approach does it.

Based on AJM, Murawski and Ong \cite{murawski2003exhausting} achieved fully complete game semantics of the multiplicative fragment of intuitionistic linear logic.
Laurent \cite{laurent2004polarized,laurent2005syntax} constructed, on the basis of HO and AJM, fully complete game semantics of the \emph{polarised} fragment of linear logic.
His games, however, do not even give rise to a category. 
The fully complete semantics of classical linear logic \cite{abramsky1999concurrent,mellies2005asynchronous} uses \emph{concurrent} games, while our and the aforementioned semantics are based on \emph{sequential} games. 
Similarly to AJM, concurrent games entail infinitary structures.  


\subsection{Structure of the present article}
\label{StructureOfThePresentArticle}
We first introduce combinatorial arenas and finitely presentable strategies in \S\ref{CombinatorialSemiArenasAndStrategies}.
We next recall formal systems for the logics and then establish the biequivalences between the syntax and the semantics in \S\ref{CombinatorialFormalSystems}.
Finally, we show that finitely presentable strategies suffices for PCF in \S\ref{Computation}.

\begin{convention}
We employ the following conventions:
\begin{itemize}

\item We write $\wp(X)$ for the \emph{power set} of a set $X$, and $|X|$ for the \emph{cardinality} of $X$; 

\item We use bold small letters $\boldsymbol{s}, \boldsymbol{t}, \boldsymbol{u}, \boldsymbol{v}$, etc. for sequences, in particular $\boldsymbol{\epsilon}$ for the \emph{empty sequence}, and small letters $a, b, m, n, x, y$, etc. for elements of sequences;

\item We write $X^\ast$ for the set of all finite sequences of elements of $X$, and given a map $f : X \rightarrow Y$ we write $f^\ast : X^\ast \rightarrow Y^\ast$ for its free-monoid map $x_1x_2 \dots x_n \mapsto f(x_1) f(x_2) \dots f(x_n)$;


\item We define $\overline{n} \colonequals \{ \, 1, 2, \dots, n \, \}$ for each $n \in \mathbb{N}_+ \colonequals \mathbb{N} \setminus \{ 0 \}$, and $\overline{0} \colonequals \emptyset$;

\item We write $x_1 x_2 \dots x_{|\boldsymbol{s}|}$ for a sequence $\boldsymbol{s} = (x_1, x_2, \dots, x_{|\boldsymbol{s}|})$, with $x^n \colonequals \underbrace{x x \dots x}_n$, where $|\boldsymbol{s}|$ is the number of elements or \emph{length} of $\boldsymbol{s}$, and define $\boldsymbol{s}(i) \colonequals x_i$ ($i \in \overline{|\boldsymbol{s}|}$) and $\boldsymbol{s}_{\leqslant i} \colonequals x_1 x_2 \dots x_i$;

\item If $L$ is a set of finite sequences, then $\underline{L} \colonequals \bigcup_{\boldsymbol{s} \in L} \underline{\boldsymbol{s}}$, where $\underline{\boldsymbol{s}} \colonequals \{ \, \boldsymbol{s}(i) \mid i \in \overline{|\boldsymbol{s}|} \, \}$;


\item A \emph{concatenation} of finite sequences $\boldsymbol{s}$ and $\boldsymbol{t}$ is represented by their juxtaposition $\boldsymbol{s}\boldsymbol{t}$ (or $\boldsymbol{s} . \boldsymbol{t}$), but we often write $a \boldsymbol{s}$, $\boldsymbol{t} b$, $\boldsymbol{u} c \boldsymbol{v}$ for $(a) \boldsymbol{s}$, $\boldsymbol{t} (b)$, $\boldsymbol{u} (c) \boldsymbol{v}$, and so on;




\item We write $\mathrm{Even}(\boldsymbol{s})$ (respectively, $\mathrm{Odd}(\boldsymbol{s})$) if $|\boldsymbol{s}|$ is even (respectively, odd), and define $S^\mathrm{P} \colonequals \{ \, \boldsymbol{s} \in S \mid \mathrm{P}(\boldsymbol{s}) \, \}$ for a set $S$ of sequences and a predicate $\mathrm{P} \in \{ \mathrm{Even}, \mathrm{Odd} \}$;

\item We write $\boldsymbol{s} \preceq \boldsymbol{t}$ if $\boldsymbol{s}$ is a \emph{prefix} of a sequence $\boldsymbol{t}$, and given a set $S$ of sequences, $\mathrm{Pref}(S)$ for the set of all prefixes of sequences in $S$, i.e., $\mathrm{Pref}(S) \colonequals \{ \, \boldsymbol{s} \mid \exists \boldsymbol{t} \in S . \, \boldsymbol{s} \preceq \boldsymbol{t} \, \}$.



\end{itemize}
\end{convention}

\section{Combinatorial arenas and finitely presentable strategies}
\label{CombinatorialSemiArenasAndStrategies}
This section introduces two central concepts of the present work, \emph{combinatorial arenas} (\S\ref{CombinatorialSemiArenas}--\ref{CombinatorialSequents}) and \emph{finitely presentable strategies}  (\S\ref{FinitelyPresentableStrategies}), and studies their basic properties.
These combinatorial structures recast dynamic game semantics \cite{yamada2020dynamic}, yet requiring nothing infinite, nonatomic or extrinsic to them, and constitute our basis.
We conclude the present section with bicategories of combinatorial arenas and finitely presentable strategies for logic and computation (\S\ref{Categories}).

\subsection{Combinatorial arenas}
\label{CombinatorialSemiArenas}
We begin with a review of some basic concepts in graph theory that are necessary for defining combinatorial arenas.
Some of them can be infinitary, but we later focus on finitary ones. 

An \emph{\bfseries edge} is any two-element set $\{ x, y \}$, and it is said to be \emph{\bfseries on} a set $S$ if $x, y \in S$.
A \emph{\bfseries (simple) graph} is a pair $G = (\mathscr{V}_G, \mathscr{E}_G)$ of a set $\mathscr{V}_G$, whose elements are called \emph{\bfseries vertices}, and a set $\mathscr{E}_G$ of edges on $\mathscr{V}_G$.
A graph $G$ is said to be \emph{\bfseries finite} (respectively, \emph{\bfseries empty}) if so is $\mathscr{V}_G$, \emph{\bfseries null} if $\mathscr{E}_G = \emptyset$, and \emph{\bfseries complete} if $\{ x, y \} \in \mathscr{E}_G$ for all $x, y \in \mathscr{V}_G$ with $x \neq y$.
A graph $H$ is called a \emph{\bfseries subgraph} of $G$, written $H \sqsubseteq G$, if $\mathscr{V}_H \subseteq \mathscr{V}_G$ and $\mathscr{E}_H \subseteq \mathscr{E}_G$.
We always assume $\mathscr{V}_G \cap \wp(\mathscr{V}_G) = \emptyset$. 

A \emph{\bfseries path} in $G$ is a finite sequence $v_0v_1 \dots v_{n} \in \mathscr{V}_G^\ast$ of pairwise distinct vertices with $\{ v_i, v_{i+1} \} \in \mathscr{E}_G$ for $i = 0, 1, \dots, n-1$. 
A nonempty graph is said to be \emph{\bfseries connected} if it has a path between each pair of vertices. 
If $\boldsymbol{s} = v_0v_1 \dots v_{n-1}$ is a path in $G$ with $n \geqslant 3$, then $\boldsymbol{s}v_0$ is called a \emph{\bfseries cycle} in $G$.
A graph is said to be \emph{\bfseries acyclic} if it does not contain a cycle.

A \emph{\bfseries tree} is a connected, acyclic graph, or equivalently a graph with exactly one path between each pair of vertices. 
A \emph{\bfseries rooted tree} is a tree $T$ together with a distinguished vertex $\mathrm{rt}_T$, called the \emph{\bfseries root}, and a \emph{\bfseries rooted forest} is a disjoint union $F$ of rooted trees.
The partial order $\leqslant_F$ on $\mathscr{V}_F$ defines $x \leqslant_F y$ if $x$ is in the path from a root to $y$.
If $p \leqslant_F c$ and $\{ p, c \} \in \mathscr{E}_F$, then $p$ is called a \emph{\bfseries parent} of $c$, or $c$ a \emph{\bfseries child} of $p$, in $F$, where $p \rightarrow_F c$ or $p \rightarrow c$ denotes the edge $e \colonequals \{ p, c \}$ with $\mathrm{src}_F(e) \colonequals p$ and $\mathrm{tgt}_F(e) \colonequals c$ called its \emph{\bfseries source} and \emph{\bfseries target}, respectively. 
A \emph{\bfseries leaf} of $F$ is a vertex of $F$ with no children.\footnote{A leaf can be a \emph{root} with no children. We adopt this definition for convenience; e.g., see Definition~\ref{DefAdditiveStructures}.} 
Two vertices of $F$ are called \emph{\bfseries siblings} in $F$ if they are both roots or have the same parent, and a subset $S \subseteq \mathscr{V}_F$ is said to be \emph{\bfseries fraternal} in $F$ if $S \neq \emptyset$ and its elements are pairwise siblings in $F$.
The \emph{\bfseries depth} $\mathrm{dep}_F(x)$ of $x \in \mathscr{V}_F$ is the length of the path from a root to $x$, lifted to $f \in \mathscr{E}_F$ by $\mathrm{dep}_F(f) \colonequals \mathrm{dep}_F(\mathrm{src}_T(f))$, and to $F$ by $\mathrm{dep}(F) \colonequals \sup(\{ \, \mathrm{dep}_F(x) \mid x \in \mathscr{V}_F \, \})$.

Let $\mathscr{V}_F^{i} \subseteq \mathscr{V}_F$ ($i \in \{ 0 \} \cup \overline{\mathrm{dep}(F)}$) and $\mathscr{E}_F^{j-1} \subseteq \mathscr{E}_F$ ($j \in \overline{\mathrm{dep}(F)}$)  consist of elements of depth $i$ and $j-1$, respectively; let $\mathscr{V}_F^{\geqslant i} \colonequals \bigcup_{k = i}^{\mathrm{dep}(F)}\mathscr{V}_F^k$ and $\mathscr{E}_F^{\geqslant j-1} \colonequals \bigcup_{l = j-1}^{\mathrm{dep}(F)-1}\mathscr{E}_F^l$.


\begin{notation}
For readability, we omit \emph{tags} for disjoint union $\uplus$. 
For instance, given sets $A$ and $B$, we write $x \in A \uplus B$ if $x \in A$ or $x \in B$; also, if $E$ and $F$ are respectively sets of edges on $A$ and $B$, then we write $E \uplus F$ for the disjoint union of $E$ and $F$ whose elements are edges on $A \uplus B$.

We represent a rooted forest by the triple $F = (\mathscr{R}_F, \mathscr{V}_F, \mathscr{E}_F)$ of the set $\mathscr{R}_F$ of all roots, the set $\mathscr{V}_F$ of all vertices and the set $\mathscr{E}_F$ of all edges. 
If $\mathscr{R}_F$ is a singleton set $\mathscr{R}_F = \{ \mathrm{rt}_F \}$, then we abbreviate it as $\mathrm{rt}_F$.
We simply write $\mathscr{R}_F$ for $F$ if $\mathscr{R}_F = \mathscr{V}_F$ and $\mathscr{E}_F = \emptyset$.
We also depict a rooted forest in the usual diagrammatic form, where edges are written $\rightarrow$ or $\Rightarrow$.

Given rooted forests $F$ and $G$, an element $\star$ and sets $X$ and $Y$ such that the intersection $X_F \colonequals X \cap \mathscr{V}_F$ is fraternal in $F$, i.e., the set $\mathrm{prt}(X_F) \colonequals \bigcup_{x \in X_F} \{ \, p \in \mathscr{V}_F \mid p \rightarrow_F x \, \}$ of all parents of elements of $X_F$ is empty or singleton, we define the following rooted forests:
\begin{itemize}

\item $\star . F \colonequals (\star, \{ \star \} \uplus \mathscr{V}_F, \{ \, \star \rightarrow r \mid r \in \mathscr{R}_F \, \} \uplus \mathscr{E}_F)$, i.e., $\star . F$ is the rooted tree obtained from $F$ by disjointly adding the element $\star$ as the new root;

\item $F \cup G \colonequals (\mathscr{R}_F \cup \mathscr{R}_G, \mathscr{V}_F \cup \mathscr{V}_G, \mathscr{E}_F \cup \mathscr{E}_G)$, i.e., $F \cup G$ is the \emph{union} of $F$ and $G$;

\item $F \uplus G \colonequals (\mathscr{R}_F \uplus \mathscr{R}_G, \mathscr{V}_F \uplus \mathscr{V}_G, \mathscr{E}_F \uplus \mathscr{E}_G)$, i.e., $F \uplus G$ is the \emph{disjoint union} of $F$ and $G$;

\item $F_X \colonequals (X_F, \bigcup_{x_0 \in X_F}\{ \, x \in \mathscr{V}_F \mid x_0 \leqslant_F x \, \}, \bigcup_{x_0 \in X_F}\{ \, e \in \mathscr{E}_F \mid x_0 \leqslant_F \mathrm{src}_F(e) \, \})$, i.e., $F_X$ is the largest rooted \emph{subforest} (i.e., a subgraph that is a forest) of $F$ with roots in $X_F$;


\item $F_X^\bullet \colonequals F_X$ if $\mathrm{prt}(X_F) = \emptyset$, and $F_X^\bullet \colonequals r . F_X$ if $\mathrm{prt}(X_F) = \{ r \}$;

\item $F{\upharpoonright_Y} \colonequals (\{ \, y \in Y_F \mid \forall y_0 \in Y_F . \, y_0 \not \rightarrow_F y \, \}, Y_F,  \mathscr{E}_F \cap \wp(Y_F))$, i.e., $F{\upharpoonright_Y}$ is the rooted subforest of $F$ induced by $Y_F$.

\end{itemize}
\end{notation}

Next, recall that a \emph{\bfseries partition} of a finite set $S$ is a (necessarily finite) set $\{ S_i \}_{i \in \overline{n}}$ of nonempty subsets $S_i \subseteq S$ such that $\bigcup_{i = 1}^nS_i = S$ and $S_i \cap S_j = \emptyset$ for all $i, j \in \overline{n}$.
We generalise a partition to a \emph{\bfseries recursive partition} of $S$, which is a (necessarily finite) rooted tree $P$ such that
\begin{enumerate}

\item The root is $S$, and other vertices are nonempty subsets of $S$;


\item If a vertex $V$ has children $V_i$ ($i \in \overline{n}$), then the set $\{ V_i \}_{i \in \overline{n}}$ is a partition of $V$ with $n > 1$.

\end{enumerate}
The recursive partition $P$ is said to be \emph{\bfseries empty} if $S = \emptyset$, \emph{\bfseries trivial} if $S \neq \emptyset$ and $\mathrm{dep}(P)$ = $0$, and \emph{\bfseries exhaustive} if its leaves are all singleton sets.

Having recalled these preliminary concepts, let us proceed to the central concept of combinatorial arenas (Definition~\ref{DefCombinatorialArenas}).
Roughly, a combinatorial arena is a finite rooted forest equipped with three combinatorial structures that correspond respectively to \emph{multiplicatives}, \emph{additives} and \emph{exponentials} in linear logic \cite{girard1987lazy}.
We first introduce these auxiliary structures.

First, the structure for multiplicatives is to specify a class of finite sequences of vertices of the underlying finite rooted forest, which are to serve as \emph{moves} in a game:
\begin{definition}[multiplicative structures]
\label{DefMultiplicativeStructures}
A \emph{\bfseries multiplicative structure} on a (necessarily finite) rooted forest $F$ is a finite rooted forest $\mu$ with $\mathscr{V}_\mu = \mathscr{V}_F$ such that the set $\underline{\boldsymbol{s}}$ for each nonempty path $\boldsymbol{s}$ in $\mu$ is fraternal in $F$. 
A vertex of $F$ is said to be \emph{\bfseries switching} in $\mu$ if it is not a leaf in $\mu$.
\end{definition}

\begin{example}
\label{ExMultSt}
Consider the finite rooted forest
\begin{small}
\begin{mathpar}
\begin{tikzcd}
& \arrow[ld] \arrow[d] x & \arrow[d] y & z & s & t \\
u & v & w
\end{tikzcd}
\end{mathpar} 
\end{small}for which we write $F$.
For instance, the following two finite rooted forests 
\begin{small}
\begin{mathpar}
\begin{tikzcd}
x \arrow[d] & z \arrow[d] & t & u & v & w \\
y & s \\
\end{tikzcd}
\and
\begin{tikzcd}
& \arrow[ld] y \arrow[d] \arrow[rd] \arrow[r] & s & u \arrow[d] & w \\
x & z & t & v
\end{tikzcd}
\end{mathpar}
\end{small}are both multiplicative structures on $F$.
For a record purpose, we write $\mu$ for the right one. 
For convenience, we represent the pair $(F, \mu)$ by the union of $F$ and $\mu$ with edges of $\mu$ written $\Rightarrow$ to distinguish them from those $\rightarrow$ of $F$, i.e., 
\begin{small}
\begin{mathpar}
\begin{tikzcd}
& \arrow[ld] \arrow[d] x & \arrow[l, Rightarrow] \arrow[d] y \arrow[r, Rightarrow] \arrow[rr, Rightarrow, bend right] \arrow[rrr, Rightarrow, bend left] & z & s & t \\
u \arrow[r, Rightarrow] & v & w
\end{tikzcd}
\end{mathpar} 
\end{small}in which $y$ and $u$ are switching in $\mu$.
We employ this representation throughout this article. 

On the other hand, for counterexamples, neither of the two graphs
\begin{small}
\begin{mathpar}
\begin{tikzcd}
x \arrow[d] & y \arrow[d] & u \arrow[d] \\
w & t & v \\
\end{tikzcd}
\and
\begin{tikzcd}
& \arrow[ld] y \arrow[d] \arrow[rd] \arrow[r] & s & u \arrow[d] & w \\
x & t & \arrow[ll, bend left] z & v
\end{tikzcd}
\end{mathpar}
\end{small}is a multiplicative structure on $F$; the left one does not satisfy the first nor the second axiom of Definition~\ref{DefMultiplicativeStructures}, and the right one is not even a (simple) forest. 
\end{example}

The intuition is that a finite rooted forest $F$ determines components of a game similarly to an \emph{arena} \cite{hyland2000full} except that nonempty maximal paths in a multiplicative structure $ \mu$ on $F$, not vertices of $F$, serve as \emph{\bfseries moves} in the game.
Arenas are a class of rooted \emph{dags}, where implication $\Rightarrow$ between arenas entails the use of dags (going beyond simple graphs). 
Yet, it makes our dynamic graph method introduced later much simpler to stay in rooted (simple) forests since dynamics on them can be defined in terms of that on vertices. 
For this reason, we recast implication via multiplicative structures in such a way that it preserves rooted forests (Definition~\ref{DefConstructionsOnCombinatorialArenas}).

More specifically, a switching vertex functions as a \emph{hub} between the domain and the codomain of our implication (Definition~\ref{DefConstructionsOnCombinatorialArenas}), and this structure enables combinatorial arenas to stay in rooted forests under implication. 
Moreover, the distinction between switching and non-switching vertices is, albeit plain, responsible for not only this simple graph approach but also many of the present results, which existing game semantics in the literature could not attain, as we shall see. 
For instance, the second part of Example~\ref{ExCombinatorialSequents} illustrates that the distinction is essential for our combinatorial characterisation of formal systems embracing cuts or \emph{intensionality} (\S\ref{Bijections}).

The first axiom $\mathscr{V}_\mu = \mathscr{V}_F$ ensures that the set $\mathscr{V}_F$ has no redundancy, i.e., each vertex of $F$ is used for a move.
The second axiom facilitates the inheritance of \emph{pointers} from HO \cite{hyland2000full} to our setting (Definition~\ref{DefJustifiedSequences}).
Besides, this axiom guarantees that every move consists of vertices of the same depth in $F$.
If it consists of vertices of odd-depth in $F$, then it is called an \emph{\bfseries Opponent's move} or \emph{\bfseries O-move}, and otherwise a \emph{\bfseries Player's move} or \emph{\bfseries P-move}.

\begin{convention}
We call the pair $(F, \mu)$ of a finite rooted forest $F$ and a multiplicative structure $\mu$ on $F$ a \emph{\bfseries multiplicative pair}, and write $\mathcal{M}(\mu)$ for the set of all moves, i.e., nonempty maximal paths, in $\mu$.
Because we often talk about sequences of moves, we use the vector notation $\vec{m}$ for moves (instead of the bold letter $\boldsymbol{m}$) so that those sequences are written $\boldsymbol{s} = \vec{m}_1 \vec{m}_1 \dots \vec{m}_n$.
\end{convention}

Next, the structure for additives is to determine moves to be \emph{unavailable}:
\begin{definition}[additive structures]
\label{DefAdditiveStructures}
An \emph{\bfseries additive structure} on a multiplicative pair $(F, \mu)$ is the (necessarily disjoint) union $\alpha = \alpha[0] \cup \alpha[2]$ of a set $\alpha[0]$ of leaves in $F$ that are not switching in $\mu$ and a set $\alpha[2]$ of edges between siblings in $\mu$.
\end{definition}

\begin{example}
\label{ExAddSt}
The set $\alpha \colonequals \{ z, w, \{ x, z \}, \{ x, t \}, \{ z, t \} \}$ is an additive structure on the multiplicative pair $(F, \mu)$ given in Example~\ref{ExMultSt}.
For a counterexample, the set $\{ \{ x, y \}, \{ y, w \} \}$ is not an additive structure on the multiplicative pair $(F, \mu)$ since $y$ and $w$ are not siblings in $\mu$. 
\end{example}

The idea of an additive structure $\alpha$ on a multiplicative pair $(F, \mu)$ is: Each move $\vec{m} \in \mathcal{M}(\mu)$ such that $\underline{\vec{m}} \cap \alpha[0] \neq \emptyset$ \emph{can never be played} in a game; when a move $\vec{n} \in \mathcal{M}(\mu)$ with $x \in \underline{\vec{n}}$ and $\{ x, y \} \in \alpha[2]$ is made, each move containing a vertex of $F_{\{ y \}}$ becomes \emph{unavailable}.
Such graph-theoretic \emph{dynamics} of $\alpha$ is to model additives in linear logic in a finitary way, where the subset $\alpha[0] \subseteq \alpha$ corresponds to 0-ary additives, and the other $\alpha[2] \subseteq \alpha$ to binary ones (Definition~\ref{DefConstructionsOnCombinatorialArenas}).
We make the dynamics of $\alpha$ precise in \S\ref{Positions}.
The axioms on $\alpha$ are for Theorem~\ref{ThmFreeCharacterisation} to hold. 

Finally, the structure for exponentials is to define what to be \emph{duplicated} during a play:
\begin{definition}[exponential structures]
An \emph{\bfseries exponential structure} on a multiplicative pair $(F, \mu)$ is a finite rooted forest $\epsilon$ whose vertices are pairs $(S, i)$ of a fraternal set $S$ in $\mu$ and a natural number $i$ unique among the second elements of vertices of $\epsilon$, and edges $(S, i) \rightarrow (T, j)$ are the superset relation $S \supseteq T$ such that the first elements of roots of $\epsilon$ are pairwise disjoint.
\end{definition}

\begin{convention}
We use the pair $(S, i)$, not the set $S$ itself, as a vertex of an exponential structure because $S$ may occur more than once.
Nevertheless, as in the case of occurrences in a sequence, we usually abbreviate $(S, i)$ as $S$; this convention does not bring confusion in practice. 
\end{convention}

\begin{notation}
For a vertex $S$ of an exponential structure $\epsilon$ on a multiplicative pair $(F, \mu)$, we define $\overline{S} \colonequals \mathscr{V}_{\mu_{S}} = \bigcup_{x_0 \in S} \{ \, x \in \mathscr{V}_{F} \mid x_0 \leqslant_{\mu} x \, \}$.
\end{notation}

\begin{example}
\label{ExExpSt}
Consider again the multiplicative pair $(F, \mu)$ in Example~\ref{ExMultSt}.
The rooted forest
\begin{small}
\begin{mathpar}
\{ x, z \} \rightarrow \{ x \}
\and
\{ v \} \rightarrow \{ v \}
\and
\{ w \}
\end{mathpar}
\end{small}written $\epsilon$, is an exponential structure on $(T, \mu)$. 
Strictly speaking, the two occurrences of $\{ v \}$ are distinguished by two distinct natural numbers attached on them. 
On the other hand, the rooted tree $\{ u, v \} \rightarrow \{ v \}$, written $\epsilon'$, is not because the vertex $\{ u, v \}$ is not fraternal in $\mu$.
\end{example}

To explain the idea of an exponential structure $\epsilon$ on a multiplicative pair $(F, \mu)$, let us define the finite sequence $\epsilon(x)$ of fraternal sets in $\mu$ for each $x \in \mathscr{V}_F$ by $\epsilon(x) \colonequals \epsilon(x)_1 \epsilon(x)_2 \dots \epsilon(x)_n$ if there is the longest nonempty path $\epsilon(x)_1 \supseteq \epsilon(x)_2 \supseteq \dots \supseteq \epsilon(x)_n$ in $\epsilon$ whose elements $\epsilon(x)_j$, $1 \leqslant j \leqslant n$, all contain $x$, and by $\epsilon(x) \colonequals \boldsymbol{\epsilon}$ otherwise.
Then, the idea is: When a move $\vec{m} \in \mathcal{M}(\mu)$ is made during a play, the rooted subforests $F_{\overline{\epsilon(x)_j}}$ of $F$ are \emph{duplicated} in a suitable way for each $x \in \underline{\vec{m}}$.
This graph-theoretic \emph{dynamics} of $\epsilon$ is to model exponentials in a finitary way (\S\ref{Positions}). 

We are now ready to introduce our combinatorial reformulation of games:
\begin{definition}[combinatorial arenas]
\label{DefCombinatorialArenas}
A \emph{\bfseries combinatorial arena} $\mathscr{A}$ is a multiplicative pair $(|\mathscr{A}|, \mu_{\mathscr{A}})$ together with an additive structure $\alpha_{\mathscr{A}}$ and an exponential structure $\epsilon_{\mathscr{A}}$ on $(|\mathscr{A}|, \mu_{\mathscr{A}})$ that admits an exhaustive recursive partition $P_S$ of each maximal fraternal set $S$ in $\mu_{\mathscr{A}}$ such that, for each partition $S_I = \{ S_i \}_{i \in I}$ occurring in $P_S$, the following holds: 
\begin{enumerate}

\item The relation $[\alpha_{\mathscr{A}}[2]]_I$ between elements $S_i, S_j \in S_I$ ($i, j \in I$) given in terms of arbitrarily chosen representatives $x \in S_i$ and $y \in S_j$ by
\begin{equation*}
[\alpha_{\mathscr{A}}[2]]_I(S_i, S_j) \ratio \Leftrightarrow \{ x, y \} \in \alpha_{\mathscr{A}}[2] 
\end{equation*}
is well-defined, i.e., independent of the choice of the representatives $x$ and $y$;

\item The graph $(S_I, [\alpha_{\mathscr{A}}[2]]_I)$ is null or complete;

\item $V \cap S_i \neq \emptyset$ implies $V \subseteq S_i$ or $S_i \subseteq V$ for all $V \in \mathscr{V}_{\epsilon_{\mathscr{A}}}$ and $i \in I$.

\end{enumerate}

\emph{\bfseries Vertices} and \emph{\bfseries edges} of $\mathscr{A}$ are those of $|\mathscr{A}|$, and \emph{\bfseries moves} in $\mathscr{A}$ are elements of $\mathcal{M}_{\mathscr{A}} \colonequals \mathcal{M}(\mu_{\mathscr{A}})$.
A vertex of $\mathscr{A}$ is said to be \emph{\bfseries switching} if so is it in $\mu_{\mathscr{A}}$.
A combinatorial arena $\mathscr{S}$ is called a \emph{\bfseries combinatorial subarena} of $\mathscr{A}$ if it satisfies $|\mathscr{S}| \sqsubseteq |\mathscr{A}|$, $\mu_{\mathscr{S}} \sqsubseteq \mu_{\mathscr{A}}$, $\alpha_{\mathscr{S}} \subseteq \alpha_{\mathscr{A}}$ and $\epsilon_{\mathscr{S}} \sqsubseteq \epsilon_{\mathscr{A}}$.
\end{definition}

The three axioms of Definition~\ref{DefCombinatorialArenas} are again for Theorem~\ref{ThmFreeCharacterisation} to hold. 
See Example~\ref{ExCombinatorialArenas}, Definition~\ref{DefConstructionsOnCombinatorialArenas} and Theorem~\ref{ThmFreeCharacterisation} on how multiplicative (respectively, additive, exponential) structures correspond to multiplicatives (respectively, additives, exponentials) in linear logic. 

\begin{example}
\label{ExCombinatorialArenas}
Consider again the multiplicative pair $(F, \mu)$ given in Example~\ref{ExMultSt} together with the additive structure $\alpha$ in Example~\ref{ExAddSt} and the exponential structure $\epsilon$ in Example~\ref{ExExpSt}.
They constitute a combinatorial arena, which is written $\mathscr{A}$ for a record purpose.

For counterexamples, the additive structure $\{ \{ x, z \}, \{ z, t \}, \{ s, t \} \}$ on $(F, \mu)$ does not satisfy the first two axioms of Definition~\ref{DefCombinatorialArenas}, and the exponential structure on $(F, \mu)$ that consists of the unique root $\{ s, t \}$ does not satisfy the third axiom with respect to the additive structure $\alpha$.
\end{example}


Let us next introduce constructions on combinatorial arenas, which correspond to those on formulae in intuitionistic linear logic \cite{girard1987lazy} (recalled in Definition~\ref{DefILL}):

\begin{definition}[constructions on combinatorial arenas]
\label{DefConstructionsOnCombinatorialArenas}
Let $\mathscr{A}$ and $\mathscr{B}$ be combinatorial arenas.
\begin{itemize}

\item \emph{\bfseries Top} is the combinatorial arena $\top \colonequals (\emptyset, \emptyset, \emptyset, \emptyset)$;

\item \emph{\bfseries One} is the combinatorial arena $1 \colonequals (1, 1, \{ 1 \}, \emptyset)$, where $1$ is an arbitrarily fixed element;

\item The \emph{\bfseries tensorial negation}\footnote{This terminology comes from the naming of a similar construction on games in \cite{mellies2010resource}.} of $\mathscr{A}$ is the combinatorial arena $\neg \mathscr{A}$ defined by
\begin{mathpar}
|\neg \mathscr{A}| \colonequals \neg . |\mathscr{A}|
\and
\mu_{\neg \mathscr{A}} \colonequals \neg \uplus \mu_{\mathscr{A}}
\and
\alpha_{\neg \mathscr{A}} \colonequals \alpha_{\mathscr{A}}
\and
\epsilon_{\neg \mathscr{A}} \colonequals \epsilon_{\mathscr{A}},
\end{mathpar}
where $\neg$ is an arbitrarily fixed element;

\item The \emph{\bfseries tensor} of $\mathscr{A}$ and $\mathscr{B}$ is the combinatorial arena $\mathscr{A} \otimes \mathscr{B}$ defined by
\begin{mathpar}
| \mathscr{A} \otimes \mathscr{B} | \colonequals |\mathscr{A}| \uplus |\mathscr{B}|
\and
\mu_{\mathscr{A} \otimes \mathscr{B}} \colonequals \mu_{\mathscr{A}} \uplus \mu_{\mathscr{B}}
\and
\alpha_{\mathscr{A} \otimes \mathscr{B}} \colonequals \alpha_{\mathscr{A}} \uplus \alpha_{\mathscr{B}}
\and
\epsilon_{\mathscr{A} \otimes \mathscr{B}} \colonequals \epsilon_{\mathscr{A}} \uplus \epsilon_{\mathscr{B}};
\end{mathpar}

\item The \emph{\bfseries product} or \emph{\bfseries with} of $\mathscr{A}$ and $\mathscr{B}$ is the combinatorial arena $\mathscr{A} \mathbin{\&} \mathscr{B}$ defined by
\begin{mathpar}
| \mathscr{A} \mathbin{\&} \mathscr{B} | \colonequals | \mathscr{A} \otimes \mathscr{B} |
\and
\mu_{\mathscr{A} \mathbin{\&} \mathscr{B}} \colonequals \mu_{\mathscr{A} \otimes \mathscr{B}}
\and
\alpha_{\mathscr{A} \mathbin{\&} \mathscr{B}} \colonequals \alpha_{\mathscr{A} \otimes \mathscr{B}} \uplus \{ \, \{ a, b \} \mid a \in \mathscr{R}_{|\mathscr{A}|} \cap \mathscr{R}_{\mu_{\mathscr{A}}}, b \in \mathscr{R}_{|\mathscr{B}|} \cap \mathscr{R}_{\mu_{\mathscr{B}}} \, \}
\and
\epsilon_{\mathscr{A} \mathbin{\&} \mathscr{B}} \colonequals \epsilon_{\mathscr{A} \otimes \mathscr{B}};
\end{mathpar}

\item The \emph{\bfseries of-course} of $\mathscr{A}$ is the combinatorial arena $\oc \mathscr{A}$ defined by 
\begin{mathpar}
|\oc \mathscr{A}| \colonequals |\mathscr{A}|
\and
\mu_{\oc \mathscr{A}} \colonequals \mu_{\mathscr{A}}
\and
\alpha_{\oc \mathscr{A}} \colonequals \alpha_{\mathscr{A}}
\and
\epsilon_{\oc \mathscr{A}} \colonequals \mathscr{R}_{\mathscr{A}} \ast \epsilon_{\mathscr{A}} \colonequals \big((\mathscr{R}_{|\mathscr{A}|} \cap \mathscr{R}_{\mu_{\mathscr{A}}}) . (\epsilon_{\mathscr{A}})_{\wp(\mathscr{R}_{|\mathscr{A}|} \cap \mathscr{R}_{\mu_{\mathscr{A}}})}\big) \uplus \big( \epsilon_{\mathscr{A}} {\upharpoonright_{\wp(\mathscr{V}_{|\mathscr{A}|} \setminus (\mathscr{R}_{|\mathscr{A}|} \cap \mathscr{R}_{\mu_{\mathscr{A}}}))}}\big);
\end{mathpar}

\item The \emph{\bfseries linear implication} from $\mathscr{A}$ to $\mathscr{B}$ is the combinatorial arena $\mathscr{A} \multimap \mathscr{B}$ defined by
\begin{mathpar}
| \mathscr{A} \multimap \mathscr{B} | \colonequals {\multimap} . |\mathscr{A}| \uplus |\mathscr{B}|
\and
\mu_{\mathscr{A} \multimap \mathscr{B}} \colonequals \mu_{\mathscr{A}} \uplus \mu_{\mathscr{B}}{\upharpoonright_{\mathscr{V}_{|\mathscr{B}|}^{\geqslant 1}}} \uplus {\multimap} . (\mu_{\mathscr{B}}{\upharpoonright_{\mathscr{R}_{|\mathscr{B}|}}})
\and
\alpha_{\mathscr{A} \multimap \mathscr{B}} \colonequals \alpha_{\mathscr{A}} \uplus \alpha_{\mathscr{B}}
\and
\epsilon_{\mathscr{A} \multimap \mathscr{B}} \colonequals \epsilon_{\mathscr{A}} \uplus \epsilon_{\mathscr{B}},
\end{mathpar}
where $\multimap$ is an arbitrarily fixed element, and the \emph{\bfseries implication} from $\mathscr{A}$ to $\mathscr{B}$ is 
\begin{equation*}
\mathscr{A} \Rightarrow \mathscr{B} \colonequals \oc \mathscr{A} \multimap \mathscr{B}.
\end{equation*}

\if0
\item \emph{\bfseries Bottom} $\bot$, \emph{\bfseries zero} $0$, the \emph{\bfseries implication} $\mathscr{A} \Rightarrow \mathscr{B}$, the \emph{\bfseries par} $\mathscr{A} \invamp \mathscr{B}$ and the \emph{\bfseries plus} $\mathscr{A} \oplus \mathscr{B}$ of $\mathscr{A}$ and $\mathscr{B}$, and the \emph{\bfseries why-not} $\wn \mathscr{A}$ of $\mathscr{A}$ are the combinatorial arenas defined respectively by
\begin{mathpar}
\bot \colonequals \neg \top
\and
0 \colonequals \neg 1
\and
\mathscr{A} \Rightarrow \mathscr{B} \colonequals \oc \mathscr{A} \multimap \mathscr{B}
\\
\mathscr{A} \invamp \mathscr{B} \colonequals \neg (\neg \mathscr{A} \otimes \neg \mathscr{B})
\and
\mathscr{A} \oplus \mathscr{B} \colonequals \neg (\neg \mathscr{A} \mathbin{\&} \neg \mathscr{B})
\and
\wn \mathscr{A} \colonequals \neg \oc \neg \mathscr{A}.
\end{mathpar}
\fi
\end{itemize} 
\end{definition}

\begin{convention}
Linear implication is right associative, while other binary operations are left associative. 
Each unary operation precedes all binary ones, and tensorial negation all other operations; linear implication is preceded by all other operations. 
\end{convention}


\if0
The literature of game semantics has demonstrated that \emph{concurrent} games \cite{abramsky1999concurrent,mellies2005asynchronous}, in which more than one participants may be active at a time, are suited for interpreting classical linear logic, and simpler, more standard \emph{sequential} games \cite{abramsky1994games,hyland2000full,abramsky2000full}, in which at most one participant can move at a time, match intuitionistic linear logic.
Combinatorial arenas are based on sequential games because it is unclear how to extend the present approach to the concurrent setting.
As a result, it seems impossible to directly define the last three constructions in Definition~\ref{DefConstructionsOnCombinatorialArenas}, so we reduce them to other constructions. 
\fi

\begin{example}
\label{SimpleExamplesOfCombinatorialArenas}
The linear implication $\bot \multimap \bot \otimes \bot$ comprises of simple graphs only, while the corresponding arena \cite{hyland2000full} does not.
Note that the switching vertex $\multimap$ makes this difference.

It is instructive to observe that switching vertices also contribute to the inequalities
\begin{mathpar}
\oc \top = \top = \top \otimes \top = \top \mathbin{\&} \top \neq \top \multimap \top \neq (\top \multimap \top) \otimes (\top \multimap \top)
\and
\oc (\top \multimap \bot) \neq \top \multimap \oc \bot 
\and
\top \neq \mathscr{A} \multimap \top \neq \mathscr{B} \multimap \top
\and
\neg \mathscr{A} \neq \mathscr{A} \multimap \bot
\and
\top \multimap \mathscr{A} \neq \mathscr{A}
\and
\top \multimap \bot \mathbin{\&} \bot \neq \bot \mathbin{\&} (\top \multimap \bot) \neq (\top \multimap \bot) \mathbin{\&} (\top \multimap \bot),
\end{mathpar}
where $\mathscr{A}$ and $\mathscr{B}$ are arbitrary combinatorial arenas such that $\mathscr{A} \neq \mathscr{B}$.
These inequalities are quite remarkable because existing game semantics \cite{abramsky2000full,hyland2000full,mccusker1998games} does not attain them. 
We shall see later that some of the inequalities are crucial for our fully complete interpretation of intuitionistic linear logic (Theorem~\ref{ThmFullCompleteness}).
\end{example}

\begin{proposition}[well-defined constructions on combinatorial arenas]
\label{PropWellDefinedConstructionsOnCombinatorialSemiArenas}
Top and one form combinatorial arenas, and combinatorial arenas are closed under tensorial negation, tensor, with, of-course and linear implication. 
\end{proposition}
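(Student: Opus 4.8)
The plan is to verify, for each construction in Definition~\ref{DefConstructionsOnCombinatorialArenas}, the four layers of data demanded by Definition~\ref{DefCombinatorialArenas}. Writing $\mathscr{C}$ for the arena being produced, I would check in turn that the pair $(|\mathscr{C}|,\mu_{\mathscr{C}})$ is a multiplicative pair (Definition~\ref{DefMultiplicativeStructures}), that $\alpha_{\mathscr{C}}$ is an additive structure (Definition~\ref{DefAdditiveStructures}), that $\epsilon_{\mathscr{C}}$ is an exponential structure, and finally that every maximal fraternal set in $\mu_{\mathscr{C}}$ admits an exhaustive recursive partition satisfying the three axioms. The base cases are immediate: for $\top$ everything is empty and every condition is vacuous, while for $1$ the unique maximal fraternal set is the singleton $\{1\}$, whose only recursive partition is trivial and satisfies all three axioms since $\alpha_{1}[2]=\emptyset$ and $\epsilon_{1}=\emptyset$. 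For the inductive cases I would reuse the data of $\mathscr{A}$ (and $\mathscr{B}$) throughout.

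For the first three layers the work is local and rests on two elementary observations about the forest operations used: the disjoint union $\uplus$ preserves siblinghood within each component and makes the two sets of roots mutually siblings, whereas adjoining a root $\star.F$ turns the former roots of $F$ into mutual children of $\star$ and leaves all other parent relations intact. It follows that every path in $\mu_{\mathscr{C}}$ either lies in one copy of a component's multiplicative structure or runs through the adjoined hub ($\neg$ or $\multimap$), and in each case its underlying vertex set stays fraternal in $|\mathscr{C}|$; together with the evident equality $\mathscr{V}_{\mu_{\mathscr{C}}}=\mathscr{V}_{|\mathscr{C}|}$ this gives the multiplicative-pair condition. The same observations keep the additive data legal: members of $\alpha[0]$ remain non-switching leaves, inherited $\alpha[2]$-edges remain between $\mu$-siblings, and the with-edges $\{a,b\}$ of $\mathscr{A}\mathbin{\&}\mathscr{B}$ join two $\mu$-roots, hence siblings. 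For the exponential layer one checks that each vertex $S$ stays fraternal, that the superset edges are untouched, that the natural-number tags can be kept globally distinct under $\uplus$, and --- only for $\oc$ --- that the adjoined root $\mathscr{R}_{|\mathscr{A}|}\cap\mathscr{R}_{\mu_{\mathscr{A}}}$ really is fraternal in $\mu_{\mathscr{A}}$ and sits disjointly above exactly the intended part of $\epsilon_{\mathscr{A}}$, so that all roots of $\epsilon_{\oc\mathscr{A}}$ remain pairwise disjoint.

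The real content is the last layer. Maximal fraternal sets lying strictly inside one component are untouched and their partitions are inherited, so the difficulty is concentrated in the sets created or reshuffled by the construction. For $\neg$ and $\otimes$ the only new such set is the root level, which becomes the union of the two components' root-level fraternal sets (plus the singleton $\{\neg\}$ for negation); here I would prepend a top split separating the piece from $\mathscr{A}$, the piece from $\mathscr{B}$, and $\{\neg\}$, then hang the components' partitions beneath. This split is null because no $\alpha[2]$-edge crosses components, its relation is well-defined because it is uniformly false, and axiom 3 holds because any $\epsilon$-vertex, being fraternal, sits wholly inside one component, and a fraternal set meeting a block of $\mu$-roots is contained in it.

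The hard part will be $\mathbin{\&}$ and $\multimap$. For $\mathbin{\&}$ the with-edges form a complete bipartite graph between $A_{0}\colonequals\mathscr{R}_{|\mathscr{A}|}\cap\mathscr{R}_{\mu_{\mathscr{A}}}$ and $B_{0}\colonequals\mathscr{R}_{|\mathscr{B}|}\cap\mathscr{R}_{\mu_{\mathscr{B}}}$, and crucially $A_{0}$ need not exhaust $\mathscr{R}_{\mu_{\mathscr{A}}}$ (a $\mu$-root may fail to be a forest-root), so the naive top split $\{\mathscr{R}_{\mu_{\mathscr{A}}},\mathscr{R}_{\mu_{\mathscr{B}}}\}$ violates well-definedness of the $\alpha[2]$-relation. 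The remedy is to arrange a partition in which $A_{0}$ and $B_{0}$ appear as nodes so that the $A_{0}$--$B_{0}$ block is completely connected while everything else is detached, and only afterwards to refine $A_{0}$ and $B_{0}$ by the relevant parts of the partitions of $\mathscr{A}$ and $\mathscr{B}$. For $\multimap$ the analogous difficulty is that the hub detaches $B_{0}$ from the rest of $\mathscr{R}_{\mu_{\mathscr{B}}}$: it becomes the children of $\multimap$, a new maximal fraternal set, while $B_{1}\colonequals\mathscr{R}_{\mu_{\mathscr{B}}}\setminus B_{0}$ stays at the root level beside $\mathscr{R}_{\mu_{\mathscr{A}}}$ and $\multimap$, so one must split $\mathscr{B}$'s given partition of $\mathscr{R}_{\mu_{\mathscr{B}}}$ into a valid partition of $B_{0}$ and one absorbing $B_{1}$. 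I expect both cases to hinge on the same compatibility lemma: the recursive partition of a root fraternal set can be chosen to refine the split between forest-roots and forest-non-roots, and to exhibit $A_{0}$, $B_{0}$ as nodes. Proving that lemma --- presumably the very reason the three axioms of Definition~\ref{DefCombinatorialArenas} are phrased as they are --- is where I would concentrate the effort.
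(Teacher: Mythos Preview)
The paper's own proof is simply ``Straightforward and left to the reader,'' so there is no route to compare against; your proposal is a genuine fleshing-out of what the paper omits. Your decomposition into four layers is the right one, and your identification of the root-level maximal fraternal set as the only place where new work is needed is correct. You are also right that the naive two-block split $\{\mathscr{R}_{\mu_{\mathscr{A}}},\mathscr{R}_{\mu_{\mathscr{B}}}\}$ can fail axiom~1 for $\mathbin{\&}$: e.g.\ with $\mathscr{A}=\bot\multimap\bot$ one has $A_0=\{{\multimap}\}\subsetneq\mathscr{R}_{\mu_{\mathscr{A}}}=\{{\multimap},\bot_1\}$, and the new with-edge sees only $A_0$.

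One remark: the ``compatibility lemma'' you isolate (that the root-level partition of $\mathscr{A}$ can be chosen to exhibit $A_0$ as a node) is exactly the sort of structural fact that the proof of Theorem~\ref{ThmFreeCharacterisation} implicitly relies on in the reverse direction, and proving it directly from Definition~\ref{DefCombinatorialArenas} is the honest content of the proposition. An alternative is to prove Proposition~\ref{PropWellDefinedConstructionsOnCombinatorialSemiArenas} and Theorem~\ref{ThmFreeCharacterisation} together by a joint induction, so that at each stage the arena under consideration is already known to be built from the constructions and hence its root-level partition manifestly has the shape you need. Either way your plan is sound; the paper's ``straightforward'' is doing real work here.
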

\begin{proof}
Straightforward and left to the reader. 
\end{proof}

\begin{example}
\label{ExSecondCombinatorialArenas}
The combinatorial arena $(F, \mu, \alpha, \epsilon)$ given in Example~\ref{ExCombinatorialArenas} can be constructed as $\oc 1_{[w]} \multimap_{[y]} (\oc (1_{[z]} \mathbin{\&} \oc \neg_{[x]} (\top \multimap_{[u]} \oc \oc \bot_{[v]})) \mathbin{\&} \bot_{[t]}) \otimes \bot_{[s]}$, where the subscripts are tags to indicate the vertices corresponding to the constructions. 
We use this notation throughout this article. 
\end{example}

The proof of the following proposition describes how to inductively construct a given combinatorial arena. 
In particular, the proof explains how the tags in Example~\ref{ExSecondCombinatorialArenas} are chosen. 

\begin{theorem}[a free characterisation of combinatorial arenas]
\label{ThmFreeCharacterisation}
Every combinatorial arena $\mathscr{A}$ can be obtained, up to graph isomorphisms $\cong$ on the underlying finite rooted forest $|\mathscr{A}|$, from top and/or one by tensorial negation, linear implication, tensor, with and/or of-course.
\end{theorem}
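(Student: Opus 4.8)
The plan is to argue by well-founded induction on the pair $(|\mathscr{V}_{|\mathscr{A}|}|, |\mathscr{V}_{\epsilon_{\mathscr{A}}}|)$ ordered lexicographically, peeling off one outermost construction at each step and applying the induction hypothesis to the strictly smaller sub-arena(s); reassembling then inverts Definition~\ref{DefConstructionsOnCombinatorialArenas}. The base case is $\mathscr{V}_{|\mathscr{A}|} = \emptyset$, where $\mathscr{A} = \top$. For the inductive step I first isolate the invariant driving the case analysis. Writing $\mathscr{R}_{\mathscr{A}} \colonequals \mathscr{R}_{|\mathscr{A}|} \cap \mathscr{R}_{\mu_{\mathscr{A}}}$ for the set of \emph{common roots} (vertices that are roots of both $|\mathscr{A}|$ and $\mu_{\mathscr{A}}$), a short argument shows that $\mathscr{A}$ nonempty forces $\mathscr{R}_{\mathscr{A}} \neq \emptyset$: starting from any $|\mathscr{A}|$-root and climbing upward along $\mu_{\mathscr{A}}$ stays at $|\mathscr{A}|$-depth $0$ by the fraternality axiom of Definition~\ref{DefMultiplicativeStructures}, ending at a common root. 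The same forest/fraternality bookkeeping shows that each connected component of the graph $|\mathscr{A}| \cup \mu_{\mathscr{A}}$ contains exactly one common root, so $|\mathscr{R}_{\mathscr{A}}|$ equals the number of components; in particular $|\mathscr{A}| \cup \mu_{\mathscr{A}}$ is connected iff $|\mathscr{R}_{\mathscr{A}}| = 1$.

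The case analysis on the outermost construction then runs as follows. (i) If $\epsilon_{\mathscr{A}}$ has a root whose underlying set is exactly $\mathscr{R}_{\mathscr{A}}$, I strip an of-course: delete that $\epsilon$-root (promoting its children to roots) to obtain $\mathscr{A}'$ with $\mathscr{A} = \oc\mathscr{A}'$, where $|\mathscr{V}_{|\mathscr{A}'|}|$ is unchanged but $|\mathscr{V}_{\epsilon_{\mathscr{A}'}}|$ drops. (ii) Otherwise, if $|\mathscr{R}_{\mathscr{A}}| \geq 2$ (equivalently $|\mathscr{A}| \cup \mu_{\mathscr{A}}$ is disconnected), I decompose as a tensor or a with. (iii) Otherwise there is a unique common root $r$: if $r$ is switching then $\mathscr{A} = \mathscr{A}_1 \multimap \mathscr{A}_2$, and if $r$ is non-switching then $\mathscr{A} = \neg\mathscr{A}'$ when $r$ has proper descendants or $r \notin \alpha_{\mathscr{A}}[0]$ (with $\mathscr{A}'$ the arena hanging below $r$, possibly $\top$), while $\mathscr{A} = 1$ in the remaining single-vertex case $r \in \alpha_{\mathscr{A}}[0]$. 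In case (iii) the vertex $r$ is the unique cut vertex of $|\mathscr{A}| \cup \mu_{\mathscr{A}}$ separating domain from codomain: its $|\mathscr{A}|$-descendants form $|\mathscr{A}_1|$, the rest form $|\mathscr{A}_2|$, and $\mu_{\mathscr{A}_2}$ is recovered by re-rooting the former $\mu_{\mathscr{A}}$-children of $r$. One reads off $\mathscr{A}_1,\mathscr{A}_2$ (resp. $\mathscr{A}'$) by restriction and checks that reapplying $\multimap$ (resp. $\neg$) reproduces $\mathscr{A}$ up to a forest isomorphism.

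For case (ii) the essential point is that the existence of an exhaustive recursive partition $P_S$ of the maximal fraternal set $S = \mathscr{R}_{\mu_{\mathscr{A}}}$ satisfying the well-definedness and \emph{null-or-complete} conditions of Definition~\ref{DefCombinatorialArenas} says exactly that the $\alpha_{\mathscr{A}}[2]$-relation restricted to the common roots $\mathscr{R}_{\mathscr{A}} \subseteq S$ is a \emph{cograph}, with $P_S$ restricting to its cotree. Reading a \emph{null} split as $\otimes$ and a \emph{complete} split as $\mathbin{\&}$, and identifying each singleton block $\{r\}$ with the whole $|\mathscr{A}| \cup \mu_{\mathscr{A}}$-component of $r$, the cotree dictates a nesting of binary tensors and withs whose leaves are the component-subarenas; this is precisely how the with-edges added by $\mathbin{\&}$ in Definition~\ref{DefConstructionsOnCombinatorialArenas} are recovered, and the remaining $\otimes$ case adds none.

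I expect the main obstacle to be the verification, in cases (ii) and (iii), that the pieces are themselves combinatorial arenas and that $\mathscr{A}$ carries no \emph{illegal cross-structure}: no $\alpha_{\mathscr{A}}[2]$-edge or $\epsilon_{\mathscr{A}}$-vertex straddling two components in (ii), and none straddling domain and codomain in (iii). This is content that would fail for an arbitrary decorated forest, and it is exactly here that the three axioms of Definition~\ref{DefCombinatorialArenas} earn their keep (as the text flags): the null-or-complete and well-definedness conditions forbid stray $\alpha[2]$-edges between blocks, while the laminarity condition (axiom~3, $V \cap S_i \neq \emptyset \Rightarrow V \subseteq S_i$ or $S_i \subseteq V$) forces every exponential vertex to align with a cotree block, hence with a single component once the top-level of-course of case (i) has been removed. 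Granting these structural lemmas, the recombination identities are the routine inverses of Definition~\ref{DefConstructionsOnCombinatorialArenas}, the lexicographic measure visibly decreases in every case, and the induction closes.
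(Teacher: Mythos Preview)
Your proposal is correct and takes essentially the same approach as the paper: induction on a size measure (the paper uses the sum $|\mathscr{V}_{|\mathscr{A}|}| + |\mathscr{V}_{\epsilon_{\mathscr{A}}}|$ rather than your lexicographic pair), with the same case split into $\oc$ first, then $1$/$\neg$ versus $\multimap$ versus $\otimes$/$\&$ according to the number of common roots and whether the unique one is switching. The paper's organisation differs only cosmetically---it branches first on whether $\mathscr{R}_{|\mathscr{A}|}$ is a singleton and then on whether $\mu_{\mathscr{A}}{\upharpoonright_{\mathscr{R}_{|\mathscr{A}|}}}$ is a tree, which is equivalent to your common-root count since the roots of that restriction are exactly $\mathscr{R}_{\mathscr{A}}$---and your cograph reading of the null-or-complete axiom is a pleasant gloss the paper does not make explicit.
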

\begin{proof}
Let $\mathscr{A}$ be a combinatorial arena.
We proceed by induction on $\mathrm{Size}(\mathscr{A}) \colonequals |\mathscr{V}_{|\mathscr{A}|}| \uplus |\mathscr{V}_{\epsilon_{\mathscr{A}}}|$.

The base case corresponds to $\mathrm{Size}(\mathscr{A}) = 0$, in which $\mathscr{A} = \top$. 
Hence, we are done.
For the inductive steps, we henceforth assume $\mathrm{Size}(\mathscr{A}) > 0$, which implies $\mathscr{R}_{|\mathscr{A}|} \neq \emptyset$.

First, assume $\epsilon_{\mathscr{A}} = \mathscr{R}_{|\mathscr{A}|} \ast \epsilon'$ for some exponential structure $\epsilon'$. 
Then, $\mathscr{A}' \colonequals (|\mathscr{A}|, \mu_{\mathscr{A}}, \alpha_{\mathscr{A}}, \epsilon')$ is a combinatorial arena with $\mathrm{Size}(\mathscr{A}') < \mathrm{Size}(\mathscr{A})$ and $\oc \mathscr{A}' = \mathscr{A}$.
Thus, the induction hypothesis verifies the claim for $\mathscr{A}$.
Now, we can assume $\epsilon_{\mathscr{A}} \neq \mathscr{R}_{|\mathscr{A}|} \ast \epsilon''$ for all exponential structures $\epsilon''$.

Next, suppose that $\mathscr{R}_{|\mathscr{A}|}$ is singleton. 
If $\alpha_{\mathscr{A}} \cap \mathscr{R}_{|\mathscr{A}|} \neq \emptyset$, then $\mathscr{A} \cong 1$; thus, assume otherwise. 
Then, $\alpha_{\mathscr{A}} \cap \mathscr{R}_{|\mathscr{A}|} = \emptyset$ and $\epsilon_{\mathscr{A}} \neq \mathscr{R}_{|\mathscr{A}|} . \epsilon''$ for all exponential structures $\epsilon''$, so $\mathscr{A} \cong \neg \mathscr{A}''$ for some combinatorial arena $\mathscr{A}''$.
Because $\mathrm{Size}(\mathscr{A}') < \mathrm{Size}(\mathscr{A})$, the induction hypothesis on $\mathscr{A}''$ verifies the claim for $\mathscr{A}$.
Hence, in the following, it suffices to suppose that $\mathscr{R}_{|\mathscr{A}|}$ is not singleton.

Lastly, consider the case where $\mathscr{R}_{|\mathscr{A}|}$ is not singleton. 
The rooted subforest $\mu' \colonequals \mu_{\mathscr{A}}{\upharpoonright_{\mathscr{R}_{|\mathscr{A}|}}}$ of $\mu_{\mathscr{A}}$ then has more than one vertex.
We then proceed by case analysis on $\mu'$. 
If $\mu'$ is a rooted tree, where we write $x_0$ for its unique root, then we define 
\begin{mathpar}
V_1 \colonequals \{ \, x \in \mathscr{V}_{|\mathscr{A}|} \mid x_0 \rightarrow_{|\mathscr{A}|} x \, \}
\and
V_2 \colonequals \mathscr{R}_{|\mathscr{A}|} \setminus \{ x_0 \}.
\end{mathpar}
Let $\mathscr{A}_1$ be the combinatorial subarena of $\mathscr{A}$ induced by $|\mathscr{A}_1| \colonequals |\mathscr{A}|_{V_1}$, and $\mathscr{A}_2$ by $|\mathscr{A}_2| \colonequals |\mathscr{A}|_{V_2}$.
Clearly, $\mathscr{A} \cong \mathscr{A}_1 \multimap \mathscr{A}_2$ and $\mathrm{Size}(\mathscr{A}_i) < \mathrm{Size}(\mathscr{A})$ ($i = 1, 2$).
Hence, the induction hypotheses on $\mathscr{A}_i$ prove the claim for $\mathscr{A}$.
It remains to assume that $\mu'$ is not a rooted tree.
In this case, there is a nonempty, nontrivial, exhaustive recursive partition $P$ of $\mathscr{R}_{\mu'}$ that satisfies the three axioms of Definition~\ref{DefCombinatorialArenas}.
Let $S_I = \{ S_i \}_{i \in I}$ be $\mathscr{V}_P^1$, where $n \colonequals |I| > 1$.
The graph $(S_I, [\alpha_{\mathscr{A}}[2]]_I)$ is well-defined and either null or complete by the first two axioms.
If it is complete, then $\mathscr{A} \cong \mathbin{\&}_{j = 1}^n \mathscr{A}'_j$ for some combinatorial arenas $\mathscr{A}'_j$ with $\mathrm{Size}(\mathscr{A}'_j) < \mathrm{Size}(\mathscr{A})$.
If it is null, then $\mathscr{A} \cong \otimes_{j = 1}^n \mathscr{A}'_j$.
In either case, the induction hypotheses on $\mathscr{A}'_j$ deduce the claim for $\mathscr{A}$ by the third axiom.
\end{proof}

This result is the first step towards our biequivalences between logic and combinatorics (\S\ref{Bijections}).
In addition, this free characterisation provides us with a concise representation for an arbitrary combinatorial arena in terms of the inductive constructions.
This representation, combined with the tags $(\_)_{[\_]}$ introduced in Example~\ref{ExSecondCombinatorialArenas}, is useful because it can be quite intricate and tedious to directly describe a combinatorial arena as seen in previous examples.

At the end of this section, we show that combinatorial arenas form \emph{Hopf algebras} \cite{hopf1964topologie}:
\begin{proposition}[combinatorial Hopf algebras]
\label{PropHopf}
Combinatorial arenas yield Hopf algebras. 
\end{proposition}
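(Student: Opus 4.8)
The plan is to realise the claimed Hopf algebra as the standard \emph{connected graded commutative bialgebra} built from combinatorial arenas, and then to obtain the antipode for free from connectedness. First I would fix a commutative ground ring $k$ (e.g.\ $k = \mathbb{Q}$) and let $H$ be the free $k$-module on the set of isomorphism classes $[\mathscr{A}]$ of combinatorial arenas, graded by placing $[\mathscr{A}]$ in degree $\mathrm{Size}(\mathscr{A}) = |\mathscr{V}_{|\mathscr{A}|}| + |\mathscr{V}_{\epsilon_{\mathscr{A}}}|$. The product is the tensor, $m([\mathscr{A}] \otimes [\mathscr{B}]) \colonequals [\mathscr{A} \otimes \mathscr{B}]$, with unit $u(1) \colonequals [\top]$. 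Since $\otimes$ is literally a disjoint union of all four layers (by Definition~\ref{DefConstructionsOnCombinatorialArenas}, each of $|{\cdot}|$, $\mu$, $\alpha$, $\epsilon$ is the $\uplus$ of the corresponding data), it is associative and commutative up to the graph isomorphisms $\cong$, and $\mathrm{Size}$ is additive under it; hence $m$ is a graded, associative, commutative, unital product. Moreover $\mathrm{Size}(\mathscr{A}) = 0$ forces $\mathscr{A} = \top$, so $H_0 = k\cdot[\top]$ and $H$ is \emph{connected}. Decomposing an arena into its inclusion-maximal $\otimes$-indecomposable pieces (its connected components across all three layers, which do not couple under $\uplus$) yields a canonical unique factorisation, so in fact $H$ is the free commutative $k$-algebra on the $\otimes$-indecomposable arenas.

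The creative step is the coproduct. Mirroring the Connes--Kreimer Hopf algebra of rooted forests, I would define $\varepsilon \colon H \to k$ as the projection onto degree $0$ (so $\varepsilon([\top]) = 1$ and $\varepsilon([\mathscr{A}]) = 0$ otherwise) and set
\[
\Delta([\mathscr{A}]) \colonequals \sum_{c} [P^c(\mathscr{A})] \otimes [R^c(\mathscr{A})],
\]
where $c$ ranges over the \emph{admissible cuts} of $|\mathscr{A}|$ (fraternal antichains of edges compatible with $\mu_{\mathscr{A}}$, including the empty and total cuts), $R^c(\mathscr{A})$ is the sub-arena retaining the roots and $P^c(\mathscr{A})$ the pruned complement, each inheriting $\alpha$ and $\epsilon$ by restriction. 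I would then verify that $\Delta$ and $\varepsilon$ make $H$ a graded coalgebra -- coassociativity being the usual bijection between two-step cuts and pairs of nested cuts -- and that $\Delta$ is an algebra homomorphism, which holds because an admissible cut of a disjoint union $\mathscr{A} \otimes \mathscr{B}$ is exactly a pair of admissible cuts of $\mathscr{A}$ and of $\mathscr{B}$. Together with multiplicativity of $\varepsilon$ this exhibits $H$ as a connected graded bialgebra.

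Finally I would invoke the folklore fact (Takeuchi's formula) that every connected graded bialgebra is automatically a Hopf algebra: the identity is convolution-invertible, so the antipode $S$ exists and is the unique map determined by $S([\top]) = [\top]$ and the recursion $S([\mathscr{A}]) = -[\mathscr{A}] - \sum S([\mathscr{A}'])\,[\mathscr{A}'']$ over the strictly-lower-degree terms $[\mathscr{A}']\otimes[\mathscr{A}'']$ of the reduced coproduct. This delivers the antipode axioms $m(S \otimes \mathrm{id})\Delta = u\varepsilon = m(\mathrm{id} \otimes S)\Delta$ and completes the proof. (As a guaranteed fallback, the symmetric-algebra structure on the free commutative algebra -- all indecomposable generators primitive -- already makes $H$ a Hopf algebra by general nonsense, so connectedness alone suffices even if the disassembly coproduct is not the intended one.)

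I expect the main obstacle to be the well-definedness and bialgebra compatibility of $\Delta$, not the antipode. Concretely, one must check that each piece $P^c(\mathscr{A})$ and $R^c(\mathscr{A})$ is again a \emph{genuine} combinatorial arena: that the restricted multiplicative forest $\mu_{\mathscr{A}}$ and the underlying forest $|\mathscr{A}|$ cut coherently, that the restricted additive structure still consists of leaves and sibling-edges, and that the restricted exponential structure together with the inherited recursive partitions still satisfies the three axioms of Definition~\ref{DefCombinatorialArenas} -- in particular the null-or-complete condition and the nesting condition $V \cap S_i \neq \emptyset \Rightarrow V \subseteq S_i$ or $S_i \subseteq V$. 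The intertwining of the multiplicative, additive and exponential layers under restriction is exactly where the bookkeeping is delicate; once these three structures are shown to restrict coherently along admissible cuts, coassociativity and multiplicativity of $\Delta$ follow from the combinatorics of cuts as in the Connes--Kreimer setting.
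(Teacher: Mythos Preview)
Your proposal is plausible but takes a genuinely different route from the paper, and the route you chose is the harder one.

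The paper does not build a Connes--Kreimer style coproduct via admissible edge cuts at all. Instead it invokes Schmitt's general machinery \cite{schmitt1993hopf}: the free $k$-module on finite rooted forests becomes a Hopf algebra via the \emph{restriction} operation, which for a forest $F$ and a subset $X \subseteq \mathscr{V}_F$ returns the induced subforest $F_X$; the coproduct then sums over vertex subsets rather than over antichains of edges. The paper's entire argument is that this restriction operation extends verbatim to combinatorial arenas (i.e., the multiplicative, additive and exponential layers restrict along $X$), and then Schmitt's theorem delivers the Hopf algebra axioms wholesale. A second Hopf algebra is obtained the same way from Schmitt's \emph{quotient} operation. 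The paper even stresses, as a contrast, that HO arenas fail precisely because they are \emph{not} closed under restriction to vertex subsets.

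What this buys the paper is that your ``main obstacle'' simply disappears: once closure under restriction is checked, coassociativity, multiplicativity of $\Delta$, and the antipode all come from Schmitt's framework with no further bookkeeping. Your approach instead invents a new notion of admissible cut (``fraternal antichains of edges compatible with $\mu_{\mathscr{A}}$'') which is not defined in the paper and whose interaction with the multiplicative forest $\mu$ is genuinely delicate --- a cut in $|\mathscr{A}|$ can separate vertices that are still linked by an edge of $\mu$, so $R^c(\mathscr{A})$ need not satisfy $\mathscr{V}_\mu = \mathscr{V}_F$. Your fallback (declaring all $\otimes$-indecomposables primitive) is of course correct but yields only the trivial symmetric-algebra Hopf structure, which is weaker than what the paper claims and does not use anything specific to combinatorial arenas.
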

\begin{proof}[Proof (sketch)]
Schmitt \cite[\S 3]{schmitt1993hopf} proved that the vector space over any commutative ring with identity, whose basis consists of all finite rooted forests, gives rise to a Hopf algebra through what is called the \emph{restriction} operation.
The restriction operation calculates the rooted subforest $F_X$ of a given finite rooted forest $F$ induced by a given subset $X$ of the vertex set. 
The claim of the theorem follows because the restriction operation easily extends to combinatorial arenas. 

Schmitt presented another way of producing a Hopf algebra of finite rooted forests through the \emph{quotient} operation \cite[\S 4]{schmitt1993hopf}, and this method is applicable to combinatorial arenas as well.
Thus, we have obtained two different Hopf algebras induced by combinatorial arenas.
\end{proof}

To the best of our knowledge, this proposition is the first bridge between game semantics and Hopf algebras. 
The literature of combinatorial Hopf algebras has shown that the results and the methods of Hopf algebras are quite useful for the study of combinatorial structures.
We leave it as future work to apply Hopf algebras to logic and computation via combinatorial arenas. 

Finally, let us demonstrate that Proposition~\ref{PropHopf} does not hold for arenas \cite{hyland2000full}.
Recall that an axiom on an arena requires that all paths from a root to a vertex in the arena have the same length.
However, the restriction operation does not respect this axiom, where the problem is that arenas are not simple (directed) graphs. 
For instance, consider the arena $x \rightarrow y \rightarrow z \leftarrow y' \leftarrow x'$.
Its subgraph induced by the set $\{ x, y, y', z \}$ is the one $x \rightarrow y \rightarrow z \leftarrow y'$, which is no longer an arena because the path from $x$ to $z$ and the one from $y'$ to $z$ have different lengths.

\subsection{Positions}
\label{Positions}
Our next step is to adapt possible developments or \emph{positions} in a game to combinatorial arenas.
HO \cite{hyland2000full} defines positions to be a class of finite walks on an arena in a \emph{finitary} fashion.
These positions are, however, too coarse to interpret linear logic.
In contrast, other variants of games \cite{abramsky1994games,abramsky2000full} possess finer controls on positions at the cost of carrying an (often) \emph{infinite} set of positions as part of a game.
The idea of a combinatorial arena $\mathscr{A}$ is to overcome this dilemma by allowing $\mathscr{A}$ to \emph{vary along the growth of a position} in such a way that the positions compatible with (or \emph{filtered} by) such graph-theoretic \emph{dynamics} interpret linear logic in a finitary way.

The aim of this section is to make this idea precise.
Let us first define the dynamics of the \emph{binary part} $\alpha_{\mathscr{A}}[2]$ of the additive structure $\alpha_{\mathscr{A}}$. 
This dynamics provides $\mathscr{A}$ with a finer control on positions to model binary additives in linear logic without referring to infinite sets of positions.
\begin{definition}[additive dynamics]
\label{DefAdditiveAction}
The \emph{\bfseries (binary) additive dynamics} of a combinatorial arena $\mathscr{A}$ on a vertex $x \in \mathscr{V}_{|\mathscr{A}|}$ is the subset $\mathscr{A}_\alpha(x) \subseteq \mathscr{V}_{|\mathscr{A}|}$ defined by
\begin{equation*}
\mathscr{A}_\alpha(x) \colonequals \begin{cases} \{ \, x' \in \mathscr{V}_{|\mathscr{A}|} \mid \{ x, x' \} \in \alpha_{\mathscr{A}}[2] \, \} &\text{if $x$ is switching in $\mu_{\mathscr{A}}$;} \\ \{ x \} \cup \{ \, x' \in \mathscr{V}_{|\mathscr{A}|} \mid \{ x, x' \} \in \alpha_{\mathscr{A}}[2] \, \} &\text{otherwise,}  \end{cases}
\end{equation*}
and it is lifted to a move $\vec{m} \in \mathcal{M}_{\mathscr{A}}$ by $\mathscr{A}_\alpha(\vec{m}) \colonequals \bigcup_{x \in \underline{\vec{m}}} \mathscr{A}_\alpha(x)$.
\end{definition}

This structure given by $\alpha_{\mathscr{A}}[2]$ yields the following graph-theoretic \emph{dynamics}: When a move $\vec{m}$ is made during a play in $\mathscr{A}$, moves $\vec{n}$ with $\underline{\vec{n}} \cap \mathscr{A}_\alpha(\vec{m}) \neq \emptyset$ become \emph{unavailable} (Definition~\ref{DefJustifiedSequences}), where a switching vertex never makes itself unavailable because it must be always there, unless made unavailable by another vertex, to bridge the domain and the codomain of linear implication.
This dynamics matches the intuition on with: selecting one of two sides. 
The \emph{0-ary part} $\alpha_{\mathscr{A}}[0]$ has nothing to do with any dynamics of $\mathscr{A}$, and its role is defined later (Definition~\ref{DefPositionsInCombinatorialSemiArenas}). 

Let us next formulate the graph-theoretic dynamics of the exponential structure $\epsilon_{\mathscr{A}}$, which is to model exponentials in a finitary fashion.
To this end, we need some notations: 
\begin{convention}
Let $\mathscr{A}$ be a combinatorial arena, and $F$ a finite rooted forest.
\begin{itemize}


\item If $\epsilon_{\mathscr{A}}$ has a nonempty path whose elements all contain $x \in \mathscr{V}_{|\mathscr{A}|}$, then $\epsilon_{\mathscr{A}}(x) \colonequals (\epsilon_{\mathscr{A}}(x)_i)_{i \in \overline{n}}$, where $\epsilon_{\mathscr{A}}(x)_1 \supseteq \epsilon_{\mathscr{A}}(x)_2 \supseteq \dots \supseteq \epsilon_{\mathscr{A}}(x)_n$ is the longest one, and otherwise $\epsilon_{\mathscr{A}}(x) \colonequals \boldsymbol{\epsilon}$;

\item Given an element $x$ and sequences $\boldsymbol{y} = y_1y_2 \dots y_n$ and $\boldsymbol{j} = j_1j_2 \dots j_n$, let $x \{ \boldsymbol{y} ; \boldsymbol{j} \}$ denote the nested pair $(\dots ((x, y_1^{j_1}), y_2^{j_2}), \dots, y_n^{j_n})$, where $y_i^{j_i} \colonequals (y_i, j_i)$; it is lifted to a set $X$ by $X \{ \boldsymbol{y} ; \boldsymbol{j} \} \colonequals \{ \, x \{ \boldsymbol{y} ; \boldsymbol{j} \} \mid x \in X \, \}$, and to a finite sequence $\boldsymbol{s}$ by $\boldsymbol{s} \{ \boldsymbol{y} ; \boldsymbol{j} \} \colonequals (\boldsymbol{s}(i) \{ \boldsymbol{y} ; \boldsymbol{j} \} )_{i \in \overline{|\boldsymbol{s}|}}$;

\item Because $x \{ \boldsymbol{\epsilon} ; \boldsymbol{\epsilon} \} = x$, we suppose that each vertex of $\mathscr{A}$ is of the form $x \{ \boldsymbol{y} ; \boldsymbol{j} \}$; without loss of generality, we also assume $x \{ \boldsymbol{y} ; \boldsymbol{j} \} \neq x'$ if $x, x' \in \mathscr{V}_{|\mathscr{A}|}$, $\boldsymbol{y} \in \mathscr{V}_{|\mathscr{A}|}^{\ast}$, $\boldsymbol{j} \in \mathbb{N}_+^\ast$ and $|\boldsymbol{y}| > 0$;


\item Given a subset $S \subseteq \mathscr{V}_F$, let $F[S \{ \boldsymbol{y}; \boldsymbol{j} \}]$ denote the finite rooted forest obtained from $F$ by replacing each vertex $x$ of $F$ that is in $S$ with the element $x \{ \boldsymbol{y}; \boldsymbol{j} \}$. 

\end{itemize}

\end{convention}

Then, the dynamics of $\epsilon_{\mathscr{A}}$ is that, when a move $\vec{m}$ is made during a play, the combinatorial subarenas of $\mathscr{A}$ specified by the elements of the sequence $\epsilon_{\mathscr{A}}(x)$ for all $x \in \underline{\vec{m}}$ are \emph{duplicated} and \emph{disjointly adjoined} to $\mathscr{A}$.
This dynamics matches the intuition on of-course: producing countably many copies of formulae. 
Note that $\epsilon_{\mathscr{A}}(x)$ corresponds to a finite sequence of nested occurrences of of-course in a formula, e.g., $\oc (A \otimes (\oc B \mathbin{\&} C))$.
For computation on those copies of combinatorial subarenas, we make tags on the disjoint unions precise; we use $\_\{ \boldsymbol{y} ; \boldsymbol{j} \}$ for this aim.

\begin{definition}[exponential dynamics]
\label{DefExpAction}
Given a combinatorial arena $\mathscr{A}$, assume $y' \in \mathscr{V}_{|\mathscr{A}|}$, $\tilde{y} = y' \{ \boldsymbol{y} ; \boldsymbol{j} \} \in \mathscr{V}_{|\mathscr{A}|}$ and $j' \in \overline{|\epsilon_{\mathscr{A}}(\tilde{y})|}$. 
Let $\mathscr{A}_\epsilon(\tilde{y}, j')$ be the combinatorial arena obtained from $\mathscr{A}$ by disjointly adding its combinatorial subarena induced by the set $\overline{\epsilon_{\mathscr{A}}(\tilde{y})(j')}$, or
\begin{itemize}

\item $|\mathscr{A}_\epsilon(\tilde{y}, j')| \colonequals |\mathscr{A}| \cup (|\mathscr{A}|^\bullet[\mathscr{V}_{|\mathscr{A}|^\flat} \{ y'; j' \}])$ ($|\mathscr{A}|^\bullet \colonequals |\mathscr{A}|^\bullet_{\overline{\epsilon_{\mathscr{A}}(\tilde{y})(j')}}$ and $|\mathscr{A}|^\flat \colonequals |\mathscr{A}|_{\overline{\epsilon_{\mathscr{A}}(\tilde{y})(j')}}$);

\item $\mu_{\mathscr{A}_\epsilon(\tilde{y}, j')} \colonequals \mu_{\mathscr{A}} \cup (\mu_{\mathscr{A}} {\upharpoonright_{\mathscr{V}_{|\mathscr{A}|^\flat}}}) [\mathscr{V}_{|\mathscr{A}|^\flat} \{ y'; j' \}]$;

\item $\alpha_{\mathscr{A}_\epsilon(\tilde{y}, j')} \colonequals \alpha_{\mathscr{A}} \cup (\alpha_{\mathscr{A}}[0] \cap \mathscr{V}_{|\mathscr{A}|^\flat})\{ y' ; j' \} \cup \{ \, e \{ y' ; j' \} \mid e \in \alpha_{\mathscr{A}}[2] \cap \wp(\mathscr{V}_{|\mathscr{A}|^\flat}) \, \}$;

\item $\epsilon_{\mathscr{A}_\epsilon(\tilde{y}, j')} \colonequals \epsilon_{\mathscr{A}} \cup (\epsilon_{\mathscr{A}})_{\epsilon_{\mathscr{A}}(\tilde{y})(j')} [(\mathscr{V}_{\epsilon_{\mathscr{A}}} \cap \wp(\mathscr{V}_{|\mathscr{A}|^\flat})) \{ y'; j' \}]$, where unlike $|\mathscr{A}|^\bullet$ or $|\mathscr{A}|^\flat$ defined above $\epsilon_{\mathscr{A}}(\tilde{y})(j')$ in $\epsilon_{\mathscr{A}_\epsilon(\tilde{y}, j')}$ serves not as a set but as a vertex of $\epsilon_{\mathscr{A}}$.

\end{itemize}
Further, let $\mathscr{A}_\epsilon(\tilde{y}) \colonequals \mathscr{A}_\epsilon(\tilde{y},1)_\epsilon (\tilde{y}, 2)_\epsilon \dots (\tilde{y}, |\epsilon_{\mathscr{A}}(\tilde{y})|)$ if $|\epsilon_{\mathscr{A}}(\tilde{y})| > 0$, and $\mathscr{A}_\epsilon(\tilde{y}) \colonequals \mathscr{A}$ otherwise. 

The \emph{\bfseries exponential dynamics} of $\mathscr{A}$ on a move $\vec{m} \in \mathscr{M}_{\mathscr{A}}$ is the combinatorial arena 
\begin{equation*}
\mathscr{A}_\epsilon(\vec{m}) \colonequals \begin{cases} \mathscr{A}_\epsilon(\vec{m}(1))_\epsilon(\vec{m}(2))_\epsilon \dots (\vec{m}(|\vec{m}|)) &\text{if $|\vec{m}| > 0$;} \\ \mathscr{A} &\text{otherwise.} \end{cases}
\end{equation*}
\end{definition}

\begin{example}
\label{ExActions} 
Recall the combinatorial arena $\mathscr{A}$ defined in Example~\ref{ExCombinatorialArenas}. 
For understanding the dynamics of $\mathscr{A}$ described below in terms of its syntactic counterpart, we strongly recommend the reader to refer to the inductive construction of $\mathscr{A}$ given in \ref{ExSecondCombinatorialArenas}.
First, observe the dynamics of $\mathscr{A}$ on the move $yx \in \mu_{\mathscr{A}}$: The additive dynamics $\mathscr{A}_\alpha(yx)$ is the set $\{ x, z, t \}$, and the exponential dynamics $\mathscr{A}_\epsilon(yx)$ is the combinatorial arena that consists of the multiplicative pair
\begin{small}
\begin{mathpar}
\begin{tikzcd}
(u, x^1) \arrow[d, Rightarrow] & \arrow[l] \arrow[ld] (x, x^1) &(z, x^1) & (x, x^2) \arrow[r] \arrow[rd] & (u, x^2) \arrow[d, Rightarrow] \\
(v, x^1) & \arrow[ld] \arrow[d] x & \arrow[l, Rightarrow] \arrow[d] \arrow[lu, Rightarrow] y \arrow[u, Rightarrow] \arrow[rd, Rightarrow] \arrow[rrd, Rightarrow] \arrow[ru, Rightarrow] \arrow[r, Rightarrow] & z & (v, x^2) \\
u \arrow[r, Rightarrow] & v & w & s & t
\end{tikzcd}
\end{mathpar} 
\end{small}together with the additive structure 
\begin{equation*}
\alpha_{\mathscr{A}} \cup \{ \{ (x, x^1), (z, x^1) \} \}
\end{equation*}
and the exponential structure
\begin{small}
\begin{mathpar}
\begin{tikzcd}
\arrow[d] \{ x, z \} \\
\{ x \}
\end{tikzcd}
\and
\begin{tikzcd}
\arrow[d] \{ (x, x^1), (z, x^1) \} \\
\{ (x, x^1) \}
\end{tikzcd}
\and
\begin{tikzcd}
\{ (x, x^2) \}
\end{tikzcd}
\and
\begin{tikzcd}
\arrow[d] \{ u \} \\
\{ v \}
\end{tikzcd}
\and
\begin{tikzcd}
\arrow[d] \{ (u, x^1) \} \\
\{ (v, x^1) \}
\end{tikzcd}
\and
\begin{tikzcd}
\arrow[d] \{ (u, x^2) \} \\
\{ (v, x^2) \}
\end{tikzcd}
\and
\begin{tikzcd}
\{ w \}
\end{tikzcd}
\end{mathpar}
\end{small}

Next, the additive dynamics $\mathscr{A}_\epsilon(yx)_\alpha(w)$ of $\mathscr{A}_\epsilon(yx)$ on the move $w$ is the singleton set $\{ w \}$, and the exponential dynamics $\mathscr{A}_\epsilon(yx)_\epsilon(w)$ does nothing, i.e., $\mathscr{A}_\epsilon(yx)_\epsilon(w) = \mathscr{A}_\epsilon(yx)$.

Finally, consider the dynamicss of $\mathscr{A}_\epsilon(yx)_\epsilon(w)$ on the move $y(z, x^1)$: The additive dynamics $\mathscr{A}_\epsilon(yx)_\epsilon(w)_\alpha(y(z, x^1))$ is the set $\{ (x, x^1), (z, x^1) \}$, and the exponential dynamics $\mathscr{A}_\epsilon(yx)_\epsilon(w)_\epsilon(y(z, x^1))$ is the combinatorial arena that consists of the multiplicative pair
\begin{small}
\begin{mathpar}
\begin{tikzcd}
(u, x^1) \arrow[d, Rightarrow] & \arrow[l] \arrow[ld] (x, x^1) &(z, x^1) & (x, x^2) \arrow[r] \arrow[rd] & (u, x^2) \arrow[d, Rightarrow] \\
(v, x^1) & \arrow[ld] \arrow[d] x & \arrow[l, Rightarrow] \arrow[d] \arrow[lu, Rightarrow] \arrow[ddl, Rightarrow] y \arrow[ddr, Rightarrow] \arrow[u, Rightarrow] \arrow[rd, Rightarrow] \arrow[rrd, Rightarrow] \arrow[ru, Rightarrow] \arrow[r, Rightarrow] & z & (v, x^2) \\
u \arrow[r, Rightarrow] & v & w & s & t \\
((u, x^1), z^1) \arrow[rr, Rightarrow, bend right] & \arrow[l] ((x, x^1), z^1) \arrow[r] & ((v, x^1), z^1) & ((z, x^1), z^1)  &
\end{tikzcd}
\end{mathpar} 
\end{small}together with the additive structure 
\begin{equation*}
\alpha_{\mathscr{A}_\epsilon(yx)_\epsilon(w)} \cup \{ \{ ((x, x^1), z^1), ((z, x^1), z^1) \} \}
\end{equation*}
and the exponential structure
\begin{small}
\begin{mathpar}
\begin{tikzcd}
\arrow[d] \{ x, z \} \\
\{ x \}
\end{tikzcd}
\and
\begin{tikzcd}
\arrow[d] \{ (x, x^1), (z, x^1) \} \\
\{ (x, x^1) \}
\end{tikzcd}
\and
\begin{tikzcd}
\arrow[d] \{ ((x, x^1), z^1), ((z, x^1), z^1) \} \\
\{ ((x, x^1), z^1) \}
\end{tikzcd}
\and
\begin{tikzcd}
\{ (x, x^2) \}
\end{tikzcd}
\\
\begin{tikzcd}
\arrow[d] \{ u \} \\
\{ v \}
\end{tikzcd}
\and
\begin{tikzcd}
\arrow[d] \{ (u, x^1) \} \\
\{ (v, x^1) \}
\end{tikzcd}
\and
\begin{tikzcd}
\arrow[d] \{ (u, x^2) \} \\
\{ (v, x^2) \}
\end{tikzcd}
\and
\begin{tikzcd}
\arrow[d] \{ ((u, x^1), z^1) \} \\
\{ ((v, x^1), z^1) \}
\end{tikzcd}
\and
\begin{tikzcd}
\{ w \}
\end{tikzcd}
\end{mathpar}
\end{small}

Because these structures easily become larger and larger during a play, one may wonder if the present framework is hard to manage.
Fortunately, it is not the case: One can always represent the structures concisely by suitable \emph{syntactic sugars}; e.g., see Example~\ref{ExArithmetic}.
\end{example}

This example also illustrates the following:
\begin{definition}[extended vertices and moves]
An \emph{\bfseries extended vertex} of a combinatorial arena $\mathscr{A}$ is a nested pair $x' \{ \boldsymbol{x} ; \boldsymbol{i} \}$ with $x' \in \mathscr{V}_{|\mathscr{A}|}$, $\boldsymbol{x} \in \mathscr{V}_{|\mathscr{A}|}^\ast$ and $\boldsymbol{i} \in \mathbb{N}_+^\ast$, where $x'$ is called its \emph{\bfseries core}, and an \emph{\bfseries extended O-} (respectively, \emph{\bfseries P-}) \emph{\bfseries move} in $\mathscr{A}$ is a sequence $x'_1 \{ \boldsymbol{x}_1 ; \boldsymbol{i}_1 \} x'_2 \{ \boldsymbol{x}_2 ; \boldsymbol{i}_2 \} \dots x'_n \{ \boldsymbol{x}_n ; \boldsymbol{i}_n \}$ of extended vertices of $\mathscr{A}$ such that $x'_1 x'_2 \dots x'_n \in \mathcal{M}_{\mathscr{A}}$ is an O- (respectively, P-) move in $\mathscr{A}$.
\end{definition}

\begin{convention}
An \emph{\bfseries extended move} in a combinatorial arena $\mathscr{A}$ refers to an extended O- or P-move in $\mathscr{A}$.
We write $\mathscr{V}_{|\mathscr{A}|}^+$ for the set of all extended vertices of $\mathscr{A}$, and $\mathcal{M}_{\mathscr{A}}^+$ for the set of all extended moves in $\mathscr{A}$.
A tag of the form $\_\{ \boldsymbol{y} ; \boldsymbol{j} \}$ is called an \emph{\bfseries exponential tag}.
If a vertex $x' \in \mathscr{V}_{|\mathscr{A}|}$ is switching, then any extended one of the form $x' \{ \boldsymbol{x} ; \boldsymbol{i} \}$ is said to be \emph{\bfseries switching} too.
\end{convention}

Next, recall that a position in an arena is a finite sequence of vertices equipped with a \emph{pointer} \cite{hyland2000full}.
Such a sequence together with a pointer is called a \emph{justified sequence}.
The idea is that each non-initial element of a justified sequence is made for a specific previous one, and a pointer represents this relation.
This idea traces back to Coquand \cite{coquand1995semantics}. 
An advantage of pointers is that they enable an arena to define its positions in a \emph{finitary} way unlike other variants of games.

Let us then adapt justified sequences to combinatorial arenas:

\begin{notation}
Given an extended vertex $x = x_0 \{ \boldsymbol{x} ; \boldsymbol{i} \}$, we write $x^\sharp$ for its core, i.e., $x^\sharp \colonequals x_0$, and this operation $(\_)^\sharp$ extends to  extended moves in the evident way.
\end{notation}

\begin{definition}[justified sequences]
\label{DefJustifiedSequences}
A \emph{\bfseries justified sequence} in a combinatorial arena $\mathscr{A}$ is a pair $\boldsymbol{s} = (\boldsymbol{s}, J_{\boldsymbol{s}})$ of a finite sequence $\boldsymbol{s}$ of extended moves in $\mathscr{A}$ and a map $J_{\boldsymbol{s}} : \overline{|\boldsymbol{s}|} \rightarrow \{ 0 \} \cup \overline{|\boldsymbol{s}|-1}$ such that $0 \leqslant J_{\boldsymbol{s}}(i) < i$ for all $i \in \overline{|\boldsymbol{s}|}$, called the \emph{\bfseries pointer}, that satisfies
\begin{enumerate}

\item If $\boldsymbol{t} \vec{m} \preceq \boldsymbol{s}$, then $\vec{m} \in \mathcal{M}_{\mathscr{A}_\epsilon(\boldsymbol{t})}$, where $\mathscr{A}_\epsilon(\boldsymbol{t}) \colonequals \mathscr{A}_\epsilon(\boldsymbol{t}(1))_\epsilon(\boldsymbol{t}(2))_\epsilon \dots (\boldsymbol{t}(|\boldsymbol{t}|))$ if $|\boldsymbol{t}| > 0$, and $\mathscr{A}_\epsilon(\boldsymbol{t}) \colonequals \mathscr{A}$ otherwise, and each $x \in \underline{\vec{m}}$ has the shortest exponential tag among all $y \in \mathscr{V}_{|\mathscr{A}_\epsilon(\boldsymbol{t})|}$ such that $y^\sharp = x^\sharp$;

\item If $i \in |\boldsymbol{s}|$, then $J_{\boldsymbol{s}}(i) = 0$ implies that $\boldsymbol{s}(i)$ consists of roots of $|\mathscr{A}_\epsilon(\boldsymbol{s}_{\leqslant i})|$, and $J_{\boldsymbol{s}}(i) > 0$ that there is $x \in \underline{\boldsymbol{s}(J_{\boldsymbol{s}}(i))}$ such that $x \rightarrow_{|\mathscr{A}_\epsilon(\boldsymbol{s}_{\leqslant i})|} y$ for all $y \in \underline{\boldsymbol{s}(i)}$;

\item If $\boldsymbol{t} \vec{m} \preceq \boldsymbol{s}$, then no element of $\vec{m}$ is in the set $\mathscr{A}_\alpha(\boldsymbol{t}) \colonequals \bigcup_{i = 1}^{|\boldsymbol{t}|} \mathscr{A}_\epsilon(\boldsymbol{t})_\alpha(\boldsymbol{t}(i))$ whose elements are said to be \emph{\bfseries unavailable} at $\boldsymbol{t}$.

\end{enumerate}
\end{definition}

\begin{notation}
We write $\mathcal{J}_{\mathscr{A}}$ for the set of all justified sequences in a combinatorial arena $\mathscr{A}$.
\end{notation}

The second axiom is a plain modification of the axiom on justified sequences in an arena that there is an edge between a non-initial element and its justifier \cite{hyland2000full}.
The other two axioms take into account the \emph{dynamics} of combinatorial arenas.
The first axiom allows not only moves but also extended ones generated by exponential dynamics to be elements of a justified sequence $\boldsymbol{s}$ in a combinatorial arena, but only those with the \emph{shortest} exponential tags.
This restriction is for \emph{finite presentations} of strategies (Definition~\ref{DefViewAlgorithms}); see the remark right after Definition~\ref{DefFinitePresentations}.
The third axiom requires that each element of $\boldsymbol{s}$ consists of \emph{available} vertices only. 

\begin{example}
\label{ExJustifiedSequences}
Recall the combinatorial arena $\mathscr{A}$ given in Examples~\ref{ExCombinatorialArenas} and \ref{ExActions}.
The pairs
\begin{mathpar}
(yt . w . ys, \{1 \mapsto 0, 2 \mapsto 1, 3 \mapsto 0 \})
\and
(yt . ys . w, \{1 \mapsto 0, 2 \mapsto 0, 3 \mapsto 1 \})
\end{mathpar}
are both justified sequences in $\mathscr{A}$, but neither of the pairs
\begin{mathpar}
(yt . w . yz, \{1 \mapsto 0, 2 \mapsto 1, 3 \mapsto 0 \})
\and
(yt . yz . w, \{1 \mapsto 0, 2 \mapsto 0, 3 \mapsto 1 \}).
\end{mathpar}
This illustrates additive dynamics, which distinguishes with from tensor (cf.~Example~\ref{ExSecondCombinatorialArenas}).

Next, as an illustration of exponential dynamics, which models the duplication of formulae given by of-course, observe that the pairs 
\begin{mathpar}
(yx . w . y(z, x^1) . y(x, x^2), \{ 1 \mapsto 0, 2 \mapsto 1, 3 \mapsto 0, 4 \mapsto 0\})
\and
(yx . uv . y(x, x^1) . (u, x^1)(v, x^1), \{ 1 \mapsto 0, 2 \mapsto 1, 3 \mapsto 0, 4 \mapsto 3 \})
\end{mathpar}
are both justified sequences in $\mathscr{A}$, but neither of the pairs
\begin{mathpar}
(yx . w . yz . yx, \{ 1 \mapsto 0, 2 \mapsto 1, 3 \mapsto 0, 4 \mapsto 0 \})
\and
(yx . uv . yx . uv, \{ 1 \mapsto 0, 2 \mapsto 1, 3 \mapsto 0, 4 \mapsto 3 \}).
\end{mathpar}
\end{example}

\begin{convention}
Let $\boldsymbol{s} = (\boldsymbol{s}, J_{\boldsymbol{s}})$ be a justified sequence in a combinatorial arena $\mathscr{A}$.
\begin{itemize}

\item An \emph{\bfseries occurrence} of an element $\vec{m}$ in $\boldsymbol{s}$ is a pair $(\boldsymbol{s}(i), i)$ such that $\boldsymbol{s}(i) = \vec{m}$ and $i \in \overline{|\boldsymbol{s}|}$;

\item The occurrence $(\boldsymbol{s}({J_{\boldsymbol{s}}(i)}), J_{\boldsymbol{s}}(i))$ is called the \emph{\bfseries justifier} of the one $(\boldsymbol{s}(i), i)$ in $\boldsymbol{s}$, and equivalently $(\boldsymbol{s}(i), i)$ is said to be \emph{\bfseries justified} by $(\boldsymbol{s}({J_{\boldsymbol{s}}(i)}), J_{\boldsymbol{s}}(i))$ in $\boldsymbol{s}$;

\item An occurrence $(\boldsymbol{s}(i), i)$ in $\boldsymbol{s}$ is said to be \emph{\bfseries initial} if $J_{\boldsymbol{s}}(i) = 0$, and \emph{\bfseries pseudo-initial} if $J_{\boldsymbol{s}}(i) > 0$ with all elements of $\boldsymbol{s}(i)$ the children of a switching element of $\boldsymbol{s}({J_{\boldsymbol{s}}(i)})$.

\end{itemize}

Henceforth, we are casual about the distinction between extended moves and their occurrences in a sequence. 
Besides, we usually keep the pointer $J_{\boldsymbol{s}}$ of a justified sequence $\boldsymbol{s} = (\boldsymbol{s}, J_{\boldsymbol{s}})$ implicit and abbreviate occurrences $(\boldsymbol{s}(i), i)$ as $\boldsymbol{s}(i)$.
We even write $J_{\boldsymbol{s}}(\boldsymbol{s}(i)) = \boldsymbol{s}(j)$ if $J_{\boldsymbol{s}}(i) = j$.
\end{convention}

As a typical example of the use of this convention, we call a justified sequence $\boldsymbol{t}$ a \emph{\bfseries justified subsequence} of a justified sequence $\boldsymbol{s}$ if $\boldsymbol{t}$ is a subsequence of $\boldsymbol{s}$, and $J_{\boldsymbol{t}}(\vec{n}) = \vec{m}$ if and only if there are elements $\vec{m}_1, \vec{m}_2, \dots, \vec{m}_k$ ($k \in \mathbb{N}$) of $\boldsymbol{s}$ yet deleted in $\boldsymbol{t}$ such that $J_{\boldsymbol{s}}(\vec{n}) = \vec{m}_1$, $J_{\boldsymbol{s}}(\vec{m}_1) = \vec{m}_2$, \dots, $J_{\boldsymbol{s}}(\vec{m}_{k-1}) = \vec{m}_k$ and $J_{\boldsymbol{s}}(\vec{m}_{k}) = \vec{m}$.

As the last preparation for positions, let us adapt a central concept in \cite{hyland2000full}, called \emph{views}, to combinatorial arenas.
In this adaptation, we modify views in such a way that they \emph{forget all but the last exponential tags} on extended moves in order for \emph{innocent} strategies (Definition~\ref{DefConstraintsOnStrategies}) to be \emph{finitely presentable} in a suitable sense (\S\ref{FinitelyPresentableStrategies}).
We also let views handle the new concept of pseudo-initial occurrences in the same way as initial ones so that Theorem~\ref{ThmFullCompleteness} will hold.

\begin{definition}[views \cite{coquand1995semantics,hyland2000full}] 
\label{DefViews}
The \emph{\bfseries P-view} $\lceil \boldsymbol{s} \rceil$ and the \emph{\bfseries O-view} $\lfloor \boldsymbol{s} \rfloor$ of a justified sequence $\boldsymbol{s}$ in a combinatorial arena $\mathscr{A}$ are respectively the justified sequences in $\mathscr{A}$ (consisting of moves in $\mathscr{A}$, not extended ones) defined inductively by
\begin{enumerate}

\item $\lceil \boldsymbol{\epsilon} \rceil \colonequals \boldsymbol{\epsilon}$; 

\item $\lceil \boldsymbol{s} \vec{m} \rceil \colonequals \lceil \boldsymbol{s} \rceil . \vec{m}^\sharp$ if $\vec{m}$ is a P-move; 

\item $\lceil \boldsymbol{s} \vec{m} \rceil \colonequals \vec{m}^\sharp$ if $\vec{m}$ is initial or pseudo-initial;

\item $\lceil \boldsymbol{s} \vec{m} \boldsymbol{t} \vec{n} \rceil \colonequals \lceil \boldsymbol{s} \rceil . \vec{m}^\sharp . \vec{n}^\sharp$ if $\vec{n}$ is an extended O-move such that $\vec{m}$ justifies $\vec{n}$; 

\item $\lfloor \boldsymbol{\epsilon} \rfloor \colonequals \boldsymbol{\epsilon}$;

\item $\lfloor \boldsymbol{s} \vec{m} \rfloor \colonequals \lfloor \boldsymbol{s} \rfloor . \vec{m}^\sharp$ if $\vec{m}$ is an O-move; 

\item $\lfloor \boldsymbol{s} \vec{m} \boldsymbol{t} \vec{n} \rfloor \colonequals \lfloor \boldsymbol{s} \rfloor . \vec{m}^\sharp . \vec{n}^\sharp$ if $\vec{n}$ is an extended P-move such that $\vec{m}$ justifies $\vec{n}$. 

\end{enumerate}
\end{definition}

\begin{remark}
The P-view $\lceil \boldsymbol{s} \rceil$ or the O-view $\lfloor \boldsymbol{s} \rfloor$ may not be a justified sequence because the justifier of $\vec{m}$ may be lost in the clause (2) or (6).
However, this problem is insignificant because we shall focus on a class of justified sequences, called \emph{positions}, in which this problem does not occur. 
\end{remark}

The idea is that, for a nonempty position $\boldsymbol{s} \vec{m}$ in a combinatorial arena $\mathscr{A}$ such that $\vec{m}$ is of odd (respectively, even) depth, the O-view $\lfloor \boldsymbol{s} \rfloor$ (respectively, the P-view $\lceil \boldsymbol{s} \rceil$) is the currently `relevant part' of $\boldsymbol{s}$ for Opponent (respectively, Player) from the logical standpoint. 
I.e., for logic Opponent (respectively, Player) is concerned only with the last occurrence in $\boldsymbol{s}$, its justifier and that justifier's O- (respectively, P-) view, which recursively proceeds.
P-views are defined as such since intuitively Opponent's aim is to \emph{attack} a specific point in Player's argument, and Player's goal is to \emph{defend} against the attack; i.e., every extended O-move in a position attacks specifically the prefix of the position ending with the justifier of the extended O-move, so the P-view of the position focuses on the last one and the P-view of the prefix (recursively).
O-views are \emph{P-views in the domain of linear implication} since Opponent and Player are switched in the domain. 

We are now ready to define \emph{positions} in a combinatorial arena: 

\begin{notation}
Let $\boldsymbol{s}$ be a justified sequence in a combinatorial arena $\mathscr{A}$.
\begin{itemize}

\item If $\mathscr{A} = \neg \mathscr{A}'$ and $\boldsymbol{s} = \neg \boldsymbol{t}$, then $\boldsymbol{s}{\upharpoonright_{\mathscr{A}'}} \colonequals \boldsymbol{t} \in \mathcal{J}_{\mathscr{A}'}$ (up to tags);

\item If $\mathscr{A} = \mathscr{B} \times \mathscr{C}$, where $\times$ is $\otimes$ or $\&$, then let $\boldsymbol{s}{\upharpoonright_{\mathscr{B}}} \in \mathcal{J}_{\mathscr{B}}$ and $\boldsymbol{s}{\upharpoonright_{\mathscr{C}}} \in \mathcal{J}_{\mathscr{C}}$ be the justified subsequences of $\boldsymbol{s}$ (up to tags) that consist of extended moves in $\mathscr{B}$ and $\mathscr{C}$, respectively. 

\item If $\mathscr{A} = \oc \mathscr{A}''$, then each extended vertex of $\mathscr{A}$ is written uniquely in the form
\begin{equation}
\label{ExponentialForm}
x_0 \{ x_1x_2 \dots x_n ; 1^n \} \{ y_1y_2 \dots y_m; j_1j_2 \dots j_m \} \quad (n, m \in \mathbb{N})
\end{equation} 
such that $x_i \in \mathscr{R}_{|\mathscr{A}''|} \cap \mathscr{R}_{\mu_{\mathscr{A}''}}$ for all $i = 0, 1, \dots, n$ and $j_k > 1$ for $k = 1, 2, \dots, m$.
Let $\boldsymbol{s}{\upharpoonright_{1, l}}$ be the justified subsequence of $\boldsymbol{s}$ consisting of extended moves in $\mathscr{A}$ whose elements are of the form (\ref{ExponentialForm}) with $l = n$ except they are changed into 
\begin{equation}
\label{Transformation}
x_0 \{ y_1y_2 \dots y_m ; (j_1-1)(j_2-1) \dots (j_m-1) \}.
\end{equation}
Note that $\boldsymbol{s}{\upharpoonright_{1, l}} \in \mathcal{J}_{\mathscr{A}''}$ holds thanks to the transformation of (\ref{ExponentialForm}) into (\ref{Transformation}).

\item If $\mathscr{A} = \mathscr{B} \multimap \mathscr{C}$, then $\boldsymbol{s}{\upharpoonright_{\mathscr{B}}} \in \mathcal{J}_{\mathscr{B}}$ is defined as above, and $\boldsymbol{s}{\upharpoonright_{\mathscr{C}}} \in \mathcal{J}_{\mathscr{C}}$ is obtained from the justified subsequence of $\boldsymbol{s}$ (up to tags) that consists of extended moves in $\mathscr{C}$ by deleting the switching (extended) vertex attributed to the linear implication.

\end{itemize}

\end{notation}


\begin{definition}[positions in a combinatorial arena]
\label{DefPositionsInCombinatorialSemiArenas}
A justified sequence $\boldsymbol{s}$ in a combinatorial arena $\mathscr{A}$ is called a \emph{\bfseries position} in $\mathscr{A}$ if it satisfies
\begin{enumerate}

\item \textsc{(Recursive alternation)} If $\boldsymbol{s} = \boldsymbol{t} \vec{m} \vec{n} \boldsymbol{u}$ with $\vec{m}$ an extended O- (respectively, P-) move, then $\vec{n}$ is an extended P- (respectively, O-) move, and this \emph{\bfseries alternation} between Opponent and Player holds recursively for $\boldsymbol{s}{\upharpoonright_{\mathscr{A}'}}$ in $\mathscr{A}'$ (respectively, for $\boldsymbol{s}{\upharpoonright_{\mathscr{B}}}$ in $\mathscr{B}$ and $\boldsymbol{s}{\upharpoonright_{\mathscr{C}}}$ in $\mathscr{C}$, for $\boldsymbol{s}{\upharpoonright_{1, l}}$ in $\mathscr{A}''$ for each $l \in \mathbb{N}$, for $\boldsymbol{s}{\upharpoonright_{\mathscr{D}}}$ in $\mathscr{D}$ and $\boldsymbol{s}{\upharpoonright_{\mathscr{E}}}$ in $\mathscr{E}$) if $\mathscr{A} = \neg \mathscr{A}'$ (respectively, if $\mathscr{A} = \mathscr{B} \times \mathscr{C}$, if $\mathscr{A} = \oc \mathscr{A}''$, if $\mathscr{A} = \mathscr{D} \multimap \mathscr{E}$);

\item \textsc{(Visibility)} If $\boldsymbol{s} = \boldsymbol{t} \vec{m} \boldsymbol{u} \vec{n} \boldsymbol{v}$ with $\vec{m}$ justifying $\vec{n}$, then $\vec{m}^\sharp$ occurs in the P-view $\lceil \boldsymbol{t} \vec{m} \boldsymbol{u} \rceil$ (respectively, in the O-view $\lfloor \boldsymbol{t} \vec{m} \boldsymbol{u} \rfloor$) when $\vec{n}$ is an extended P- (respectively, O-) move;

\item \textsc{(Joker)} Each element $\boldsymbol{s}(i)$ ($i \in \overline{|\boldsymbol{s}|}$) contains no extended vertex whose core is in $\alpha_{\mathscr{A}}[0]$.

\end{enumerate}

\end{definition}

\begin{notation}
We write $\mathcal{P}_{\mathscr{A}}$ for the set of all positions in a combinatorial arena $\mathscr{A}$.
\end{notation}

\begin{convention}
A \emph{\bfseries play} in a combinatorial arena $\mathscr{A}$ refers to a nonempty, prefix-closed sequence of positions in $\mathscr{A}$.
A play can be infinite in length; however, as in the case of other infinitary structures, we talk about infinite plays only temporarily; we will eventually dispense with them. 
\end{convention}

The recursive alternation axiom requires that every position is \emph{alternating} not only globally but also locally and recursively.
By Theorem~\ref{ThmFreeCharacterisation}, this condition is applied to \emph{all} combinatorial arenas, not only to inductively constructed ones. 
Positions in an arena \cite{hyland2000full} only satisfy global alternation, which is insufficient to achieve fully complete semantics of linear logic. 
Meanwhile, fully complete game semantics of (fragments of) linear logic \cite{abramsky1994games,murawski2003exhausting,abramsky1999concurrent,mellies2005asynchronous} equips each game with a potentially infinite set of (selected) positions that satisfy recursive alternation. 
We shall obtain fully complete yet finitary semantics of linear logic partly by imposing recursive alternation in a finitary way (specifically via Lemma~\ref{LemLeftLinearImplication}).
Next, visibility \cite{hyland2000full} is a standard axiom in the literature; it ensures that each view is a justified sequence. 
Finally, the joker axiom is a novel one, and it is for our full completeness to admit 0-ary additives, i.e., one and zero. 

\begin{remark}
An idea similar to the joker axiom, called \emph{joker moves}, is used by Murawski and Ong \cite{murawski2003exhausting} for their fully complete semantics of intuitionistic linear logic to include top.
In contrast to our method, they interpret top, not one, by a joker move, and endow it with a more special status.
Besides, the method of joker moves does not work in the presence of one or of-course. 
\end{remark}

\begin{example}
Among the justified sequences in the combinatorial arena $\mathscr{A}$ given in Example~\ref{ExJustifiedSequences}, only the first one is a position in $\mathscr{A}$; the other ones do not satisfy global alternation.

Next, the justified sequence $id . b . ic . a$ in the combinatorial arena $(\bot_{[a]} \otimes \bot_{[b]}) \multimap_{[i]} (\bot_{[c]} \otimes \bot_{[d]})$ is not a position since it does not satisfy recursive alternation (though it meets global alternation).

Further, the justified sequence $rg . mnf' . le' . e . f . e\{ e' ; 1 \}$ in the combinatorial arena $((\oc \bot_{[e]} \multimap_{[l]} \bot_{[e']}) \multimap_{[m]} (\bot_{[f]} \multimap_{[n]} \bot_{[f']})) \multimap_{[r]} \bot_{[g]}$ is not a position as the last move violates visibility. 

Finally, the justified sequence $io . p$ in the combinatorial arena $1_{[p]} \multimap_{[i]} 1_{[o]}$ is not a position because neither of the two moves satisfies joker.
\end{example}

Although positions in the examples given so far are not very eye-friendly, we can always make positions in a combinatorial arena readable by suitable \emph{syntactic-sugars}.
This is because each combinatorial arena has only finitely many vertices together with a finite number of vertex generators given by the exponential structure. 
The following example illustrates this point: 

\begin{example}
\label{ExArithmetic}
The \emph{\bfseries arithmetic combinatorial arena} refers to the combinatorial arena $\mathscr{N} \colonequals \oc (\bot_{[q]} \multimap_{[p]} \bot_{[y]}) \otimes \bot_{[n]} \multimap_{[o]} \bot_{[\hat{q}]}$.
Let us assign names to extended moves in $\mathscr{N}$ by
\begin{mathpar}
\vec{q}_0 \colonequals o\hat{q}
\and
\vec{\mathrm{yes}}_{i} \colonequals py \{ p^i ; 1^i \}
\and
\vec{q}_{i+1} \colonequals q \{ p^i ; 1^i \}
\and
\vec{\mathrm{no}} \colonequals n.
\end{mathpar}

The arithmetic combinatorial arena $\mathscr{N}$ is a finitary recast of the \emph{lazy natural number game} \cite[Example~23]{yamada2019game}, and it defines natural numbers in a syntax-free, primitive, finitary fashion through the idea of a \emph{counting game} as follows. 
First, each position in $\mathscr{N}$ is of the form
\begin{equation*}
\vec{q}_0 . \vec{\mathrm{yes}}_0 . \vec{q}_1 . \vec{\mathrm{yes}}_1 \dots \vec{q}_n . \vec{\mathrm{yes}}_n . \vec{q}_{n+1} . \vec{\mathrm{no}},
\end{equation*}
where $\vec{q}_0$ justifies $\vec{\mathrm{yes}}_i$ and $\vec{\mathrm{no}}$, and $\vec{\mathrm{yes}}_i$ justifies $\vec{q}_{i+1}$ ($0 \leqslant i \leqslant n$).
Then, this position can be read informally as follows: 
\begin{enumerate}

\item Opponent asks `Do you want to count one more?' by the initial move $\vec{q}_0$;

\item Player answers `Yes!' by the move $\vec{\mathrm{yes}}_0$ if she does, and `No!' by the move $\mathrm{no}$ otherwise, where the play stops in the latter case;

\item If the last move by Player is $\vec{\mathrm{yes}}_i$, then Opponent asks `Do you want to count one more?' by the move $\vec{q}_{i+1}$;
\label{Opponent}

\item Player answers `Yes!' by the move $\vec{\mathrm{yes}}_{i+1}$ if she does, and `No!' by the move $\vec{\mathrm{no}}$ otherwise, where again the play stops in the latter case;
\label{Player}

\item Opponent and Player iterate the steps \ref{Opponent} and \ref{Player} until Player makes the move $\vec{\mathrm{no}}$.

\end{enumerate}

\end{example}

\if0
Moreover, for each natural number $n$, we define
\begin{equation}
\label{Numerals}
\underline{n} \colonequals \mathrm{Pref}(\{ \hat{q} . (\mathrm{yes} . q)^n . \mathrm{no} \})^{\mathrm{Even}}
\end{equation}
for each $n \in \mathbb{N}$, which is the strategy on $\mathscr{N}$ that computes $n$ as we shall see shortly. 
\fi

\if0
The following lemma plays a crucial role in \S\ref{FinitelyPresentableStrategies}
\begin{lemma}[the uniqueness lemma]
\label{UniquenessLemma}
Assume that $m$ and $n$ are two O- (respectively, P-) moves with the same justifier occurring in the P-view $\lceil \boldsymbol{s} \rceil$ (respectively, in the O-view $\lfloor \boldsymbol{s} \rfloor$) of a position $\boldsymbol{s}$ in a combinatorial arena $\mathscr{A}$.
If $m^\sharp = n^\sharp$, then $m = n$.
\end{lemma}
\begin{proof}
Focus on the case where $m$ and $n$ are P-moves since the other case is analogous. 
Assume, for a contradiction, $m^\sharp = n^\sharp$ and $m \neq n$.
Note that $m$ and $n$ differ only in their exponential tags.
Because $m$ and $n$ have the same justifier, the justifier must be the unique parent of all vertices in $m$ and those in $n$.
But then, $m$ and $n$ cannot occur in the same P-view, a contradiction. 
\end{proof}
\fi

\subsection{Combinatorial sequents}
\label{CombinatorialSequents}
Recall that games do not capture \emph{intensionality} in logic and computation such as \emph{cuts} \cite{gentzen1936widerspruchsfreiheit}, but \emph{dynamic games} \cite{yamada2020dynamic}, a generalisation of games, overcome this limitation.
Based on that work, we generalise combinatorial arenas to \emph{combinatorial sequents}, which may capture cuts. 

\begin{notation}
Given a switching vertex $v$ of a combinatorial arena $\mathscr{A}$, let $\mathscr{A}^v$ be the combinatorial subarena of $\mathscr{A}$ induced by moves containing $v$ (i.e., $|\mathscr{A}^v| = |\mathscr{A}|_{\{ \, u \in \mathscr{V}_{|\mathscr{A}|} \mid \exists m \in \mu_{\mathscr{A}} . \, \{ u, v \} \subseteq \underline{m} \, \}}$).
\end{notation}

\begin{definition}[combinatorial sequents]
\label{DefCombinatorialSequents}
A \emph{\bfseries combinatorial sequent} $\mathscr{S}$ is a combinatorial arena $\underline{\mathscr{S}}$ together with a fraternal set $\mathrm{Int}_\mathscr{S} \subseteq \mathscr{V}_{|\underline{\mathscr{S}}|}^1$ of switching vertices such that
\begin{enumerate}

\item The unique parent of elements of $\mathrm{Int}_\mathscr{S}$ is also switching, and its children are all in $\mathrm{Int}_\mathscr{S}$; 

\item For each $v \in \mathrm{Int}_\mathscr{S}$, $\underline{\mathscr{S}}^v \cong \oc^k (\mathscr{A} \multimap \mathscr{A})$ for some combinatorial arena $\mathscr{A}$ and number $k \in \mathbb{N}$;

\item If $v \in \mathrm{Int}_\mathscr{S}$ with $\{ v, w \} \in \alpha_{\underline{\mathscr{S}}}[2]$ or $\{ v, w \} \subseteq V \in \mathscr{V}_{\epsilon_{\underline{\mathscr{S}}}}$, then $w \in \mathrm{Int}_\mathscr{S}$,



\end{enumerate}
where $\mathrm{Int}_\mathscr{S}$ is called the \emph{\bfseries intensionality} of $\mathscr{S}$.
An element of $\alpha_{\underline{\mathscr{S}}}[2]$ or a vertex of $\mathscr{V}_{\epsilon_{\underline{\mathscr{S}}}}$ is said to be \emph{\bfseries internal} if its elements are all in $\mathrm{Int}_\mathscr{S}$, and \emph{\bfseries external} otherwise. 
\end{definition}

\begin{example}
\label{ExCombinatorialSequents}
The pair $((\bot_{[a]} \multimap_{[b]} \bot_{[c]}) \otimes (\top \multimap_{[d]} \top) \multimap_{[e]} \bot_{[f]}, \{ b, d \})$ is a combinatorial sequent, but the similar pair $((\bot_{[a]} \multimap_{[b]} (\bot_{[c]} \otimes (\top \multimap_{[d]} \top))) \multimap_{[e]} \bot_{[f]}, \{ b, d \})$ is not.

The pair $\mathscr{S} = (\underline{\mathscr{S}}, \mathrm{Int}_{\mathscr{S}})$ of $\underline{\mathscr{S}} \colonequals \bot_{[a]} \multimap_{[b]} \oc ((\bot_{[c]} \multimap_{[d]} \oc \bot_{[e]}) \multimap_{[f]} (\bot_{[g]} \multimap_{[h]} \oc \bot_{[i]})) \mathbin{\&} (\bot_{[m]} \mathbin{\&} \bot_{[n]} \multimap_{[j]} \bot_{[p]} \mathbin{\&} \bot_{[q]}) \multimap_{[k]} \bot_{[l]}$ and $\mathrm{Int}_{\mathscr{S}} \colonequals \{ f, j \}$ is a combinatorial sequent, where $\underline{\mathscr{S}}^f = \oc ((\bot_{[c]} \multimap_{[d]} \oc \bot_{[e]}) \multimap_{[f]} (\bot_{[g]} \multimap_{[h]} \oc \bot_{[i]}))$ and $\underline{\mathscr{S}}^j = \bot_{[m]} \mathbin{\&} \bot_{[n]} \multimap_{[j]} \bot_{[p]} \mathbin{\&} \bot_{[q]}$. 
Note that the elements $\{ m, n \}, \{ p, q \} \in \alpha_{\underline{\mathscr{S}}}[2]$ are external, while the one $\{ f, j \} \in \alpha_{\underline{\mathscr{S}}}[2]$ is internal.
\end{example}

The name of combinatorial sequents comes from their free characterisation (Proposition~\ref{PropFreeChracterisationsOfCombinatorialSequents}), which resembles sequents \cite{gentzen1936widerspruchsfreiheit}. 
This characterisation exploits the axioms on combinatorial sequents. 
In fact, a combinatorial sequent $\mathscr{S}$ corresponds to a sequent in our variant of a sequent calculus (\S\ref{CombinatorialFormalSystems}), where elements of $\mathrm{Int}_\mathscr{S}$ to cuts. 
Under this correspondence, the internal part of an additive or an exponential structure is between or on cuts in the sense clarified shortly.

\begin{notation}
Given a combinatorial sequent $\mathscr{S}$ together with an isomorphism $\underline{\mathscr{S}}^v \cong \oc^k (\mathscr{A} \multimap \mathscr{A})$ for some $v \in \mathrm{Int}_\mathscr{S}$, we write $\mathscr{S}(\vec{m}, \vec{n})$ or $\mathscr{S}(\vec{n}, \vec{m})$ if $\vec{m}$ is an extended move in one of the two copies of $\oc^k \mathscr{A}$, and $\vec{n}$ is the corresponding one in the other copy.
\end{notation}

\begin{definition}[positions in a combinatorial sequent]
A \emph{\bfseries position} in a combinatorial sequent $\mathscr{S}$ is a position $\boldsymbol{s}$ in $\underline{\mathscr{S}}$ such that if $\boldsymbol{t}\vec{m}\vec{n} \preceq \boldsymbol{s}$ is even, then $\mathscr{S}(\vec{m}, \vec{n})$.
\end{definition}

\begin{example}
Recall the combinatorial sequent $\mathscr{S} = (\underline{\mathscr{S}}, \mathrm{Int}_{\mathscr{S}})$ defined in Example~\ref{ExCombinatorialSequents}.
The position $bkl . fhi . de . c . g . a$ in $\underline{\mathscr{S}}$ is a position in $\mathscr{S}$, but the one $bkl . fhi . de . c . a$ in $\underline{\mathscr{S}}$ is not.
\end{example}

That is, a position in a combinatorial sequent $\mathscr{S}$ is a position in $\underline{\mathscr{S}}$ in which each extended O-move $\vec{n}$ following an extended P-move $\vec{m}$ is the copy of $\vec{m}$ in the sense specified by $\mathrm{Int}(\mathscr{S})$.
A combinatorial arena $\mathscr{A}$ is the same as the combinatorial sequent $(\mathscr{A}, \emptyset)$, and positions in $\mathscr{A}$ coincide with those in $(\mathscr{A}, \emptyset)$.
Thus, combinatorial sequents generalise combinatorial arenas. 

\begin{notation}
Given a combinatorial arena $\mathscr{A}$, we also write $\mathscr{A}$ for the combinatorial sequent $(\mathscr{A}, \emptyset)$.

Given a combinatorial sequent $\mathscr{S}$, we write $\mathcal{M}_{\mathscr{S}}$ (respectively, $\mathcal{M}_{\mathscr{S}}^+$, $\mathcal{P}_{\mathscr{S}}$) for the set of all moves (respectively, extended moves, positions) in the underlying combinatorial arena $\underline{\mathscr{S}}$.
\end{notation}

\if0
\begin{convention}
Let $\mathscr{A}$ be a combinatorial arena.
Abusing notation, we also write $\mathscr{A}$ for the combinatorial sequent $(\mathscr{A}, \emptyset)$.
This convention does not bring confusion in practice. 
\end{convention}

Let us finally lift construction on combinatorial arenas to combinatorial sequents:
\begin{definition}[constructions on combinatorial arenas]
Given a combinatorial arena $\mathscr{A}$ and combinatorial sequents $\mathscr{S}$ and $\Sigma$, we obtain the combinatorial sequents
\begin{mathpar}
\Pi \mathbin{\spadesuit} \Sigma \colonequals \big(\underline{\Pi} \mathbin{\spadesuit} \underline{\Sigma}, \mathrm{Int}_\Pi \uplus \mathrm{Int}_\Sigma\big)
\and
\diamondsuit \Pi \colonequals \big(\diamondsuit \underline{\Pi}, \mathrm{Int}_\Pi\big),
\end{mathpar}
where $\spadesuit$ is $\otimes$ or $\&$, and $\diamondsuit$ is $\neg$ or $\oc$, and
\begin{equation*}
\mathrm{Cut}(\mathscr{A}_{[0]}, \mathscr{A}_{[1]}) \colonequals \big(\mathscr{A}_{[0]} \multimap \mathscr{A}_{[1]}, \{ \, \{ m_{[0]}, m_{[1]} \} \mid m \in \mu_{\mathscr{A}} \, \}\big),
\end{equation*}
where the subscripts are informal tags to distinguish between the two occurrences of $\mathscr{A}$.
\end{definition}
\fi


\begin{proposition}[closure of positions under views]
\label{PropClosureOfPositionsUnderViews}
If $\boldsymbol{s}$ is a position in a combinatorial sequent $\mathscr{S}$, then so are the P-view $\lceil \boldsymbol{s} \rceil$ and the O-view $\lfloor \boldsymbol{s} \rfloor$.
\end{proposition}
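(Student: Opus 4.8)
The plan is to prove both statements together by induction on $|\boldsymbol{s}|$, treating the O-view as ``the P-view read in the domain of a linear implication'' (as the intuition after Definition~\ref{DefViews} suggests), so that one argument dualises and I only spell out the P-view case. At each step I would write $\boldsymbol{s} = \boldsymbol{s}_0\vec{r}$ for the last move $\vec{r}$ and split according to the clauses of Definition~\ref{DefViews}: $\vec{r}$ a P-move (clause 2), $\vec{r}$ initial or pseudo-initial (clause 3), or $\vec{r}$ a non-initial O-move (clause 4). The guiding observation is that $\lceil \boldsymbol{s} \rceil$ is a subsequence of $\boldsymbol{s}$ whose pointer map is inherited from $J_{\boldsymbol{s}}$, so every \emph{per-element} requirement transfers for free.

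First I would dispose of the ``easy'' conditions. The concern in the remark after Definition~\ref{DefViews}—that a view need not even be a justified sequence because a justifier can be dropped by clause (2) or (6)—is precisely where the visibility axiom of the position $\boldsymbol{s}$ is used: when clause 2 appends a P-move $\vec{r}$, visibility guarantees that the core of the justifier of $\vec{r}$ already occurs in $\lceil \boldsymbol{s}_0 \rceil$, while O-moves appended by clause 4 carry their justifier as the immediately preceding element. Hence $J_{\boldsymbol{s}}$ restricts to a well-defined pointer on $\lceil \boldsymbol{s} \rceil$. The first axiom of Definition~\ref{DefJustifiedSequences} holds because views consist of \emph{cores}, which are base moves of $\mathscr{A}$; these persist under the exponential dynamics generated by any prefix and carry the empty (hence shortest) exponential tag. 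The joker axiom is immediate, as it constrains each element individually and the view is a subsequence, and global alternation is the standard consequence of the view construction.

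The two genuinely new points are recursive alternation and the cut condition. For recursive alternation I would show that taking views commutes with the restriction operations $\upharpoonright_{\mathscr{A}'}$, $\upharpoonright_{\mathscr{B}}$, $\upharpoonright_{1,l}$, $\upharpoonright_{\mathscr{C}}$ of Definition~\ref{DefPositionsInCombinatorialSemiArenas}, with the caveat that passing under $\neg$ or into the domain of $\multimap$ exchanges the P-view with the O-view because it flips polarity; granting this commutation, recursive alternation of the view follows from that of $\boldsymbol{s}$ by induction on the structure of $\mathscr{A}$ via Theorem~\ref{ThmFreeCharacterisation}, with global alternation as the base. For the cut condition I would examine the even-length prefixes of the view: the last pair of such a prefix is either a pair already consecutive in $\boldsymbol{s}$ (so $\mathscr{S}(\vec{m},\vec{n})$ holds by the cut condition on $\boldsymbol{s}$) or a justifier–move pair produced by the jump clause. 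The decisive point is that inside each cut component $\underline{\mathscr{S}}^v \cong \oc^k(\mathscr{A} \multimap \mathscr{A})$ the hub vertex $v$ forces the justifier of a move in one copy of $\oc^k\mathscr{A}$ to be the move containing $v$ in the other copy; together with recursive alternation this makes the projection of $\boldsymbol{s}$ to the cut a synchronised copycat in which justification coincides with the correspondence witnessing $\mathscr{S}$. Thus $\mathscr{S}(\vec{m},\vec{n})$ holds for the jump-generated pairs as well, and taking cores preserves this correspondence.

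I expect the cut condition to be the main obstacle, since it has no counterpart in ordinary HO game semantics: one must show that the pointer-following performed by a view never breaks the copycat alignment at a cut, which requires combining the rigid shape $\oc^k(\mathscr{A} \multimap \mathscr{A})$ of cuts with recursive alternation and with the polarity tracking already needed for recursive alternation. The remaining bookkeeping—visibility of the view (the P-view of a P-view is itself, so justifiers stay visible) and the commutation of views with the restrictions—should be routine once the polarity conventions are fixed.
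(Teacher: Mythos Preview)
Your plan is correct and follows exactly the approach the paper states: induction on $|\boldsymbol{s}|$. The paper's own proof is a single sentence (``By induction on the length $|\boldsymbol{s}|$ of $\boldsymbol{s}$''), so your proposal simply spells out the details the paper omits.
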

\begin{proof}
By induction on the length $|\boldsymbol{s}|$ of $\boldsymbol{s}$.
\end{proof}

Finally, let us establish a free characterisation of combinatorial sequents: 
\begin{definition}[combinatorial cuts]
\emph{\bfseries Combinatorial pre-cuts} are the class of combinatorial arenas generated from linear implication of the form $\mathscr{A} \multimap \mathscr{A}$ with $\mathscr{A}$ a combinatorial arena by with and of-course, and \emph{\bfseries combinatorial cuts} are the class of combinatorial arenas generated from top and the same kind of linear implication by tensor, with and of-course.
\end{definition}


\begin{notation}
Given combinatorial cuts $\mathscr{C}_j$ ($j \in \overline{m}$) and combinatorial arenas $\mathscr{A}_i$ ($i \in \overline{n}$) and $\mathscr{B}$, we write $\mathscr{A}_1, \mathscr{A}_2, \dots, \mathscr{A}_n \dashv \mathscr{C}_1, \mathscr{C}_2, \dots, \mathscr{C}_m \vdash \mathscr{B}$ for the combinatorial sequent $\mathscr{S}$ defined by
\begin{mathpar}
\underline{\mathscr{S}} \colonequals \bigotimes_{i=1}^n \mathscr{A}_i \multimap_{[\dashv]} \bigotimes_{j=1}^m \mathscr{C}_j \multimap_{[\vdash]} \mathscr{B}
\and
\mathrm{Int}_{\mathscr{S}} \colonequals \bigcup_{j=1}^m \{ \, v \in \mathscr{R}_{|\mathscr{C}_j|} \mid \text{$v$ is switching in $\mu_{\mathscr{C}_j}$} \, \},
\end{mathpar}
where $\bigotimes_{i=1}^n \mathscr{A}_i \colonequals \top$ if $n = 0$, $\bigotimes_{j=1}^m \mathscr{B}_j \colonequals \top$ if $m = 0$, and the identifiers $(\_)_{[\dashv]}$ and $(\_)_{[\vdash]}$ are fixed for convenience.
We also write $\mathscr{A}_1, \mathscr{A}_2, \dots, \mathscr{A}_n \dashv \mathscr{C}_1, \mathscr{C}_2, \dots, \mathscr{C}_m \vdash$ for $\mathscr{S}$ if $\mathscr{B} = \bot$.

The letters $\Gamma$, $\Delta$, etc. range over finite sequences of combinatorial arenas, and the ones $\Pi$, $\Sigma$, etc. over those of combinatorial cuts.
Of-course is applied to these sequences componentwisely.
\end{notation}

\begin{proposition}[a free characterisation of combinatorial sequents]
\label{PropFreeChracterisationsOfCombinatorialSequents}
Every combinatorial sequent can be written, up to graph isomorphisms, in the form $\mathscr{A}_1, \mathscr{A}_2, \dots, \mathscr{A}_n \dashv \mathscr{C}_1, \mathscr{C}_2, \dots, \mathscr{C}_m \vdash \mathscr{B}$ for combinatorial cuts $\mathscr{C}_j$ ($j \in \overline{m}$) and combinatorial arenas $\mathscr{A}_i$ ($i \in \overline{n}$) and $\mathscr{B}$, and further made into the form $\mathscr{A}_1, \mathscr{A}_2, \dots, \mathscr{A}_n \dashv \mathscr{C} \vdash \mathscr{B}$ with $\mathscr{C}$ a combinatorial cut.
\end{proposition}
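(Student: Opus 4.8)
The plan is to reduce the claim to the free characterization of combinatorial arenas (Theorem~\ref{ThmFreeCharacterisation}) applied to the underlying arena $\underline{\mathscr{S}}$, using the three axioms of Definition~\ref{DefCombinatorialSequents} to locate and isolate the \emph{cut block} carved out by $\mathrm{Int}_{\mathscr{S}}$. If $\mathrm{Int}_{\mathscr{S}} = \emptyset$ then $\mathscr{S}$ is a bare combinatorial arena, and Theorem~\ref{ThmFreeCharacterisation} together with the empty-tensor conventions ($\bigotimes$ over the empty index set being $\top$) yields the cut-free form with $m = 0$. So I assume $\mathrm{Int}_{\mathscr{S}} \neq \emptyset$ and let $r$ be the common parent of the fraternal set $\mathrm{Int}_{\mathscr{S}} \subseteq \mathscr{V}_{|\underline{\mathscr{S}}|}^{1}$; since $\mathrm{Int}_{\mathscr{S}}$ lives at depth $1$, $r$ is a root, and axiom~(1) makes $r$ switching with all of its children lying in $\mathrm{Int}_{\mathscr{S}}$.

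First I would use the linear-implication structure to split off the two turnstiles. The switching root $r$ plays the role of the inner hub $\multimap_{[\vdash]}$: splitting $\underline{\mathscr{S}}$ at $r$ as in the implication case of the proof of Theorem~\ref{ThmFreeCharacterisation} writes the relevant piece as $\mathscr{D} \multimap_{[\vdash]} \mathscr{B}$, where $\mathscr{D}$ is the subarena induced by the forest-children of $r$, i.e.\ by $\mathrm{Int}_{\mathscr{S}}$ and its descendants, and $\mathscr{B}$ is the codomain, an arbitrary combinatorial arena serving as the conclusion. If $r$ carries a $\mu$-ancestor, the unique root of $\underline{\mathscr{S}}$ that is also a root of $\mu_{\underline{\mathscr{S}}}$ is the outer hub $\multimap_{[\dashv]}$, and splitting there peels off the hypotheses as the top-level tensor decomposition $\bigotimes_{i} \mathscr{A}_i$ (again via Theorem~\ref{ThmFreeCharacterisation}) of the subarena on its forest-children. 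Degenerate configurations---empty context, empty cut block, or trivial conclusion---collapse the corresponding hub and are absorbed by the empty-tensor conventions (for instance $\top \multimap \top \cong \bot$).

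The crux is to identify $\mathscr{D}$ as a combinatorial cut and to read off its factors. By axiom~(2), for each $v \in \mathrm{Int}_{\mathscr{S}}$ the subarena $\underline{\mathscr{S}}^{v} \cong \oc^{k}(\mathscr{A} \multimap \mathscr{A})$ is a combinatorial pre-cut. Axiom~(3) guarantees that every $\alpha_{\underline{\mathscr{S}}}[2]$-edge and every exponential vertex meeting $\mathrm{Int}_{\mathscr{S}}$ stays entirely inside $\mathrm{Int}_{\mathscr{S}}$, so the with- and of-course-structure at the cut roots never leaks into the context or the conclusion; hence $\mathscr{D}$ is closed under these operations and is generated from the pre-cuts $\underline{\mathscr{S}}^{v}$ by with and of-course, i.e.\ it is a combinatorial cut. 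Applying the tensor-versus-with analysis of the proof of Theorem~\ref{ThmFreeCharacterisation} to $\mathscr{D}$---where the null/complete dichotomy of Definition~\ref{DefCombinatorialArenas}(1)--(2) decides tensor or with at each fraternal set and (3) keeps the exponential structure aligned---gives the top-level tensor factorization $\mathscr{D} \cong \bigotimes_{j} \mathscr{C}_j$ into tensor-indecomposable combinatorial cuts $\mathscr{C}_j$, with $\mathrm{Int}_{\mathscr{S}} = \bigcup_{j} \{\, v \in \mathscr{R}_{|\mathscr{C}_j|} \mid v \text{ switching} \,\}$ by construction. This is the first form. The second form is then immediate, since combinatorial cuts are closed under tensor (one of their generators) and $\top$ is a cut, so $\mathscr{C} \colonequals \bigotimes_{j} \mathscr{C}_j$ is a single combinatorial cut with the same switching roots.

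The step I expect to be the main obstacle is precisely this cut-block characterization: carefully verifying from axioms~(2)--(3) and the recursive-partition conditions of Definition~\ref{DefCombinatorialArenas} that $\mathscr{D}$ is genuinely built by with and of-course from identity-shaped pre-cuts and carries no external additive or exponential structure, while simultaneously confirming that the context and conclusion are left untouched by the cut structure. Keeping the collapses of the two hubs $\multimap_{[\dashv]}$ and $\multimap_{[\vdash]}$ consistent with the empty-tensor conventions in the degenerate cases is the secondary delicate point.
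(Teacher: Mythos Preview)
Your proposal is correct and arrives at the same decomposition as the paper, but the organization differs. The paper's proof is a two-line sketch: the first part ``is by induction on the cardinality $|\mathrm{Int}_{\mathscr{S}}|$ \dots\ with the help of the three axioms on $\mathscr{S}$,'' and the second part just sets $\mathscr{C} \colonequals \top$ when $m=0$ and $\mathscr{C} \colonequals \bigotimes_{j=1}^m \mathscr{C}_j$ when $m>0$. You instead perform a single direct structural pass: locate the parent $r$ of $\mathrm{Int}_{\mathscr{S}}$ as the inner hub $\multimap_{[\vdash]}$, peel off the outer hub $\multimap_{[\dashv]}$ for the context, and then analyse the cut block $\mathscr{D}$ via axioms~(2)--(3) together with the tensor/with dichotomy from the proof of Theorem~\ref{ThmFreeCharacterisation}. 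Your second part agrees with the paper's verbatim.

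Your global decomposition is arguably more informative than the bare induction hint, since it makes explicit which subarenas become the $\mathscr{A}_i$, the $\mathscr{C}_j$, and $\mathscr{B}$, and it leans directly on Theorem~\ref{ThmFreeCharacterisation} rather than rederiving pieces of it. The edge cases you flag (hub collapses when the context, cut block, or conclusion is trivial) are genuine and are not addressed in the paper's sketch either; both arguments implicitly rely on the empty-tensor conventions and the ``up to graph isomorphism'' clause to absorb them.
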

\begin{proof}
The first part is by induction on the cardinality $|\mathrm{Int}_{\mathscr{S}}|$ for combinatorial sequents $\mathscr{S}$ with the help of the three axioms on $\mathscr{S}$ (Definition~\ref{DefCombinatorialSequents}), and the second by replacing the case $m = 0$ with $\mathscr{C} \colonequals \top$, and the case $m > 0$ with $\mathscr{C} \colonequals \bigotimes_{j=1}^m \mathscr{C}_j$.
\end{proof}

This result is the second step towards the biequivalences between logic and combinatorics (\S\ref{Bijections}).
In addition, this free characterisation, together with Theorem~\ref{ThmFreeCharacterisation}, gives us a practical notation to concisely represent combinatorial sequents.

\subsection{Finitely presentable strategies}
\label{FinitelyPresentableStrategies}
Next, let us adapt another central concept in game semantics to our combinatorial setting:
\begin{definition}[strategies \cite{nickau1994hereditarily,abramsky2000full,hyland2000full}]
\label{DefStrategies}
A \emph{\bfseries strategy} on a combinatorial sequent $\mathscr{S}$ is a subset $\varphi \subseteq \mathcal{P}_{\mathscr{S}}^{\mathrm{Even}}$, written $\varphi : \mathscr{S}$, that is nonempty, \emph{even-prefix-closed} (i.e., $\boldsymbol{s}\vec{m}\vec{n} \in \varphi$ implies $\boldsymbol{s} \in \varphi$) and \emph{deterministic} (i.e., $\boldsymbol{s}\vec{m}\vec{n}, \boldsymbol{s}\vec{m}\vec{n}' \in \varphi$ implies $\boldsymbol{s}\vec{m}\vec{n} = \boldsymbol{s}\vec{m}\vec{n}'$).
\end{definition}

A strategy $\varphi : \mathscr{S}$ depicts for Player \emph{how to play} on the combinatorial sequent $\mathscr{S}$ by mapping an odd-length position $\boldsymbol{s}\vec{m}$ in $\mathscr{S}$ to its extension $\boldsymbol{s}\vec{m}\vec{n} \in \varphi$ if it exists (n.b., $\vec{m}$ is an extended \emph{O-move}), where the extension, if any, is \emph{unique} by the determinacy of $\varphi$, and the map is \emph{partial} as there can be no extension of some odd-length position in $\varphi$.
For convenience, we often confuse a strategy with the induced partial map from odd-length positions to extended P-moves. 

\begin{example}
\label{ExamplesOfStrategies}
There are strategies $\{ \boldsymbol{\epsilon} \}, \underline{n} : \mathscr{N}$ (Example~\ref{ExArithmetic}), where $n \in \mathbb{N}$ and
\begin{equation*}
\underline{n} \colonequals \{ \, \boldsymbol{s} \in \mathcal{P}_{\mathscr{N}}^{\mathrm{Even}} \mid \boldsymbol{s} \preceq \vec{q}_0 . \vec{\mathrm{yes}}_0 . \vec{q}_1 . \vec{\mathrm{yes}}_1 \dots \vec{q}_n . \vec{\mathrm{yes}}_n . \vec{q}_{n+1} . \vec{\mathrm{no}} \, \}.
\end{equation*}
Loosely speaking, the trivial strategy $\{ \boldsymbol{\epsilon} \}$ does nothing, while the one $\underline{n}$ plays by counting $n$.
\end{example}
 
In contrast to these finite strategies, the strategy on ${\oc} {\neg} {\neg} {\top}$ that computes by the unique map for each copy of $\neg \neg \top$ generated by of-course is \emph{infinite}. 
Because our aim is to establish a \emph{finitary} foundation (\S\ref{Foreword}), we will eventually focus on \emph{finitely presentable} strategies in a suitable sense.

Next, let us recall that not every strategy corresponds to a (formal) proof.
For example, there is the trivial strategy on bottom $\bot \colonequals \neg \top$, but bottom models \emph{falsity}, which has no proof. 
Thus, this strategy cannot be an interpretation of a proof.
A standard way to carve out strategies for proofs is to impose \emph{winning} \cite[\S 2]{abramsky1997semantics} on strategies.
We adapt this idea to our framework: 
\begin{definition}[winning strategies \cite{hyland2000full,clairambault2010totality,coquand1995semantics,laird1997full}]
\label{DefConstraintsOnStrategies}
A strategy $\varphi : \mathscr{S}$ is 
\begin{itemize}

\item \emph{\bfseries Total} if it always responds to the last extended O-move: If $\boldsymbol{s} \vec{m} \in \mathcal{P}_{\mathscr{S}}^{\mathrm{Odd}}$ and $\boldsymbol{s} \in \varphi$, then there is some (necessarily unique) $\boldsymbol{s} \vec{m} \vec{n} \in \varphi$;

\item \emph{\bfseries Innocent} if it only depends on P-views: If $\boldsymbol{s} \vec{m} \vec{n} \in \varphi$, $\boldsymbol{t} \vec{l} \in \mathcal{P}_{\mathscr{S}}^{\mathrm{Odd}}$ and $\boldsymbol{t} \in \varphi$ satisfy $\lceil \boldsymbol{s} \vec{m} \rceil = \lceil \boldsymbol{t} \vec{l} \rceil$, then there is some $\boldsymbol{t} \vec{l}\vec{r} \in \varphi$ such that $\lceil \boldsymbol{s} \vec{m} \vec{n} \rceil = \lceil \boldsymbol{t} \vec{l} \vec{r} \rceil$;

\item \emph{\bfseries Noetherian} if there is no strictly increasing (with respect to the prefix relation $\preceq$) infinite sequence of elements in the set $\lceil \varphi \rceil = \{ \, \lceil \boldsymbol{s} \rceil \mid \boldsymbol{s} \in \varphi \, \}$ of all P-views in $\varphi$;

\item \emph{\bfseries Winning} if it is total, innocent and noetherian.

\end{itemize}
\end{definition}

\begin{example}
The trivial strategies $\{ \boldsymbol{\epsilon} \} : \bot$ and $\{ \boldsymbol{\epsilon} \} : \mathscr{N}$ are not total, while the ones $\{ \boldsymbol{\epsilon} \} : \top$ and $\underline{n} : \mathscr{N}$ for all $n \in \mathbb{N}$ are winning. 
\end{example}

One sees winning strategies as \emph{proofs} as follows.
First, proofs never get `stuck,' so strategies for proofs must be \emph{total}.
Next, recall that views are the `relevant' parts of positions from the logical perspective.
In syntax, the `irrelevant' parts correspond to \emph{states}.
Thus, imposing \emph{innocence} on strategies corresponds to excluding \emph{stateful} terms \cite{abramsky1997linearity,abramsky1998fully,abramsky1999full}.
Logic is concerned with the \emph{truth} of a formula, independently of `states of arguments,' so strategies for proofs are innocent.
Lastly, \emph{noetherianity} is to decide who `wins' in infinite plays: If a play by an innocent, noetherian strategy grows forever, then it is by Opponent so that the play is Player's `win.'

In addition to this standard constraint on strategies in the literature, our full completeness result (Theorem~\ref{ThmFullCompleteness}) needs another one that corresponds to the \emph{linearity} of proofs \cite{girard1987linear}:
\begin{definition}[linear strategies]
A P-move\footnote{Because linear strategies are innocent by definition, it suffices to focus on P-moves here, not extended ones.} $\vec{m}$ in a combinatorial sequent $\mathscr{S}$ is said to be
\begin{itemize}

\item \emph{\bfseries Subject to O-joker} if there is an edge $x \rightarrow y$ of $|\underline{\mathscr{S}}|$ such that $x \in \alpha_{\mathscr{S}}[0]$ and $y \in \underline{\vec{m}}$;

\item \emph{\bfseries Subject to P-exponential} if an element of $\underline{\vec{m}}$ is a vertex of $|\underline{\mathscr{S}}|_{V}$ for some $V \in \mathscr{V}_{\epsilon_{\underline{\mathscr{S}}}}$. 

\end{itemize}

An innocent strategy $\varphi : \mathscr{S}$ is said to be \emph{\bfseries linear} if the set $\{ \, \lceil \boldsymbol{s}\vec{o} \rceil \mid \boldsymbol{s}\vec{o}\vec{p} \in \varphi \, \}$ is singleton for each P-move $\vec{p} \in \mathcal{M}_{\mathscr{S}}$ not subject to O-joker or P-exponential, and \emph{\bfseries linearly winning} if it is linear and winning.
\end{definition}

That is, a strategy is linear if it injectively visits all P-moves (in the \emph{nondeterministic} sense explained in the examples below) in the underlying combinatorial arena except those in the effect of Opponent's one $1$ or Player's of-course $\oc$. 
Intuitively, this axiom matches the linearity of proofs \cite{girard1987linear}: A proof is linear precisely when it consumes and produces formulae (or resources) exactly as specified by the underlying sequent except those generated by the rule \textsc{$1$R} or \textsc{$\oc$W}.

\begin{remark}
Murawski and Ong \cite{murawski2003exhausting} impose \emph{P-exhaustivity} on strategies, which is similar to our linearity, for their fully complete game semantics of the multiplicative fragment of intuitionistic linear logic. 
A difference between the two is that their approach entails a drastic modification of the very notion of (deterministic) strategies \cite[p.~296]{murawski2003exhausting}, but ours does not. 

Another existing game-semantic approach to linear proofs is \emph{payoff functions} \cite[\S 4]{mellies2010resource}.
In contrast to linearity or P-exhaustivity of strategies, payoff functions are extrinsic to the structure of games or strategies, and a priori they do not reflect the intuition behind linear proofs.
\end{remark}

\begin{example}
The strategy on the combinatorial sequent $\bot_{[0]}, \oc \bot_{[1]}, \bot_{[2]} \dashv \vdash \bot_{[3]} \otimes \bot_{[4]}$ that maps $3 \mapsto 2$ and $4 \mapsto 0$ is linearly winning.
The same computation also forms a linearly winning strategy on the combinatorial sequent $\bot_{[0]}, \bot_{[1]}, \bot_{[2]} \dashv \vdash \bot_{[3]} \otimes \bot_{[4]} \otimes 1_{[5]}$.
In the both cases, the strategy injectively visits all P-moves not subject to O-joker or P-exponential \emph{nondeterministically}: Any single play by the strategy does not cover all the P-moves, but its plays collectively do.
The sequents $\bot_{[0]}, \oc \bot_{[1]}, \bot_{[2]} \vdash \bot_{[3]} \otimes \bot_{[4]}$ and $\bot_{[0]}, \bot_{[1]}, \bot_{[2]} \vdash \bot_{[3]} \otimes \bot_{[4]} \otimes 1_{[5]}$, which correspond to the two combinatorial sequents, respectively, are both provable in intuitionistic linear logic.

In contrast, the same computation yields a non-linear, though still winning, strategy on the combinatorial sequent $\bot_{[0]}, \bot_{[1]}, \bot_{[2]} \dashv \vdash \bot_{[3]} \otimes \bot_{[4]}$ due to the absence of of-course on $\bot_{[1]}$.

Another strategy on the first combinatorial sequent that plays $3 \mapsto 0$ and $4 \mapsto 0$ is winning but not linear: Its computation is (nondeterministically) not injective. 
\end{example}

\begin{example}
The strategy on the combinatorial sequent $\bot_{[0]}, \bot_{[1]} \multimap \top, \bot_{[2]} \dashv \vdash \bot_{[3]}$ mapping $3 \mapsto 2$ and $1 \mapsto 0$ is linearly winning.
The sequent $\bot_{[0]}, \bot_{[1]} \multimap \top, \bot_{[2]} \vdash \bot_{[3]}$, which corresponds to the combinatorial sequent, is provable in intuitionistic linear logic.
This example illustrates one of the difficult challenges in achieving fully complete semantics of intuitionistic linear logic: Standard game semantics satisfies the equality $(\bot \multimap \top) = \top$, so it does not have a strategy that interprets a proof of the sequent. 
We solve this problem by our novel linear implication, which attains the inequality $(\bot \multimap \top) \neq \top$, and pseudo-initial occurrences; see Theorem~\ref{ThmFullCompleteness}. 
\end{example}

\begin{example}
There are linearly winning strategies on the combinatorial sequent $\bot, \mathscr{A} \multimap \mathscr{A} \dashv \vdash \bot$ if $\mathscr{A}$ is top, bottom, $\oc \bot$, etc., but only winning, not linear, ones if $\mathscr{A}$ is $1$ or $\oc (\bot \multimap \top)$.
The corresponding sequent  $\bot, \mathscr{A} \multimap \mathscr{A} \vdash \bot$ is provable in intuitionistic linear logic in the former cases, but not in the latter cases.
\end{example}


Let us proceed to a key algorithmic property of linearly winning strategies: \emph{polynomial time decidability} (Theorem~\ref{ThmPolyTime}).
This property is necessary for those strategies to constitute a formal system or \emph{proof system} in the sense of \cite{cook1979relative}.
To this end, we introduce the following concepts:


\begin{definition}[flatly innocent strategies]
A strategy $\varphi : \mathscr{S}$ is said to be \emph{\bfseries flatly innocent} if the relation $\mathrm{fun}_\varphi \subseteq \underline{\lceil \mathcal{P}_{\mathscr{S}}^{\mathrm{Even}} \rceil} \times \mathcal{M}_{\mathscr{S}} \times \mathcal{M}_{\mathscr{S}}$ given by $\mathrm{fun}_\varphi (\underline{\boldsymbol{s}}, \vec{o}, \vec{p}) \ratio \Leftrightarrow \boldsymbol{s}\vec{o}\vec{p} \in \lceil \varphi \rceil$ forms a partial map $\underline{\lceil \mathcal{P}_{\mathscr{S}}^{\mathrm{Even}} \rceil} \times \mathcal{M}_{\mathscr{S}} \rightharpoonup \mathcal{M}_{\mathscr{S}}$ such that $\varphi = \tilde{\varphi}$ if $\mathrm{fun}_\varphi = \mathrm{fun}_{\tilde{\varphi}}$ for all innocent $\tilde{\varphi} : \mathscr{S}$.
\end{definition}

While innocent strategies compute on P-views, flatly innocent ones on \emph{flattened} P-views in the sense that they forget the order of elements of each input position except the last element. 

\begin{definition}[circular strategies]
An innocent strategy $\varphi : \mathscr{S}$ is said to be \emph{\bfseries circular} if $\mathrm{fun}_\varphi(S, \vec{o}, \vec{p})$ for some $S \in \underline{\lceil \mathcal{P}_{\mathscr{S}}^{\mathrm{Even}} \rceil}$ and $\vec{o}, \vec{p} \in \mathcal{M}_{\mathscr{S}}$ such that $\vec{p} \in S$ and $\vec{o}$ justifies $\vec{p}$.
\end{definition}

In other words, an innocent strategy is circular if its computation exhibits \emph{circularity} in the sense that it outputs what has been already played before. 

\begin{example}
\emph{Fixed-point strategies}, which will be given in Example~\ref{ExFixedPoints}, are an example of total innocent strategies that are not noetherian, flatly innocent or non-circular. 
\end{example}

The following lemma is interesting in its own right since it implies that an innocent strategy fails to be noetherian precisely when it is circular or not flatly innocent. 
This characterisation holds thanks to the finitary nature of combinatorial arenas in the sense that an infinite growth of a P-view occurs only through the action of of-course yet the action is invisible in P-views.

\begin{lemma}[flat innocence lemma]
\label{LemCircularity}
Every flatly innocent strategy is innocent, and an innocent strategy is flatly innocent if and only if it is non-circular if and only if it is noetherian.
\end{lemma}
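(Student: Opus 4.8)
The plan is to isolate a single combinatorial mechanism---the re-enactment of a loop in the P-view through fresh copies produced by of-course---and to derive all three equivalences from it, after first settling the preliminary claim that flat innocence entails innocence.

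For the preliminary claim, note that if $\mathrm{fun}_\varphi$ is a partial map then the response of $\varphi$ to an odd-length position is already determined by the P-view (which determines its own flattening together with its last O-move); this gives view-determinacy but not yet the closure half of innocence. To obtain the latter, I would read off from $\mathrm{fun}_\varphi$ the evident view function and let $\tilde\varphi$ be the innocent strategy it generates, namely the nonempty, even-prefix-closed, deterministic set of positions each of whose even prefixes responds according to $\mathrm{fun}_\varphi$. A routine verification gives $\mathrm{fun}_{\tilde\varphi}=\mathrm{fun}_\varphi$, so the uniqueness clause in the definition of flatly innocent strategies forces $\varphi=\tilde\varphi$, and hence $\varphi$ is innocent.

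Now fix an innocent strategy $\varphi$ and establish the \emph{engine}: $\varphi$ is circular if and only if it is not noetherian (Definition~\ref{DefConstraintsOnStrategies}). For the forward direction, a circular witness $\boldsymbol{s}\vec{o}\vec{p}\in\lceil\varphi\rceil$ with $\vec{o}$ justifying $\vec{p}$ and the core of $\vec{p}$ already present in $\boldsymbol{s}$ exhibits a loop: because $\varphi$ is innocent, the play-fragment that first produced that core can be replayed verbatim after $\vec{o}\vec{p}$, this time on a fresh copy of the relevant subarena created by the exponential dynamics. Splicing this loop indefinitely yields a strictly $\preceq$-increasing infinite chain in $\lceil\varphi\rceil$---each iteration lengthens the P-view, the repetition being recorded only in ever-longer exponential tags that are invisible at the level of cores---so $\varphi$ is not noetherian. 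For the converse, an infinite strictly increasing chain in $\lceil\varphi\rceil$ has an infinite P-view as its limit; since the underlying combinatorial arena is finite, the core set $\mathcal{M}_{\mathscr{S}}$ is finite, so some core recurs infinitely often along this limit, and I would use the thread structure of P-views (each O-move justified by its immediate predecessor, P-moves constrained by visibility) to extract from such a recurrence an instance matching exactly the shape required by the definition of circular strategies.

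Finally I would close the remaining equivalences around this engine. If $\varphi$ is non-circular then no core is ever replayed within a P-view, so the justification-ancestor chain orders the moves of any P-view in $\lceil\varphi\rceil$ uniquely; thus the flattened view together with the last O-move recovers the full view, $\mathrm{fun}_\varphi$ is a partial map, and the same reconstruction supplies the uniqueness clause, making $\varphi$ flatly innocent. Conversely, if $\varphi$ is circular, the loop constructed above produces two positions with equal flattened P-view and equal last O-move but distinct responses, so $\mathrm{fun}_\varphi$ fails to be a partial map and $\varphi$ is not flatly innocent. Together with the engine this yields flatly innocent $\Leftrightarrow$ non-circular $\Leftrightarrow$ noetherian. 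The main obstacle is the converse direction of the engine: turning a pigeonhole recurrence of cores along an infinite P-view into a circular witness of the precise form $\mathrm{fun}_\varphi(S,\vec{o},\vec{p})$ with $\vec{p}\in S$ and $\vec{o}$ justifying $\vec{p}$ demands a careful analysis of how exponential copying threads through P-views, rather than a naive counting argument, and it is here that the finitary structure of combinatorial arenas does the real work.
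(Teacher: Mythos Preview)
Your proposal is correct in outline but takes a longer route than the paper, and the detour lands you on exactly the step you flag as the ``main obstacle''.

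The paper organises the equivalences differently: it shows (i) noetherian $\Rightarrow$ non-circular trivially, (ii) circular $\Rightarrow$ not flatly innocent (since the two P-views obtained before and after traversing the loop have the same flattening yet are distinct, so $\mathrm{fun}_\varphi$ cannot pin $\varphi$ down), and then (iii) non-circular $\Rightarrow$ noetherian \emph{and} flatly innocent in one stroke. The key observation for (iii) is that in a non-circular P-view no P-move core repeats, and since in a P-view each P-move justifies at most one subsequent O-move, no O-move core repeats either; hence every P-view has length at most $|\mathcal{M}_{\mathscr{S}}|$, which gives noetherianity immediately and also shows that the triple $(\underline{\boldsymbol{s}},\vec{m},\vec{n})$ reconstructs $\boldsymbol{s}\vec{m}\vec{n}$ uniquely, giving flat innocence.

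Compared to your route, this completely sidesteps your ``engine'' in its hard direction: you try to prove not-noetherian $\Rightarrow$ circular by pigeonholing along an infinite P-view and then carefully extracting a witness of the precise shape in the definition of circular, which you rightly identify as delicate. The paper instead proves the contrapositive non-circular $\Rightarrow$ noetherian, which is a one-line counting argument once the ``no repeated P-move $\Rightarrow$ no repeated O-move'' observation is made. Your exponential-replay argument for circular $\Rightarrow$ not noetherian is fine (and is what the paper sweeps under ``clearly''), but it is not needed as a separate engine. A minor point: in your final paragraph, ``distinct responses'' is not quite the right failure mode for flat innocence; the loop gives distinct P-views with the same flattening, so it is the \emph{uniqueness} clause in the definition of flatly innocent that fails, not necessarily functionality of $\mathrm{fun}_\varphi$.
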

\begin{proof}
Clearly, a strategy is not flatly innocent if it it is not innocent; this verifies the first part.

For the second part, let $\varphi : \mathscr{S}$ be an innocent strategy.
First, noetherianity clearly implies non-circularity.
Next, let us show that flat innocence implies non-circularity.
Assume that $\varphi$ is circular. 
Then, $\mathrm{fun}_\varphi$ does not distinguish two different P-views (by flattening them), so $\varphi$ is not flatly innocent. 
Finally, let us show that non-circularity implies noetherianity and flat innocence.
Let $\varphi$ be non-circular. 
Then, each P-view $\boldsymbol{s} \vec{m} \vec{n} \in \lceil \varphi \rceil$ does not have more than one occurrence of the same P-move.
This also implies that the P-view does not admit more than one occurrence of the same O-move either because in a P-view each occurrence of a P-move justifies at most one occurrence. 
Hence, the triple $(\underline{\boldsymbol{s}}, \vec{m}, \vec{n})$ completely recovers the P-view $\boldsymbol{s} \vec{m} \vec{n}$.
This verifies that $\varphi$ is flatly innocent. 
The same argument also shows that $\varphi$ is noetherian since otherwise one gets a contradiction to the finiteness of the set $\underline{\lceil \varphi \rceil}$.
\end{proof}

\if0
\begin{proposition}[a finite characterisation of noetherian linear strategies]
\label{PropFiniteCharacterisationOfNoetherianity}
A linear strategy is noetherian if and only if it is not circular.
\end{proposition}
\begin{proof}
By contraposition (for both of the sufficiency and the necessity).
\end{proof}
\fi

Because a strategy can be infinite, it is in general undecidable in conventional game semantics whether a given strategy is total, innocent or noetherian.
Nevertheless, Lemma~\ref{LemCircularity} implies that our combinatorial approach overcomes this undecidability in game semantics:
\begin{theorem}[polynomial time decidability of winning]
\label{ThmPolyTime}
Let $\mathscr{S}$ be a combinatorial sequent.
It only takes polynomial time to decide if a given (necessarily finite) relation $R \subseteq \underline{\lceil \mathcal{P}_{\mathscr{S}}^{\mathrm{Even}} \rceil} \times \mathcal{M}_{\mathscr{S}} \times \mathcal{M}_{\mathscr{S}}$ encodes a winning (respectively, linearly winning) strategy $\varphi : \mathscr{S}$ by $R = \mathrm{fun}_\varphi$.
\end{theorem}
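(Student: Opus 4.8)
The plan is to reduce \textbf{winning} to a finite, locally checkable condition on $R$ using the flat innocence lemma (Lemma~\ref{LemCircularity}), and then to verify that condition by a bounded search. First I would observe that, by Lemma~\ref{LemCircularity}, for an innocent strategy noetherianity coincides with flat innocence and with non-circularity; hence a winning strategy is exactly a total, flatly innocent strategy, and such a strategy is \emph{completely and uniquely determined} by the finite datum $\mathrm{fun}_\varphi$. Consequently, deciding whether a given finite relation $R$ satisfies $R = \mathrm{fun}_\varphi$ for some winning $\varphi$ amounts to (i) checking that $R$ is the graph of a partial map $\underline{\lceil \mathcal{P}_{\mathscr{S}}^{\mathrm{Even}} \rceil} \times \mathcal{M}_{\mathscr{S}} \rightharpoonup \mathcal{M}_{\mathscr{S}}$, (ii) checking that $R$ is non-circular, and (iii) reconstructing the unique innocent $\varphi$ with $\mathrm{fun}_\varphi = R$ and verifying that it is a genuine strategy on $\mathscr{S}$ and total. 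Step (i) is immediate, and step (ii) is a direct scan of $R$: by definition $R$ is circular precisely when it contains a triple $(S, \vec o, \vec p)$ with $\vec p \in S$ and $\vec o$ justifying $\vec p$, which can be tested triple-by-triple.

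For step (iii), the crucial consequence of non-circularity established in the proof of Lemma~\ref{LemCircularity} is that no P-view of $\varphi$ repeats a move; hence every P-view has length at most $2\lvert \mathcal{M}_{\mathscr{S}} \rvert$, and the reachable \emph{flattened} even P-views are exactly the first coordinates occurring in $R$ (together with $\emptyset$), of which there are at most $\lvert R \rvert$. I would therefore reconstruct the P-views of $\varphi$ by unfolding $R$ from the empty view: from a reachable even P-view $\boldsymbol{s}$ (a state $\underline{\boldsymbol{s}}$) and each legal O-move extension $\boldsymbol{s}\vec o$, read off $\mathrm{fun}_\varphi(\underline{\boldsymbol{s}}, \vec o) = \vec p$ and form the even P-view $\boldsymbol{s}\vec o\vec p$. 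Because the depth is bounded by $2\lvert\mathcal{M}_{\mathscr{S}}\rvert$ and the number of states is at most $\lvert R\rvert$, this search visits polynomially many P-views. For each generated P-view I would check that it is a legal position in $\mathscr{S}$; all the relevant axioms (recursive alternation, visibility, joker, the pointer conditions of Definition~\ref{DefJustifiedSequences} and the copycat constraint imposed by $\mathrm{Int}_{\mathscr{S}}$) are \emph{local}, i.e., decided by inspecting a bounded window of the P-view together with the fixed data of $\mathscr{S}$, so each check is polynomial. Totality is checked in the same pass: at each reachable even P-view $\boldsymbol{s}$ and each legal O-move $\vec o$ with $\boldsymbol{s}\vec o$ legal, $\mathrm{fun}_\varphi(\underline{\boldsymbol{s}}, \vec o)$ must be defined, and the number of legal $\vec o$ per state is at most $\lvert\mathcal{M}_{\mathscr{S}}\rvert$. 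That the P-views so verified actually assemble into a valid total innocent strategy, rather than merely a legal set of views, follows from innocence: $\varphi$ is generated from its P-views, and legality of an arbitrary position reduces, via the view operators, to legality of its P- and O-views, which are among those already checked.

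For the \textbf{linearly winning} case I would add one further polynomial test encoding linearity. Recall that $\varphi$ is linear iff the set $\{\, \lceil \boldsymbol{s}\vec o \rceil \mid \boldsymbol{s}\vec o\vec p \in \varphi \,\}$ is a singleton for every P-move $\vec p$ not subject to O-joker or to P-exponential (these two exceptions being fixed, polynomial-time recognisable properties of $\vec p$ read off from $\alpha_{\underline{\mathscr{S}}}[0]$ and $\mathscr{V}_{\epsilon_{\underline{\mathscr{S}}}}$). Under flat innocence this translates into the requirement that, among the reachable triples of $R$, each such $\vec p$ occurs as the output of \emph{exactly one} pair $(S, \vec o)$; I would verify this by grouping the triples of $R$ according to their third coordinate and inspecting each group, which is again polynomial.

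The main obstacle, and the point that the whole argument turns on, is establishing genuine polynomiality despite the fact that a strategy has, a priori, unboundedly many positions and that the domain $\underline{\lceil \mathcal{P}_{\mathscr{S}}^{\mathrm{Even}} \rceil}$ of the first coordinate is a set of subsets of $\mathcal{M}_{\mathscr{S}}$, of potentially exponential size. The resolution is exactly the finitary leverage provided by Lemma~\ref{LemCircularity}: non-circularity forbids repeated moves in a P-view, capping P-view length linearly in $\lvert\mathcal{M}_{\mathscr{S}}\rvert$, and the reachable states are not arbitrary subsets but precisely the at most $\lvert R\rvert$ first coordinates of the given relation. Combined with the locality of the position axioms and with innocence reducing full-position legality to view legality, this confines the entire decision procedure to a polynomial-size search. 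The care required is in verifying these two reductions — that validity of all positions of an innocent $\varphi$ follows from validity of the reconstructed P-views, and that each position axiom (in particular the $\mathrm{Int}_{\mathscr{S}}$ copycat condition peculiar to combinatorial sequents) is genuinely local — rather than in any elaborate computation.
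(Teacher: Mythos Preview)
Your approach is essentially the same as the paper's: both hinge on Lemma~\ref{LemCircularity} to collapse the winning condition to ``$R$ is a function, total, and non-circular,'' and both dispatch the linear case with an additional direct scan of $R$. The paper's proof is far terser---it simply checks that $R$ is a total map and then that it is non-circular, invoking Lemma~\ref{LemCircularity} for correctness---whereas you spell out the P-view reconstruction, the locality of the position axioms, and the bound on the search space that justifies polynomiality; these elaborations are sound and fill in details the paper leaves implicit.
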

\begin{proof}
The following algorithm only takes polynomial time (with respect to the size of $R$):
\begin{enumerate}

\item Decide if $R$ is a total map $\underline{\lceil \mathcal{P}_{\mathscr{S}}^{\mathrm{Even}} \rceil} \times \mathcal{M}_{\mathscr{S}} \rightarrow \mathcal{M}_{\mathscr{S}}$. Output `No!' and stop if not; go to the next step otherwise. 

\item Decide if $R$ is circular. 
Output `Yes!' and stop if not; output `No!' and stop otherwise.

\end{enumerate} 

By Lemma~\ref{LemCircularity}, this algorithm outputs `Yes!' (respectively, `No!') if and only if $R = \mathrm{fun}_\varphi$ holds (respectively, does not hold) for a unique winning strategy $\varphi : \mathscr{S}$. 

In addition, it is also polynomial time decidable whether a winning strategy is linear. 
\end{proof}

Let us next turn to constructions on strategies.
We adapt the novel, \emph{intensional} constructions on strategies introduced by Yamada and Abramsky \cite{yamada2020dynamic} to our combinatorial setting:

\begin{notation}
We generalise the notation $\upharpoonright$ used in Definition~\ref{DefPositionsInCombinatorialSemiArenas} and write $\boldsymbol{s}{\upharpoonright_{\mathscr{A}_{1}, \mathscr{A}_{2}, \dots, \mathscr{A}_{n}}}$ for the justified subsequence of a justified sequence $\boldsymbol{s}$ in a combinatorial arena $\mathscr{A}$, where $\mathscr{A}_{i}$ ($i \in \overline{n}$) are combinatorial subarenas of $\mathscr{A}$, that consists of extended moves in $\mathscr{A}_{i}$ for some $i \in \overline{n}$.
\end{notation}

\begin{definition}[constructions on strategies]
Given strategies $\varphi : (\Gamma \dashv \Pi \vdash \mathscr{A})$, $\varphi' : (\Gamma' \dashv \Pi' \vdash \mathscr{A}')$, $\psi : (\Gamma \dashv \Sigma \vdash \mathscr{B})$, $\tau : (\Delta, \mathscr{A} \dashv \Sigma \vdash \mathscr{B})$ and $\theta : (\oc \Gamma \dashv \oc \Pi \vdash \mathscr{A})$, we define
\begin{itemize}

\item The \emph{\bfseries copy-cat} $\mathrm{cp}_{\mathscr{A}} : (\mathscr{A}_{[0]} \dashv \vdash \mathscr{A}_{[1]})$ on $\mathscr{A}$ by
\begin{equation*}
\mathrm{cp}_{\mathscr{A}} \colonequals \{ \, \boldsymbol{s} \in \mathcal{P}_{\mathscr{A}_{[0]} \dashv \vdash \mathscr{A}_{[1]}}^{\mathrm{Even}} \mid \forall \boldsymbol{t} \preceq \boldsymbol{s} . \, \mathrm{Even}(\boldsymbol{t}) \Rightarrow \boldsymbol{t} {\upharpoonright_{\mathscr{A}_{[0]}}} = \boldsymbol{t} {\upharpoonright_{\mathscr{A}_{[1]}}}, \mathrm{Init}_{\mathscr{A}_{[0]}, \mathscr{A}_{[1]}}(\boldsymbol{s}) \, \},
\end{equation*}
where the predicate $\mathrm{Init}_{\mathscr{A}_{[0]}, \mathscr{A}_{[1]}}(\boldsymbol{s})$ means that each initial occurrence in $\boldsymbol{s}$ on $\mathscr{A}_{[0]}$ is justified by the last initial one on $\mathscr{A}_{[1]}$;

\item The \emph{\bfseries parallel composition} $\varphi \between \tau : (\Gamma, \Delta \dashv \Pi, \mathscr{A}_{[0]} \multimap \mathscr{A}_{[1]}, \Sigma \vdash \mathscr{B})$ of $\varphi$ and $\tau$ by
\begin{equation*}
\varphi \between \tau \colonequals \{ \, \boldsymbol{s} \in \mathcal{P}_{\Gamma, \Delta \dashv \Pi, \mathscr{A}_{[0]} \multimap \mathscr{A}_{[1]}, \Sigma \vdash \mathscr{B}}^{\mathrm{Even}} \mid {\boldsymbol{s} {\upharpoonright_{\Gamma, \Pi, \mathscr{A}_{[0]}}}} \in \varphi, {\boldsymbol{s} {\upharpoonright_{\Delta, \mathscr{A}_{[1]}, \Sigma, \mathscr{B}}}} \in \tau \, \};
\end{equation*}

\item The \emph{\bfseries tensor} $\varphi \otimes \varphi' : (\Gamma, \Gamma' \dashv \Pi, \Pi' \vdash \mathscr{A} \otimes \mathscr{A}')$ of $\varphi$ and $\varphi'$ by
\begin{equation*}
\varphi \otimes \varphi' \colonequals \{ \, \boldsymbol{s} \in \mathcal{P}_{\Gamma, \Gamma' \dashv \Pi, \Pi' \vdash \mathscr{A} \otimes \mathscr{A}'}^{\mathrm{Even}} \mid \boldsymbol{s} {\upharpoonright_{\Gamma, \Pi, \mathscr{A}}} \in \varphi, \boldsymbol{s} {\upharpoonright_{\Gamma', \Pi', \mathscr{A}'}} \in \varphi' \, \};
\end{equation*}

\item The \emph{\bfseries pairing} $\langle \varphi, \psi \rangle : (\Gamma \dashv \Pi \mathbin{\&} \Sigma \vdash \mathscr{A} \mathbin{\&} \mathscr{B})$ of $\varphi$ and $\psi$, if $\Pi$ and $\Sigma$ are both singleton (n.b., this does not lose generality since $\Pi$ and $\Sigma$ are combinatorial cuts, not pre-cuts), by
\begin{equation*}
\langle \varphi, \psi \rangle \colonequals \{ \, \boldsymbol{s} \in \mathcal{P}_{\Gamma \dashv \Pi \mathbin{\&} \Sigma \vdash \mathscr{A} \mathbin{\&} \mathscr{B}}^{\mathrm{Even}} \mid \boldsymbol{s} {\upharpoonright_{\Gamma, \Pi, \mathscr{A}}} \in \varphi, \boldsymbol{s} {\upharpoonright_{\Gamma, \Sigma, \mathscr{B}}} \in \psi \, \};
\end{equation*}

\item The \emph{\bfseries dereliction} $\mathrm{der}_{\mathscr{A}} : \oc \mathscr{A} \dashv \vdash \mathscr{A}$ on $\mathscr{A}$ by 
\begin{equation*}
\mathrm{der}_{\mathscr{A}} \colonequals \{ \, \boldsymbol{s} \in \mathcal{P}_{\oc \mathscr{A} \dashv \vdash \mathscr{A}}^{\mathrm{Even}} \mid \forall \boldsymbol{t} \preceq \boldsymbol{s} . \, \mathrm{Even}(\boldsymbol{t}) \Rightarrow \boldsymbol{t} {\upharpoonright_{\oc \mathscr{A}}} = \boldsymbol{t} {\upharpoonright_{\mathscr{A}}}, \mathrm{Init}_{\oc \mathscr{A}, \mathscr{A}}(\boldsymbol{s}) \, \};
\end{equation*}

\item The \emph{\bfseries promotion} $\theta^\dagger : \oc \Gamma \dashv \oc \Pi \vdash \oc \mathscr{A}$ of $\theta$ by
\begin{equation*}
\theta^\dagger \colonequals \{ \, \boldsymbol{s} \in \mathcal{P}_{\oc \Gamma \dashv \oc \Pi \vdash \oc \mathscr{A}}^{\mathrm{Even}} \mid {\boldsymbol{s} {\upharpoonright_{\oc \Gamma, \oc \Pi, \oc \mathscr{A}}}} \in \theta \, \};
\end{equation*} 

\item The \emph{\bfseries transpose} $\lambda(\tau) : \Delta \dashv \Sigma \vdash \mathscr{A} \multimap \mathscr{B}$ of $\tau$ by adjusting the tags on $\mathscr{A}$ in $\tau$.

\end{itemize}

\end{definition}


There is one more operation imported from dynamic game semantics \cite{yamada2020dynamic}, which is significantly simplified thanks to the intensionality \emph{explicitly} displayed in a combinatorial sequent:

\begin{definition}[the big-step hiding operation]
\label{DefCombinatorialBigStepCutElimination}
The \emph{\bfseries big-step hiding operation} is the function $\mathcal{H}^\omega$ that maps each combinatorial strategy $\varphi : (\Gamma \dashv \Pi \vdash \Phi)$ to the one 
\begin{equation*}
\mathcal{H}^\omega(\varphi) \colonequals \{ \, \boldsymbol{s}{\upharpoonright_{\Gamma, \Phi}} \mid \boldsymbol{s} \in \varphi \, \} : \mathcal{H}^\omega(\mathscr{S}) \colonequals (\Gamma \dashv \vdash \Phi), 
\end{equation*}
and the \emph{\bfseries hiding equivalence} is the equivalence relation $\simeq_{\mathcal{H}}^\omega$ between strategies defined by
\begin{equation*}
\varphi : \mathscr{S} \simeq_{\mathcal{H}}^\omega \varphi' : \mathscr{S}' \ratio \Leftrightarrow \mathcal{H}^\omega(\varphi) = \mathcal{H}^\omega(\varphi') : \mathcal{H}^\omega(\mathscr{S}) = \mathcal{H}^\omega(\mathscr{S}').
\end{equation*}
\end{definition}

The big-step hiding operation $\mathcal{H}^\omega$ deletes the intensionality of a combinatorial sequent and a strategy \emph{in one go}.
We refine it into a \emph{step-by-step one} $\mathcal{H}$ by Corollary~\ref{CorCombinatorialCutElim}, whose countable iteration coincides with $\mathcal{H}^\omega$, in such a way that $\mathcal{H}$ corresponds precisely to cut-elimination.

\begin{proposition}[well-defined constructions on strategies]
\label{PropWellDefinedConstructionsOnStrategies}
Copy-cats and derelictions are linearly winning strategies; strategies are closed under parallel composition, tensor, pairing, promotion, transpose and hiding, and they preserve winning, linear winning and the hiding equivalence.
\end{proposition}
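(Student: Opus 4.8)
The plan is to establish the claims in four stages — strategy-hood of each construction, the linearly-winning property of copy-cats and derelictions, preservation of winning and linear winning, and preservation of the hiding equivalence — adapting the standard game-semantic arguments \cite{hyland2000full,yamada2020dynamic} to our setting. The new ingredients are the dynamics of $\alpha_{\mathscr{A}}$ and $\epsilon_{\mathscr{A}}$ and the explicit intensionality $\mathrm{Int}_{\mathscr{S}}$; the guiding observation is that these dynamics are invisible in P-views, so that the flat innocence lemma (Lemma~\ref{LemCircularity}) stays applicable throughout. Strategy-hood is checked first: for tensor, pairing, promotion and parallel composition, nonemptiness and even-prefix-closure are inherited componentwise from the defining restriction clauses, while determinism follows from determinism of the components together with the fact that the switching vertices make the component carrying the next extended P-move unambiguous; the transpose merely re-tags moves and so preserves every property at once.

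For $\mathrm{cp}_{\mathscr{A}}$ and $\mathrm{der}_{\mathscr{A}}$ I would argue directly. Totality holds because each responds to an extended O-move by copying it into the opposite copy of $\mathscr{A}$ (respectively between $\oc \mathscr{A}$ and $\mathscr{A}$), which the matching-restriction clause together with the alternation and visibility axioms always renders a legal extension; the $\mathrm{Init}$ bookkeeping guarantees the justification pointers are well-formed. Innocence is immediate since the response depends only on the last O-move and its justifier, hence on the P-view. For noetherianity I invoke Lemma~\ref{LemCircularity}: a copy-cat never outputs a core already present in the current P-view, so it is non-circular and therefore noetherian. Linearity holds because, away from the effect of $1$ and $\oc$, each relevant P-move is visited by a single P-view of the strategy, the copying being injective on cores.

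For tensor, pairing and promotion, winning transfers componentwise: totality and innocence pass through the defining restrictions directly, and noetherianity passes through because a P-view of the composite projects to P-views of the components, so circularity of the composite would force circularity of a component, contradicting Lemma~\ref{LemCircularity}. Linearity is likewise preserved, since the injective-visiting condition is imposed on each P-move separately and the relevant P-moves of the composite partition into those of the components (the new exponential moves created by promotion being exempt as subject to P-exponential). Parallel composition is the first place the interaction across a cut matters: one must show that its noetherianity is inherited, i.e., that the internal dialogue across each $\underline{\mathscr{S}}^v \cong \oc^k (\mathscr{A} \multimap \mathscr{A})$ cannot grow a single P-view without bound; this again follows from Lemma~\ref{LemCircularity} applied to the components, since an unbounded internal dialogue would project to an unbounded P-view of $\varphi$ or $\tau$.

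The main obstacle is the hiding operation $\mathcal{H}^\omega$, and I expect the termination argument underpinning it to be the crux. I must show that deleting the internal moves of $\mathrm{Int}_{\mathscr{S}}$ again yields a deterministic strategy, preserves totality and noetherianity, and is compatible with the other constructions. Determinism and preservation of positions under the restriction $\boldsymbol{s}{\upharpoonright_{\Gamma, \Phi}}$ demand a switching (zipping) analysis showing that, between two consecutive external moves, the internal dialogue across each cut is uniquely determined by the surrounding strategies. The heart of the matter is that this internal dialogue is always finite, so that $\mathcal{H}^\omega(\varphi)$ responds to every external O-move and is therefore total; this is exactly where the noetherianity established for parallel composition is used, since a finite bound on the P-views of $\varphi$ bounds the length of each internal dialogue. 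Noetherianity of $\mathcal{H}^\omega(\varphi)$ then follows because P-views can only shrink under hiding, and the hiding equivalence is preserved because $\mathcal{H}^\omega$ is idempotent (its codomain has empty intensionality) and commutes with the component restrictions defining the remaining constructions.
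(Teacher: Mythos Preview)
The paper's own proof is literally ``Straightforward and left to the reader.'' Your proposal is therefore not competing with an argument in the paper but supplying the routine verification the authors decline to write out; the outline you give is the standard one for HO-style game semantics adapted to the present combinatorial setting, and it is correct in spirit.

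A couple of minor remarks on places where your sketch could be tightened. First, for noetherianity of $\mathrm{cp}_{\mathscr{A}}$ via Lemma~\ref{LemCircularity} you should be explicit that the two copies of $\mathscr{A}$ carry distinct tags, so that the ``same'' core on opposite sides counts as two different moves in $\mathcal{M}_{\mathscr{A}_{[0]} \dashv \vdash \mathscr{A}_{[1]}}$; otherwise the non-circularity claim is ambiguous. Second, your preservation-of-totality argument for $\mathcal{H}^\omega$ needs slightly more than a bound on P-views of the parallel composition: one must also invoke innocence and the finite branching of each P-view node (a K\"onig-style step) to conclude that the internal dialogue between two consecutive external moves is finite, since a bound on P-view length alone does not immediately bound position length. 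Both points are standard and do not affect the overall correctness of your plan.
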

\begin{proof}
Straightforward and left to the reader. 
\end{proof}

At the end of the present section, let us introduce a \emph{finite presentation} of a strategy:
\begin{definition}[P-view algorithms]
\label{DefViewAlgorithms}
A \emph{\bfseries P-view algorithm} on a combinatorial sequent $\mathscr{S}$ is a (necessarily finite) partial map $\underline{\lceil \mathcal{P}_{\mathscr{S}}^{\mathrm{Even}} \rceil} \times \mathcal{M}_{\mathscr{S}} \rightharpoonup \mathcal{M}_{\mathscr{S}}$.
\end{definition}

\if0
\begin{definition}[view algorithms]
\label{DefViewAlgorithms}
A \emph{\bfseries view algorithm} on a combinatorial arena $\mathscr{A}$ is a quadruple $f = (\partial_f, \amalg_f, | f |, \| f \|)$ of a function $\partial_f : \mu_{\mathscr{A}}^{+\sharp} \rightarrow \{ 0,1 \}$ such that $\partial_f(\mu_{\mathscr{A}}^0) = \{ 0 \}$, a subset $\amalg_f \subseteq \wp(\mu_{\mathscr{A}}^{+\sharp})$ such that each of its elements contains an initial move of $\mathscr{A}$, a family $\{ |f|_s \}_{s \in \amalg_f}$ of natural numbers $| f |_s$ and a family $\| f \| = \{ \| f \|_{s} \}_{s \in \amalg_f}$ of partial maps $\| f \|_{s} : \lceil \mathcal{P}_{\mathscr{A}}^{\mathrm{Odd}} \rceil {\downharpoonright_{|f|_s}} \rightharpoonup \mu_{\mathscr{A}}^{+\sharp}$, where $\boldsymbol{t} {\downharpoonright_{|f|}}$ is the suffix of a given sequence $\boldsymbol{t}$ that consists of the last $|f|_s$ elements of $\boldsymbol{t}$.
\end{definition}
\fi

\begin{definition}[finitely presentable strategies]
\label{DefFinitePresentations}
The strategy $\mathrm{st}_{\mathscr{S}}(f) : \mathscr{S}$ \emph{\bfseries finitely presented} by a P-view algorithm $f$ on a combinatorial sequent $\mathscr{S}$ is defined inductively by
\begin{equation}
\label{EquationOfFinitePresentations}
\mathrm{st}_{\mathscr{S}}(f) \colonequals \{ \boldsymbol{\epsilon} \} \cup \{ \, \boldsymbol{s}\vec{m}\vec{n} \in \mathcal{P}_{\mathscr{S}}^{\mathrm{Even}} \mid \boldsymbol{s} \in \mathrm{st}_{\mathscr{S}}(f), f(\underline{\lceil \boldsymbol{s} \rceil}, \vec{m}^\sharp) = \vec{n}^\sharp \, \}.
\end{equation}

A strategy $\varphi : \mathscr{S}$ is \emph{\bfseries finitely presentable} if there is a P-view algorithm $f$ on $\mathscr{S}$ that satisfies the equation $\mathrm{st}_{\mathscr{S}}(f) = \varphi$, where $f$ is called a \emph{\bfseries finite presentation} of $\varphi$.
\end{definition}

\begin{remark}
By the second half of the first axiom of Definition~\ref{DefJustifiedSequences}, $p$ in the equation (\ref{EquationOfFinitePresentations}) is \emph{unique}.
It then follows that this equation (\ref{EquationOfFinitePresentations}) yields a well-defined flatly innocent strategy.
Clearly, every flatly innocent strategy is finitely presentable, but not vice versa as we shall see shortly. 
\end{remark}

\begin{notation}
We write $f :: \mathscr{S}$ if $f$ is a P-view algorithm of a combinatorial sequent $\mathscr{S}$, and $\mathrm{st}(f)$ for the strategy $\mathrm{st}_{\mathscr{S}}(f) : \mathscr{S}$ when the underlying combinatorial sequent $\mathscr{S}$ is evident. 
\end{notation}

As the naming indicates, P-view algorithms are to serve as \emph{finite presentations} of strategies. 
A finite presentation of a strategy exhibits \emph{computability} of the strategy in the evident sense.
We show in \S\ref{Computation} that finitely presentable strategies form a powerful model of higher-order computation that is not only Turing complete but also \emph{PCF-complete}, i.e., at least as strong as PCF.

As indicated by the notion of flat innocence, a P-view algorithm may finitely present more than one innocent strategy; i.e., P-view algorithms are more \emph{abstract} than innocent strategies (by flattening P-views). 
This abstract nature of P-view algorithms is notable since it stands in sharp contrast with existing approaches to computability which rely on symbolic methods with superfluous details for defining computability of more abstract, semantic objects.
As a result, one does not have to care if a property of a strategy in terms of its finite presentation is invariant under the choice of the finite presentation; the proof of Theorem~\ref{ThmPolyTime} is an example.
This is a strong advantage as it is often nontrivial to prove the invariance of a property under the choice of representations. 
More generally, inessential details of the symbolic manipulations in theory of computation, e.g., Turing machines and lambda-calculi, keep mathematician away from the field, but the semantic, abstract nature of our method changes this unfortunate situation. 

Parallel composition, tensor and pairing take the disjoint unions of underlying combinatorial sequents, and promotion and transpose essentially do not alter innocent strategies. 
Hence:
\begin{proposition}[preservation of flat innocence and finite presentability]
\label{PropPreservationOfFlatInnocenceAndFinitePresentability}
Copy-cats and derelictions are flatly innocent and therefore finitely presentable; parallel composition, tensor, pairing, promotion and transpose preserve flat innocence and finite presentability.
\end{proposition}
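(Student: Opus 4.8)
The plan is to reduce everything to two ingredients already available: the characterisation in Lemma~\ref{LemCircularity} that, for an \emph{innocent} strategy, flat innocence coincides with non-circularity and with noetherianity, and the fact recorded after Definition~\ref{DefFinitePresentations} that flat innocence entails finite presentability. The first half of the proposition then costs nothing new. By Proposition~\ref{PropWellDefinedConstructionsOnStrategies} copy-cats $\mathrm{cp}_{\mathscr{A}}$ and derelictions $\mathrm{der}_{\mathscr{A}}$ are linearly winning, hence in particular innocent and noetherian; Lemma~\ref{LemCircularity} immediately upgrades this to flat innocence, and flat innocence gives finite presentability.

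For the operations I would treat both properties by a single mechanism: show that the P-view of a position in the constructed strategy decomposes into P-views of positions of the component strategies, and then assemble a P-view algorithm for the result from those of the components. Once a position $\boldsymbol{s}$ of the result is known to restrict to positions of the inputs with the expected P-views, three things follow in turn: innocence of the result from innocence of the inputs; non-circularity of the result from non-circularity of the inputs, since a repeated P-move in a composite P-view would project to a repeated P-move in a component P-view, whence flat innocence by Lemma~\ref{LemCircularity}; and a finite presentation of the result obtained as the (finite) union of the component P-view algorithms, re-indexed along the tags introduced by the construction.

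The easy cases are tensor, pairing, promotion and transpose. For $\varphi \otimes \varphi'$ and $\langle \varphi, \psi \rangle$ the underlying combinatorial sequent is a disjoint union, so every justification chain, and hence every P-view, stays inside one component; thus $\mathrm{fun}_{\varphi \otimes \varphi'}$ is literally the disjoint union of $\mathrm{fun}_\varphi$ and $\mathrm{fun}_{\varphi'}$ (and similarly for pairing), and the union of the two finite P-view algorithms presents the result. Promotion $\theta^\dagger$ and transpose $\lambda(\tau)$ only reshuffle exponential and linear-implication tags and restrict the set of admissible positions; they leave the P-view function unchanged up to this retagging, so flat innocence and finite presentability are inherited verbatim.

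The main obstacle is parallel composition $\varphi \between \tau$, where the explicit cut $\mathscr{A}_{[0]} \multimap \mathscr{A}_{[1]}$ genuinely links the two components and the decomposition of P-views is no longer immediate. The technical heart is a \emph{zipping} lemma: for $\boldsymbol{s} \in \varphi \between \tau$ the P-view $\lceil \boldsymbol{s} \rceil$, read through the cut, projects to the P-views of $\boldsymbol{s}{\upharpoonright_{\Gamma, \Pi, \mathscr{A}_{[0]}}} \in \varphi$ and $\boldsymbol{s}{\upharpoonright_{\Delta, \mathscr{A}_{[1]}, \Sigma, \mathscr{B}}} \in \tau$, with the forced copy-moves across the cut acting as transparent links. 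I would prove this by induction on $|\boldsymbol{s}|$, using the cut condition $\mathscr{S}(\vec{m}, \vec{n})$ on positions in a combinatorial sequent to match each P-move of one component against its copied O-move in the other. Granting the zipping lemma, innocence transfers by the classical composition-preserves-innocence argument, non-circularity (equivalently noetherianity) transfers because a cycle in the composite P-view would induce one in a projected component P-view, and the composite P-view algorithm is the union of the finite presentations of $\varphi$ and $\tau$ together with the deterministic cut-copy rule. The point demanding care is precisely that keeping the cut explicit in the combinatorial-sequent framework keeps the copy-moves visible; this is what makes the P-views, rather than the full interaction sequences, close under the construction and prevents the composite from acquiring circularity that its components lack.
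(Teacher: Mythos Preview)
Your argument is correct, but you over-engineer the parallel composition case.  The paper's proof is literally the sentence that precedes the proposition: ``Parallel composition, tensor and pairing take the disjoint unions of underlying combinatorial sequents, and promotion and transpose essentially do not alter innocent strategies.''  The point you almost reach in your last paragraph---that keeping the cut explicit makes the P-views close under the construction---is in fact the whole story.  Since the cut $\mathscr{A}_{[0]}\multimap\mathscr{A}_{[1]}$ is retained in the intensionality rather than hidden, every move of $\varphi\between\tau$ lies in exactly one of the two disjoint pieces $(\Gamma,\Pi,\mathscr{A}_{[0]})$ or $(\Delta,\mathscr{A}_{[1]},\Sigma,\mathscr{B})$, and, crucially, the O-moves at the cut boundary are \emph{pseudo-initial} in the composite sequent (their elements are children of the switching vertex of the inner $\multimap$).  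By clause~(3) of Definition~\ref{DefViews} the P-view therefore \emph{restarts} at such moves, so the P-view of any position in $\varphi\between\tau$ lies wholly inside one component and coincides with the P-view of the restricted position there.  Consequently $\mathrm{fun}_{\varphi\between\tau}$ is the disjoint union of $\mathrm{fun}_\varphi$ and $\mathrm{fun}_\tau$ (up to the obvious retagging), and your ``zipping lemma'' collapses to this observation.  Likewise, the finite presentation of $\varphi\between\tau$ is just the union of the two component P-view algorithms; no separate ``cut-copy rule'' is needed, since the copy across the cut is an O-move and never appears in Player's algorithm.

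What your approach buys is robustness: your zipping argument would survive in a setting where composition hides the cut (standard HO-composition), which is indeed where such lemmas are genuinely required.  The paper's approach buys brevity: the whole design of combinatorial \emph{sequents} with explicit intensionality was chosen precisely so that parallel composition is no harder than tensor, and the proposition becomes immediate.
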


\if0
\begin{proposition}[completeness of finite presentations for faithfully innocent strategy]
\label{PropCompletenessOfFinitePresentations}
Every faithfully innocent strategy has an injective finite presentation, and the map $\mathrm{st}_{\mathscr{S}}$ forms a bijection between injective P-view algorithms and faithfully innocent strategies on $\mathscr{S}$.
\end{proposition}
\begin{proof}
By induction on the length of the P-view of an odd-length position. 
\end{proof}
\fi

On the other hand, the big-step hiding operation does not preserve the finite presentability, let alone the flat innocence, of a strategy as the following example demonstrates: 

\begin{example}
\label{ExFinitePresentableStrategies}
The strategies $\underline{0}, \underline{1} : \mathscr{N}$ are both linearly winning and flatly innocent, but the one $\underline{n} : \mathscr{N}$ is neither for each $n > 1$.
How shall we finitely present the strategy $\underline{n} : \mathscr{N}$ then?

To answer this question, observe that the strategy $\mathrm{succ} : (\mathscr{N} \dashv \vdash \mathscr{N}')$, where the superscript $(\_)'$ is a tag to distinguish the two copies of the arithmetic combinatorial arena $\mathscr{N}$, defined by
\begin{footnotesize}
\begin{equation*}
\mathrm{succ} \colonequals \{ \, {\dashv\vdash}(\vec{q}_0)'  . \vec{\mathrm{yes}}'_0 ((\vec{q}_1)' . \vec{q}_0 . \vec{\mathrm{yes}}_0 . \vec{\mathrm{yes}}'_{1}) ((\vec{q}_2)' . \vec{q}_1 . \vec{\mathrm{yes}}_{1} . \vec{\mathrm{yes}}'_{2}) \dots ((\vec{q}_{n})' . \vec{q}_{n-1} . \vec{\mathrm{yes}}_{n-1} . \vec{\mathrm{yes}}'_{n}) ((\vec{q}_{n+1})' . \vec{q}_{n} . \vec{\mathrm{no}} . \vec{\mathrm{no}}') \mid n \in \mathbb{N} \, \}
\end{equation*}
\end{footnotesize}is linearly winning and flatly innocent; we leave the verification to the reader. 
Then, the strategy 
\begin{equation*}
\mathrm{succ}^n(\underline{0}) \colonequals \underline{0} \between \mathrm{succ} \between \mathrm{succ} \between \dots \between \mathrm{succ} : (\dashv (\mathscr{N} \multimap \mathscr{N})^n \vdash \mathscr{N})
\end{equation*}
is linearly winning and flatly innocent by Propositions~\ref{PropWellDefinedConstructionsOnStrategies} and \ref{PropPreservationOfFlatInnocenceAndFinitePresentability}, and has  $\mathcal{H}^\omega(\mathrm{succ}^n(\underline{0})) = \underline{n}$.
Hence, although $\underline{n}$ itself is not finitely presentable, its \emph{intensional refinement} $\mathrm{succ}^n(\underline{0})$ is.
\end{example}

\if0
McCusker \cite{mccusker1998games} showed that the map $\mathrm{st}$ (Definition~\ref{DefViewAlgorithms}) from view functions to innocent strategies becomes a \emph{bijection} if one restricts view functions to canonical ones. 
Thanks to this bijection, one may regard canonical view functions as innocent strategies \emph{themselves}.
Moreover, because the underlying rooted dag of every combinatorial arena is finite, and because there is only the binary choice on the dynamics of exponential dynamics, one may clearly decide if a view function is canonical in a finitary fashion. 
For these reasons, we take canonical view functions as a finitary reformulation of innocent strategies and call them \emph{\bfseries combinatorial strategies}.
Accordingly, we henceforth write $\sigma : \mathscr{A}$ if $\sigma$ is a combinatorial strategy on a combinatorial arena $\mathscr{A}$.

We can even strengthen Lemma~\ref{LemCircularity} based on \cite[\S 3.5.1]{mccusker1998games}:
\begin{definition}[safety and saturation \cite{mccusker1998games}]
A P-view algorithm $f :: \mathscr{S}$ is said to be \emph{\bfseries safe} if, for all $\boldsymbol{s} \in \mathrm{st}(f)$, $\boldsymbol{s}m . f(\underline{\lceil \boldsymbol{s}m \rceil}, m^\sharp) \in \mathcal{P}_{\mathscr{S}}$ whenever $\boldsymbol{s}m \in  \mathcal{P}_{\mathscr{S}}^{\mathrm{Odd}}$, where $\boldsymbol{s}m . f(\underline{\lceil \boldsymbol{s}m \rceil}, m^\sharp) \colonequals \boldsymbol{s}m$ if $ f(\underline{\lceil \boldsymbol{s}m \rceil}, m^\sharp)$ is undefined, and \emph{\bfseries saturated} if the converse holds.
\end{definition}

\begin{corollary}[a functional characterisation of faithfully innocent strategies]
For each combinatorial sequent $\mathscr{S}$, the map $\mathrm{st}_{\mathscr{S}}$ forms a bijection between injective P-view algorithms and faithfully innocent strategies on $\mathscr{S}$.
\end{corollary}
\begin{proof}
Immediate from Lemma~\ref{LemCircularity}.
\end{proof}
\fi


\if0
\begin{example}
\label{ExamplesOfFinitePresentableStrategies}
Let us show that the strategy $\mathrm{succ} : \mathscr{N}^{[0]} \multimap \mathscr{N}^{[1]}$ is finitely presentable, in which the superscripts $(\_)^{[i]}$ ($i = 0, 1$) are informal tags to distinguish the two copies of $\mathscr{N}$. 
As a finite presentation of $\mathrm{succ}$, we define a view algorithm $\mathrm{alg}(\mathrm{succ}) :: \mathscr{N}^{[0]} \multimap \mathscr{N}^{[1]}$ that consists of the map $\partial_{\mathrm{alg}(\mathrm{succ})} : m \mapsto \begin{cases} 0 &\text{if $m = \vec{q}_0^{[1]}$;} \\ 1 &\text{otherwise,} \end{cases}$ the set $\amalg_{\mathrm{alg}(\mathrm{succ})} \colonequals \{ \{ \vec{q}_0^{[1]} \} \}$, the natural number $|\mathrm{alg}(\mathrm{succ})|_{\{ \vec{q}_0 \}} \colonequals 3$ and the function $\| \mathrm{alg}(\mathrm{succ}) \|_{\{ \vec{q}_0 \}}  : \lceil \mathcal{P}_{\mathscr{N}^{[0]} \multimap \mathscr{N}^{[1]}}^{\mathrm{Odd}} \rceil {\downharpoonright_{|f|}} \rightarrow \mu_{\mathscr{N}^{[0]} \multimap \mathscr{N}^{[1]}}^{+\sharp}$ that maps
\begin{mathpar}
\vec{q}_0^{[1]} \mapsto \vec{q}_0^{[0]}
\and
q_i^{[1]}q_i^{[0]}\mathrm{yes}_i^{[0]} \mapsto \mathrm{yes}_i^{[1]}
\and
q_i^{[1]}q_i^{[0]}\mathrm{no}_i^{[0]} \mapsto \mathrm{yes}_i^{[1]}
\\
\mathrm{yes}_i^{[0]}\mathrm{yes}_i^{[1]}\vec{q}_1^{[1]} \mapsto q_i^{[0]}
\and
\mathrm{no}_i^{[0]}\mathrm{yes}_i^{[1]}\vec{q}_1^{[1]} \mapsto \mathrm{no}^{[0]},
\end{mathpar}
where $i$ is $0$ or $1$.
It is easy to verify $\mathrm{st}(\mathrm{alg}(\mathrm{succ})) = \mathrm{succ}$. 
The strategy $\mathrm{succ}$ itself is \emph{infinite}, so its finite presentability is nontrivial.
Here, the operation $(\_)^\sharp$ is crucial for this finite presentation.

Finally, we leave it to the reader to show that the copy-cat $\mathrm{cp}_{\mathscr{A}} : \mathscr{A}^{[0]} \multimap \mathscr{A}^{[1]}$ on each combinatorial arena $\mathscr{A}$ is finitely presentable because its finite presentation can be constructed in a way very similar to the one $\mathrm{alg}(\mathrm{succ})$ for $\mathrm{succ}$.
\end{example}

\begin{theorem}[preservation of finite presentability]
Copy-cats and derelictions are finitely presentable, and cut, tensor, pairing and promotion preserve finite presentability of strategies.
\end{theorem}
\begin{proof}
We have shown in Example~\ref{ExamplesOfFinitePresentableStrategies} that copy-cats are finitely presentable, and the case for derelictions is analogous. 
In the following, we focus on the preservation of finite presentability under cut and promotion since the cases for other constructions are simpler or similar.

Let us first consider cut.
Assume that strategies $\sigma : (\Gamma \dashv \Pi \vdash \mathscr{A})$ and $\tau : (\Delta, \mathscr{A} \dashv \Sigma \vdash \mathscr{B})$ admit their finite presentations $f$ and $g$, respectively.
We define a view algorithm $f \between g :: (\Gamma, \Delta \dashv \Pi, \mathrm{Cut}(\mathscr{A}_{[0]}, \mathscr{A}_{[1]}), \Sigma \vdash \mathscr{B})$ by $\partial_{f \between g} \colonequals \partial_f \uplus \partial_g$, $\amalg_{f \between g} \colonequals \amalg_f \uplus \amalg_g$, $|f \between g|_s \colonequals \begin{cases} |f|_s &\text{if $s \in \amalg_f$} \\ |g|_s &\text{otherwise} \end{cases}$ and $\| f \between g \|_{s} \colonequals \begin{cases} \| f \|_{s} &\text{if $s \in \amalg_f$} \\ \| g \|_{s} &\text{otherwise} \end{cases}$ ($s \in \amalg_{f \between g}$).
It is then easy to see that $\mathrm{st}(f \between g) = \sigma \between \tau$ follows from $\mathrm{st}(f) = \sigma$ and $\mathrm{st}(g) = \tau$, showing by the assumption that $\sigma \between \tau$ is finitely presentable. 

Finally, consider promotion.
Suppose that a strategy $\theta : \oc \Gamma \dashv \oc \Pi \vdash \mathscr{A}$ has a finite presentation $h$.
We define a view algorithm $h^\dagger :: (\oc \Gamma \dashv \oc \Pi \vdash \oc \mathscr{A})$ by $\partial_{h^\dagger}(m) \colonequals \begin{cases} \partial_h(m) &\text{if $m \in \mu_{\oc \Pi} \cup \mu_{\oc \Gamma} \cup \mu_{\mathscr{A}}$;} \\ \partial_h(\tilde{m}) &\text{if $m = \tilde{m} \{ y; j \} \in \mu_{\oc \mathscr{A}}$,} \end{cases}$ $\amalg_{h^\dagger} \colonequals \amalg_h \cup \{ \, S\{ y; 1 \}_{\mathscr{A}} \mid y \in \mathscr{R}_{|\mathscr{A}|}, \text{$y$ is a leaf in $\epsilon_{\mathscr{A}}$}, S \in \amalg_h \, \}$, where $S\{ y; 1 \}_{\mathscr{A}}$ is obtained from $S$ by replacing moves $m$ of $\mathscr{A}$ occurring in $S$ with the extended ones $m\{ y; 1 \}$, $|h^\dagger|_{S} \colonequals \begin{cases} |h|_S &\text{if $S \in \amalg_h$;} \\ |h|_{\tilde{S}} &\text{if $S = \tilde{S}\{ y; 1 \}_{\mathscr{A}}$ and $\tilde{S} \in \amalg_h$} \end{cases}$ and $\|h^\dagger\|_{S} \colonequals \begin{cases} \|h\|_S &\text{if $S \in \amalg_h$;} \\ \|h\|_{\tilde{S}} &\text{if $S = \tilde{S}\{ y; 1 \}_{\mathscr{A}}$ and $\tilde{S} \in \amalg_h$.} \end{cases}$
\end{proof}
\fi


The above example also illustrates the fine analysis of \emph{computational complexity} given by our intensional approach. 
For instance, take $n \colonequals 100$.
Recall that the strategy $\underline{n} : \mathscr{N}$ answers each question by Opponent, and the answer depends on the number of previous question answering. 
Hence, it is intuitively much easier for the strategy $\underline{n}$ to answer the \emph{initial} question by saying `counting one more' than the \emph{99th} question because in the first case it is immediately obvious if the answer is the correct one but not in the second case.
However, the strategy $\underline{n}$ counts the both answers \emph{equally as single steps}; i.e., it cannot reflect the algorithmic difference between the two answers. 
Nevertheless, by shifting from strategies to finitely presentable ones, `too big' computational steps such as answering the 99th question in $\mathscr{N}$ are banned, and we are led to the intensional one $\mathrm{succ}^n(\underline{0})$ that captures the algorithmic difference between the two answers.  

In comparison with Turing machines \cite{turing1937computable}, the standard foundation of computational complexity theory, the significance of this framework is that it is applicable to computational complexity of arbitrarily higher types (\S\ref{Computation}).
Note that \emph{higher-order computational complexity theory} lacks a proper mathematical foundation, and the concept of higher-order computational complexity has stopped at type 2.
In contrast, our game-semantic model of computation given in \S\ref{Computation} captures computation at arbitrarily higher types, and it seems to be a promising solution to this problem. 
For lack of space, however, we leave it as future work to exploit this research direction.

\subsection{Combinatorial bicategories of logic and computation}
\label{Categories}
This section studies the algebras of combinatorial arenas and finitely presentable strategies in terms of B\`{e}nabou's \emph{bicategories} \cite{benabou1967introduction}.
Whereas ordinary semantics of logic and computation \cite{lambek1988introduction} identifies proofs and programs with their \emph{values}, which makes its structure (1-)categories, Yamada and Abramsky \cite{yamada2020dynamic} showed that its extension to non-values entails a generalisation of the categorical structure into a bicategorical one. 
We follow this bicategorical account.

In terms of our framework, their idea is as follows.
A strategy $\varphi : \mathscr{S}$ is said to be \emph{normalised} or a \emph{value} if $\mathrm{Int}_{\mathscr{S}} = \emptyset$, and the big-step hiding operation $\mathcal{H}^\omega$ maps non-values to their values. 
In the (conventional) category of games, morphisms are normalised strategies, and the composition corresponds to the operation $\mathcal{H}^\omega(\_ \between \_)$ in our approach.
Thus, for admitting non-values, it seems necessary to replace this composition with parallel composition. 
However, parallel composition does not satisfy the unit law, e.g., $(\mathrm{id}_{\mathscr{N}} \between \mathrm{succ}) \neq \mathrm{succ}$, where identities are copy-cats.
The idea of Yamada and Abramsky is then to relax the category into a bicategory, where 2-cells are the hiding equivalence, because copy-cats satisfy the bicategorical unit law, e.g., $(\mathrm{id}_{\mathscr{N}} \between \mathrm{succ}) \simeq_{\mathcal{H}}^\omega \mathrm{succ}$.

Why should one study the algebras of our combinatorics?
One reason is clarity: The familiar categorical language will help one understand exotic combinatorial structures and relate them to other mathematical structures.
Another reason is generality: The present categorical framework, once established, will be general enough to be applied to other semantics as well.

We focus on a class of bicategories or \emph{E-categories} as in \cite{yamada2020dynamic}.\footnote{E-categories are called $\beta$-categories in \cite{yamada2020dynamic}. We change the name to avoid the conflict with $\beta$-reduction.}
An E-category is a category except that the equality between morphisms is replaced with an equivalence relation.
Thus, the composition and identities in an E-category respect the equivalence relation, so does a functor between E-categories or an \emph{E-functor}, and so does a natural transformation between E-functors or an \emph{E-natural transformation}.
If we regard the equivalence relation as 2-cells, then each E-category forms a bicategory in the evident way \cite{yamada2020dynamic}.
For brevity, we work on E-categories: 

\begin{convention}
We say that an operation on morphisms is \emph{up to an equivalence relation $\simeq$} between morphisms if the operation preserves the relation $\simeq$, and similarly for a property of morphisms. 
\end{convention}

\begin{definition}[combinatorial bicategories]
The E-category $\mathcal{G}$ consists of the following data:
\begin{itemize}

\item An object is a combinatorial arena;

\item A morphism $\mathscr{A} \rightarrow \mathscr{B}$ is a pair $(\Pi, \varphi)$ of a finite sequence $\Pi$ of combinatorial cuts and a finitely presentable strategy $\varphi$ on the combinatorial sequent $\mathscr{A} \dashv \Pi \vdash \mathscr{B}$, which is said to be \emph{\bfseries normalised} or a \emph{\bfseries value} if $\Pi = \boldsymbol{\epsilon}$;

\item The composition of morphisms $(\Pi, \varphi) : \mathscr{A} \rightarrow \mathscr{B}$ and $(\Sigma, \psi) : \mathscr{B} \rightarrow \mathscr{C}$ is the pair 
\begin{equation*}
(\Pi, \varphi) ; (\Sigma, \psi) \colonequals ((\Pi, \mathscr{B} \multimap \mathscr{B}, \Sigma), \varphi \between \psi) : \mathscr{A} \rightarrow \mathscr{C};
\end{equation*}

\item The identity $\mathrm{id}_{\mathscr{A}}$ is the pair $(\boldsymbol{\epsilon}, \mathrm{cp}_{\mathscr{A}}) : \mathscr{A} \rightarrow \mathscr{A}$;

\item The equivalence relation $\simeq_{\mathscr{A}, \mathscr{B}}$ between morphisms $(\Pi, \varphi), (\Pi', \varphi') : \mathscr{A} \rightrightarrows \mathscr{B}$ is given by
\begin{equation*}
(\Pi, \varphi) \simeq_{\mathscr{A}, \mathscr{B}} (\Pi', \varphi') \ratio\Leftrightarrow \mathcal{H}^\omega(\varphi) = \mathcal{H}^\omega(\varphi'),
\end{equation*}
i.e., $\simeq_{\mathscr{A}, \mathscr{B}}$ is the hiding equivalence $\simeq_{\mathcal{H}}^\omega$, where we often omit the subscripts $(\_)_{\mathscr{A}, \mathscr{B}}$ on $\simeq$.

\end{itemize}

An subcategory $\mathcal{LG}$ of $\mathcal{G}$ up to $\simeq$ is obtained from $\mathcal{G}$ by requiring the strategy $\varphi$ of each morphism $(\Pi, \varphi) : \mathscr{A} \rightarrow \mathscr{B}$ to be linearly winning and satisfy the following two conditions, called the \emph{\bfseries logical intensionality axioms}, on its computation on $\Pi$:

\begin{enumerate}

\item Each internal edge $e = \{ x, y \} \in \alpha_\Pi[2]$ with $x$ or $y$ occurring in $\varphi$ admits a unique external edge $e' = \{ x', y' \} \in \alpha_{\mathscr{A} \dashv \vdash \mathscr{B}}[2]$ such that, for all $\boldsymbol{s}\vec{o} \boldsymbol{t} \vec{p} \in \lceil \varphi \rceil$ with $\vec{p}$ justified by $\vec{o}$, $e' \cap \underline{\vec{o}} \neq \emptyset$ if and only if $e \cap \underline{\vec{p}} \neq \emptyset$;

\item Each internal vertex $V \in \mathscr{V}_{\epsilon_\Pi}$ with an element of $V$ occurring in $\varphi$ admits a unique external vertex $V' \in \mathscr{V}_{\epsilon_{\mathscr{A} \dashv \vdash \mathscr{B}}}$ such that, for all $\boldsymbol{s}\vec{o} \boldsymbol{t} \vec{p} \in \lceil \varphi \rceil$ with $\vec{p}$ justified by $\vec{o}$, $\vec{p}$ is of the form $p_0\{ y'; j' \}$ with $y' \in V'$ if and only if $\vec{o}$ is of the form $o_0\{ y; j \}$ with $y \in V$.

\end{enumerate}
\end{definition}

\begin{notation}
We also write $\varphi : (\mathscr{A} \dashv \Pi \vdash \mathscr{B})$ for a 1-cell $(\Pi, \varphi) : \mathscr{A} \rightarrow \mathscr{B}$ in these bicategories.
\end{notation}

It is straightforward to verify that $\mathcal{G}$ and $\mathcal{LG}$ form E-categories, thence bicategories; we leave it as an exercise.
As we shall see, the linear winning condition on 1-cells suffices for $\mathcal{LG}$ to attain fully complete semantics of intuitionistic linear logic with respect to cut-free proofs; the logical intensionality axioms are to extend the full completeness to proofs with cuts (Theorem~\ref{ThmFullCompleteness}). 

\begin{example}
The strategy on $\bot_{[0]} \otimes \bot_{[1]} \dashv (\bot_{[2]} \multimap \bot_{[3]}) \mathbin{\&} (\bot_{[4]} \multimap \bot_{[5]}) \vdash \bot_{[6]} \otimes (\bot_{[7]} \mathbin{\&} \bot_{[8]})$ that computes $6 \mapsto 0$, $7 \mapsto 3$, $8 \mapsto 5$, $3 \mapsto 0$ and $4 \mapsto 1$ does not satisfy the logical intensionality axioms, but the one that computes $6 \mapsto 3$, $6 \mapsto 5$, $2 \mapsto 0$, $4 \mapsto 0$, $7 \mapsto 1$ and $8 \mapsto 1$ does.
\end{example}

These E-categories form standard categorical semantics of intuitionistic linear logic:
\begin{definition}[new Seely categories \cite{bierman1995categorical}]
\label{DefNewSeelyCategories}
A \emph{\bfseries new Seely category} is a symmetric monoidal closed category $\mathcal{C} = (\mathcal{C}, \otimes, \top, \multimap)$ that has finite products $(1, \&)$ and is equipped with a comonad $\oc$ on $\mathcal{C}$ such that the canonical adjunction between the co-Kleisli category $\mathcal{C}_\oc$ and $\mathcal{C}$ is monoidal.
\end{definition}

\begin{proposition}[combinatorial new Seely categories]
The E-categories $\mathcal{G}$ and $\mathcal{LG}$ give rise to new Seely categories up to the hiding equivalence $\simeq$ between morphisms.
\end{proposition}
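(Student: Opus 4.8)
The plan is to verify each ingredient of Definition~\ref{DefNewSeelyCategories} directly, working throughout in the E-categorical sense, that is, reading every equation between morphisms as the hiding equivalence $\simeq$. Since $\mathcal{LG}$ is the sub-E-category of $\mathcal{G}$ carved out by the linear winning and logical intensionality conditions, and since all the structure maps I shall use (copy-cats, derelictions, and morphisms built from tensor, pairing, promotion and transpose) are linearly winning and stable under those conditions by Propositions~\ref{PropWellDefinedConstructionsOnStrategies} and~\ref{PropPreservationOfFlatInnocenceAndFinitePresentability}, it suffices to exhibit the new Seely structure in $\mathcal{G}$ and then check that the distinguished 1-cells and their composites land in $\mathcal{LG}$. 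All the object-level operations $\otimes$, $\mathbin{\&}$, $\oc$ and $\multimap$ are well-defined on combinatorial arenas by Proposition~\ref{PropWellDefinedConstructionsOnCombinatorialSemiArenas}.

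For the symmetric monoidal closed structure I would take the monoidal product to be $\otimes$ with unit $\top$; on 1-cells it is the tensor of strategies, whose bifunctoriality up to $\simeq$ reduces to the fact that parallel composition and $\mathcal{H}^\omega$ act componentwise on the disjoint union $|\mathscr{A}\otimes\mathscr{B}|=|\mathscr{A}|\uplus|\mathscr{B}|$. The associator, unitors and symmetry are copy-cats reindexed along the canonical graph isomorphisms witnessing associativity, unitality ($\top\otimes\mathscr{A}=\mathscr{A}=\mathscr{A}\otimes\top$, which even holds strictly since $|\top|=\emptyset$) and commutativity of $\uplus$; being copy-cats they are linearly winning isomorphisms, and the pentagon and triangle commute up to $\simeq$ by the same copy-cat laws that already make $\mathcal{G}$ an E-category. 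Closure is given by $\multimap$: the transpose $\lambda$ and its inverse set up a bijection $\mathcal{G}(\mathscr{A}\otimes\mathscr{B},\mathscr{C})\cong\mathcal{G}(\mathscr{A},\mathscr{B}\multimap\mathscr{C})$, natural up to $\simeq$, the passage from $\otimes$ to $\multimap$ merely reorganising the switching hubs and leaving the underlying positions in canonical bijection; equivalently I would produce the evaluation 1-cell $(\mathscr{B}\multimap\mathscr{C})\otimes\mathscr{B}\to\mathscr{C}$ as a copy-cat and verify the two triangle identities.

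For the finite products I would take the terminal object to be $1$ (equivalently $\top$, to which it is isomorphic, since $\mathscr{A}\multimap 1$ and $\mathscr{A}\multimap\top$ carry no initial O-move and hence admit only the strategy $\{\boldsymbol{\epsilon}\}$) and the binary product to be $\mathbin{\&}$ with the pairing $\langle\varphi,\psi\rangle$ as the mediating map; the projections $\mathscr{A}\mathbin{\&}\mathscr{B}\to\mathscr{A}$ and $\mathscr{A}\mathbin{\&}\mathscr{B}\to\mathscr{B}$ are copy-cats that play in a single component, legal because the additive edges $\alpha_{\mathscr{A}\mathbin{\&}\mathscr{B}}[2]$ between the two roots make the components mutually exclusive, and the universal property up to $\simeq$ then follows from the definition of pairing together with the additive dynamics of Definition~\ref{DefAdditiveAction}. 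Finally, for the comonad I would take $\oc$ with counit the dereliction $\mathrm{der}_\mathscr{A}:\oc\mathscr{A}\to\mathscr{A}$ and comultiplication the promotion $(\mathrm{cp}_{\oc\mathscr{A}})^\dagger:\oc\mathscr{A}\to\oc\oc\mathscr{A}$, its action on a 1-cell $\varphi:\mathscr{A}\to\mathscr{B}$ being $(\mathrm{der}_\mathscr{A};\varphi)^\dagger:\oc\mathscr{A}\to\oc\mathscr{B}$; the comonad laws hold up to $\simeq$ by the standard argument that promotions of derelictions satisfy them. To see that the induced co-Kleisli adjunction is monoidal I would exhibit the Seely isomorphisms $\oc(\mathscr{A}\mathbin{\&}\mathscr{B})\cong\oc\mathscr{A}\otimes\oc\mathscr{B}$ and $\oc 1\cong\top$ as copy-cats reindexed along the graph isomorphism supplied by the explicit $\ast$-construction on exponential structures in Definition~\ref{DefConstructionsOnCombinatorialArenas} (read off from Theorem~\ref{ThmFreeCharacterisation}), and check their naturality and coherence up to $\simeq$.

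The hard part will be the coherence and naturality verifications up to $\simeq$, and above all the monoidal naturality of the Seely isomorphisms together with the comonad laws: these are the only places where the exponential \emph{dynamics}, i.e.\ the duplication of subarenas, interacts nontrivially with promotion and with $\mathcal{H}^\omega$, so the bookkeeping of exponential tags through $(\_)^\dagger$ and the hiding operation must be tracked with care. For $\mathcal{LG}$ there is the additional, but essentially routine, task of confirming that each distinguished 1-cell remains linearly winning and that the relevant composites satisfy the logical intensionality axioms; since every structure map above is of copy-cat or dereliction type, this reduces to the closure properties already recorded in Propositions~\ref{PropWellDefinedConstructionsOnStrategies} and~\ref{PropPreservationOfFlatInnocenceAndFinitePresentability}.
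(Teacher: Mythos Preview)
Your overall plan—verify each clause of Definition~\ref{DefNewSeelyCategories} directly, using the constructions on combinatorial arenas and strategies—is exactly what the paper's proof amounts to; the paper simply defers the details to the standard game-semantic argument in \cite{hyland1997game}, which your outline spells out.

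There is, however, one genuine slip. Your parenthetical claim that the terminal object $1$ is ``equivalently $\top$, to which it is isomorphic, since $\mathscr{A}\multimap 1$ and $\mathscr{A}\multimap\top$ carry no initial O-move'' is wrong in $\mathcal{LG}$, and the reasoning is wrong in both categories. In $\mathscr{A}\multimap\top$ the codomain contributes no root vertices, so the multiplicative structure makes the switching vertex $\multimap$ a leaf of $\mu_{\mathscr{A}\multimap\top}$; the singleton path $\multimap$ is therefore itself an initial O-move (cf.\ Example~\ref{SimpleExamplesOfCombinatorialArenas}, where $\top\neq\mathscr{A}\multimap\top$). Player must respond to it, and a \emph{linearly} winning response is not always available: the paper notes immediately after the proposition that there is no morphism $\bot\to\top$ in $\mathcal{LG}$, so $\top$ is not terminal and $\top\not\cong 1$ there. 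By contrast, the reason $1$ is terminal is the \emph{joker} axiom: the initial O-move of $\mathscr{A}\multimap 1$ is $\multimap 1$, which contains $1\in\alpha[0]$ and hence can never occur in a position. This distinction between $\top$ and $1$ is one of the points the paper highlights as new relative to conventional game semantics, so you should keep them separate; once you drop the parenthetical, the rest of your argument stands.
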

\begin{proof}[Proof]
The notations for the categorical constructions in Definition~\ref{DefNewSeelyCategories} signify the corresponding ones in $\mathcal{G}$ and $\mathcal{LG}$.
We leave out the details because the 1-categorical case for conventional game semantics is well-known and outlined in \cite{hyland1997game}, which the present case follows.
\end{proof}

In conventional game semantics, the unit $\top$ and the terminal object $1$ \emph{coincide}. 
Clearly, this degeneracy makes it hopeless for the game semantics to achieve full completeness for intuitionistic linear logic because the logic distinguishes top and one (i.e., the corresponding formulae). 

In contrast, the E-category $\mathcal{LG}$ satisfies $\top \neq 1$.
It is then interesting to see how top fails to be the unit for product in $\mathcal{LG}$.
Assume for a contradiction that top is the unit for product in $\mathcal{LG}$. 
Then, there is a unique morphism $\tau : \bot \rightarrow \top$ because the assumption induces the isomorphism $\top \cong 1$.
However, there is no such $\tau$ in $\mathcal{LG}$ due to the linearity axiom on $\tau$, a contradiction.

Last but not least, the (co-)Kleisli constructions on $\mathcal{G}$ or $\mathcal{LG}$ associated to the comonad $\oc$ and the monads $\neg\neg$ and $\wn \colonequals \neg \oc \neg$ are well-defined as they respect the hiding equivalence $\simeq$.
These constructions yield \emph{cartesian closed categories}, \emph{star-autonomous categories} \cite{barr2006autonomous} and \emph{control categories} \cite{selinger2001control} up to $\simeq$; they are respectively the categorical semantics of intuitionistic logic \cite{lambek1988introduction}, classical linear logic \cite{seely1987linear} and classical logic \cite{selinger2001control}.
Although the following fact is not strictly necessary for the present work, it says that our combinatorics induces these categories:

\begin{corollary}[(co-)Kleisli extensions to non-linearity and classicality]
The co-Kleisli categories $\mathcal{G}_\oc$ and $\mathcal{LG}_\oc$ are cartesian closed, the Kleisli ones $\mathcal{G}_{\neg\neg}$ and $\mathcal{LG}_{\neg\neg}$ are star-autonomous, and the Kleisli ones $\mathcal{G}_{\oc\wn}$ and $\mathcal{LG}_{\oc\wn}$ are control, up to the hiding equivalence $\simeq$ on morphisms. 
\end{corollary}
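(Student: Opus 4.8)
The plan is to deduce all three assertions from the corresponding standard theorems of categorical logic, using that $\mathcal{G}$ and $\mathcal{LG}$ are new Seely categories up to $\simeq$ (the preceding proposition) together with the observation, recorded just above, that the comonad $\oc$ and the monads $\neg\neg$ and $\wn \colonequals \neg \oc \neg$ are well-defined and respect the hiding equivalence. The guiding principle is that an E-category is an ordinary category in which equality of morphisms has been weakened to the congruence $\simeq$; hence, once every structural morphism and every coherence isomorphism required below is exhibited as a $\simeq$-class of finitely presentable (respectively linearly winning) strategies, the usual 1-categorical constructions and their verifications carry over verbatim, with each equation read as a $\simeq$-equality. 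This is the only role played by the 2-cells, and it is precisely what the qualifier ``up to $\simeq$'' abbreviates.

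First I would treat the two cartesian closed cases. By the theorem of Seely and Bierman \cite{seely1987linear,bierman1995categorical}, the co-Kleisli category of a new Seely category is cartesian closed, with finite products inherited from $(1, \mathbin{\&})$ via the Seely isomorphism $\oc(\mathscr{A} \mathbin{\&} \mathscr{B}) \cong \oc \mathscr{A} \otimes \oc \mathscr{B}$ and $\oc 1 \cong \top$, and with exponential $\mathscr{B}^{\mathscr{A}} \colonequals \oc \mathscr{A} \multimap \mathscr{B}$. Instantiating this at $\mathcal{G}$ and at $\mathcal{LG}$ gives the cartesian closed structure on $\mathcal{G}_\oc$ and $\mathcal{LG}_\oc$; the only thing to check is that the Seely isomorphism, evaluation and currying are realised by strategies of the required kind up to $\simeq$, which is immediate from the constructions on strategies and Proposition~\ref{PropWellDefinedConstructionsOnStrategies}.

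For the star-autonomous cases, tensorial negation $\neg$ supplies the dualising structure, with dualising object $\bot = \neg \top$. Here I would pass to the Kleisli category of the continuation monad $\neg\neg$ and invoke the standard fact \cite{barr2006autonomous} that this yields a $*$-autonomous category, the involution $\mathscr{A} \cong \neg\neg \mathscr{A}$ being exactly the Kleisli-level collapse of the monad. For the control cases I would appeal to Selinger's correspondence \cite{selinger2001control} between control categories and the semantics of classical logic: a control category arises from a $*$-autonomous category with products by the continuation construction, which on the combinatorial side is precisely the $(\_)_{\oc\wn}$-Kleisli passage combining the non-linear $(\_)_\oc$ and the classical $(\_)_\wn$ constructions. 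Having secured the $*$-autonomous and the cartesian structures above, the control structure on $\mathcal{G}_{\oc\wn}$ and $\mathcal{LG}_{\oc\wn}$ then follows by transporting Selinger's axioms along these two constructions.

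The hard part will be the control case, for two reasons. First, a control category carries by far the richest signature --- a premonoidal $\wn$-structure together with finite products and the classical distributivities --- so the substance of the argument is the verification that each of Selinger's coherence axioms holds merely up to $\simeq$ rather than on the nose, which forces one to track how $\mathcal{H}^\omega$ interacts with $\oc$ and $\wn$ simultaneously. Second, one must confirm that $\wn = \neg \oc \neg$ genuinely bears a monad structure at the E-categorical level that is compatible with both the comonad $\oc$ and the negation $\neg$; the naturality squares and the monad laws are the delicate points, since they hold only as $\simeq$-equalities of strategies and not as literal identities.
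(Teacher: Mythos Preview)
Your proposal is correct and matches the paper's approach: the paper gives no explicit proof of this corollary, instead presenting it as a direct consequence of the preceding proposition (that $\mathcal{G}$ and $\mathcal{LG}$ are new Seely categories up to $\simeq$) together with the standard categorical results cited just before the corollary (Seely--Bierman for cartesian closure of the co-Kleisli category, Barr for $*$-autonomy, Selinger for control categories). If anything, your write-up is more detailed than the paper's treatment, which simply asserts the result after noting that the (co-)Kleisli constructions respect the hiding equivalence.
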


As advertised in \S\ref{Foreword}, the \emph{consistency} of our logical bicategories are immediate: 
\begin{proposition}[consistency]
\label{PropConsistency}
The bicategories $\mathcal{LG}$, $\mathcal{LG}_{\neg\neg}$, $\mathcal{LG}_\oc$ and $\mathcal{LG}_{\oc\wn}$ are all consistent in the sense that they have no 1-cells $\top \rightarrow \bot$ or $\top \rightarrow 0$.
\end{proposition}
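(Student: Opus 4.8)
The plan is to reduce all four bicategories to a single combinatorial fact about the cut-free \emph{value} sequents and then to analyse the plays these sequents admit. First I would remove the cuts. A $1$-cell $\top\to X$ with $X\in\{\bot,0\}$ in $\mathcal{LG}$ is a linearly winning strategy $\varphi$ on some $\top\dashv\Pi\vdash X$; applying the big-step hiding operation $\mathcal{H}^\omega$ (Definition~\ref{DefCombinatorialBigStepCutElimination}) produces the value $\mathcal{H}^\omega(\varphi)$, a winning strategy on $\top\dashv\vdash X$, since hiding preserves winning (Proposition~\ref{PropWellDefinedConstructionsOnStrategies}). Hence it suffices to show that no such value sequent carries a winning strategy. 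For the derived bicategories I would unfold the (co-)Kleisli structure: a $1$-cell $\top\to X$ is a $\mathcal{LG}$ $1$-cell $\oc\top\to X$ in $\mathcal{LG}_\oc$, $\top\to\neg\neg X$ in $\mathcal{LG}_{\neg\neg}$ and $\oc\wn\top\to\wn X$ in $\mathcal{LG}_{\oc\wn}$. Because $\oc\top=\top$ (Example~\ref{SimpleExamplesOfCombinatorialArenas}) the $\mathcal{LG}_\oc$ case is literally the base case, and after hiding the remaining two reduce to winning strategies on $\top\dashv\vdash\neg\neg X$ and on $\oc\wn\top\dashv\vdash\wn X$.

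The heart of the argument is then the claim that none of the sequents $\top\dashv\vdash\bot$, $\top\dashv\vdash 0$, $\top\dashv\vdash\neg\neg X$ and $\oc\wn\top\dashv\vdash\wn X$ admits a winning strategy. For $X\in\{\bot,0\}$ the underlying arena of $\top\dashv\vdash X$ has, after unfolding $\bot=\neg\top$ and $0=\neg 1$, exactly one playable move: the maximal $\mu$-path running through the two implication hubs $\dashv,\vdash$ and ending at the root of $X$ (the only other vertex of $0$ lies in $\alpha[0]$ and is barred from every position by the joker axiom, Definition~\ref{DefPositionsInCombinatorialSemiArenas}). This is the unique initial move, and the game offers no legal successor to it, since replaying it violates alternation and no other move is available; thus no strategy can respond to it and none is total, let alone winning.

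For the $\neg\neg$- and $\wn$-codomains the obstruction is instead \emph{noetherianity}. Here every maximal branch of the codomain ends either in an Opponent move or in the jokered vertex of $1$, so a total innocent strategy, after answering Opponent's initial demand, is driven to reply to each new Opponent move by replaying the unique available Player move (a fresh copy, in the $\wn$-case, produced by the exponential dynamics). I would make this precise by tracing the unique forced play and exhibiting a strictly increasing infinite chain of $\mathrm{P}$-views in $\lceil\varphi\rceil$ --- equivalently, by showing that $\varphi$ is circular --- which by Lemma~\ref{LemCircularity} contradicts noetherianity, hence winning.

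The main obstacle I anticipate is the $\mathcal{LG}_{\oc\wn}$ case, where hiding leaves the non-trivial domain $\oc\wn\top=\oc\neg\oc\bot$ in the sequent. One must check that the extra Player moves available inside this domain never furnish a legitimate, terminating answer to Opponent's demand in the codomain $\wn X$, so that the forced-play-and-noetherianity analysis still applies. The clean way to organise everything is to isolate, once and for all, the invariant that for every codomain built from $\top$ or $1$ by $\neg$, $\oc$ and $\wn$ each terminal position is either an Opponent turn or jokered; totality then disposes of the immediate dead ends and Lemma~\ref{LemCircularity} of the rest.
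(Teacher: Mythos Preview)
Your route is far more elaborate than the paper's, whose entire proof is the single line ``By the totality of 1-cells in these bicategories.'' The reduction to values via $\mathcal{H}^\omega$ and the direct totality analysis for $\mathcal{LG}$ and $\mathcal{LG}_\oc$ are fine and match the spirit of that line.

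There is, however, a slip in your treatment of $\mathcal{LG}_{\neg\neg}$. You claim the obstruction there is noetherianity, with Player forced to ``replay the unique available Player move.'' But in $\top\dashv\vdash\neg\neg\bot=\top\dashv\vdash\neg_{[c_1]}\neg_{[c_2]}\neg_{[c_3]}\top$ there are no exponentials, and once $c_2$ is played the additive dynamics (Definition~\ref{DefAdditiveAction}) renders it unavailable---$c_2$ is a leaf in $\mu$, hence not switching. After the forced play $(\dashv\vdash c_1).c_2.c_3$ Player has literally no legal move: there is no fresh copy, and $c_2$ cannot be replayed. So the obstruction is still plain totality, exactly as for $\mathcal{LG}$; your circularity argument never gets off the ground in this case. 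The same applies to $\neg\neg 0$.

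Where your caution does pay off is $\mathcal{LG}_{\oc\wn}$. In $\oc\wn\top\dashv\vdash\wn\bot$ the $\oc$ inside $\wn\bot=\neg\oc\neg\neg\top$ genuinely lets Player respond forever with fresh copies of $c_2$, so a total innocent strategy exists and the contradiction must come from noetherianity (equivalently circularity, via Lemma~\ref{LemCircularity}). Your identification of this as the delicate case is correct, and arguably the paper's bare ``by totality'' is too terse here---one really needs the full winning condition. So: fix the $\neg\neg$ case to use totality directly, and your plan goes through; the invariant you propose at the end is a clean way to package the $\oc\wn$ argument.
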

\begin{proof}
By the totality of 1-cells in these bicategories. 
\end{proof}

\section{Combinatorial formal systems}
\label{CombinatorialFormalSystems}
This section presents a combinatorial reformulation of formal systems. 
Specifically, we establish \emph{biequivalences} between $\mathcal{LG}$ and a bicategory $\mathsf{ILL}$ of intuitionistic linear logic, between $\mathcal{LG}_{\neg\neg}$ and that $\mathsf{CLL}$ of classical linear logic, between $\mathcal{LG}_\oc$ and that $\mathsf{IL}$ of intuitionistic logic, and between $\mathcal{LG}_{\oc\wn}$ and that $\mathsf{CL}$ of classical logic. 
Moreover, we define for each of the combinatorial bicategories a step-by-step operation on 1-cells that precisely corresponds to the cut-elimination in the syntax.
These results, together with Theorem~\ref{ThmPolyTime}, justify our combinatorics as formal systems. 

This result resolves the bottleneck of mathematical semantics (\S\ref{Foreword}) completely by recasting cuts and cut-elimination syntax-freely and non-inductively.
Even when one focuses on cut-free proofs, our result solves a problem open for thirty years: fully complete semantics of intuitionistic linear logic.
In addition, the correspondence between proofs and strategies are remarkably tight: One may directly and non-inductively read off proofs as strategies, and vice versa. 

Also, our method captures the relation between the logics in terms of categorical algebras. 
This method admits an \emph{intuitive} reading too: The Kleisli extension $(\_)_\wn$ allows \emph{unlimited hypothesis consumptions}, and the co-Kleisli one $(\_)_\oc$ does \emph{unlimited reasoning do-overs}.


As the syntax of the logics, we adopt the \emph{sigma-calculus} [Yam23], a novel term calculus for intuitionistic linear logic and its (co-)Kleisli extensions. 
This calculus has the tightest correspondence with $\mathcal{LG}$ as mentioned above.
This is our primary motivation to use the calculus. 
Besides, whilst other term calculi for linear logic \cite{abramsky1993computational,benton1993term,barber1996dual,bierman1999classical} suffer from complex \emph{commuting conversions}, the sigma-calculus does not.
This abstract nature of the sigma-calculus makes the correspondences between the combinatorics and the syntax plain and direct. 

We first review the syntax of the logics in terms of the sigma-calculus and interpret them by combinatorics in \S\ref{TheLogics}.
We finally prove that the interpretations form biequivalences in \S\ref{Bijections}.

\if0
\begin{convention}
For concreteness, we adopt the sequent calculi \emph{\bfseries G1i }and \emph{\bfseries G1c} \cite[\S 3.1.1]{troelstra2000basic} for intuitionistic and classical logics, respectively, and \emph{\bfseries Gil} and \emph{\bfseries Gcl} \cite[\S 9.3.2]{troelstra2000basic} for intuitionistic and classical linear logics, respectively.
We focus on their \emph{propositional} fragments. 
\end{convention}
\fi

\subsection{Linear, intuitionistic and classical logics}
\label{TheLogics}
Let us first recall the formal languages of (propositional) linear, intuitionistic and classical logics:
\begin{definition}[formulae in classical linear logic \cite{girard1987linear}]
\label{DefCLL}
Formulae in classical linear logic are the formal expressions defined by the grammar (of the \emph{Backus–Naur form} \cite{backus1959syntax})
\begin{equation*}
A, B \colonequals \top \mid \bot \mid 1 \mid 0 \mid A \otimes B \mid A \invamp B \mid A \mathbin{\&} B \mid A \oplus B \mid A^\bot \mid \oc A \mid \wn A,
\end{equation*}
where $A \multimap B \colonequals \neg A \invamp B$, and we call $\top$ \emph{\bfseries top}, $\bot$ \emph{\bfseries bottom}, $1$ \emph{\bfseries one}, $0$ \emph{\bfseries zero}, $\otimes$ \emph{\bfseries tensor}, $\invamp$ \emph{\bfseries par}, $\&$ \emph{\bfseries with}, $\oplus$ \emph{\bfseries plus}, $(\_)^\bot$ \emph{\bfseries linear negation}, $\oc$ \emph{\bfseries of-course}, $\wn$ \emph{\bfseries why-not}, and $\multimap$ \emph{\bfseries linear implication}.
\if0
\begin{figure}
\begin{mathpar}
\AxiomC{$\boldsymbol{A}, B, B', \boldsymbol{C} \vdash \boldsymbol{D}$}
\LeftLabel{\textsc{(XL)}}
\UnaryInfC{$\boldsymbol{A}, B', B, \boldsymbol{C} \vdash \boldsymbol{D}$}
\DisplayProof \and
\AxiomC{$\boldsymbol{A} \vdash \boldsymbol{B}, C, C', \boldsymbol{D}$}
\LeftLabel{\textsc{(XR)}}
\UnaryInfC{$\boldsymbol{A} \vdash \boldsymbol{B}, C', C, \boldsymbol{D}$}
\DisplayProof \\
\AxiomC{$\boldsymbol{A} \vdash \boldsymbol{C}$}
\LeftLabel{\textsc{($\oc$W)}}
\UnaryInfC{$\boldsymbol{A}, \oc B \vdash \boldsymbol{C}$}
\DisplayProof \and
\AxiomC{$\boldsymbol{A} \vdash \boldsymbol{C}$}
\LeftLabel{\textsc{($\wn$W)}}
\UnaryInfC{$\boldsymbol{A} \vdash \wn B, \boldsymbol{C}$}
\DisplayProof \\
\AxiomC{$\boldsymbol{A}, \oc B, \oc B \vdash \boldsymbol{C}$}
\LeftLabel{\textsc{($\oc$C)}}
\UnaryInfC{$\boldsymbol{A}, \oc B \vdash \boldsymbol{C}$}
\DisplayProof \and
\AxiomC{$\boldsymbol{A} \vdash \wn B, \wn B, \boldsymbol{C}$}
\LeftLabel{\textsc{($\wn$C)}}
\UnaryInfC{$\boldsymbol{A} \vdash \wn B, \boldsymbol{C}$}
\DisplayProof \\
\AxiomC{$\boldsymbol{A}, B \vdash \boldsymbol{C}$}
\LeftLabel{\textsc{($\oc$D)}}
\UnaryInfC{$\boldsymbol{A}, \oc B \vdash \boldsymbol{C}$}
\DisplayProof \and
\AxiomC{$\boldsymbol{A} \vdash B, \boldsymbol{C}$}
\LeftLabel{\textsc{($\wn$D)}}
\UnaryInfC{$\boldsymbol{A} \vdash \wn B, \boldsymbol{C}$}
\DisplayProof 
\\
\AxiomC{$\oc \boldsymbol{A}, B \vdash \wn \boldsymbol{C}$}
\LeftLabel{\textsc{($\wn$L)}}
\UnaryInfC{$\oc \boldsymbol{A}, \wn B \vdash \wn \boldsymbol{C}$}
\DisplayProof \and
\AxiomC{$\oc \boldsymbol{A} \vdash B, \wn \boldsymbol{C}$}
\LeftLabel{\textsc{($\oc$R)}}
\UnaryInfC{$\oc \boldsymbol{A} \vdash \oc B, \wn \boldsymbol{C}$}
\DisplayProof \\
\AxiomC{}
\LeftLabel{\textsc{(Id)}}
\UnaryInfC{$A \vdash A$}
\DisplayProof \and
\AxiomC{$\boldsymbol{A} \vdash B, \boldsymbol{C}$}
	\AxiomC{$\boldsymbol{A'}, B \vdash \boldsymbol{C'}$}
	\LeftLabel{\textsc{(Cut)}}
\BinaryInfC{$\boldsymbol{A}, \boldsymbol{A'} \vdash \boldsymbol{C}, \boldsymbol{C'}$}
\DisplayProof 
\\
\AxiomC{}
\LeftLabel{\textsc{($1$R)}}
\UnaryInfC{$\boldsymbol{A} \vdash 1, \boldsymbol{B}$}
\DisplayProof 
\and
\AxiomC{}
\LeftLabel{\textsc{($0$L)}}
\UnaryInfC{$\boldsymbol{A}, 0 \vdash \boldsymbol{B}$}
\DisplayProof  
\\
\AxiomC{$\boldsymbol{A} \vdash \boldsymbol{B}$}
\LeftLabel{\textsc{($\top$L)}}
\UnaryInfC{$\boldsymbol{A}, \top \vdash \boldsymbol{B}$}
\DisplayProof \and
\AxiomC{}
\LeftLabel{\textsc{($\top$R)}}
\UnaryInfC{$\vdash \top$}
\DisplayProof \and
\AxiomC{}
\LeftLabel{\textsc{($\bot$L)}}
\UnaryInfC{$\bot \vdash$}
\DisplayProof \and
\AxiomC{$\boldsymbol{A} \vdash \boldsymbol{B}$}
\LeftLabel{\textsc{($\bot$R)}}
\UnaryInfC{$\boldsymbol{A} \vdash \bot, \boldsymbol{B}$}
\DisplayProof \\
\AxiomC{$\boldsymbol{A}, B, B' \vdash \boldsymbol{C}$}
\LeftLabel{\textsc{($\otimes$L)}}
\UnaryInfC{$\boldsymbol{A}, B \otimes B' \vdash \boldsymbol{C}$}
\DisplayProof \and
\AxiomC{$\boldsymbol{A} \vdash B, \boldsymbol{C}$}
		\AxiomC{$\boldsymbol{A'} \vdash B', \boldsymbol{C'}$}
	\LeftLabel{\textsc{($\otimes$R)}}
\BinaryInfC{$\boldsymbol{A}, \boldsymbol{A'} \vdash B \otimes B', \boldsymbol{C}, \boldsymbol{C'}$}
\DisplayProof \\ 
\AxiomC{$\boldsymbol{A}, B_i \vdash \boldsymbol{C}$}
	\LeftLabel{\textsc{($\&$L)}}
	\RightLabel{($i \in \overline{2}$)}
\UnaryInfC{$\boldsymbol{A}, B_1 \mathbin{\&} B_2 \vdash \boldsymbol{C}$}
\DisplayProof \and
\AxiomC{$\boldsymbol{A} \vdash B, \boldsymbol{C}$}
		\AxiomC{$\boldsymbol{A} \vdash B', \boldsymbol{C}$}
	\LeftLabel{\textsc{($\&$R)}}
\BinaryInfC{$\boldsymbol{A} \vdash B \mathbin{\&} B', \boldsymbol{C}$}
\DisplayProof \\
\AxiomC{$\boldsymbol{A}, B \vdash \boldsymbol{C}$}
		\AxiomC{$\boldsymbol{A'}, B' \vdash \boldsymbol{C'}$}
	\LeftLabel{\textsc{($\invamp$L)}}
\BinaryInfC{$\boldsymbol{A}, \boldsymbol{A'}, B \invamp B' \vdash \boldsymbol{C}, \boldsymbol{C'}$}
\DisplayProof \and
\AxiomC{$\boldsymbol{A} \vdash B, B', \boldsymbol{C}$}
\LeftLabel{\textsc{($\invamp$R)}}
\UnaryInfC{$\boldsymbol{A} \vdash B \invamp B', \boldsymbol{C}$}
\DisplayProof \\
\AxiomC{$\boldsymbol{A}, B \vdash \boldsymbol{C}$}
		\AxiomC{$\boldsymbol{A}, B' \vdash \boldsymbol{C}$}
	\LeftLabel{\textsc{($\oplus$L)}}
\BinaryInfC{$\boldsymbol{A}, B \oplus B' \vdash \boldsymbol{C}$}
\DisplayProof \and
\AxiomC{$\boldsymbol{A} \vdash B_i, \boldsymbol{C}$}
	\LeftLabel{\textsc{($\oplus$R)}}
	\RightLabel{($i \in \overline{2}$)}
\UnaryInfC{$\boldsymbol{A} \vdash B_1 \oplus B_2, \boldsymbol{C}$}
\DisplayProof \\
\AxiomC{$\boldsymbol{A} \vdash B, \boldsymbol{C}$}
\LeftLabel{\textsc{($\neg$L)}}
\UnaryInfC{$\boldsymbol{A}, \neg B \vdash \boldsymbol{C}$}
\DisplayProof \and
\AxiomC{$\boldsymbol{A}, B \vdash \boldsymbol{C}$}
\LeftLabel{\textsc{($\neg$R)}}
\UnaryInfC{$\boldsymbol{A} \vdash \neg B, \boldsymbol{C}$}
\DisplayProof
\end{mathpar}
\caption{The sequent calculus CLL for classical linear logic}
\label{FigCLL}
\end{figure}
\fi
\end{definition}

\begin{remark}
We do not include propositional variables as (atomic) formulae for the following reason. 
In the literature, fully complete game semantics of (a fragment of) linear logic with propositional variables was achieved \cite{abramsky1994games}, and we may certainly adapt that method. 
However, the adaptation significantly digresses our main contributions. 
For the same reason, other authors \cite{laurent2004polarized,clairambault2021tale} also exclude propositional variables from their fully complete semantics of linear logic.
\end{remark}

\begin{definition}[formulae in intuitionistic linear logic \cite{girard1987lazy}]
\label{DefILL}
Formulae in intuitionistic linear logic are the class of formulae in classical linear logic defined by the grammar
\begin{equation*}
A, B \colonequals \top \mid 1 \mid A \otimes B \mid A \mathbin{\&} B \mid A \multimap B \mid \oc A \mid \neg A,
\end{equation*}
where $\bot \colonequals \neg \top$, and $\neg$ is called \emph{\bfseries tensorial negation} (adopted from \cite{mellies2010resource}).
\if0
\begin{mathpar}
\AxiomC{$\boldsymbol{A} \vdash B$}
\AxiomC{$\boldsymbol{A'}, C \vdash D$}
\LeftLabel{\textsc{($\multimap$L)}}
\BinaryInfC{$\boldsymbol{A}, \boldsymbol{A'}, B \multimap C \vdash D$}
\DisplayProof \and
\AxiomC{$\boldsymbol{A}, B \vdash C$}
\LeftLabel{\textsc{($\multimap$R)}}
\UnaryInfC{$\boldsymbol{A} \vdash B \multimap C$}
\DisplayProof
\end{mathpar}
on linear implication.
A formula $A$ is \emph{\bfseries provable in intuitionistic linear logic} if ILL admits a derivation of the sequent $\vdash A$.
\fi
\end{definition}

\begin{remark}
We do not include zero or plus in intuitionistic linear logic as in \cite[\S 3.2]{mellies2009categorical}, while some authors \cite{girard1987lazy,troelstra1991lectures,abramsky1993computational} do.
The reason is that the \emph{sequential} computation in the bicategory $\mathcal{LG}$, which interprets intuitionistic linear logic, does not have coproducts.
\end{remark}

\begin{definition}[formulae in classical and intuitionistic logics \cite{frege1879begriffsschrift,heyting1930formalen,troelstra2000basic}]
\label{DefCL}
Formulae in classical and intuitionistic logics are both the formal expressions defined by the grammar
\begin{mathpar}
A, B \colonequals \mathrm{tt} \mid \mathrm{ff} \mid A \wedge B \mid A \vee B \mid A \Rightarrow B,
\end{mathpar}
where ${\sim} A \colonequals A \Rightarrow \mathrm{ff}$, and we call $\mathrm{tt}$ \emph{\bfseries truth}, $\mathrm{ff}$ \emph{\bfseries falsity}, $\wedge$ \emph{\bfseries conjunction}, $\vee$ \emph{\bfseries disjunction}, $\Rightarrow$ \emph{\bfseries implication}, and $\sim$ \emph{\bfseries negation}. 

\if0
\begin{figure}
\begin{mathpar}
\AxiomC{$\boldsymbol{A}, B, B', \boldsymbol{C} \vdash \boldsymbol{D}$}
\LeftLabel{\textsc{(XL)}}
\UnaryInfC{$\boldsymbol{A}, B', B, \boldsymbol{C} \vdash \boldsymbol{D}$}
\DisplayProof \and
\AxiomC{$\boldsymbol{A} \vdash \boldsymbol{B}, C, C', \boldsymbol{D}$}
\LeftLabel{\textsc{(XR)}}
\UnaryInfC{$\boldsymbol{A} \vdash \boldsymbol{B}, C', C, \boldsymbol{D}$}
\DisplayProof \\
\AxiomC{$\boldsymbol{A} \vdash \boldsymbol{C}$} 
\LeftLabel{\textsc{(WL)}}
\UnaryInfC{$\boldsymbol{A}, B \vdash \boldsymbol{C}$}
\DisplayProof \and
\AxiomC{$\boldsymbol{A} \vdash \boldsymbol{C}$}
\LeftLabel{\textsc{(WR)}}
\UnaryInfC{$\boldsymbol{A} \vdash B, \boldsymbol{C}$}
\DisplayProof \\
\AxiomC{$\boldsymbol{A}, B, B \vdash \boldsymbol{C}$}
\LeftLabel{\textsc{(CL)}}
\UnaryInfC{$\boldsymbol{A}, B \vdash \boldsymbol{C}$}
\DisplayProof \and
\AxiomC{$\boldsymbol{A} \vdash B, B, \boldsymbol{C}$}
\LeftLabel{\textsc{(CR)}}
\UnaryInfC{$\boldsymbol{A} \vdash B, \boldsymbol{C}$}
\DisplayProof \\
\AxiomC{}
\LeftLabel{\textsc{(Id)}}
\UnaryInfC{$A \vdash A$}
\DisplayProof \and
\AxiomC{$\boldsymbol{A} \vdash B, \boldsymbol{C}$}
	\AxiomC{$\boldsymbol{A'}, B \vdash \boldsymbol{C'}$}
	\LeftLabel{\textsc{(Cut)}}
\BinaryInfC{$\boldsymbol{A}, \boldsymbol{A'} \vdash \boldsymbol{C}, \boldsymbol{C'}$}
\DisplayProof \\
\AxiomC{$\boldsymbol{A} \vdash \boldsymbol{B}$}
\LeftLabel{\textsc{($\mathrm{tt}$L)}}
\UnaryInfC{$\boldsymbol{A}, \mathrm{tt} \vdash \boldsymbol{B}$}
\DisplayProof \and
\AxiomC{}
\LeftLabel{\textsc{($\mathrm{tt}$R)}}
\UnaryInfC{$\vdash \mathrm{tt}$} 
\DisplayProof \and
\AxiomC{}
\LeftLabel{\textsc{($\mathrm{ff}$L)}}
\UnaryInfC{$\mathrm{ff} \vdash$}
\DisplayProof \and
\AxiomC{$\boldsymbol{A} \vdash \boldsymbol{B}$}
\LeftLabel{\textsc{($\mathrm{ff}$R)}}
\UnaryInfC{$\boldsymbol{A} \vdash \mathrm{ff}, \boldsymbol{B}$}
\DisplayProof \\
\AxiomC{$\boldsymbol{A}, B_i \vdash \boldsymbol{C}$}
\LeftLabel{\textsc{($\wedge$L)}}
\RightLabel{\textsc{($i \in \overline{2}$)}}
\UnaryInfC{$\boldsymbol{A}, B_1 \wedge B_2 \vdash \boldsymbol{C}$}
\DisplayProof \and
\AxiomC{$\boldsymbol{A} \vdash B_1, \boldsymbol{C}$}
		\AxiomC{$\boldsymbol{A} \vdash B_2, \boldsymbol{C}$}
	\LeftLabel{\textsc{($\wedge$R)}}
\BinaryInfC{$\boldsymbol{A} \vdash B_1 \wedge B_2, \boldsymbol{C}$} 
\DisplayProof \\
\AxiomC{$\boldsymbol{A}, B_1 \vdash \boldsymbol{C}$}
		\AxiomC{$\boldsymbol{A}, B_2 \vdash \boldsymbol{C}$}
	\LeftLabel{\textsc{($\vee$L)}}
\BinaryInfC{$\boldsymbol{A}, B_1 \vee B_2 \vdash \boldsymbol{C}$}
\DisplayProof \and
\AxiomC{$\boldsymbol{A} \vdash B_i, \boldsymbol{C}$}
\LeftLabel{\textsc{($\vee$R)}}
\RightLabel{\textsc{($i \in \overline{2}$)}}
\UnaryInfC{$\boldsymbol{A} \vdash B_1 \vee B_2, \boldsymbol{C}$}
\DisplayProof \\
\AxiomC{$\boldsymbol{A} \vdash B, \boldsymbol{D}$}
	\AxiomC{$\boldsymbol{A}, C \vdash \boldsymbol{D}$}
	\LeftLabel{\textsc{($\Rightarrow$L)}}
\BinaryInfC{$\boldsymbol{A}, B \Rightarrow C \vdash \boldsymbol{D}$}
\DisplayProof \and
\AxiomC{$\boldsymbol{A}, B \vdash C, \boldsymbol{D}$}
\LeftLabel{\textsc{($\Rightarrow$R)}}
\UnaryInfC{$\boldsymbol{A} \vdash B \Rightarrow C, \boldsymbol{D}$}
\DisplayProof
\end{mathpar}
\caption{The sequent calculus CL for classical logic}
\label{FigCL}
\end{figure}
\fi
\end{definition}

\if0
\begin{definition}[intuitionistic logic \cite{heyting1930formalen,troelstra2000basic}]
\label{DefIL}
\emph{\bfseries Formulae in (propositional) intuitionistic logic} are the same as the formulae in classical logic.
The sequent calculus \emph{\bfseries IL} is obtained from CL by restricting the sequents to those with at most one element on the right-hand side.
A formula $A$ is \emph{\bfseries provable in intuitionistic logic} if IL has a derivation of the sequent $\vdash A$.
\end{definition}
\fi

Let us next review the provability of these logics in terms of the \emph{sigma-calculus} [Yam23]:
\begin{definition}[the sigma-calculus {[Yam23]}]
\label{DefSigmaCalculus}
The \emph{\bfseries sigma-calculus} consists of the following:
\begin{itemize}

\item \textsc{(Types)} A \emph{\bfseries type} is a formula in intuitionistic linear logic equipped with positive integers $i$ in the form of the subscripts $(\_)_{[i]}$ on tensorial negation, called an \emph{\bfseries identifier}, defined by
\begin{equation*}
S, T \colonequals \top \mid 1 \mid S \otimes T \mid S \mathbin{\&} T \mid S \multimap T \mid \oc S \mid \neg_{[i]} S
\quad
(i \in \mathbb{N}_+)
\end{equation*}
and $\bot_{[i]} \colonequals \neg_{[i]} \top$ (the identifier $(\_)_{[0]}$ is reserved for the \emph{empty codomain} defined below).
\if0
and the equations
\begin{mathpar}
\bot_{[i]} \colonequals \neg_{[i]} \top
\and
0_{[i]} \colonequals \neg_{[i]}1
\and
A_{[j]} {\invamp_{[i]}} B_{[k]} \colonequals \neg_{[i]} (\neg_{[j]} A \otimes \neg_{[k]} B)
\\
A_{[j]} {\oplus_{[i]}} B_{[k]} \colonequals \neg_{[i]} (\neg_{[j]} A \mathbin{\&} \neg_{[k]} B)
\and
\wn_{[e]} A_{[ij]} \colonequals \neg_{[i]} \oc_{[e]} \neg_{[j]} A.
\end{mathpar}
\fi

\if0
The finite set $\mathrm{Init}(S)$ of the \emph{\bfseries initial identifiers} on each type $S$ is defined inductively by
\begin{mathpar}
\mathrm{Init}(\top) \colonequals \mathrm{Init}(1) \colonequals \emptyset
\and
\mathrm{Init}(\oc_{[e]} S) \colonequals \mathrm{Init}(S) \cup \{ e \}
\and
\mathrm{Init}(\neg_{[i]} S) \colonequals \mathrm{Init}(S) \cup \{ i \}
\and
\mathrm{Init}(S \otimes T) \colonequals \mathrm{Init}(S \mathbin{\&} T) \colonequals \mathrm{Init}(S) \cup \mathrm{Init}(T)
\and
\mathrm{Init}(S \multimap T) \colonequals \mathrm{Init}(T),
\end{mathpar}
and this operation extends to each finite sequence $\Gamma$ of types by
\begin{equation*}
\mathrm{Init}(\Gamma) \colonequals \{ \, \mathrm{Init}(\Gamma(l)) \mid l \in \overline{|\Gamma|} \, \}.
\end{equation*}
\fi

\begin{notation}
We write $\underline{S}$ for the formula obtained from a type $S$ by deleting all identifiers.
This operation extends to finite sequences of types in the componentwise fashion.
We write $\iota$, $\jmath$, etc. for an assignment of identifiers that transforms a formula into a type; let $T(\iota)$ be a type $T$ with such an assignment $\iota$ explicit, and $T\{ \iota' / \iota \} \colonequals T(\iota')$.
Abusing notation, let $T\{ i'/i \}$ be the type obtained from $T$ by renaming the identifier $(\_)_{[i]}$ with the one $(\_)_{[i']}$.
\end{notation}

\item \textsc{(Particles)} 
An \emph{\bfseries atom} is a formal expression $[V]o \mapsto p$ such that $o, p \in \mathbb{N}$ and $V$ is a finite set of natural numbers. 
A \emph{\bfseries particle}, written $\mathcal{P}$, $\mathcal{A}$, etc., is a finite set of atoms.

\begin{notation}
From an atom $a = ([V]o \mapsto p)$ and a number $i \in \mathbb{N}_+$, we obtain the atom $a\{ i/0 \}$ by replacing $0$ with $i$ occurring in $a$.
This operation extends to a particle $\mathcal{P}$ by $\mathcal{P}\{ i/0 \} \colonequals \{ \, a\{ i/0 \} \mid a \in \mathcal{P} \, \}$, and to a finite sequence $\mathcal{P}_1\mathcal{P}_2 \dots \mathcal{P}_n$ of particles by $(\mathcal{P}_1\mathcal{P}_2 \dots \mathcal{P}_n)\{ i/0 \} \colonequals \mathcal{P}_1\{ i/0 \}\mathcal{P}_2\{ i/0 \} \dots \mathcal{P}_n\{ i/0 \}$.
We also define another atom $a[+0i] \colonequals ([V \cup \{ 0, i \}]o \mapsto p)$, and similarly this operation extends to a particle and to a finite sequence of particles. 
\end{notation}

\item \textsc{(Contexts)} A \emph{\bfseries context} is a finite sequence $(\mathcal{P}_i : T_i)_{i \in \overline{n}}$ of pairs, written $\mathcal{P}_i : T_i$, of a particle $\mathcal{P}_i$ and a type $T_i$, where the context is written $\gamma : \Gamma$ if $\gamma = (\mathcal{P}_i)_{i \in \overline{n}}$ and $\Gamma = (T_i)_{i \in \overline{n}}$.


\item \textsc{(Cuts)} A \emph{\bfseries cut} is a formal expression defined by
\begin{equation*}
C, D \colonequals \top \mid \mathrm{Cut}(T \mid T') \mid C \otimes D \mid C \mathbin{\&} D \mid \oc C,
\end{equation*}
where $T$ and $T'$ are types that satisfy the equation $\underline{T} = \underline{T'}$.
An \emph{\bfseries intensionality} is a finite sequence $(\mathcal{M}_i : C_i)_{i \in \overline{n}}$ of pairs, written $\mathcal{M}_i : C_i$, of a particle $\mathcal{M}_i$ and a cut $C_i$, where the intensionality is written $\pi : \Pi$ if $\pi = (\mathcal{M}_i)_{i \in \overline{n}}$ and $\Pi = (C_i)_{i \in \overline{n}}$.

\item \textsc{(Sigma-sequents)}
A \emph{\bfseries sigma-sequent} is a formal expression $\Gamma \dashv \Pi \vdash \Phi$ such that $\Gamma$, called the \emph{\bfseries domain}, is a finite sequence of types, $\Pi$ is that of cuts, and $\Phi$, called the \emph{\bfseries codomain}, is that of types of length at most $1$.
We identify the empty codomain with the symbol $\bot_{[0]}$.

\item \textsc{(Raw-terms)} A \emph{\bfseries raw-term} is a formal expression $\gamma : \Gamma \dashv \pi : \Pi \vdash \phi : \Phi$, identified up to renaming of identifiers (similarly to the \emph{variable convention} in the lambda-calculus \cite{barendregt1984lambda}), such that $\gamma : \Gamma$ is a context, $\pi : \Pi$ is an intensionality, and $\phi : \Phi$ is a context of length at most $1$, and the numbers occurring as identifiers in the raw-term are pairwise distinct.

\begin{notation}
If $t = (\gamma : \Gamma \dashv \pi : \Pi \vdash \phi: \Phi)$ is a raw-term, then $\mathrm{Sq}(t) \colonequals \Gamma \dashv \Pi \vdash \Phi$.
Clearly, $t$ is recovered from $\mathrm{Sq}(t)$ and the set $\mathrm{atm}(t)$ of all atoms in $t$.
The \emph{union} $t \cup u$ of raw-terms $t$ and $u$ such that $\mathrm{Sq}(t) = \mathrm{Sq}(u)$ is the raw-term on $\mathrm{Sq}(t)$ corresponding to $\mathrm{atm}(t) \cup \mathrm{atm}(u)$.
In raw-terms, $e_1, e_2, \dots, e_n$ denotes the set $\{ e_i \}_{i \in \overline{n}}$, and $s_1 ; s_2 ; \dots ; s_m$ the sequence $(s_j)_{j \in \overline{m}}$. 
\end{notation}

\if0
\begin{convention}
We confuse a raw-term $t$ with the particle $\mathrm{atm}(t)$ and define the \emph{union} $t \cup u$ of raw-terms $t$ and $u$ to be the raw-term that corresponds to the particle $\mathrm{atm}(t) \cup \mathrm{atm}(u)$.
\end{convention}
\fi

\item \textsc{(Sigma-terms)} A \emph{\bfseries sigma-term} is a raw-term derivable by the rules displayed in Figure~\ref{FigLinearSigmaCalculus}, where identifiers are renamed, if necessary, in such a way that they are always \emph{fresh}.

\begin{notation}
If $t$ is a sigma-sequent on a sigma-sequent $F$, then we write $t :: F$.
\end{notation}

\if0
\begin{remark}
Each of the typing rules of the sigma-calculus other than \textsc{$\&_j$R} works as in the case of sequent calculi except that their inputs and outputs are derivation forests, not trees: If it is an unary rule, then it is applied to each root of an input derivation forest; if it is a binary rule, then it is applied to each pair of roots of two respective input derivation forests.
On the other hand, the rule \textsc{$\&_1$R} is applied to one of two input derivation forests with roots $\gamma_i : \Gamma \dashv \pi_i : \Pi_i \vdash \mathscr{A}_i : S$ ($i \in I$), and \emph{simultaneously} the rule \textsc{$\&_2$R} to the other input with roots $\gamma'_{i'} : \Gamma \dashv \pi'_{i'} : \Pi'_{i'} \vdash \mathscr{A}'_{i'} : T$ ($i' \in I'$); these simultaneous applications of the rules, which we simply call an application of the rule \textsc{$\&$R}, produces a derivation forest whose roots are $\gamma_i[+k] : \Gamma \dashv \pi_i[+k] : \Pi_i \vdash \mathscr{A}_i[+k] : S {}_{[k]}{\&}_{[l]} T$ ($i \in I$) and $\gamma'_{i'}[+l] : \Gamma \dashv \pi'_{i'}[+l] : \Pi'_{i'} \vdash \mathscr{A}'_{i'}[+l] : S {}_{[k]}{\&}_{[l]} T$ ($i' \in I'$).
We adopt this sliced rule \textsc{$\&$R} for Lemma~\ref{LemSlicedWithLemma}.
\end{remark}
\fi

\end{itemize}
\begin{figure}
\begin{center}
\begin{footnotesize}
\begin{mathpar}
\AxiomC{$\gamma : \Gamma; \mathcal{P} : S; \mathcal{P}' : S'; \delta : \Delta \dashv \pi : \Pi \vdash \phi : \Phi$}
\LeftLabel{\textsc{(XL)}}
\UnaryInfC{$\gamma : \Gamma; \mathcal{P}' : S'; \mathcal{P} : S; \delta : \Delta \dashv \pi : \Pi \vdash \phi : \Phi$}
\DisplayProof 
\and
\AxiomC{$\gamma : \Gamma \dashv \pi : \Pi \vdash \phi : \Phi$}
\LeftLabel{\textsc{($\oc$W)}}
\UnaryInfC{$\gamma : \Gamma; \emptyset : \oc S \dashv \pi : \Pi \vdash \phi : \Phi$}
\DisplayProof 
\and
\AxiomC{$\gamma : \Gamma; \mathcal{P} : \oc S(\iota); \mathcal{P}' :  \oc S(\iota') \dashv \pi : \Pi \vdash \phi : \Phi$}
\LeftLabel{\textsc{($\oc$C)}}
\UnaryInfC{$\gamma : \Gamma; \mathcal{P} \cup \mathcal{P}'\{ \iota / \iota' \} : \oc S(\iota) \dashv \pi : \Pi \vdash \phi : \Phi$}
\DisplayProof 
\and
\AxiomC{$\gamma : \Gamma; \mathcal{P} : S \dashv \pi : \Pi \vdash \phi : \Phi$}
\LeftLabel{\textsc{($\oc$D)}}
\UnaryInfC{$\gamma : \Gamma; \mathcal{P} : \oc S \dashv \pi: \Pi \vdash \phi : \Phi$}
\DisplayProof 
\and
\AxiomC{$\delta : \oc \Delta \dashv \pi : \Pi \vdash \mathcal{A} : S$}
\LeftLabel{\textsc{($\oc$R)}}
\UnaryInfC{$\delta : \oc \Delta \dashv \pi : \oc \Pi \vdash \mathcal{A} : \oc S$}
\DisplayProof 
\and
\AxiomC{$\gamma : \Gamma \dashv \pi : \Pi \vdash \mathcal{A} : S(\iota)$}
	\AxiomC{$\delta : \Delta; \mathcal{P} : S(\jmath) \dashv \sigma : \Sigma \vdash \phi : \Phi$}
	\LeftLabel{\textsc{(Cut)}}
\BinaryInfC{$\gamma : \Gamma; \delta : \Delta \dashv \pi : \Pi; \mathcal{A} \cup \mathcal{P} : \mathrm{Cut}(S(\iota) \mid S(\jmath)); \sigma : \Sigma \vdash \phi : \Phi$}
\DisplayProof
\\
\AxiomC{$\gamma : \Gamma \dashv \pi : \Pi \vdash \phi : \Phi$}
\LeftLabel{\textsc{($\top$L)}}
\UnaryInfC{$\gamma : \Gamma; \emptyset : \top \dashv \pi : \Pi \vdash \phi : \Phi$}
\DisplayProof \and
\AxiomC{}
\LeftLabel{\textsc{($\top$R)}}
\UnaryInfC{$\dashv \vdash \emptyset : \top$}
\DisplayProof 
\and
\AxiomC{}
\LeftLabel{\textsc{($1$R)}}
\UnaryInfC{$\Gamma \dashv \vdash \emptyset : 1$}
\DisplayProof 
\\
\AxiomC{$\gamma : \Gamma; \mathcal{P} : S; \mathcal{Q} : T \dashv \pi : \Pi \vdash \phi : \Phi$}
\LeftLabel{\textsc{($\otimes$L)}}
\UnaryInfC{$\gamma : \Gamma; \mathcal{P} \cup \mathcal{Q} : S \otimes T \dashv \pi : \Pi \vdash \phi : \Phi$}
\DisplayProof
\and
\AxiomC{$\gamma : \Gamma \dashv \pi : \Pi \vdash \mathcal{A} : S$}
		\AxiomC{$\delta : \Delta \dashv \sigma : \Sigma \vdash \mathcal{B} : T$}
	\LeftLabel{\textsc{($\otimes$R)}}
\BinaryInfC{$\gamma : \Gamma; \delta : \Delta \dashv \pi : \Pi; \sigma : \Sigma \vdash \mathcal{A} \cup \mathcal{B} : S \otimes T$} 
\DisplayProof 
\\
\AxiomC{$\gamma : \Gamma; \mathcal{P} : S \dashv \pi : \Pi \vdash \phi : \Phi$}
\LeftLabel{\textsc{($\&$L)}}
\UnaryInfC{$\gamma : \Gamma; \mathcal{P} : S \mathbin{\&} T \dashv \pi : \Pi \vdash \phi : \Phi$}
\DisplayProof
\and
\AxiomC{$\gamma : \Gamma; \mathcal{Q} : T \dashv \pi : \Pi \vdash \phi : \Phi$}
\LeftLabel{\textsc{($\&$L)}}
\UnaryInfC{$\gamma : \Gamma; \mathcal{Q} : S \mathbin{\&} T \dashv \pi : \Pi \vdash \phi : \Phi$}
\DisplayProof
\and
\AxiomC{$\gamma : \Gamma \dashv \pi : \Pi \vdash \mathcal{A} : S$}
\AxiomC{$\gamma' : \Gamma \dashv \sigma : \Sigma \vdash \mathcal{B} : T$}
	\LeftLabel{\textsc{($\&$R)}}
	\RightLabel{($|\Pi| = 1 = |\Sigma|$)}
\BinaryInfC{$\gamma \cup \gamma' : \Gamma \dashv \pi \cup \sigma : \Pi \mathbin{\&} \Sigma \vdash \mathcal{A} \cup \mathcal{B} : S \mathbin{\&} T$} 
\DisplayProof
\and
\AxiomC{$\gamma : \Gamma \dashv \pi : \Pi \vdash \mathcal{A} : S$}
\LeftLabel{\textsc{($\neg$L)}}
\UnaryInfC{$\gamma[+0i] : \Gamma; \mathcal{A}[+0i] \cup \{ [\emptyset]0 \mapsto i \} : \neg_{[i]} S \dashv \pi[+0i] : \Pi \vdash$}
\DisplayProof 
\and
\AxiomC{$\gamma : \Gamma; \mathcal{P} : S \dashv \pi : \Pi \vdash$}
\LeftLabel{\textsc{($\neg$R)}}
\UnaryInfC{$\gamma\{ i/0 \} : \Gamma \dashv \pi\{ i/0 \} : \Pi \vdash \mathcal{P}\{ i/0 \} : \neg_{[i]} S$}
\DisplayProof
\\
\AxiomC{$\gamma : \Gamma \dashv \pi : \Pi \vdash \mathcal{A} : S$}
\AxiomC{$\delta : \Delta; \mathcal{Q} : T \dashv \sigma : \Sigma \vdash \phi : \Phi$}
\LeftLabel{\textsc{($\multimap$L)}}
\BinaryInfC{$\gamma : \Gamma; \delta: \Delta; \mathcal{A} \cup \mathcal{Q} : S \multimap T \dashv \pi : \Pi; \sigma : \Sigma \vdash \phi : \Phi$}
\DisplayProof 
\and
\AxiomC{$\gamma : \Gamma; \mathcal{P} : S \dashv \pi : \Pi \vdash \mathcal{B} : T$}
\LeftLabel{\textsc{($\multimap$R)}}
\UnaryInfC{$\gamma : \Gamma \dashv \pi : \Pi \vdash \mathcal{P} \cup \mathcal{B} : S \multimap T$}
\DisplayProof
\end{mathpar}
\end{footnotesize}
\end{center}
\caption{Typing rules of the sigma-calculus}
\label{FigLinearSigmaCalculus}
\end{figure}
\end{definition}

\begin{example}
\label{FirstExampleOfSigmaTerm}
Figure~\ref{FigFirstExampleOfSigmaTerm} displays a derivation of a sigma-term.
\if0
\begin{figure}[h]
\begin{center}
\begin{small}
\begin{mathpar}
\AxiomC{}
\LeftLabel{\textsc{($\top$R)}}
\UnaryInfC{$\dashv \vdash \emptyset : \top$}
\LeftLabel{\textsc{($\top$L)}}
\UnaryInfC{$\emptyset : \top \dashv \vdash \emptyset : \top$}
\LeftLabel{\textsc{($\neg$L)}}
\UnaryInfC{$\emptyset : \top; [\emptyset]0 \mapsto i : \bot_{[i]} \dashv \vdash$}
\LeftLabel{\textsc{(XL)}}
\UnaryInfC{$[\emptyset]0 \mapsto i : \bot_{[i]}; \emptyset : \top \dashv \vdash$}
\LeftLabel{\textsc{($\neg$R)}}
\UnaryInfC{$[\emptyset]j \mapsto i : \bot_{[i]} \dashv \vdash \emptyset : \bot_{[j]}$}
\AxiomC{}
\LeftLabel{\textsc{($\top$R)}}
\UnaryInfC{$\dashv \vdash \emptyset : \top$}
\LeftLabel{\textsc{($\top$L)}}
\UnaryInfC{$\emptyset : \top \dashv \vdash \emptyset : \top$}
\LeftLabel{\textsc{($\neg$L)}}
\UnaryInfC{$\emptyset : \top; [\emptyset]0 \mapsto k : \bot_{[k]} \dashv \vdash$}
\LeftLabel{\textsc{(XL)}}
\UnaryInfC{$[\emptyset]0 \mapsto k : \bot_{[k]}; \emptyset : \top \dashv \vdash$}
\LeftLabel{\textsc{($\neg$R)}}
\UnaryInfC{$[\emptyset]l \mapsto k : \bot_{[k]} \dashv \vdash \emptyset : \bot_{[l]}$}
\LeftLabel{\textsc{($\multimap$L)}}
\BinaryInfC{$[\emptyset]j \mapsto i : \bot_{[i]}; [\emptyset]l \mapsto k : \bot_{[j]} \multimap \bot_{[k]} \dashv \vdash \emptyset : \bot_{[l]}$}
\LeftLabel{\textsc{(XL)}}
\UnaryInfC{$[\emptyset]l \mapsto k : \bot_{[j]} \multimap \bot_{[k]}; [\emptyset]j \mapsto i : \bot_{[i]} \dashv \vdash \emptyset : \bot_{[l]}$}
\LeftLabel{\textsc{($\multimap$R)}}
\UnaryInfC{$[\emptyset]l \mapsto k : \bot_{[j]} \multimap \bot_{[k]} \dashv \vdash [\emptyset]j \mapsto i : \bot_{[i]} \multimap \bot_{[l]}$}
\DisplayProof
\end{mathpar}
\end{small}
\end{center}
\caption{An example of a sigma-term}
\label{FigFirstExampleOfSigmaTerm}
\end{figure} 
\fi
\begin{figure}[h]
\begin{center}
\begin{footnotesize}
\begin{mathpar}
\AxiomC{}
\LeftLabel{\textsc{($\top$R)}}
\UnaryInfC{$\dashv \vdash \emptyset : \top$}
\LeftLabel{\textsc{($\top$L)}}
\UnaryInfC{$\emptyset : \top \dashv \vdash \emptyset : \top$}
\LeftLabel{\textsc{($\neg$L)}}
\UnaryInfC{$\emptyset : \top; [\emptyset]0 \mapsto p : \bot_{[p]} \dashv \vdash$}
\LeftLabel{\textsc{(XL)}}
\UnaryInfC{$[\emptyset]0 \mapsto p : \bot_{[p]}; \emptyset : \top \dashv \vdash$}
\LeftLabel{\textsc{($\neg$R)}}
\UnaryInfC{$[\emptyset]o \mapsto p : \bot_{[p]} \dashv \vdash \emptyset : \bot_{[o]}$}
\LeftLabel{\textsc{($\oc$D)}}
\UnaryInfC{$[\emptyset]o \mapsto p : \oc  \bot_{[p]} \dashv \vdash \emptyset : \bot_{[o]}$}
\LeftLabel{\textsc{($\oc$R)}}
\UnaryInfC{$[\emptyset]o \mapsto p : \oc  \bot_{[p]} \dashv \vdash \emptyset : \oc \bot_{[o]}$}
\AxiomC{}
\LeftLabel{\textsc{($\top$R)}}
\UnaryInfC{$\dashv \vdash \emptyset : \top$}
\LeftLabel{\textsc{($\top$L)}}
\UnaryInfC{$\emptyset : \top \dashv \vdash \emptyset : \top$}
\LeftLabel{\textsc{($\neg$L)}}
\UnaryInfC{$\emptyset : \top; [\emptyset]0 \mapsto q : \bot_{[q]} \dashv \vdash$}
\LeftLabel{\textsc{(XL)}}
\UnaryInfC{$[\emptyset]0 \mapsto q : \bot_{[q]}; \emptyset : \top \dashv \vdash$}
\LeftLabel{\textsc{($\neg$R)}}
\UnaryInfC{$[\emptyset]i \mapsto q : \bot_{[q]} \dashv \vdash \emptyset : \bot_{[i]}$}
\LeftLabel{\textsc{($\oc$D)}}
\UnaryInfC{$[\emptyset]i \mapsto q :\oc  \bot_{[q]} \dashv \vdash \emptyset : \bot_{[i]}$}
\AxiomC{}
\LeftLabel{\textsc{($\top$R)}}
\UnaryInfC{$\dashv \vdash \emptyset : \top$}
\LeftLabel{\textsc{($\top$L)}}
\UnaryInfC{$\emptyset : \top \dashv \vdash \emptyset : \top$}
\LeftLabel{\textsc{($\neg$L)}}
\UnaryInfC{$\emptyset : \top; [\emptyset]0 \mapsto q' : \bot_{[q']} \dashv \vdash$}
\LeftLabel{\textsc{(XL)}}
\UnaryInfC{$[\emptyset]0 \mapsto q' : \bot_{[q']}; \emptyset : \top \dashv \vdash$}
\LeftLabel{\textsc{($\neg$R)}}
\UnaryInfC{$[\emptyset]j \mapsto q' : \bot_{[q']} \dashv \vdash \emptyset : \bot_{[j]}$}
\LeftLabel{\textsc{($\oc$D)}}
\UnaryInfC{$[\emptyset]j \mapsto q' : \oc \bot_{[q']} \dashv \vdash \emptyset : \bot_{[j]}$}
\LeftLabel{\textsc{($\otimes$R)}}
\BinaryInfC{$[\emptyset]i \mapsto q : \oc  \bot_{[q]}; [\emptyset]j \mapsto q' : \oc \bot_{[q']} \dashv \vdash \emptyset : \bot_{[i]} \otimes \bot_{[j]}$}
\LeftLabel{\textsc{($\oc$C)}}
\UnaryInfC{$[\emptyset]i \mapsto q, [\emptyset]j \mapsto q : \oc \bot_{[q]} \dashv \vdash \emptyset : \bot_{[i]} \otimes \bot_{[j]}$}
\BinaryInfC{$[\emptyset]o \mapsto p : \oc  \bot_{[p]} \dashv \mathrm{Cut}(\emptyset : \bot_{[o]} \mid [\emptyset]i \mapsto q, [\emptyset]j \mapsto q : \oc \bot_{[q]}) \vdash \emptyset : \bot_{[i]} \otimes \bot_{[j]}$}
\DisplayProof
\end{mathpar}
\end{footnotesize}
\end{center}
\caption{An example of a sigma-term}
\label{FigFirstExampleOfSigmaTerm}
\end{figure} 
\end{example}

\if0
\begin{example}
\label{SecondExampleOfSigmaTerm}
The sigma-term $\{ f, g \}$, where $f = (\langle k[l], \emptyset \rangle : \bot_{[k]}^{\bot_{[j]}} \mathbin{\&} \bot_{[m]} \dashv \vdash \langle i[j], \emptyset \rangle : \bot_{[l]}^{\bot_{[i]}} \mathbin{\&} \bot_{[n]})$ and $g = (\langle m[n], \emptyset \rangle : \bot_{[k]}^{\bot_{[j]}} \mathbin{\&} \bot_{[m]} \dashv \vdash \emptyset : \bot_{[l]}^{\bot_{[i]}} \mathbin{\&} \bot_{[n]})$, is derived in Figure~\ref{FigSecondExampleOfSigmaTerm}, where the derivation $\mathcal{D}$ comes from Example~\ref{FirstExampleOfSigmaTerm}, and the double line indicates multiple applications of typing rules.
This derivation is not a rooted tree but a rooted forest due to the application of the rule \textsc{$\&$R}.
As we shall see, this sigma-term depicts a view algorithm for the copy-cat $\mathrm{cp}_{\bot^\bot \mathbin{\&} \bot}$.
\begin{figure}[h]
\begin{center}
\begin{scriptsize}
\begin{mathpar}
\AxiomC{}
\LeftLabel{\textsc{($\mathcal{D}$)}}
\doubleLine
\UnaryInfC{$\langle k[l], \emptyset \rangle : \bot_{[k]}^{\bot_{[j]}} \dashv \vdash \langle i[j], \emptyset \rangle : \bot_{[l]}^{\bot_{[i]}}$}
\LeftLabel{\textsc{($\&$L)}}
\UnaryInfC{$\langle k[l], \emptyset \rangle : \bot_{[k]}^{\bot_{[j]}} \mathbin{\&} \bot_{[m]} \dashv \vdash \langle i[j], \emptyset \rangle : \bot_{[l]}^{\bot_{[i]}}$}
\LeftLabel{\textsc{($\&_1$R)}}
\UnaryInfC{$\langle k[l], \emptyset \rangle : \bot_{[k]}^{\bot_{[j]}} \mathbin{\&} \bot_{[m]} \dashv \vdash \langle i[j], \emptyset \rangle : \bot_{[l]}^{\bot_{[i]}} \mathbin{\&} \bot_{[n]}$}
\DisplayProof
\and
\AxiomC{}
\LeftLabel{\textsc{($\top$R)}}
\UnaryInfC{$\dashv \vdash \emptyset : \top$}
\LeftLabel{\textsc{($\top$L)}}
\UnaryInfC{$\emptyset : \top \dashv \vdash \emptyset : \top$}
\LeftLabel{\textsc{($\neg$L)}}
\UnaryInfC{$\emptyset : \top; \langle m[\emptyset], \emptyset \rangle : \bot_{[m]} \dashv \vdash$}
\LeftLabel{\textsc{(XL)}}
\UnaryInfC{$\langle m[\emptyset], \emptyset \rangle : \bot_{[m]}; \emptyset : \top \dashv \vdash$}
\LeftLabel{\textsc{($\neg$R)}}
\UnaryInfC{$\langle m[n], \emptyset \rangle : \bot_{[m]} \dashv \vdash \emptyset : \bot_{[n]}$}
\LeftLabel{\textsc{($\&$L)}}
\UnaryInfC{$\langle m[n], \emptyset \rangle : \bot_{[k]}^{\bot_{[j]}} \mathbin{\&} \bot_{[m]} \dashv \vdash \emptyset : \bot_{[n]}$}
\LeftLabel{\textsc{($\&_2$R)}}
\UnaryInfC{$\langle m[n], \emptyset \rangle : \bot_{[k]}^{\bot_{[j]}} \mathbin{\&} \bot_{[m]} \dashv \vdash \emptyset : \bot_{[l]}^{\bot_{[i]}} \mathbin{\&} \bot_{[n]}$}
\DisplayProof
\end{mathpar}
\end{scriptsize}
\end{center}
\caption{The second example of a sigma-term}
\label{FigSecondExampleOfSigmaTerm}
\end{figure} 
\end{example}
\fi


It is immediate to see that the sigma-calculus reformulates intuitionistic linear logic because the typing rules of the former corresponds precisely to the logical rules of the latter: 

\begin{notation}
Let $\lbag x \rbag$ be a finite sequence ranging over the empty one $\boldsymbol{\epsilon}$ or the singleton one $x$.
\end{notation}

\begin{proposition}[a term calculus for intuitionistic linear logic {[Yam23]}]
\label{PropCHIsForILL}
A sequent $\underline{\Gamma} \vdash \lbag \underline{T} \rbag$ is derivable in the sequent calculus for intuitionistic linear logic if and only if there is a sigma-term $\gamma : \Gamma \dashv \pi : \Pi \vdash \lbag \mathcal{B} : T \rbag$, where the sequent is derivable without cut if and only if $\Pi$ is empty.
\end{proposition}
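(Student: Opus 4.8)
The plan is to prove both biconditionals by exhibiting a rule-by-rule correspondence between the typing rules of the sigma-calculus (Figure~\ref{FigLinearSigmaCalculus}) and the logical rules of the sequent calculus for intuitionistic linear logic recalled in Definition~\ref{DefILL}. The bridge is the erasure operation $\underline{(\_)}$, which forgets all identifiers (so each type $T$ collapses to the formula $\underline{T}$), forgets all particles, and contracts the intensionality. First I would verify that applying $\underline{(\_)}$ to the conclusion and premise(s) of each sigma-calculus rule yields exactly an instance of the corresponding ILL rule: \textsc{($\multimap$R)}, \textsc{($\multimap$L)}, \textsc{($\otimes$R)}, \textsc{($\otimes$L)}, \textsc{($\neg$L)}, \textsc{($\neg$R)}, \textsc{($\oc$D)}, \textsc{($\oc$W)}, \textsc{($\oc$C)}, \textsc{($\oc$R)}, \textsc{(XL)}, \textsc{($\top$L)}, \textsc{($\top$R)} and \textsc{($1$R)} erase to their homonymous ILL rules, the two \textsc{($\&$L)} rules erase to the two choices of with-left, \textsc{($\&$R)} erases to with-right, and \textsc{(Cut)} erases to the ILL cut rule.

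For the direction from the sigma-calculus to the syntax, I would argue by induction on the derivation of a sigma-term $\gamma : \Gamma \dashv \pi : \Pi \vdash \lbag \mathcal{B} : T \rbag$, replacing each rule by its erasure to obtain a derivation of $\underline{\Gamma} \vdash \lbag \underline{T} \rbag$ in ILL. The key bookkeeping observation for the cut clause is that \textsc{(Cut)} is the only rule that enlarges the intensionality, appending the fresh component $\mathcal{A} \cup \mathcal{P} : \mathrm{Cut}(S(\iota) \mid S(\jmath))$; the remaining rules either pass $\Pi$ through unchanged, apply $\oc$ to it via \textsc{($\oc$R)}, or concatenate two intensionalities via \textsc{($\otimes$R)} and \textsc{($\&$R)} without manufacturing a new cut. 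Hence $\Pi$ is empty precisely when no instance of \textsc{(Cut)} occurs, which under erasure is precisely when the resulting ILL derivation is cut-free.

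For the converse direction I would proceed by induction on a given ILL derivation of $\underline{\Gamma} \vdash \lbag \underline{T} \rbag$, at each step applying the sigma-calculus rule lying over the ILL rule used. The content of the induction is to supply, coherently with the inductive hypotheses, exactly the data that $\underline{(\_)}$ erases: fresh identifiers on each tensorial negation (and on the reserved empty-codomain slot $\bot_{[0]}$), together with the particles recording the atomic $o \mapsto p$ links created by \textsc{($\neg$L)}/\textsc{($\neg$R)} and propagated through the other rules. The freshness conventions built into Definition~\ref{DefSigmaCalculus} guarantee that unused identifiers are always available, so each rule can indeed be instantiated; a cut-free ILL derivation never invokes \textsc{(Cut)} and therefore lifts to a sigma-term with empty $\Pi$, matching the final clause.

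The hard part will be the \textsc{($\&$R)} rule, whose side condition $|\Pi| = 1 = |\Sigma|$ forces both premise intensionalities to be single cuts so that $\Pi \mathbin{\&} \Sigma$ is a well-formed cut. When lifting an ILL with-right inference, the two subderivations furnished by the induction hypothesis need not carry singleton intensionalities — in particular a cut-free premise has empty $\Pi$ — so I would first normalise each premise intensionality into exactly one combinatorial cut, padding an empty one with the trivial cut $\top$ and collapsing a longer one by tensor. This is the syntactic shadow of the second half of Proposition~\ref{PropFreeChracterisationsOfCombinatorialSequents}, which makes every intensionality into a single combinatorial cut. Reconciling this normalisation with the requirement that both premises share the same domain $\Gamma$, and with the particle unions $\gamma \cup \gamma'$ and $\mathcal{A} \cup \mathcal{B}$ taken in the conclusion, is the delicate step; once it is in place the remainder is a routine, if lengthy, traversal of the two sets of rules.
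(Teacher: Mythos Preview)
Your proposal is correct and follows exactly the approach the paper indicates: the paper does not give a detailed proof but simply states, immediately before the proposition, that ``the typing rules of the former correspond precisely to the logical rules of the latter,'' deferring the details to [Yam23]. Your rule-by-rule induction via the erasure $\underline{(\_)}$ is precisely this correspondence spelled out, and your identification of the \textsc{($\&$R)} side condition $|\Pi| = 1 = |\Sigma|$ as the one place needing a normalisation step (padding with $\top$ or collapsing via $\otimes$, as in Proposition~\ref{PropFreeChracterisationsOfCombinatorialSequents}) is a genuine subtlety that the paper's one-line justification glosses over.
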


Let us next explain the intuition behind the sigma-calculus.
This intuition is based on game semantics yet originally informal [Yam23], but it can be made precise by our combinatorics: A sigma-sequent represents a combinatorial sequent, and a sigma-term a P-view algorithm.

\begin{corollary}[a surjection between formulae and combinatorial arenas]
\label{CorCombinatorialArenasAndFormulaeInILL}
The assignment of a combinatorial arena $\llbracket A \rrbracket_{\mathcal{LG}}$ to each formula $A$ in intuitionistic linear logic defined by
\begin{mathpar}
\llbracket \top \rrbracket_{\mathcal{LG}} \colonequals \top
\and
\llbracket 1 \rrbracket_{\mathcal{LG}} \colonequals 1
\and
\llbracket A \otimes B \rrbracket_{\mathcal{LG}} \colonequals \llbracket A \rrbracket_{\mathcal{LG}} \otimes \llbracket B \rrbracket_{\mathcal{LG}}
\and
\llbracket A \mathbin{\&} B \rrbracket_{\mathcal{LG}} \colonequals \llbracket A \rrbracket_{\mathcal{LG}} \mathbin{\&} \llbracket B \rrbracket_{\mathcal{LG}}
\and
\llbracket A \multimap B \rrbracket_{\mathcal{LG}} \colonequals \llbracket A \rrbracket_{\mathcal{LG}} \multimap \llbracket B \rrbracket_{\mathcal{LG}}
\and
\llbracket \oc A \rrbracket_{\mathcal{LG}} \colonequals \oc \llbracket A \rrbracket_{\mathcal{LG}}
\and
\llbracket \neg A \rrbracket_{\mathcal{LG}} \colonequals \neg \llbracket A \rrbracket_{\mathcal{LG}}
\end{mathpar}
forms a surjection.
\end{corollary}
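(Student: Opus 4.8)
The plan is to derive surjectivity directly from the free characterisation of combinatorial arenas (Theorem~\ref{ThmFreeCharacterisation}), so that this corollary becomes essentially a restatement of that theorem on the syntactic side. First I would record well-definedness: every clause of $\llbracket \_ \rrbracket_{\mathcal{LG}}$ applies one of top, one, tensorial negation, tensor, with, of-course and linear implication, so by Proposition~\ref{PropWellDefinedConstructionsOnCombinatorialSemiArenas} the assignment indeed lands in combinatorial arenas. The substance is therefore surjectivity, and the key observation is that the grammar of intuitionistic linear logic (Definition~\ref{DefILL}) — built from $\top$ and $1$ by $\otimes$, $\mathbin{\&}$, $\multimap$, $\oc$ and $\neg$ — uses exactly the operations appearing in Theorem~\ref{ThmFreeCharacterisation}, and under $\llbracket \_ \rrbracket_{\mathcal{LG}}$ each syntactic constructor is sent to its combinatorial namesake (Definition~\ref{DefConstructionsOnCombinatorialArenas}).

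Concretely, I would fix an arbitrary combinatorial arena $\mathscr{A}$ and argue by induction on $\mathrm{Size}(\mathscr{A}) \colonequals |\mathscr{V}_{|\mathscr{A}|}| \uplus |\mathscr{V}_{\epsilon_{\mathscr{A}}}|$, mirroring the case analysis in the proof of Theorem~\ref{ThmFreeCharacterisation}, to produce a formula $A$ with $\llbracket A \rrbracket_{\mathcal{LG}} \cong \mathscr{A}$. In the base case $\mathrm{Size}(\mathscr{A}) = 0$ we have $\mathscr{A} = \top = \llbracket \top \rrbracket_{\mathcal{LG}}$. For the inductive step, the free characterisation exhibits $\mathscr{A}$, up to graph isomorphism, as $\oc \mathscr{A}'$, $\neg \mathscr{A}''$, $\mathscr{A}_1 \multimap \mathscr{A}_2$, $\otimes_{j=1}^n \mathscr{A}'_j$ or $\mathbin{\&}_{j=1}^n \mathscr{A}'_j$ (the singleton-root subcase with $\alpha_{\mathscr{A}} \cap \mathscr{R}_{|\mathscr{A}|} \neq \emptyset$ yielding $\mathscr{A} \cong 1 = \llbracket 1 \rrbracket_{\mathcal{LG}}$), where each constituent has strictly smaller $\mathrm{Size}$. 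Applying the induction hypothesis to the constituents and then reassembling with the corresponding syntactic constructor — $\oc B$, $\neg B$, $B_1 \multimap B_2$, $B_1 \otimes B_2$, $B_1 \mathbin{\&} B_2$ — gives a formula whose interpretation is isomorphic to $\mathscr{A}$, since the defining clauses of $\llbracket \_ \rrbracket_{\mathcal{LG}}$ turn these constructors into precisely the constructions used by Theorem~\ref{ThmFreeCharacterisation}.

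The only genuine care needed is the treatment of graph isomorphism: Theorem~\ref{ThmFreeCharacterisation} decomposes $\mathscr{A}$ only up to $\cong$, so surjectivity is established onto isomorphism classes of combinatorial arenas, which is exactly what is required (and what the essential surjectivity of the later biequivalences of \S\ref{Bijections} will supply). I would therefore check that each construction on combinatorial arenas is a congruence for $\cong$, so that $\llbracket B_i \rrbracket_{\mathcal{LG}} \cong \mathscr{A}_i$ for the constituents propagates to $\llbracket A \rrbracket_{\mathcal{LG}} \cong \mathscr{A}$ for the assembled formula; this is immediate from the componentwise definitions in Definition~\ref{DefConstructionsOnCombinatorialArenas}. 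I expect this bookkeeping, rather than any conceptual difficulty, to be the main (and only mild) obstacle, as the heavy lifting is done entirely by the free characterisation. Finally I would note that the map is \emph{not} injective — for instance $\llbracket \top \rrbracket_{\mathcal{LG}} = \top = \llbracket \top \otimes \top \rrbracket_{\mathcal{LG}}$ as observed in Example~\ref{SimpleExamplesOfCombinatorialArenas} — so ``surjection'' is the sharp statement at the level of formulae and arenas.
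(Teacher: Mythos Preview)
Your proposal is correct and takes essentially the same approach as the paper: the paper's proof is simply ``Immediate from Theorem~\ref{ThmFreeCharacterisation},'' and you have spelled out exactly the induction and bookkeeping that this one-liner packages up. Your remarks on the ``up to $\cong$'' caveat and on non-injectivity are also in line with what the paper records (the latter appears in the remark immediately following the corollary).
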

\begin{proof}
Immediate from Theorem~\ref{ThmFreeCharacterisation}.
\end{proof}

\begin{remark}
This surjection $\llbracket \_ \rrbracket_{\mathcal{LG}}$ fails to be injective on the nose: $\llbracket \top \otimes \top \rrbracket_{\mathcal{LG}} = \top = \llbracket \top \rrbracket_{\mathcal{LG}}$ yet $\top \otimes \top \neq \top$.
As we shall see, however, it is \emph{essentially injective} (Corollary). 
\end{remark}

\begin{definition}[sigma-sequents as combinatorial sequents]
The map $\llbracket \_ \rrbracket_{\mathcal{LG}}$ of Corollary~\ref{CorCombinatorialArenasAndFormulaeInILL} extends to a type $T$ by $\llbracket T \rrbracket_{\mathcal{LG}} \colonequals \llbracket \underline{T} \rrbracket_{\mathcal{LG}}$, to a cut $C$ by $\llbracket \mathrm{Cut}(T \mid T') \rrbracket_{\mathcal{LG}} \colonequals \llbracket T \rrbracket_{\mathcal{LG}} \multimap \llbracket T' \rrbracket_{\mathcal{LG}}$, and to a sigma-sequent $F = (S_1, S_2, \dots, S_n \dashv C_1, C_2, \dots, C_m \vdash T)$ by
\begin{equation*}
\llbracket F \rrbracket_{\mathcal{LG}} \colonequals \llbracket S_1 \rrbracket_{\mathcal{LG}}, \llbracket S_2 \rrbracket_{\mathcal{LG}}, \dots, \llbracket S_n \rrbracket_{\mathcal{LG}} \dashv  \llbracket C_1 \rrbracket_{\mathcal{LG}}, \llbracket C_2 \rrbracket_{\mathcal{LG}}, \dots, \llbracket C_m \rrbracket_{\mathcal{LG}} \vdash \llbracket T \rrbracket_{\mathcal{LG}},
\end{equation*}
where the number $i$ of each identifier $(\_)_{[i]}$ in $F$ is inherited to the corresponding vertex in $\llbracket F \rrbracket_{\mathcal{LG}}$, and called an \emph{\bfseries O-numeral} if it is contained in an O-move, and a \emph{\bfseries P-numeral} otherwise. 
\end{definition} 

\begin{definition}[raw-terms as P-view algorithms]
\label{DefDirectReading}
Given a raw-term $t$ on a sigma-sequent $F$, the P-view algorithm $\mathrm{alg}(t)$ on the combinatorial sequent $\llbracket F \rrbracket_{\mathcal{LG}}$ is defined by
\begin{equation*}
\mathrm{alg}(t) \colonequals \{ \, (M^F_V, m^F_o, m^F_p) \mid \text{$[V]o \mapsto p$ is an atom occurring in $t$} \, \},
\end{equation*}
where $m^F_i$ for each number $i$ is the move in $\llbracket F \rrbracket_{\mathcal{LG}}$ that contains $i$, and $M^F_V \colonequals \{ \, m^F_i \mid i \in V \, \}$.
\end{definition}

\begin{proposition}[sigma-terms as finite presentations]
\label{PropSigmaTermsAsFinitePresentations}
For every sigma-term $t :: F$, the pair $(\llbracket F \rrbracket_{\mathcal{LG}}, \llbracket t \rrbracket_{\mathcal{LG}})$, where $\llbracket t \rrbracket_{\mathcal{LG}} \colonequals \mathrm{st}(\mathrm{alg}(t))$, is a well-defined 1-cell in the bicategory $\mathcal{LG}$, and this mapping is injective, i.e., $\llbracket t \rrbracket_{\mathcal{LG}} = \llbracket t' \rrbracket_{\mathcal{LG}}$ implies $t = t'$ for all sigma-terms $t' :: F$.
\end{proposition}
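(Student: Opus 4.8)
The plan is to prove both halves by induction on the derivation of the sigma-term $t :: F$, using that the direct reading $\mathrm{alg}$ of Definition~\ref{DefDirectReading} is compositional: each typing rule of Figure~\ref{FigLinearSigmaCalculus} corresponds to one of the constructions on combinatorial sequents and strategies, and $\mathrm{alg}$ (hence $\mathrm{st} \circ \mathrm{alg}$) commutes with these constructions. Concretely, I would first record a lemma stating that $\llbracket F \rrbracket_{\mathcal{LG}}$ for the conclusion of a rule is obtained from the $\llbracket \_ \rrbracket_{\mathcal{LG}}$-images of the premises by the matching operation — $\top$ and $1$ for the axiom rules, tensor for \textsc{($\otimes$R)}, pairing for \textsc{($\&$R)}, promotion for \textsc{($\oc$R)}, transpose for \textsc{($\multimap$R)}, parallel composition for \textsc{(Cut)}, and the evident adjustments of the domain for the left and structural rules — and that $\llbracket t \rrbracket_{\mathcal{LG}}$ is the very same operation applied to the strategies of the subderivations. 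This reduces well-definedness to the preservation properties already established.

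Granted this compositionality, linear winning and finite presentability of $\llbracket t \rrbracket_{\mathcal{LG}}$ follow from Propositions~\ref{PropWellDefinedConstructionsOnStrategies} and \ref{PropPreservationOfFlatInnocenceAndFinitePresentability}: copy-cats and derelictions (the images of the axiom/identity material) are linearly winning and flatly innocent, and tensor, pairing, promotion, transpose and parallel composition all preserve linear winning and finite presentability. The one genuinely new obligation is the pair of logical intensionality axioms, which constrain the strategy's behaviour on the cut part $\Pi$; these cuts are created only by \textsc{(Cut)}, so it suffices to check at each such application (and to propagate through the remaining rules) that the internal edges and internal $\epsilon$-vertices of the freshly introduced $\mathrm{Cut}(S \mid S')$ are mirrored by external ones exactly as the axioms demand. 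I expect this to be immediate from the shape of parallel composition, which forces every play on the cut copy of $S$ to be the copy of the corresponding play on the other side, so that an internal additive or exponential event occurs precisely when its external counterpart does.

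For injectivity I would argue as follows. Since $\llbracket t \rrbracket_{\mathcal{LG}}$ is linearly winning, it is in particular noetherian, hence flatly innocent by the flat innocence lemma (Lemma~\ref{LemCircularity}); therefore it is uniquely recovered from its associated partial map $\mathrm{fun}_{\llbracket t \rrbracket_{\mathcal{LG}}}$. I would then show that this partial map equals the P-view algorithm $\mathrm{alg}(t)$, the key point being reachability: by totality and linearity every triple of $\mathrm{alg}(t)$ is realised in some P-view of $\llbracket t \rrbracket_{\mathcal{LG}}$ (no atom of a sigma-term is junk), so $\mathrm{fun}_{\mathrm{st}(\mathrm{alg}(t))} = \mathrm{alg}(t)$. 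Finally, since $t$ is reconstructed from $\mathrm{Sq}(t)$ and its atom set $\mathrm{atm}(t)$, and since $\mathrm{Sq}(t) = F = \mathrm{Sq}(t')$, it remains to see that the encoding $[V]o \mapsto p \,\longmapsto\, (M^F_V, m^F_o, m^F_p)$ of Definition~\ref{DefDirectReading} is injective on the atoms occurring in sigma-terms; this uses the structural invariant, maintained along the derivation, that the identifiers appearing in atoms determine their moves uniquely, so that $(V,o,p)$ can be read back off $(M^F_V, m^F_o, m^F_p)$. Chaining these recoveries gives $\mathrm{atm}(t) = \mathrm{atm}(t')$ and hence $t = t'$.

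The step I expect to be the main obstacle is this last one: establishing that $\mathrm{alg}$ is injective on atoms of sigma-terms. Because a single move of $\llbracket F \rrbracket_{\mathcal{LG}}$ is a maximal $\mu$-path that may carry several identifiers, the naive map $i \mapsto m^F_i$ from identifiers to moves is not injective, and one must exploit the discipline by which the sigma-calculus introduces identifiers — the placeholder role of $0$ and the freshness of each $i$ in \textsc{($\neg$L)}, \textsc{($\neg$R)} and \textsc{(Cut)} — to pin down, for each atom, exactly which identifier labels its O- and P-moves. Verifying this invariant together with the reachability claim, namely that $\mathrm{fun}_{\llbracket t \rrbracket_{\mathcal{LG}}}$ witnesses every atom, is where the real work lies; the compositionality and preservation bookkeeping, by contrast, should be routine given the earlier propositions.
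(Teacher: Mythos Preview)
Your plan is correct and matches the paper's proof, which is the single line ``By induction on a derivation of a sigma-term'': you have simply spelled out what that induction entails, namely the compositionality of $\mathrm{alg}$ with respect to the typing rules, the appeal to Propositions~\ref{PropWellDefinedConstructionsOnStrategies} and~\ref{PropPreservationOfFlatInnocenceAndFinitePresentability} for linear winning and finite presentability, and the separate check of the logical intensionality axioms at each \textsc{(Cut)}.

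The one point where you have gone astray is your assessment of the ``main obstacle''. Your worry that $i \mapsto m^F_i$ is not injective is unfounded: identifiers in the sigma-calculus decorate only $\neg$-vertices, and by the definition of $\mu_{\neg\mathscr{A}} \colonequals \neg \uplus \mu_{\mathscr{A}}$ every such vertex is a \emph{leaf} in the multiplicative structure $\mu$ (no subsequent construction ever gives a $\neg$-vertex a $\mu$-child). Since a move is a maximal $\mu$-path, it has a unique leaf, hence at most one identifier; conversely each identifier is the leaf of exactly one maximal path. Thus $i \mapsto m^F_i$ is injective on identifiers, and the encoding $[V]o \mapsto p \;\longmapsto\; (M^F_V, m^F_o, m^F_p)$ is injective outright, with no need to invoke the freshness discipline of \textsc{($\neg$L)}/\textsc{($\neg$R)}. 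The genuine work that remains in your injectivity argument is the reachability step --- that every atom of $t$ is witnessed by some P-view of $\llbracket t \rrbracket_{\mathcal{LG}}$, so that $\mathrm{fun}_{\llbracket t \rrbracket_{\mathcal{LG}}} = \mathrm{alg}(t)$ --- and this is exactly what the induction on the derivation delivers.
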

\begin{proof}
By induction on a derivation of a sigma-term. 
\end{proof}

\begin{example}
\label{ExampleOfInterpretation}
If we write $t$ for the sigma-term derived in Example~\ref{FigFirstExampleOfSigmaTerm}, and $\mathscr{S} = (\underline{\mathscr{S}}, \mathrm{Int}_{\mathscr{S}})$, where $\underline{\mathscr{S}} = \oc  \bot_{[p]} \dashv_{[l]} \bot_{[o]} \multimap_{[m]} \oc \bot_{[q]}) \vdash_{[r]} \bot_{[i]} \otimes \bot_{[j]}$ and $\mathrm{Int}_{\mathscr{S}} = \{ m \}$, for the underlying combinatorial sequent with additional tags $l$, $m$ and $r$ attached to the switching vertices, then $\mathrm{alg}(t) = \{ (\emptyset, lri, mq), (\emptyset, lrj, mq), (\emptyset, o, p) \} :: \mathscr{S}$ and $\mathrm{st}(\mathrm{alg}(t)) = \mathrm{Pref}(\{ lri . mq . o . p \} \cup \{ lrj . mq . o . p \}) : \mathscr{S}$.
\end{example}

In this way, each sigma-term $t :: F$ is read off \emph{directly} and \emph{non-inductively} as a finite presentation $\mathrm{alg}(t)$ of the 1-cell $\llbracket t \rrbracket_{\mathcal{LG}} : \llbracket F \rrbracket_{\mathcal{LG}}$, which is said to be \emph{\bfseries definable} by the sigma-calculus.

\if0
\begin{example}
Figure~\ref{FigSecondExampleOfSigmaTerm} displays a derivation of the sigma-term $t :: F \colonequals [\emptyset]j' \mapsto i, [\emptyset]j \mapsto i : \oc \bot_{[i]} \mathbin{\&} T \dashv \vdash \emptyset : \oc (\oc \bot_{[j]} \otimes \oc \bot_{[j']})$, where $T$ is an arbitrary type. 
The strategy $\llbracket t \rrbracket_{\mathcal{LG}} : \oc \bot \mathbin{\&} T \multimap \oc (\oc \bot \otimes \oc \bot)$ is the promotion $\oc \bot \mathbin{\&} T \cong \oc \bot \rightarrow \oc (\oc \bot \mathbin{\&} \oc \bot) \cong \oc (\oc \bot \otimes \oc \bot)$ of the diagonal $\oc \bot \rightarrow \oc \bot \mathbin{\&} \oc \bot$.
\begin{figure}[h]
\begin{center}
\begin{mathpar}
\AxiomC{}
\LeftLabel{\textsc{($\top$R)}}
\UnaryInfC{$\dashv \vdash \emptyset : \top$}
\LeftLabel{\textsc{($\top$L)}}
\UnaryInfC{$\emptyset : \top \dashv \vdash \emptyset : \top$}
\LeftLabel{\textsc{($\neg$L)}}
\UnaryInfC{$[\emptyset]0 \mapsto i : \bot_{[i]}, \emptyset : \top \dashv \vdash$}
\LeftLabel{\textsc{($\neg$R)}}
\UnaryInfC{$[\emptyset]j \mapsto i : \bot_{[i]} \dashv \vdash \emptyset : \bot_{[j]}$}
\LeftLabel{\textsc{($\oc$D)}}
\UnaryInfC{$[\emptyset]j \mapsto i: \oc \bot_{[i]} \dashv \vdash \emptyset : \bot_{[j]}$}
\LeftLabel{\textsc{($\oc$R)}}
\UnaryInfC{$[\emptyset]j \mapsto i : \oc \bot_{[i]} \dashv \vdash \emptyset : \oc \bot_{[j]}$}
\AxiomC{}
\LeftLabel{\textsc{($\top$R)}}
\UnaryInfC{$\dashv \vdash \emptyset : \top$}
\LeftLabel{\textsc{($\top$L)}}
\UnaryInfC{$\emptyset : \top \dashv \vdash \emptyset : \top$}
\LeftLabel{\textsc{($\neg$L)}}
\UnaryInfC{$[\emptyset]0 \mapsto i' : \bot_{[i']}, \emptyset : \top \dashv \vdash$}
\LeftLabel{\textsc{($\neg$R)}}
\UnaryInfC{$[\emptyset]j' \mapsto i' : \bot_{[i']} \dashv \vdash \emptyset : \bot_{[j']}$}
\LeftLabel{\textsc{($\oc$D)}}
\UnaryInfC{$[\emptyset]j' \mapsto i' : \oc \bot_{[i']} \dashv \vdash \emptyset : \bot_{[j']}$}
\LeftLabel{\textsc{($\oc$R)}}
\UnaryInfC{$[\emptyset]j' \mapsto i' : \oc \bot_{[i']} \dashv \vdash \emptyset : \oc \bot_{[j']}$}
\LeftLabel{\textsc{($\otimes$R)}}
\BinaryInfC{$[\emptyset]j \mapsto i : \oc \bot_{[i]}; [\emptyset]j' \mapsto i' : \oc \bot_{[i']} \dashv \vdash \emptyset : \oc \bot_{[j]} \otimes \oc \bot_{[j']}$}
\LeftLabel{\textsc{($\oc$R)}}
\UnaryInfC{$[\emptyset]j \mapsto i : \oc \bot_{[i]}; [\emptyset]j' \mapsto i' : \oc \bot_{[i']} \dashv \vdash \emptyset : \oc (\oc \bot_{[j]} \otimes \oc \bot_{[j']})$}
\LeftLabel{\textsc{($\oc$C)}}
\UnaryInfC{$[\emptyset]j' \mapsto i, [\emptyset]j \mapsto i : \oc \bot_{[i]} \dashv \vdash \emptyset : \oc (\oc \bot_{[j]} \otimes \oc \bot_{[j']})$}
\LeftLabel{\textsc{($\&$L)}}
\UnaryInfC{$[\emptyset]j' \mapsto i, [\emptyset]j \mapsto i : \oc \bot_{[i]} \mathbin{\&} T \dashv \vdash \emptyset : \oc (\oc \bot_{[j]} \otimes \oc \bot_{[j']})$}
\DisplayProof
\end{mathpar}
\end{center}
\caption{The second example of a sigma-term}
\label{FigSecondExampleOfSigmaTerm}
\end{figure} 
\end{example}
\fi

Let us next recall the \emph{cut-elimination} for the sigma-calculus [Yam23], which deletes cuts in a sigma-term in a step-by-step fashion.
As the name indicates, this procedure corresponds under Proposition~\ref{PropCHIsForILL} to the cut-elimination procedure for intuitionistic linear logic \cite{gentzen1936widerspruchsfreiheit,troelstra2000basic}.

\begin{notation}
Let $\mathcal{P}$ be a particle, $\iota$ an assignment of identifiers to a type, and $t$ a raw-term.
\begin{itemize}

\item $\mathcal{P}{\upharpoonright_\iota} \colonequals \{ \, a \in \mathcal{P} \mid \mathbb{N}(a) \cap \mathbb{N}(\iota) \neq \emptyset \, \}$ and $\mathcal{P}{\downharpoonright_\iota} \colonequals \mathcal{P} \setminus \mathcal{P}{\upharpoonright_\iota}$, where $\mathbb{N}(a)$ and $\mathbb{N}(\iota)$ are the sets of all numbers occurring in $a$ and $\iota$, respectively.
These operations extend to $t$ via $\mathrm{atm}(t)$.

\item A pair $\mathcal{M} : \mathrm{Cut}(T(\iota) \mid T(\jmath))$ of a particle $\mathcal{M}$ and a cut $\mathrm{Cut}(T(\iota) \mid T(\jmath))$, called a \emph{\bfseries cut-pair}, is also written $\mathrm{Cut}(\mathcal{M}{\upharpoonright_\iota} : T(\iota) \mid \mathcal{M}{\upharpoonright_\jmath} : T(\jmath))$ for convenience. 

\item Given finite sequences $\boldsymbol{c}$ and $\boldsymbol{c'}$ of cut-pairs with $\boldsymbol{c}$ occurring in $t$, we write $t[\boldsymbol{c}]$ for $t$ with the \emph{hole} $[\_]$ assigned to $\boldsymbol{c}$, and $t[\boldsymbol{c}']$ for what is obtained from $t$ by replacing $\boldsymbol{c}$ with $\boldsymbol{c}'$. 

\item We write $t\{ \iota'/\iota \}$ for the raw-term obtained from $t$ by replacing the assignment $\iota$ of identifiers to the one $\iota'$, and $t\{ i'/i \}$ for $t\{ \iota'/\iota \}$ if $\iota$ and $\iota$ are single assignments of $i$ and $i'$, respectively. 

\item For a cut-pair $\mathrm{Cut}(\mathcal{L} : T(\iota) \mid \mathcal{R} : T(\jmath))$, we write $\#\mathcal{R} = 1$ if, for each P-numeral $p$ occurring in $\jmath$, $\mathcal{R}$ contains a unique atom of the form $[V]o \mapsto p$ yet no other atoms. 

\end{itemize}
\end{notation}

\begin{definition}[cut-elimination {[Yam23]}]
The \emph{\bfseries cut-elimination} for the sigma-calculus is the union ${\rightarrow} \colonequals \bigcup_{i = 1}^{7} \rightarrow_i$ of the binary relations $\rightarrow_i$ on raw-terms listed in Figure~\ref{FigLinearBetaReduction}, and the \emph{\bfseries big-step cut-elimination} $\rightarrow^\omega$ is the reflexive, transitive closure of $\rightarrow$.
\begin{figure}
\begin{center}
\begin{footnotesize}
\begin{gather*}
t[\mathrm{Cut}(\mathcal{L} : S(\iota) \mid \emptyset : S(\tilde{\iota}))] \rightarrow_1 t[\boldsymbol{\epsilon}]{\downharpoonright_{\iota}} \\
t[\mathrm{Cut}(\mathcal{L} : \neg_{[i]} S(\iota) \mid \mathcal{R}, [J]o \mapsto j : \neg_{[j]}S(\jmath))] \rightarrow_2 t[\mathrm{Cut}(\mathcal{R} : S(\jmath) \mid \mathcal{L} : S(\iota))]\{ o / i \} \\
t[\mathrm{Cut}(\mathcal{L} : S(\iota) \otimes T(\jmath) \mid \mathcal{R} : S(\tilde{\iota}) \otimes T(\tilde{\jmath}))] \rightarrow_3 t[\mathrm{Cut}(\mathcal{L}{\upharpoonright_{\iota}} : S(\iota) \mid \mathcal{R}{\upharpoonright_{\tilde{\iota}}} : S(\tilde{\iota})); \mathrm{Cut}(\mathcal{L}{\upharpoonright_{\jmath}} : T(\jmath) \mid \mathcal{R}{\upharpoonright_{\tilde{\jmath}}} : T(\tilde{\jmath}))] \\
t[\mathrm{Cut}(\mathcal{L} : S(\iota) \mathbin{\&} T(\jmath) \mid \mathcal{R} : S(\tilde{\iota}) \mathbin{\&} T(\tilde{\jmath}))] \rightarrow_4 t[\mathrm{Cut}(\mathcal{L}{\upharpoonright_{\iota}} : S(\iota) \mid \mathcal{R}{\upharpoonright_{\tilde{\iota}}} : S(\tilde{\iota})); \mathrm{Cut}(\mathcal{L}{\upharpoonright_{\jmath}} : T(\jmath) \mid \mathcal{R}{\upharpoonright_{\tilde{\jmath}}} : T(\tilde{\jmath}))] \\
t[\mathrm{Cut}(\mathcal{L} : S(\iota) \multimap T(\jmath) \mid \mathcal{R} : S(\tilde{\iota}) \multimap T(\tilde{\jmath}))] \rightarrow_5 t[\mathrm{Cut}(\mathcal{R}{\upharpoonright_{\tilde{\iota}}} : S(\tilde{\iota}) \mid \mathcal{L}{\upharpoonright_{\iota}} : S(\iota)); \mathrm{Cut}(\mathcal{L}{\upharpoonright_{\jmath}} : T(\jmath) \mid \mathcal{R}{\upharpoonright_{\tilde{\jmath}}} : T(\tilde{\jmath}))] \\
t[\mathrm{Cut}(\mathcal{L} : \oc S(\iota) \mid \mathcal{R} : \oc S(\jmath))] \rightarrow_{6} t[\mathrm{Cut}(\mathcal{L} : S(\iota) \mid \mathcal{R} : S(\jmath))] \quad (\text{if $\# \mathcal{R} = 1$}) \\
t[\mathrm{Cut}(\mathcal{L} : \oc S(\iota) \mid \mathcal{R}, \mathcal{Q} : \oc S(\jmath))] \rightarrow_{7} t[\mathrm{Cut}(\mathcal{L} : \oc S(\iota) \mid \mathcal{R} : \oc S(\jmath)); \mathrm{Cut}(\mathcal{L}\{ \iota'/\iota \} : \oc S(\iota') \mid \mathcal{Q}\{ \jmath' / \jmath \} : \oc S(\jmath'))] \cup t{\upharpoonright_{\iota, \jmath}}\{ \iota', \jmath' /\iota, \jmath \} \\ (\text{if $\# \mathcal{R} = 1$ and $\mathcal{Q} \neq \emptyset$})
\end{gather*}
\end{footnotesize}
\end{center}
\caption{Cut-elimination for the sigma-calculus}
\label{FigLinearBetaReduction}
\end{figure}
\end{definition}

\begin{example}
The sigma-term of Example~\ref{FirstExampleOfSigmaTerm} computes by the cut-elimination steps
\begin{footnotesize}
\begin{gather*}
[\emptyset]o \mapsto p : \oc \bot_{[p]} \dashv \mathrm{Cut}(\emptyset : \oc \bot_{[o]} \mid [\emptyset]i \mapsto q, [\emptyset]j \mapsto q : \oc \bot_{[q]}) \vdash \emptyset : \bot_{[i]} \otimes \bot_{[j]} \\
\downarrow \\
[\emptyset]o \mapsto p, [\emptyset]o' \mapsto p : \oc \bot_{[p]} \dashv \mathrm{Cut}(\emptyset : \bot_{[o]} \mid [\emptyset]i \mapsto q : \bot_{[q]}); \mathrm{Cut}(\emptyset : \bot_{[o']} \mid [\emptyset]j \mapsto q' : \bot_{[q']}) \vdash \emptyset : \bot_{[i]} \otimes \bot_{[j]} \\
\downarrow_\ast \\
[\emptyset]i \mapsto p, [\emptyset]j \mapsto p : \oc \bot_{[p]} \dashv \mathrm{Cut}(\emptyset : \top \mid \emptyset : \top); \mathrm{Cut}(\emptyset : \top \mid \emptyset : \top) \vdash \emptyset : \bot_{[i]} \otimes \bot_{[j]} \\
\downarrow_\ast \\
[\emptyset]i \mapsto p, [\emptyset]j \mapsto p : \oc \bot_{[p]} \dashv \vdash \emptyset : \bot_{[i]} \otimes \bot_{[j]}.
\end{gather*}
\end{footnotesize}
\end{example}

Yamada [Yam23] has proven basic properties of the cut-elimination such as \emph{subject reduction}, \emph{confluence} and \emph{strong normalisation}.
These properties collectively imply: 

\begin{theorem}[correctness of cut-elimination {[Yam23]}]
\label{ThmSyntacticCutElim}
If $t$ is a sigma-term on a sigma-sequent $\Gamma \dashv \Pi \vdash \Phi$, then there is a finite sequence $t \colonequals t_0 \rightarrow t_1 \rightarrow t_2 \rightarrow \dots \rightarrow t_n$ of cut-eliminations such that each $t_i$ ($0 \leqslant i \leqslant n$) is a sigma-term on a sigma-sequent $\Gamma \dashv \Pi_i \vdash \Phi$ with $t_n$ on $\Gamma \dashv \vdash \Phi$.
\end{theorem}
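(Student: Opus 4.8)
The plan is to assemble the three properties of the cut-elimination already established in [Yam23] --- subject reduction, strong normalisation and confluence --- into the stated reduction of $t$ to a cut-free sigma-term, the only genuinely new ingredient being a \emph{progress} lemma to the effect that a $\rightarrow$-irreducible sigma-term carries no cuts. First I would invoke strong normalisation: starting from $t = t_0$, every $\rightarrow$-sequence terminates, so I may fix a maximal one $t_0 \rightarrow t_1 \rightarrow \dots \rightarrow t_n$ whose last term $t_n$ is $\rightarrow$-normal. Confluence then guarantees that $t_n$ is independent of the chosen sequence, although for the bare existence claim of the theorem this canonicity is not needed.

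Next I would use subject reduction to conclude that every $t_i$ is again a sigma-term, and strengthen this to the observation that none of the seven rules of Figure~\ref{FigLinearBetaReduction} touches the domain context $\gamma : \Gamma$ or the codomain $\phi : \Phi$: each rule rewrites only a cut-pair sitting inside the intensionality $\pi : \Pi$, either deleting it ($\rightarrow_1$), or replacing it by cut-pairs on proper subtypes ($\rightarrow_2$ through $\rightarrow_6$), or by a residual cut together with a fresh duplicate ($\rightarrow_7$). Hence each $t_i$ is a sigma-term on a sigma-sequent of the form $\Gamma \dashv \Pi_i \vdash \Phi$, with $\Gamma$ and $\Phi$ literally preserved along the whole sequence; this already secures every clause of the statement except that $\Pi_n$ is empty.

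It then remains to prove the progress lemma: if $t_n$ is $\rightarrow$-normal, then $\Pi_n = \boldsymbol{\epsilon}$. Suppose not; then, by the grammar of cuts and the free characterisation of combinatorial sequents (Proposition~\ref{PropFreeChracterisationsOfCombinatorialSequents}), $t_n$ contains a cut-pair $\mathrm{Cut}(\mathcal{L} : T(\iota) \mid \mathcal{R} : T(\jmath))$ with $\underline{T}$ their common formula. I would case-split on the outermost connective of $\underline{T}$. For $\top$ and $1$ one of the two particles is forced to be empty (these types generate no atoms), so $\rightarrow_1$ fires; for $\neg_{[i]}$, $\otimes$, $\mathbin{\&}$ and $\multimap$ the matching rule $\rightarrow_2,\rightarrow_3,\rightarrow_4,\rightarrow_5$ applies once one checks, via the typing derivation of $t_n$ (Proposition~\ref{PropCHIsForILL}), that the particles have the shape the rule demands; and for $\oc$ one decomposes the right particle as $\mathcal{R},\mathcal{Q}$ with $\#\mathcal{R}=1$, so that $\rightarrow_6$ fires when $\mathcal{Q}=\emptyset$ and $\rightarrow_7$ otherwise. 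In every case $t_n$ would reduce, contradicting normality, so $\Pi_n = \boldsymbol{\epsilon}$ and $t_n$ lies on $\Gamma \dashv \vdash \Phi$ as required.

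The step I expect to be the main obstacle is the exponential case, on both fronts. For strong normalisation one must find a reduction measure that dominates the copying performed by $\rightarrow_7$ --- typically a multiset ordering on the sizes of the cut types weighted by the number of of-course atoms --- and verify that $\rightarrow_7$ strictly decreases it despite introducing a new cut-pair. Dually, in the progress lemma one must verify that the decomposition of a well-typed $\oc$-cut into $\mathcal{R},\mathcal{Q}$ with $\#\mathcal{R}=1$ always exists, which is precisely where the identifier bookkeeping and the linearity discipline of the sigma-calculus are used. Since these are exactly the properties asserted in [Yam23], the theorem follows by the assembly described above.
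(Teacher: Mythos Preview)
Your proposal is essentially correct and aligned with the paper's own treatment: the paper does not give a proof but simply states that the theorem follows from subject reduction, confluence and strong normalisation, all proven in [Yam23]. Your assembly of these three ingredients, together with the explicit progress lemma (a $\rightarrow$-normal sigma-term carries no cuts), is exactly the standard way to unpack that implication, and you are right that confluence is not needed for the bare existence claim.

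One minor remark: your progress argument is slightly informal in places. For instance, $1$ is not handled by $\rightarrow_1$ via ``no atoms'' on the \emph{left} particle; rather, in a well-typed cut on $1$ the right-hand particle is empty (the only way $1$ enters the domain is via \textsc{$1$R} followed by \textsc{Cut}, or via the context in \textsc{$1$R} which produces no atoms for it), and $\rightarrow_1$ fires on that basis. Similarly, the $\oc$-case decomposition into $\mathcal{R},\mathcal{Q}$ with $\#\mathcal{R}=1$ relies on the shape of particles produced by \textsc{$\oc$D}, \textsc{$\oc$W} and \textsc{$\oc$C}, which is indeed part of the [Yam23] analysis you defer to. These are exactly the bookkeeping points you flag as the main obstacle, and since the paper takes them as established in [Yam23], your proposal matches the paper's level of detail and intent.
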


\if0
\begin{remark}
At this point, we can obtain a bicategory from the sigma-calculus similarly to the one $\mathcal{LG}$.
However, this bicategory defines identities and natural isomorphisms \emph{by induction on types}, which makes the verification of the construction lengthy and complex.
Therefore, we shall instead define the syntactic bicategory \emph{indirectly via $\mathcal{LG}$} after establishing a bijection between the two.
\end{remark}
\fi

We next define a bicategory out of the sigma-calculus by a method similar to the one for $\mathcal{LG}$:
\begin{definition}[the sigma-calculus as a bicategory]
The E-category $\mathsf{ILL}$ is defined as follows:
\begin{itemize}

\item An object is a type;

\item A morphism $S \rightarrow T$ is a pair $(\Pi, t)$ of a finite sequence $\Pi$ of cuts and a sigma-term $t$ on the sigma-sequent $S \dashv \Pi \vdash T$, where the morphism $(\Pi, t)$ is said to be \emph{\bfseries normalised} or a \emph{\bfseries value} if $\Pi = \boldsymbol{\epsilon}$;

\item The composition of morphisms $(\Pi, t) : S \rightarrow T(\iota)$ and $(\Sigma, u) : T(\iota') \rightarrow U$ is the pair 
\begin{equation*}
(\Pi, t) ; (\Sigma, u) \colonequals ((\Pi, \mathrm{Cut}(T(\iota) \mid T(\iota')), \Sigma), t \cup u) : S \rightarrow U;
\end{equation*}

\item The identity $\mathrm{id}_{S}$ is the pair $(\boldsymbol{\epsilon}, \mathrm{id}_{S})$, where the sigma-term $\mathrm{id}_S :: (S(\jmath) \dashv \vdash S(\jmath'))$ is defined by induction on the type $S$ in the standard way (i.e., the same as the proof that the sequent $A \vdash A$ is \emph{derivable} for all formulae $A$ in the sequent calculus for intuitionistic linear logic);

\item The equivalence relation $\simeq_{S, T}$ between morphisms $(\Pi, t), (\Pi', t') : S \rightrightarrows T$ is given by
\begin{equation*}
(\Pi, t) \simeq_{S, T} (\Pi', t') \ratio\Leftrightarrow \text{$t \rightarrow^\ast t''$ and $t' \rightarrow^\ast t''$ for the same sigma-term $t''$ on $S \dashv \vdash T$,}
\end{equation*}
where we often omit the subscripts $(\_)_{S, T}$ on $\simeq$ (n.b., $\rightarrow^\ast {=} \rightarrow^\omega$ thanks to Theorem~\ref{ThmSyntacticCutElim}).
\end{itemize}
\end{definition}

It is straightforward, albeit tedious, to show that $\mathsf{ILL}$ forms a well-defined E-category.
Further, it forms a new Seely category up to $\simeq$ similarly to $\mathcal{LG}$ though we will not use this result.
We leave the details to the reader. 
We shall eventually prove that the two bicategories are \emph{biequivalent} by:

\begin{proposition}[a semantic functor]
\label{PropInterpretationFunctor}
The maps $\llbracket \_ \rrbracket_{\mathcal{LG}}$ give rise to a 2-functor (or E-functor) $\mathsf{ILL} \rightarrow \mathcal{LG}$, and it preserves the structures of new Seely categories up to 2-cells.
\end{proposition}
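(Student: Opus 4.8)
The plan is to verify the four conditions defining an E-functor---well-definedness on objects and on morphisms, preservation of identities and composition, and preservation of the equivalence relation $\simeq$---and then to promote this to preservation of the new Seely structure. The object assignment $S \mapsto \llbracket S \rrbracket_{\mathcal{LG}} = \llbracket \underline{S} \rrbracket_{\mathcal{LG}}$ lands in combinatorial arenas and is well-defined by Corollary~\ref{CorCombinatorialArenasAndFormulaeInILL}, and by construction it commutes strictly with the type constructors $\otimes, \mathbin{\&}, \multimap, \oc, \neg$ and the constants $\top, 1$. The morphism assignment $(\Pi, t) \mapsto (\llbracket \Pi \rrbracket_{\mathcal{LG}}, \llbracket t \rrbracket_{\mathcal{LG}})$ is a well-defined $1$-cell $\llbracket S \rrbracket_{\mathcal{LG}} \to \llbracket T \rrbracket_{\mathcal{LG}}$ by Proposition~\ref{PropSigmaTermsAsFinitePresentations}.

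For identities I would prove $\llbracket \mathrm{id}_S \rrbracket_{\mathcal{LG}} = \mathrm{cp}_{\llbracket S \rrbracket_{\mathcal{LG}}}$ by induction on the type $S$: the inductive clauses defining the identity sigma-term mirror the component-wise copying structure of the copy-cat, so the two agree under $\mathrm{alg}$ and hence, via $\mathrm{st}$, as strategies. For composition, the key is a compatibility lemma: since $\mathrm{atm}(t \cup u) = \mathrm{atm}(t) \cup \mathrm{atm}(u)$, Definition~\ref{DefDirectReading} gives $\mathrm{alg}(t \cup u) = \mathrm{alg}(t) \cup \mathrm{alg}(u)$, and for innocent presentations supported on the two sides of a cut one has $\mathrm{st}(f \cup g) = \mathrm{st}(f) \between \mathrm{st}(g)$, because an innocent strategy responds according to its P-view and every such P-view lies entirely on one side of the cut. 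Combined with the definitional equality $\llbracket \mathrm{Cut}(T(\iota) \mid T(\iota')) \rrbracket_{\mathcal{LG}} = \llbracket T \rrbracket_{\mathcal{LG}} \multimap \llbracket T \rrbracket_{\mathcal{LG}}$, this yields $\llbracket (\Pi,t);(\Sigma,u) \rrbracket_{\mathcal{LG}} = \llbracket (\Pi,t) \rrbracket_{\mathcal{LG}} ; \llbracket (\Sigma,u) \rrbracket_{\mathcal{LG}}$ on the nose, so composition and identities are preserved strictly, not merely up to $\simeq$.

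The main obstacle is preservation of $\simeq$. If $(\Pi, t) \simeq (\Pi', t')$, meaning $t \rightarrow^\ast t''$ and $t' \rightarrow^\ast t''$ for a common cut-free $t''$ (Theorem~\ref{ThmSyntacticCutElim}), I must show $\mathcal{H}^\omega(\llbracket t \rrbracket_{\mathcal{LG}}) = \mathcal{H}^\omega(\llbracket t' \rrbracket_{\mathcal{LG}})$. Since $t''$ is cut-free its interpretation is a value, so $\mathcal{H}^\omega(\llbracket t'' \rrbracket_{\mathcal{LG}}) = \llbracket t'' \rrbracket_{\mathcal{LG}}$, and it suffices to prove that interpretation commutes with normalisation, namely $\mathcal{H}^\omega(\llbracket t \rrbracket_{\mathcal{LG}}) = \llbracket t'' \rrbracket_{\mathcal{LG}}$. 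This is the genuinely dynamic content of the statement, and I would establish it through the step-by-step correspondence between a single cut-elimination step $t \rightarrow t_1$ of Figure~\ref{FigLinearBetaReduction} and one application of the refined hiding operation $\mathcal{H}$ (Corollary~\ref{CorCombinatorialCutElim}): matching each rule $\rightarrow_i$ against the elimination of one internal cut-vertex of $\llbracket t \rrbracket_{\mathcal{LG}}$ shows $\mathcal{H}(\llbracket t \rrbracket_{\mathcal{LG}}) = \llbracket t_1 \rrbracket_{\mathcal{LG}}$, and iterating, together with the fact that the countable iteration of $\mathcal{H}$ coincides with $\mathcal{H}^\omega$, gives the claim. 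The rule-by-rule bookkeeping of exponential tags for $\rightarrow_6, \rightarrow_7$ against the exponential dynamics is where the argument is most delicate.

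Finally, preservation of the new Seely structure up to $\simeq$ is routine once functoriality is in place. The object map already fixes the monoidal product $\otimes$ with unit $\top$, the internal hom $\multimap$, the finite products $(1, \mathbin{\&})$ and the comonad $\oc$ strictly; each structural morphism (associators, unitors, symmetry, evaluation and currying for $\multimap$, projections and pairing for $\mathbin{\&}$, counit and comultiplication of $\oc$, and the monoidality isomorphisms of the co-Kleisli adjunction of Definition~\ref{DefNewSeelyCategories}) is the interpretation of the canonical sigma-term realising it. Because $\llbracket \_ \rrbracket_{\mathcal{LG}}$ preserves composition and identities, it carries each such term to the corresponding combinatorial structural $1$-cell, and the coherence equations---which hold in $\mathsf{ILL}$ up to $\simeq$---are transported to $\mathcal{LG}$ up to $\simeq$, yielding the asserted structure preservation.
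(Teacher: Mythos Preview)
Your verification of the functor conditions (objects, morphisms, composition, identities) is fine, if far more detailed than the paper's one-line ``Immediate from the constructions of $\mathsf{ILL}$ and $\mathcal{LG}$.'' The paper treats the whole proposition as routine because both bicategories were built from the same raw material, so the structural checks are mechanical.

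There is, however, a genuine circularity in your argument for preservation of $\simeq$. You invoke the one-step hiding operation $\mathcal{H}$ and Corollary~\ref{CorCombinatorialCutElim} to show $\mathcal{H}^\omega(\llbracket t\rrbracket_{\mathcal{LG}}) = \llbracket t''\rrbracket_{\mathcal{LG}}$. But in this paper $\mathcal{H}$ is \emph{defined} only after the biequivalence of Corollary~\ref{CorIntensionalBijection} is in hand (it is transported from the syntactic cut-elimination via the inverse $\llbracket\_\rrbracket_{\mathcal{LG}}^\flat$), and Corollary~\ref{CorIntensionalBijection} in turn cites the present Proposition~\ref{PropInterpretationFunctor}. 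So appealing to $\mathcal{H}$ or to Corollary~\ref{CorCombinatorialCutElim} here begs the question.

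The repair is simple and is presumably what the paper has in mind by ``immediate'': work only with the \emph{big-step} hiding $\mathcal{H}^\omega$ of Definition~\ref{DefCombinatorialBigStepCutElimination}, which is defined independently as restriction $\boldsymbol{s}\mapsto\boldsymbol{s}{\upharpoonright_{\Gamma,\Phi}}$. For each rewrite rule $t\rightarrow_i t'$ of Figure~\ref{FigLinearBetaReduction} one checks directly that $\mathcal{H}^\omega(\llbracket t\rrbracket_{\mathcal{LG}}) = \mathcal{H}^\omega(\llbracket t'\rrbracket_{\mathcal{LG}})$: the rule only reorganises atoms inside the intensionality component $\Pi$, and the restriction to the external part $\Gamma,\Phi$ is unaffected. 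This is the soundness lemma you actually need, and it requires no reference to the refined step-wise operator $\mathcal{H}$.
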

\begin{proof}
Immediate from the constructions of $\mathsf{ILL}$ and $\mathcal{LG}$.
\end{proof}

Next, Yamada [Yam23] employed the \emph{double negation translation} of classical linear logic into intuitionistic linear logic \cite[\S 5.12]{troelstra1991lectures} for the sigma-calculus to embody classical linear logic:
\begin{proposition}[a term calculus for classical linear logic {[Yam23]}]
\label{PropCHIsForCLL}
Let $\mathscr{T}_{\neg\neg}$ be the map from formulae in classical linear logic to those in intuitionistic linear logic defined by
\begin{mathpar}
\mathscr{T}_{\neg\neg}(\top) \colonequals \top
\and
\mathscr{T}_{\neg\neg}(1) \colonequals 1
\and
\mathscr{T}_{\neg\neg}(\bot) \colonequals \bot
\and
\mathscr{T}_{\neg\neg}(0) \colonequals \neg 1
\and
\mathscr{T}_{\neg\neg}(A^\bot) \colonequals \neg \mathscr{T}_{\neg\neg}(A)
\and
\mathscr{T}_{\neg\neg}(A \otimes B) \colonequals \neg \neg \mathscr{T}_{\neg\neg}(A) \otimes \neg \neg \mathscr{T}_{\neg\neg}(B)
\and
\mathscr{T}_{\neg\neg}(A \mathbin{\&} B) \colonequals \neg \neg \mathscr{T}_{\neg\neg}(A) \mathbin{\&} \neg \neg \mathscr{T}_{\neg\neg}(B)
\and
\mathscr{T}_{\neg\neg} (A \invamp B) \colonequals \neg (\neg \mathscr{T}_{\neg\neg} (A) \otimes \neg \mathscr{T}_{\neg\neg}(B))
\and
\mathscr{T}_{\neg\neg} (A \oplus B) \colonequals \neg (\neg \mathscr{T}_{\neg\neg} (A) \mathbin{\&} \neg \mathscr{T}_{\neg\neg}(B))
\and
\mathscr{T}_{\neg\neg}(A \multimap B) \colonequals \mathscr{T}_{\neg\neg}(A) \multimap \neg \neg \mathscr{T}_{\neg\neg}(B)
\and
\mathscr{T}_{\neg\neg}(\oc A) \colonequals \oc \neg \neg \mathscr{T}_{\neg\neg}(A) 
\and
\mathscr{T}_{\neg\neg}(\wn A) \colonequals \neg \oc \neg \mathscr{T}_{\neg\neg}(A).
\end{mathpar}
A sequent $\boldsymbol{A} \vdash \lbag B \rbag$ is derivable in the sequent calculus for classical linear logic if and only if there is a sigma-term $\gamma : \Gamma \dashv \pi : \Pi \vdash \lbag \mathcal{B} : T \rbag$ such that $\underline{\Gamma} = \mathscr{T}_{\neg\neg}^\ast(\boldsymbol{A})$ and $\underline{T} = \neg \neg \mathscr{T}_{\neg\neg}(B)$ if $\lbag B \rbag = B$ and $\lbag \mathcal{B} : T \rbag = \mathcal{B} : T$, where the sequent is derivable without cut if and only if $\Pi = \boldsymbol{\epsilon}$.
\end{proposition}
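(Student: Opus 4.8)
The plan is to reduce the statement to two prior results: the standard double-negation embedding of classical linear logic into intuitionistic linear logic (Troelstra [\S 5.12], adapted to the tensorial negation $\neg$), and Proposition~\ref{PropCHIsForILL}, which already converts derivability in intuitionistic linear logic into the existence of sigma-terms. The whole proposition is then obtained by composing these two correspondences, so the genuine content is concentrated in checking that the particular translation $\mathscr{T}_{\neg\neg}$ written down here is a correct double-negation embedding.

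First I would establish the soundness and faithfulness of $\mathscr{T}_{\neg\neg}$, namely that a sequent $\boldsymbol{A} \vdash \lbag B \rbag$ is derivable in classical linear logic if and only if the sequent $\mathscr{T}_{\neg\neg}^\ast(\boldsymbol{A}) \vdash \lbag \neg\neg\mathscr{T}_{\neg\neg}(B) \rbag$ is derivable in intuitionistic linear logic (with the empty codomain identified with $\bot_{[0]}$ as in the calculus). The soundness direction proceeds by induction on a derivation in classical linear logic, translating each logical rule into a block of rules in intuitionistic linear logic; the delicate point is that the double negations must sit exactly where the clauses of $\mathscr{T}_{\neg\neg}$ place them (on the immediate arguments of $\otimes$ and $\mathbin{\&}$, on the codomain of $\multimap$, inside $\oc$, and around $\wn$), because in the resource-sensitive linear setting one cannot freely insert or delete $\neg\neg$ as one can in the classical-to-intuitionistic embedding. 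For faithfulness I would use that each $A$ is provably equivalent, under the involution $\neg\neg A \dashv\vdash A$, to a reading of $\mathscr{T}_{\neg\neg}(A)$, so that any intuitionistic-linear derivation of the translated sequent can be reflected back into a classical-linear derivation of the original.

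Second I would invoke Proposition~\ref{PropCHIsForILL} with $\underline{\Gamma} \colonequals \mathscr{T}_{\neg\neg}^\ast(\boldsymbol{A})$ and $\underline{T} \colonequals \neg\neg\mathscr{T}_{\neg\neg}(B)$: it states that $\underline{\Gamma} \vdash \lbag \underline{T} \rbag$ is derivable in intuitionistic linear logic exactly when there is a sigma-term $\gamma : \Gamma \dashv \pi : \Pi \vdash \lbag \mathcal{B} : T \rbag$, and that this sequent is cut-free derivable precisely when $\Pi = \boldsymbol{\epsilon}$. Chaining this with the first step gives the desired equivalence: the classical-linear sequent $\boldsymbol{A} \vdash \lbag B \rbag$ is derivable if and only if such a sigma-term exists.

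The cut-free refinement is where the main care is required. I would show that $\mathscr{T}_{\neg\neg}$ both preserves and reflects cut-free derivability: each classical-linear logical rule and the axiom translate to a cut-free block of intuitionistic-linear rules, so cut-free derivations map to cut-free ones, while cut-elimination on both calculi together with the faithfulness argument lets one strip any cut introduced on the intuitionistic side. Combining this cut-free correspondence with the ``$\Pi = \boldsymbol{\epsilon}$ iff cut-free'' clause of Proposition~\ref{PropCHIsForILL} yields the claim about $\Pi$. The hardest part will be verifying, connective by connective—especially for the exponentials $\oc$, $\wn$ and the units $0$ and $\bot$—that the chosen placement of double negations simultaneously makes the soundness induction go through and keeps the cut-free structure intact in both directions; it is in the multiplicative and exponential cases that the linear discipline makes the usual double-negation bookkeeping genuinely nontrivial, and this is the core content attributed to [Yam23].
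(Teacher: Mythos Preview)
Your approach is sound, and the factorisation through Proposition~\ref{PropCHIsForILL} is natural, but it is organised differently from the paper's (suppressed) argument. The paper does not pass through ILL derivability as an intermediate stage; instead it strengthens the statement to arbitrary two-sided CLL sequents $\boldsymbol{A} \vdash [B], \boldsymbol{C}$ and builds a \emph{direct} correspondence with sigma-terms of shape $\gamma : \Gamma; \delta : \Delta \dashv \pi : \Pi \vdash [\mathcal{B} : T]$, where the extra right-hand formulae $\boldsymbol{C}$ land as $\underline{\Delta} = \neg\mathscr{T}_{\neg\neg}^\ast(\boldsymbol{C})$ on the left. The forward direction is an induction on CLL derivations producing sigma-terms directly; the backward direction takes a sigma-term derivation and simply deletes the $\neg$-rules inserted by $\mathscr{T}_{\neg\neg}$, reading the residue as a CLL derivation. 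Because this deletion is purely structural, cut-freeness is preserved on the nose in both directions, so the $\Pi = \boldsymbol{\epsilon}$ clause falls out immediately.

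Your route buys modularity---you reuse Proposition~\ref{PropCHIsForILL} as a black box---but at the cost of an indirect cut-free argument: to reflect a cut-free ILL proof of the translated sequent back to a cut-free CLL proof, you invoke cut-elimination for CLL after reinserting the equivalences $\neg\neg X \dashv\vdash X$. This works, but it is heavier machinery than the paper needs. Note also that your soundness induction will in any case force you to generalise to multi-conclusion CLL sequents (since the CLL rules manipulate both sides), so you end up proving essentially the same strengthened statement the paper starts from; at that point the direct construction of sigma-terms is no more work than verifying the ILL translation rule by rule.
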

\if0
\begin{proof}
Let us show a more general fact that a sequent $\boldsymbol{A} \vdash [B], \boldsymbol{C}$ is derivable in CLL if and only if there is a sigma-term $\gamma : \Gamma; \delta : \Delta \dashv \pi : \Pi \vdash [\mathcal{B} : T]$ such that $\underline{\Gamma} = \mathscr{T}_{\neg\neg}^\ast(\boldsymbol{A})$, $\underline{\Delta} = \neg \mathscr{T}_{\neg\neg}^\ast(\boldsymbol{C})$ and $\underline{T} = \neg \neg \mathscr{T}_{\neg\neg}(B)$ when $[B] = B$ and $[\mathcal{B} : T] = \mathcal{B} : T$, where the sequent is derivable without cut if and only if $\Pi$ is empty.
This clearly proves the claim of the proposition. 

For the necessity, it suffices to observe by induction on derivations in CLL that if a sequent $\boldsymbol{A} \vdash [B], \boldsymbol{C}$ is derivable in CLL, then there is a sigma-term $\gamma : \Gamma; \delta : \Delta \dashv \pi : \Pi \vdash [\mathcal{B} : T]$ such that $\underline{\Gamma} = \mathscr{T}_{\neg\neg}^\ast(\boldsymbol{A})$, $\underline{T} = \neg \neg \mathscr{T}_{\neg\neg}(B)$ when $[B] = B$ and $[\mathcal{B} : T] = \mathcal{B} : T$, and $\underline{\Delta} = \neg \mathscr{T}_{\neg\neg}^\ast(\boldsymbol{C})$.
For the sufficiency, given a derivation of a sigma-term $\gamma' : \Gamma'; \delta' : \Delta' \dashv \pi' : \Pi' \vdash [\mathcal{B'} : T']$ such that $\underline{\Gamma'} = \mathscr{T}_{\neg\neg}^\ast(\boldsymbol{A'})$ and $\underline{T'} = \neg \neg \mathscr{T}_{\neg\neg}(B')$ when $[B] = B$ and $[\mathcal{B} : T] = \mathcal{B} : T$, and $\underline{\Delta'} = \neg \mathscr{T}_{\neg\neg}^\ast(\boldsymbol{C'})$ for some sequence $\boldsymbol{A'}, B', \boldsymbol{C'}$ of formulae in classical linear logic, we delete all the rules on negation used for the map $\mathscr{T}_{\neg\neg}$ and for the outermost two negations on $B$ if $[B] = B$. 
Then, the resulting derivation of a sigma-term can be read as that of the sequent $\boldsymbol{A'} \vdash [B'], \boldsymbol{C'}$ in CLL in the evident way.
This proves the sufficiency.
Finally, under these mutual translations between CLL and the sigma-calculus, the characterisation of cut-free proofs in CLL by $\Pi$ clearly holds. 
\end{proof}
\fi

Moreover, the sigma-calculus also embodies intuitionistic logic through \emph{Girard's translation} \cite[\S 3.3]{girard1987lazy} of intuitionistic logic into intuitionistic linear logic:

\begin{proposition}[a term calculus for intuitionistic logic {[Yam23]}]
\label{PropCHIsForIL}
Let $\mathscr{T}_{\oc}$ be the map from formulae in intuitionistic logic to those in intuitionistic linear logic defined by
\begin{mathpar}
\mathscr{T}_{\oc}(\mathrm{tt}) \colonequals \top
\and
\mathscr{T}_{\oc}(\mathrm{ff}) \colonequals \bot
\and
\mathscr{T}_{\oc}(A \wedge B) \colonequals \mathscr{T}_{\oc}(A) \mathbin{\&} \mathscr{T}_{\oc}(B)
\and
\mathscr{T}_{\oc} (A \vee B) \colonequals \neg (\neg \oc \mathscr{T}_{\oc} (A) \mathbin{\&} \neg \oc \mathscr{T}_{\oc}(B))
\and
\mathscr{T}_{\oc}(A \Rightarrow B) \colonequals \oc \mathscr{T}_{\oc}(A) \multimap \mathscr{T}_\oc(B).
\end{mathpar}
A sequent $\boldsymbol{A} \vdash \lbag B \rbag$ is derivable in the sequent calculus for intuitionistic logic if and only if there is a sigma-term $\gamma : \Gamma \dashv \pi : \Pi \vdash \lbag \mathcal{B} : T \rbag$ such that $\underline{\Gamma} = \oc \mathscr{T}_{\oc}^\ast(\boldsymbol{A})$ and $\underline{\Phi} = \mathscr{T}_{\oc}(B)$ if $\lbag B \rbag = B$ and $\lbag \mathcal{B} : T \rbag = \mathcal{B} : T$, where the sequent is derivable without cut if and only if $\Pi = \boldsymbol{\epsilon}$.
\end{proposition}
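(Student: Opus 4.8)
The plan is to mirror the proof strategy of Proposition~\ref{PropCHIsForILL} and Proposition~\ref{PropCHIsForCLL}, since this statement is of exactly the same shape: a sequent is derivable in a source logic if and only if a sigma-term exists whose underlying types match a prescribed translation $\mathscr{T}_{\oc}$, with the cut-free case tracked by the emptiness of $\Pi$. I would prove the biconditional by establishing the two directions separately, each by induction on derivations. The engine throughout is that Girard's translation $\mathscr{T}_{\oc}$ sends each logical rule of the sequent calculus for intuitionistic logic to a derivable combination of typing rules in Figure~\ref{FigLinearSigmaCalculus}, with the nonlinear behaviour (weakening and contraction on the left) absorbed by the $\oc$-rules \textsc{($\oc$W)}, \textsc{($\oc$C)}, \textsc{($\oc$D)} applied to the translated hypotheses $\oc \mathscr{T}_{\oc}^\ast(\boldsymbol{A})$.

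For the necessity (``only if''), I would induct on a cut-free or general derivation of $\boldsymbol{A} \vdash \lbag B \rbag$ in intuitionistic logic. The base case \textsc{(Id)} is handled by the identity sigma-term $\mathrm{id}_S$ already defined for $\mathsf{ILL}$, precomposed with \textsc{($\oc$D)} to account for the $\oc$ on the translated hypothesis. Each inductive step matches a structural or logical rule of intuitionistic logic to the corresponding sigma-calculus derivation: the key observations are that \textsc{(WL)} and \textsc{(CL)} are realised by \textsc{($\oc$W)} and \textsc{($\oc$C)} on $\oc \mathscr{T}_{\oc}(\cdot)$, that \textsc{($\wedge$L/R)} follows the \textsc{(\&L/R)} rules since $\mathscr{T}_{\oc}(A \wedge B) = \mathscr{T}_{\oc}(A) \mathbin{\&} \mathscr{T}_{\oc}(B)$, and that \textsc{($\Rightarrow$L/R)} follows the \textsc{($\multimap$L/R)} rules after a \textsc{($\oc$R)} promotion of the premise corresponding to the domain, reflecting $\mathscr{T}_{\oc}(A \Rightarrow B) = \oc \mathscr{T}_{\oc}(A) \multimap \mathscr{T}_\oc(B)$. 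Disjunction uses the double-negation encoding $\mathscr{T}_{\oc}(A \vee B) = \neg(\neg \oc \mathscr{T}_{\oc}(A) \mathbin{\&} \neg \oc \mathscr{T}_{\oc}(B))$ and is handled by combining \textsc{($\neg$L/R)}, \textsc{(\&L/R)}, and \textsc{($\oc$R)} in the pattern dictated by the translation; this case, together with verifying the right-hand side restriction to a single formula inherent to intuitionistic (as opposed to classical) sequents, is where the argument is most delicate. Throughout, I would track that cut is used in the intuitionistic derivation precisely when a \textsc{(Cut)} rule appears in the sigma-term, so that $\Pi = \boldsymbol{\epsilon}$ characterises cut-freeness.

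For the sufficiency (``if''), I would conversely induct on a derivation of the sigma-term $\gamma : \Gamma \dashv \pi : \Pi \vdash \lbag \mathcal{B} : T \rbag$ with $\underline{\Gamma} = \oc \mathscr{T}_{\oc}^\ast(\boldsymbol{A})$ and $\underline{T} = \mathscr{T}_{\oc}(B)$, deleting the auxiliary negation and exponential rules introduced purely to implement $\mathscr{T}_{\oc}$ and reading the remaining rule applications back as intuitionistic logical rules, exactly as in the proof pattern indicated (and commented out) for Proposition~\ref{PropCHIsForCLL}. The main structural point to check is that the image of $\mathscr{T}_{\oc}$ is closed under the typing rules in a way that always produces a sequent of the admissible intuitionistic form, i.e.\ with at most one formula on the right; this uses that the sigma-calculus codomain $\Phi$ has length at most $1$ by definition and that $\mathscr{T}_{\oc}$ never introduces a multi-formula right-hand side except inside a negation encoding.

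The main obstacle I anticipate is the disjunction case and, more broadly, ensuring the translation is \emph{faithful} rather than merely sound: one must verify that \emph{every} sigma-term on the translated sequent arises from a genuine intuitionistic derivation, with no ``spurious'' linear proof sneaking through the double-negation encoding of $\vee$ that would fail to project back to intuitionistic logic. Since the underlying sigma-calculus is linear, the nonlinearity needed for intuitionistic reasoning is entirely mediated by the placement of $\oc$ dictated by $\mathscr{T}_{\oc}$, so the crux is a careful bookkeeping argument showing that the $\oc$-structure forces each linear proof into the shape of a translated intuitionistic proof. This is stated as an established result (due to Yamada), so I would invoke the standard faithfulness of Girard's translation \cite{girard1987lazy} together with the rule-by-rule correspondence above, rather than reprove faithfulness from scratch.
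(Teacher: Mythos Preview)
Your approach is correct and follows essentially the same strategy as the paper's (commented-out) proof. The paper is more economical: it observes that $\mathscr{T}_{\oc}$ agrees with Girard's translation \cite[\S 3.3]{girard1987lazy} everywhere except on disjunction (Girard uses $\oc A \oplus \oc B$, but since $\oplus$ is absent here the paper substitutes the encoding $\neg(\neg \oc \mathscr{T}_{\oc}(A) \mathbin{\&} \neg \oc \mathscr{T}_{\oc}(B))$), so by Proposition~\ref{PropCHIsForILL} and Girard's result it suffices to verify only the disjunction case, which it does by induction on the formula. Your full rule-by-rule induction rederives the cases that the paper simply defers to Girard, but since you too invoke Girard's faithfulness for the hard direction, the substance is the same; the paper's reduction just isolates the single genuinely new ingredient (the $\vee$ encoding) rather than replaying the standard translation.
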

\if0
\begin{proof}
The only difference from Girard's translation \cite[\S 3.3]{girard1987lazy} is the translation of disjunction. 
Hence, by Proposition~\ref{PropCHIsForILL} and \cite[\S 3.3]{girard1987lazy}, it suffices to focus on the translation of disjunction, $\mathscr{T}_{\oc} (A \vee B) \colonequals \neg (\neg \oc \mathscr{T}_{\oc} (A) \mathbin{\&} \neg \oc \mathscr{T}_{\oc}(B))$, and we show it by induction on the formula $B$.
\end{proof}
\fi

Finally, the sigma-calculus also implements classical logic via \emph{T-translation} \cite{danos1995lkq,danos1997new}:

\begin{proposition}[a term calculus for classical logic {[Yam23]}]
\label{PropCHIsForCL}
Let $\mathscr{T}_{\oc\wn}$ be the map from formulae in classical logic to those in intuitionistic linear logic defined by
\begin{mathpar}
\mathscr{T}_{\oc\wn}(\mathrm{tt}) \colonequals \top
\and
\mathscr{T}_{\oc\wn}(\mathrm{ff}) \colonequals \bot
\and
\mathscr{T}_{\oc\wn}(A \wedge B) \colonequals \wn \mathscr{T}_{\oc\wn}(A) \mathbin{\&} \wn \mathscr{T}_{\oc\wn}(B)
\and
\mathscr{T}_{\oc\wn} (A \vee B) \colonequals \neg (\oc \neg \mathscr{T}_{\oc\wn} (A) \mathbin{\&} \oc \neg \mathscr{T}_{\oc\wn}(B))
\and
\mathscr{T}_{\oc\wn}(A \Rightarrow B) \colonequals \oc \wn \mathscr{T}_{\oc\wn}(A) \multimap \wn \mathscr{T}_{\oc\wn}(B).
\end{mathpar}
A sequent $\boldsymbol{A} \vdash [B]$ is derivable in the sequent calculus for classical logic if and only if there is a sigma-term $\gamma : \Gamma \dashv \pi : \Pi \vdash \lbag \mathcal{B} : T \rbag$ such that $\underline{\Gamma} = \oc \wn \mathscr{T}_{\oc}^\ast(\boldsymbol{A})$ and $\underline{T} = \wn \mathscr{T}_{\oc}(B)$ when $\lbag B \rbag = B$ and $\lbag \mathcal{B} : T \rbag = \mathcal{B} : T$, where the sequent is derivable without cut if and only if $\Pi = \boldsymbol{\epsilon}$.
\end{proposition}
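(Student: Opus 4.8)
The plan is to prove this exactly as the intuitionistic case (Proposition~\ref{PropCHIsForIL}) was handled, by factoring the claim through Proposition~\ref{PropCHIsForILL} together with the soundness and faithfulness of the \emph{T-translation} $\mathscr{T}_{\oc\wn}$ of classical logic into intuitionistic linear logic \cite{danos1995lkq,danos1997new}. Concretely, the target statement reduces to the purely syntactic lemma that a classical sequent $\boldsymbol{A} \vdash \lbag B \rbag$ is derivable in the sequent calculus for classical logic if and only if the translated sequent $\oc \wn \mathscr{T}_{\oc\wn}^\ast(\boldsymbol{A}) \vdash \wn \mathscr{T}_{\oc\wn}(B)$ (with the empty right-hand side translated accordingly) is derivable in intuitionistic linear logic, together with the matching cut-free clause. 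Granting this lemma, Proposition~\ref{PropCHIsForILL} immediately converts the intuitionistic-linear derivation into a sigma-term $\gamma : \Gamma \dashv \pi : \Pi \vdash \lbag \mathcal{B} : T \rbag$ with $\underline{\Gamma} = \oc \wn \mathscr{T}_{\oc\wn}^\ast(\boldsymbol{A})$ and $\underline{T} = \wn \mathscr{T}_{\oc\wn}(B)$, and it records cut-freeness as the emptiness of $\Pi$; conversely a sigma-term of this shape yields, again by Proposition~\ref{PropCHIsForILL}, the intuitionistic-linear derivation from which the lemma recovers the classical one.

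For the soundness half of the lemma I would induct on a derivation in the sequent calculus for classical logic. The delicate connectives are the structural rules and the places where classicality is genuinely used, so, just as the proof of Proposition~\ref{PropCHIsForIL} isolated disjunction as the only new case relative to Girard's translation, here I expect every case except those involving right-hand contraction and weakening, and the interaction of $\wedge$, $\vee$, $\Rightarrow$ with $\wn$, to be routine. The design of $\mathscr{T}_{\oc\wn}$ is engineered precisely so that the left-hand prefix $\oc \wn$ makes each hypothesis both duplicable (via \textsc{$\oc$C}) and discardable (via \textsc{$\oc$W}), reproducing the classical structural rules, while the single $\wn$ on the right encodes the multiple-conclusion, \emph{backtracking} character of classical reasoning. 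I would verify each classical rule by exhibiting the corresponding block of intuitionistic-linear rules acting on the translated sequent, checking that a cut is introduced exactly when the classical \textsc{Cut} rule is.

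The main obstacle is the faithfulness (reflection) half: that derivability of $\oc \wn \mathscr{T}_{\oc\wn}^\ast(\boldsymbol{A}) \vdash \wn \mathscr{T}_{\oc\wn}(B)$ in intuitionistic linear logic forces $\boldsymbol{A} \vdash \lbag B \rbag$ to be classically derivable. One cannot argue here by naive induction on the intuitionistic-linear derivation, since that derivation may interleave exponential manipulations with no direct classical counterpart. The standard remedy, which I would adopt, is to inherit the established faithfulness of the T-translation \cite{danos1995lkq,danos1997new} (equivalently, to compose with its known proof-theoretic inverse), so that reflection is imported rather than re-proved from scratch. Finally, the cut-free correspondence follows because $\mathscr{T}_{\oc\wn}$ commutes with cut and cut-elimination up to administrative exponential steps: a cut-free classical proof translates to a cut-free intuitionistic-linear derivation, whence $\Pi = \boldsymbol{\epsilon}$ by Proposition~\ref{PropCHIsForILL}, and conversely, matching the cut-free clause of the statement.
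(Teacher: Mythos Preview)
The paper does not supply its own proof of this proposition: it is stated with the attribution \texttt{[Yam23]} and no argument is given (the same holds for the companion Propositions~\ref{PropCHIsForILL}, \ref{PropCHIsForCLL} and \ref{PropCHIsForIL}). Your proposal---reduce to Proposition~\ref{PropCHIsForILL} via the soundness and faithfulness of the T-translation \cite{danos1995lkq,danos1997new}, handling the forward direction by induction on classical derivations and importing the reflection direction from the cited literature---is exactly the pattern the paper follows in the (suppressed) arguments for the analogous Propositions~\ref{PropCHIsForCLL} and \ref{PropCHIsForIL}, so your approach is correct and aligned with the paper's treatment of the parallel cases.

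One incidental remark: you silently repaired a typo in the statement, writing $\mathscr{T}_{\oc\wn}$ where the paper's displayed statement has $\mathscr{T}_{\oc}$ in the conditions $\underline{\Gamma} = \oc\wn\mathscr{T}_{\oc}^\ast(\boldsymbol{A})$ and $\underline{T} = \wn\mathscr{T}_{\oc}(B)$. Your reading is the intended one.
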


\if0
\begin{definition}[T-translation \cite{danos1995lkq,danos1997new}]
An assignment of a formula $A^{\mathrm{T}}$ in linear logic to each formula $A$ in classical logic, called \emph{\bfseries T-translation}, is defined inductively by
\begin{mathpar}
\mathrm{tt}^{\mathrm{T}} \colonequals \top
\and
\mathrm{ff}^{\mathrm{T}} \colonequals \bot
\and
(A \wedge B)^{\mathrm{T}} \colonequals \wn (A^{\mathrm{T}}) \mathbin{\&} \wn (B^{\mathrm{T}})
\\
(A \vee B)^{\mathrm{T}} \colonequals \wn (A^{\mathrm{T}}) \invamp \wn (B^{\mathrm{T}})
\and
(A \Rrightarrow B)^{\mathrm{T}} \colonequals \oc \wn (A^{\mathrm{T}}) \multimap \wn (B^{\mathrm{T}}).
\end{mathpar}

This assignment $(\_)^{\mathrm{T}}$ is lifted to finite sequences $\Delta = A_1, A_2, \dots, A_n$ of formulae by 
\begin{equation*}
\Delta^{\mathrm{T}} \colonequals A_1^{\mathrm{T}}, A_2^{\mathrm{T}}, \dots, A_n^{\mathrm{T}}.
\end{equation*}
\end{definition}
\fi

We may obtain a clearer picture of these translations by applying the following (co-)Kleisli constructions to $\mathsf{ILL}$, which are similar to those applied to the bicategory $\mathcal{LG}$:
\begin{definition}[syntactic (co-)Kleisli constructions]
The bicategories $\mathsf{CLL}$ for classical linear logic, $\mathsf{IL}$ for intuitionistic logic and $\mathsf{CL}$ for classical logic are the (co-)Kleisli constructions
\begin{mathpar}
\mathsf{CLL} \colonequals \mathsf{ILL}_{\neg \neg}
\and
\mathsf{IL} \colonequals \mathsf{ILL}_{\oc}
\and
\mathsf{CL} \colonequals \mathsf{ILL}_{\oc \wn}.
\end{mathpar}
\end{definition} 

As expected, $\mathsf{CLL}$ forms a star-autonomous category, $\mathsf{IL}$ a cartesian closed category, and $\mathsf{CL}$ a control category, all up to the equivalence relation $\simeq$; we leave the details to the reader.
Note that the translations $\mathscr{T}_{\neg\neg}$, $\mathscr{T}_{\oc}$ and $\mathscr{T}_{\oc\wn}$ map $\mathsf{ILL}$ to $\mathsf{CLL}$, $\mathsf{IL}$ and $\mathsf{CL}$, respectively.
By the above results, the last three bicategories embody classical linear, intuitionistic and classical logics, respectively.
We show in the next section the \emph{biequivalences} $\mathsf{ILL} \simeq \mathcal{LG}$, $\mathsf{CLL} \simeq \mathcal{LG}_{\neg\neg}$, $\mathsf{IL} \simeq \mathcal{LG}_{\oc}$ and $\mathsf{CL} \simeq \mathcal{LG}_{\oc\wn}$.

\subsection{Biequivalences between formal systems and combinatorics}
\label{Bijections}
This section is to establish \emph{biequivalences} between formal systems and combinatorics for linear, intuitionistic and classical logics, respectively, where proofs may contain cuts.
Such \emph{intensional} biequivalences are, to the best of our knowledge, achieved for the first time in the literature. 

We begin with proving that the map $\llbracket \_ \rrbracket_{\mathcal{LG}}$ from sigma-terms to 1-cells in the bicategory $\mathcal{LG}$ (Definition~\ref{DefDirectReading}) is \emph{surjective} (Theorem~\ref{ThmFullCompleteness}).
To this end, we need two technical lemmata:

\begin{definition}[the domain and the codomain of a combinatorial cut]
Given a combinatorial cut $\mathscr{C}$,  its \emph{\bfseries domain} $\mathrm{dom}(\mathscr{C})$ and \emph{\bfseries codomain} $\mathrm{cod}(\mathscr{C})$ are the combinatorial arenas defined by
\begin{mathpar}
\mathrm{dom}(\mathscr{A}_{[0]} \multimap \mathscr{A}_{[1]}) \colonequals \mathscr{A}_{[0]}
\and
\mathrm{cod}(\mathscr{A}_{[0]} \multimap \mathscr{A}_{[1]}) \colonequals \mathscr{A}_{[1]}
\\
\mathrm{dom}(\mathscr{C} \mathbin{\&} \mathscr{C}') \colonequals \mathrm{dom}(\mathscr{C}) \mathbin{\&} \mathrm{dom}(\mathscr{C}')
\and
\mathrm{cod}(\mathscr{C} \mathbin{\&} \mathscr{C}') \colonequals \mathrm{cod}(\mathscr{C}) \mathbin{\&} \mathrm{cod}(\mathscr{C}')
\\
\mathrm{dom}(\oc \mathscr{C}) \colonequals \oc \mathrm{dom}(\mathscr{C})
\and
\mathrm{cod}(\oc \mathscr{C}) \colonequals \oc \mathrm{cod}(\mathscr{C})
\and
\mathrm{dom}(\top) \colonequals \mathrm{cod}(\top) \colonequals \top
\and
\mathrm{dom}(\mathscr{C} \otimes \mathscr{C}') \colonequals \mathrm{dom}(\mathscr{C}) \otimes \mathrm{dom}(\mathscr{C}')
\and
\mathrm{cod}(\mathscr{C} \otimes \mathscr{C}') \colonequals \mathrm{cod}(\mathscr{C}) \otimes \mathrm{cod}(\mathscr{C}').
\end{mathpar}
\end{definition}

\begin{lemma}[acyclicity lemma]
\label{LemLeftLinearImplication}
Let $\varphi : \llbracket F \rrbracket_{\mathcal{LG}}$ be a 1-cell in the bicategory $\mathcal{LG}$ with $F$ a sigma-sequent.
If $<_\varphi$ is the relation on the set of all occurrences of linear implication in the domain of $F$ and of cuts in the intensionality of $F$ such that $X <_\varphi X'$ if and only if there is $\boldsymbol{s} \vec{m} \boldsymbol{t} \vec{n} \in \lceil \varphi \rceil$ with $\vec{m}$ in the domain of $\llbracket X' \rrbracket_{\mathcal{LG}}$, and $\vec{n}$ in the codomain of $\llbracket X \rrbracket_{\mathcal{LG}}$ and justified by $\vec{m}$, then the relation constitutes the directed edges $X \rightarrow X'$ of a finite rooted dag. 
\end{lemma}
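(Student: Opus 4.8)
The plan is to check that the described graph is finite and well-defined (which is routine) and then to prove its single nontrivial property, \emph{acyclicity}, by reducing a hypothetical cycle to a contradiction with the noetherianity of $\varphi$, exploiting the characterisation of noetherianity as non-circularity in Lemma~\ref{LemCircularity}. Finiteness is immediate: since $F$ is a sigma-sequent, it is a finite syntactic object, so there are only finitely many occurrences of linear implication in its domain and finitely many cuts in its intensionality; these constitute the (finite) vertex set, and $<_\varphi$ picks out a finite set of directed edges among them. It then remains to show that this finite directed graph is acyclic, whence it is a rooted dag by taking its sources as roots.

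The heart of the argument is acyclicity. Suppose, for a contradiction, that there is a cycle $X_0 <_\varphi X_1 <_\varphi \dots <_\varphi X_{k-1} <_\varphi X_0$ (the case $k=1$ being a self-loop). Each edge $X_i <_\varphi X_{i+1}$ is witnessed by a P-view $\boldsymbol{s}_i \vec{m}_i \boldsymbol{t}_i \vec{n}_i \in \lceil \varphi \rceil$ with $\vec{m}_i$ in the domain of $\llbracket X_{i+1} \rrbracket_{\mathcal{LG}}$, with $\vec{n}_i$ in the codomain of $\llbracket X_i \rrbracket_{\mathcal{LG}}$, and with $\vec{m}_i$ justifying $\vec{n}_i$. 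The key mechanism is a \emph{feedback through hubs}: the move $\vec{n}_i \in \mathrm{cod}(X_i)$ produced by $\varphi$ can be relayed, at the switching vertex (hub) of $X_i$, to a move $\vec{m}_{i-1} \in \mathrm{dom}(X_i)$, which is precisely the domain move read in the witness for $X_{i-1} <_\varphi X_i$. For a cut $X_i$ in the intensionality of $F$, this relay is the copy-cat forced by the very definition of a position in a combinatorial sequent (Definition~\ref{DefCombinatorialSequents}), which pairs a P-move in one copy of $\oc^k \mathscr{A}$ with the corresponding O-move, i.e.\ $\mathscr{S}(\vec{m}, \vec{n})$, in the other copy; the logical intensionality axioms on $\varphi$ guarantee that the additive and exponential structures are respected across the relay. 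For a linear implication $X_i$ in the domain, the relay instead follows from \textsc{Visibility} together with the hub vertex $\multimap$ that links $\mathrm{cod}(X_i)$ and $\mathrm{dom}(X_i)$ in the multiplicative structure. Using the innocence and totality of $\varphi$, I would then splice the witnessing P-views along the cycle: whenever the current P-view ends with such a relayed O-move $\vec{m}_{i-1}$ whose P-view context agrees with that of the $(i-1)$-th witness, innocence forces $\varphi$ to continue with $\vec{n}_{i-1}$, and by Proposition~\ref{PropClosureOfPositionsUnderViews} the result is again a genuine P-view of $\varphi$. Iterating around the finite cycle indefinitely produces P-views in $\lceil \varphi \rceil$ of unbounded length; whether the hubs force ever-fresh moves (contradicting noetherianity directly) or eventually repeat a P-move (contradicting non-circularity), Lemma~\ref{LemCircularity} is violated, and this contradiction establishes acyclicity.

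I expect the main obstacle to be the \emph{feedback/splicing sub-lemma}, namely making precise that the witnessing P-views of the various edges can be concatenated into a single, strictly growing P-view of $\varphi$. This requires treating the two kinds of hubs uniformly — the forced copy-cat at cuts, together with its compatibility with the logical intensionality axioms, versus the visibility-driven link at domain implications — and carefully checking that the P-view contexts align so that innocence genuinely drives the continuation rather than merely permitting it. Once this relay is in hand, the reduction to Lemma~\ref{LemCircularity} is routine, and the finiteness and rootedness of the resulting graph follow immediately.
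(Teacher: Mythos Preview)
Your route via noetherianity and P-view splicing is quite different from the paper's, which dispatches acyclicity in a single line by appealing to the \emph{recursive alternation} axiom on positions together with Proposition~\ref{PropClosureOfPositionsUnderViews}. The paper's argument is purely structural and local: since each P-view is itself a position, its restriction to every sub-arena (in particular to each $X_i$ and to $\mathrm{dom}(X_i)$, $\mathrm{cod}(X_i)$) must alternate, and this alone rules out a cycle among the $X_i$. No splicing of distinct P-views, no appeal to totality, innocence, noetherianity, or Lemma~\ref{LemCircularity} is used.

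Your feedback/splicing sub-lemma is not just the main obstacle---it is a genuine gap. Two issues:
\begin{itemize}
\item For a domain linear implication $X_i$ (as opposed to a cut), there is no forced relay. After $\varphi$ plays $\vec{n}_i$ in $\mathrm{cod}(X_i)$, nothing compels the next O-move to be your chosen $\vec{m}_{i-1}$ in $\mathrm{dom}(X_i)$; visibility constrains where justifiers may lie, not which O-move comes next. At best you can \emph{choose} to extend the position by $\vec{m}_{i-1}$, but then you must check it is even a legal extension (right depth, right justifier available) of the current position, which depends on the structure of $\vec{n}_i$.
\item More seriously, the witnessing P-views for the different edges have a priori unrelated prefixes. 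Even if you succeed in appending $\vec{m}_{i-1}$, the P-view of the resulting odd-length position need not coincide with $\lceil \boldsymbol{s}_{i-1}\vec{m}_{i-1} \rceil$, so innocence does \emph{not} force $\varphi$ to respond with $\vec{n}_{i-1}$. Without that match, the splicing cannot be iterated and no unbounded chain of P-views is produced.
\end{itemize}
The paper's recursive-alternation argument sidesteps all of this by never leaving a single P-view.
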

\begin{proof}
By the recursive alternation axiom on positions and Proposition~\ref{PropClosureOfPositionsUnderViews}, the directed edges never constitute a cycle. 
\end{proof}

\if0
\begin{convention}
Given a sigma-sequent $\Gamma \dashv \Pi \vdash \Phi$, if a type $S$ occurs as a component of the sequence $\Gamma, \Phi$ or 
\end{convention}

\begin{definition}[left and right occurrences of types \cite{regnier1992lambda,lamarche2008proof}]
An occurrence of a type $S$ as an element in a sigma-sequent $F$ is said to be \emph{\bfseries left} in $F$ if the parity,
\begin{equation*}
p_1(S) \colonequals \begin{cases} 1 &\text{if $S$ is on the left of the turnstile $\vdash$ or on the right of the bar $\mid$ in a cut;} \\ 0 &\text{otherwise,} \end{cases}
\end{equation*}
is $1$, and \emph{\bfseries right} in $F$ otherwise.
More generally, an occurrence of a type $S$ in an element $\underbrace{\neg \neg \dots \neg}_{p_2(S)} S$ of $F$ is said to be \emph{\bfseries left} in $F$ if the sum $p_1(S) + p_2(S)$ is odd, and \emph{\bfseries right} in $F$ otherwise.
\end{definition}

\begin{remark}
In \cite{regnier1992lambda,lamarche2008proof}, left and right occurrences of formulae are called \emph{\bfseries input} and \emph{\bfseries output} ones, respectively.  
Note that these \emph{polarised} approaches divide each logical connective into several variants, accordingly to the polarities of component formulae, but our approach does not. 
\end{remark}

\begin{proposition}[left and right rules]
\label{PropParity}
In a derivation of a sigma-term, each left occurrence of a type in a sigma-sequent is generated by a left rule, and each right one by a right rule. 
\end{proposition}
\begin{proof}
By induction on derivations of sigma-terms.
\end{proof}
\fi
\if0
\begin{lemma}[right with]
\label{LemSlicedWithLemma}
Let $F$ be a sigma-sequent with a right occurrence $S {}_{[i]}{\&}_{[j]} T$, and $\phi = \{ \phi_p \}_{p \in \wp(P)} : \llbracket F \rrbracket_{\mathcal{LG}}$ a 1-cell in $\mathcal{LG}$.
Define $\phi^{[i]} \colonequals \{ \phi_{q \cup \{ i \}} \}_{q \in \wp(P \setminus \{ i, j \})}$ and $\phi^{[j]} \colonequals \{ \phi_{q \cup \{ j \}} \}_{q \in \wp(P \setminus \{ i, j \})}$, and let $F^{[i]}$ and $F^{[j]}$ be the sigma-sequents obtained from $F$ by replacing the occurrence $S {}_{[i]}{\&}_{[j]} T$ with that of $S$ and $T$, respectively.
If $\phi^{[i]}$ is definable by a sigma-term $f^{[i]} : F^{[i]}$, $\sigma^{[j]}$ by $f^{[j]} : F^{[j]}$, and the domain (of the sigma-sequent) in a derivation of $f^{[i]}$ when $S$ is produced coincides with the domain in a derivation of $f^{[j]}$ when $T$ is yielded, then $\phi$ is definable by a sigma-term $f : F$.
\end{lemma}
\begin{proof}
Assume that $\phi^{[i]}$ is definable by a sigma-term $f^{[i]} : F^{[i]}$, $\phi^{[j]}$ by $f^{[j]} : F^{[j]}$, the sigma-sequent in a derivation of $f^{[i]}$ when $S$ is produced is $\Gamma \dashv \Pi \vdash S$, and the one in a derivation of $f^{[j]}$ when $T$ is produced is $\Gamma \dashv \Sigma \vdash T$.
(By Proposition~\ref{PropParity}, $S$ and $T$ are right occurrences if so is $S {}_{[i]}{\&}_{[j]} T$.)
Now, in the derivation of $f^{[i]}$ (respectively, $f^{[j]}$), insert the rule \textsc{$\&_1$R} (respectively, \textsc{$\&_2$R}) right after the occurrence $S$ (respectively, $T$) is produced in such a way that it becomes $S {}_{[i]}{\&}_{[j]} T$. 
This yields a derivation of a sigma-term $f : F$ that satisfies $\llbracket f \rrbracket_{\mathcal{LG}} = \phi$.
\end{proof}
\fi

\begin{definition}[the size of a sigma-sequent]
The \emph{\bfseries size} of a formula $A$ in intuitionistic linear logic (Definition~\ref{DefILL}) is the positive integer $|A|$ defined by
\begin{mathpar}
|\top| \colonequals |1| \colonequals 1
\and
|\neg A| \colonequals |\oc A| \colonequals |A| + 1
\and
|A \otimes B| \colonequals |A \mathbin{\&} B| \colonequals |A \multimap B| \colonequals |A| + |B| + 1,
\end{mathpar}
that $|T|$ of a type $T$ is $|\underline{T}|$, that $|\mathrm{Cut}(T \mid T')|$ of a cut $\mathrm{Cut}(T \mid T')$ is $|T|$, and that $| \Gamma \dashv \Pi \vdash \Phi |$ of a sigma-sequent $\Gamma \dashv \Pi \vdash \Phi$ is $|\Gamma| + |\Pi| + |\Phi|$, where $|\Gamma|$ is the sum of the sizes of the components of $\Gamma$, and similarly for $|\Pi|$ and $|\Phi|$.
\end{definition}

\begin{lemma}[prime lemma]
\label{LemSliceLemma}
Given a sigma-sequent $F$ and a 1-cell $\varphi : \llbracket F \rrbracket_{\mathcal{LG}}$ in the bicategory $\mathcal{LG}$, there is a finitely-indexed family $\{ (F_i, \varphi_i) \}_{i \in I}$ of such pairs that satisfies
\begin{enumerate}

\item $|F_i| < |F|$ for each $i \in I$;

\item $\varphi$ is definable by a sigma-term $t :: F$ if so is $\varphi_i$ by a sigma-term $t_i :: F_i$ for each $i \in I$;

\item For each $i \in I$, the intensionality of $F_i$ consists of of-course and/or with, the domain of one, negation, with and/or of-course, the codomain of top, one, tensor, with and/or of-course, and $\lceil \varphi_i \rceil$ visits an occurrence of of-course in the domain at most once. 

\end{enumerate}
\end{lemma}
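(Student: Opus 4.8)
The plan is to argue by induction on the size $|F|$, peeling off from $F$ every outermost connective forbidden by the prime form of condition~3, each peeling being the inverse of a typing rule of the sigma-calculus and strictly decreasing $|F|$. The forbidden outer connectives are $\multimap$ and $\neg$ in the codomain, $\top$, $\otimes$ and $\multimap$ in the domain, and $\top$ and $\otimes$ in the cuts of the intensionality; once the outermost connective of every type lies among one, negation, with and of-course (domain), top, one, tensor, with and of-course (codomain), and of-course and with (cuts), the pair is prime (this being the case in which the statement has content, a $F$ already prime being treated directly in the principal-case analysis of Theorem~\ref{ThmFullCompleteness}). The size function is defined precisely so that each inversion lowers $|F|$: moving $S$ out of a codomain $S \multimap U$ or out of a codomain $\neg S$ into the domain trades the eliminated connective for a net decrease of one, and the domain and cut decompositions erase a connective outright. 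Since every reduction strictly decreases $|F|$, iterating them terminates, and the leaves of the resulting finite reduction tree form the family $\{(F_i,\varphi_i)\}_{i\in I}$, each of strictly smaller size (condition~1) and in prime form.

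Each reduction is the inverse of a rule, and I would define the corresponding $\varphi_i$ by the evident restriction of $\varphi$. The codomain steps invert $\multimap R$ and $\neg R$ (the latter sending the codomain to the empty one, identified with $\bot_{[0]}$); since these rules act invertibly on positions up to tags, the restricted $\varphi_i$ is again a linearly winning $1$-cell. The domain steps for $\top$ and $\otimes$ are trivial semantically: $\top$ has empty underlying graph, and $\llbracket S \otimes T \rrbracket_{\mathcal{LG}}$ has the same underlying forest as the separate hypotheses $\llbracket S \rrbracket_{\mathcal{LG}}$, $\llbracket T \rrbracket_{\mathcal{LG}}$, so the combinatorial sequent is unchanged and $\varphi_i = \varphi$. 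The cut steps distribute a cut $\mathscr{C} \otimes \mathscr{C}'$ into two cuts and discard a $\top$-cut, mirroring the sequence structure of the intensionality in Proposition~\ref{PropFreeChracterisationsOfCombinatorialSequents}. For condition~2, I would check that applying the forward rule ($\multimap R$, $\neg R$, $\top L$, $\otimes L$, the cut rule, or $\multimap L$) to sigma-terms defining the pieces yields a sigma-term for $\varphi$; this uses the compositionality of $\mathrm{alg}(\_)$ and the injectivity in Proposition~\ref{PropSigmaTermsAsFinitePresentations}, so that the reconstructed term interprets back to $\varphi$ on the nose.

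The main obstacle is the domain linear implication, handled by the binary rule $\multimap L$, the only branching reduction. To peel a domain occurrence $S \multimap T$ I would invoke the acyclicity lemma (Lemma~\ref{LemLeftLinearImplication}): its finite rooted dag on the occurrences of linear implication in the domain and of cuts orders the dependencies induced by $\lceil \varphi \rceil$, and choosing a source guarantees that the hypotheses and cuts feeding $S$ can be separated from those feeding $T$ and $\Phi$ without circular dependency. This split partitions the context $\Gamma$ and the intensionality into sub-sequents $\Gamma_1 \dashv \Pi_1 \vdash S$ and $\Gamma_2, T \dashv \Pi_2 \vdash \Phi$, and I would define $\varphi_1,\varphi_2$ as the justified subsequences of $\varphi$ living on each side. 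The delicate point is to verify that this restriction preserves linearity (each side still visits its P-moves injectively, which follows because $\varphi$ is linear and the partition is read off the dag), totality, innocence, and the two logical intensionality axioms; acyclicity is exactly what rules out a cyclic flow of justifiers that would make the split ill-defined.

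Finally, for the of-course clause of condition~3 I would observe that it comes for free from winning: since $\varphi_i$ is winning it is noetherian, hence non-circular by the flat innocence lemma (Lemma~\ref{LemCircularity}), and because P-views forget all but the last exponential tag, a second visit to the same domain of-course occurrence within a single P-view would repeat a core move, contradicting non-circularity. Assembling the finitely many leaves of the reduction tree then yields the desired family, with conditions~1--3 holding by construction.
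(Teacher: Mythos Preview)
Your overall scheme --- peel forbidden outermost connectives by inverting typing rules, using the acyclicity lemma to organise the $\multimap L$ split --- matches the paper. The gap is the of-course clause of condition~3. You read ``$\lceil \varphi_i \rceil$ visits an occurrence of of-course in the domain at most once'' as a constraint on each individual P-view and argue it follows from non-circularity. But the intended reading is that the \emph{set} $\lceil \varphi_i \rceil$ collectively makes at most one entry into that of-course, and non-circularity does not give this: the value of Example~\ref{FirstExampleOfSigmaTerm}, namely $[\emptyset]i \mapsto p, [\emptyset]j \mapsto p : \oc \bot_{[p]} \dashv \vdash \emptyset : \bot_{[i]} \otimes \bot_{[j]}$, is winning and non-circular, yet its two P-views both visit the domain of-course. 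The paper handles this by a separate preliminary step you omit: if $\lceil \varphi \rceil$ visits $\oc S$ up to $n$ times, replace $\oc S$ by $(\oc S)^n$, spread the visits across the copies, and reassemble via \textsc{$\oc$C}. This step is not size-decreasing and therefore lives outside the main induction on $|F|$.

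The omission breaks your $\multimap L$ split. When you partition the context between $\Gamma_1 \dashv \Pi_1 \vdash S$ and $\Gamma_2, T \dashv \Pi_2 \vdash \Phi$, acyclicity handles the linear-implication and cut occurrences, but a remaining domain hypothesis $\oc U$ can feed both sides --- once in a P-view through $S$, once in a P-view through $T$ --- and then there is no clean partition of $\Gamma$. The paper's proof explicitly invokes the already-established of-course condition at this point (``we can assume \dots $\lceil \varphi \rceil$ visits an occurrence of of-course in $\Gamma$ at most once. Thus, the disjointness of $\varphi^S$ and $\varphi^T$ holds''); without the $\oc C$-unfolding first, your restricted $\varphi_1, \varphi_2$ need not be $1$-cells on disjoint sub-sequents.
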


\begin{proof}
Let $F = \Gamma \dashv \Pi \vdash \Phi$.
We first verify the claim on of-course in the third clause. 
Assume $\Gamma = \Delta, \oc S, \Theta$, and that $\lceil \varphi \rceil$ visits the occurrence $\oc S$ at most $n$-times. 
By taking sufficiently large $n$, we do not lose generality.
Clearly, there is a 1-cell $\varphi' : \llbracket \Delta, (\oc S)^n, \Theta \dashv \Pi \vdash \Phi \rrbracket_{\mathcal{LG}}$ in $\mathcal{LG}$ that coincides with $\varphi$ except that $\lceil \varphi' \rceil$ visits each component $\oc S$ of $(\oc S)^n$ at most once.
We iterate this procedure until we obtain a sigma-sequent $F_1$ and a 1-cell $\varphi_1 : \llbracket F_1 \rrbracket_{\mathcal{LG}}$ such that $\lceil \varphi_1 \rceil$ visits each occurrence of of-course in the domain at most once.
If there is a sigma-term $t_1 :: F_1$ such that $\llbracket t_1 \rrbracket_{\mathcal{LG}} = \varphi_1$, then by the rules \textsc{$\oc$C} and \textsc{XL}, we obtain a sigma-term $t :: F$ from $t_1$ such that $\llbracket t \rrbracket_{\mathcal{LG}} = \varphi$. 
Thus, the claim on of-course is always satisfied; thus, we assume it from now on.

In the following, we prove the remaining claims of the lemma by induction on $|F|$.
We first consider occurrences of top, tensor and linear implication in $\Gamma$, and those of cuts in $\Pi$:
\begin{itemize}

\if0
\item Assume $\Gamma = \Delta, \top, \Theta$. 
Since $\llbracket \Delta, \Theta \rrbracket_{\mathcal{LG}} \cong \llbracket \Gamma \rrbracket_{\mathcal{LG}}$, there is a 1-cell $\sigma_1 : \llbracket \Delta, \Theta \rrbracket_{\mathcal{LG}} \dashv \llbracket \Pi \rrbracket_{\mathcal{LG}} \vdash \llbracket \Phi \rrbracket_{\mathcal{LG}}$ that coincides with $\sigma$ up to tags.
If there is a sigma-term $f_1$ on $\Delta, \Theta \dashv \Pi \vdash \Phi$ with $\llbracket f_1 \rrbracket_{\mathcal{LG}} = \sigma_1$, then, by applying the rules \textsc{$\top$L} and \textsc{XL} to $f_1$, we obtain a sigma-term $f$ on $F$ that satisfies $\llbracket f \rrbracket_{\mathcal{LG}} = \sigma$.
This addresses the first two claims of the lemma. 

\item Assume $\Gamma = \Delta, S \otimes T, \Theta$. 
Since $\llbracket \Delta, S, T, \Theta \rrbracket_{\mathcal{LG}} \cong \llbracket \Gamma \rrbracket_{\mathcal{LG}}$, there is a 1-cell $\sigma_1 : \llbracket \Delta, S, T, \Theta \rrbracket_{\mathcal{LG}} \dashv \llbracket \Pi \rrbracket_{\mathcal{LG}} \vdash \llbracket \Phi \rrbracket_{\mathcal{LG}}$ that coincides with $\sigma$ up to tags.
If there is a sigma-term $f_1$ on $ \Delta, S, T, \Theta \dashv \Pi \vdash \Phi$ such that $\llbracket f_1 \rrbracket = \sigma_1$, then, by applying the rules \textsc{$\otimes$L} and \textsc{XL} to $f_1$, we obtain a term $f$ on $F$ such that $\llbracket f \rrbracket = \sigma$.
This addresses the first two claims. 
\fi

\item If $\Gamma$ contains top (respectively, tensor), then we reduce this case to the induction hypothesis (respectively, hypotheses) by the rules \textsc{$\top$L} (respectively, \textsc{$\otimes$L}) and \textsc{XL}.

\if0
\item Assume that $\Gamma$ contains with, say, $\Gamma = \Delta, S \mathbin{\&} T, \Theta$. 
If $\phi$ always visits only one of $S$ and $T$, then we simply reduce this case to the induction hypotheses by applying the rule \textsc{$\&$L}; so assume otherwise. 
By the history-freeness of $\phi$, there is a unique right occurrence $U {}_{[i]}\&_{[j]} V$ in $F$ from which $\phi$ goes to the left one $S \mathbin{\&} T$ such that the choice between $U$ or $V$ by Opponent determines the one between $S$ or $T$ by $\phi$.
Assume that $U$ corresponds to $S$, and $V$ to $T$; the other case is analogous. 
Let $F^U$ be the sigma-sequent obtained from $F$ by replacing the occurrence $U {}_{[i]}\&_{[j]} V$ (respectively, $S \mathbin{\&} T$) with the one $U$ (respectively, $S$), and similarly let $F^V$ be the sigma-sequent obtained from $F$ by replacing the occurrence $U {}_{[i]}\&_{[j]} V$ (respectively, $S \mathbin{\&} T$) with the one $V$ (respectively, $T$).
Then, we have 1-cells
\begin{mathpar}
\phi^U \colonequals \{ \, \boldsymbol{s} \in \phi \mid \mu_V^\ast \cap \boldsymbol{s} = \emptyset \, \} : \llbracket F^U \rrbracket_{\mathcal{LG}}
\and
\phi^V \colonequals \{ \, \boldsymbol{s} \in \phi \mid \mu_U^\ast \cap \boldsymbol{s} = \emptyset \, \} : \llbracket F^V \rrbracket_{\mathcal{LG}}
\end{mathpar}
in $\mathcal{LG}$.
By the induction hypotheses, the claim of the lemma holds for the pairs $(F^U, \phi^U)$ and $(F^V, \phi^V)$.
Note that, if $\phi^U = \llbracket f^U \rrbracket_{\mathcal{LG}}$ and $\phi^V = \llbracket f^V \rrbracket_{\mathcal{LG}}$ for some sigma-terms $f^U : F^U$ and $f^V : F^V$, then by inserting the rule \textsc{$\&$L} (respectively, \textsc{$\&$R}) right after the occurrence $S$ (respectively, $U$) is generated in the derivation of $f^U$ so that it becomes the one $S \mathbin{\&} T$ (respectively, $U {}_{[i]}\&_{[j]} V$), and similarly in the derivation of $f^V$, we obtain a sigma-term $f : F$ such that $\phi = \llbracket f \rrbracket_{\mathcal{LG}}$.
From this argument, the claim for the pair $(F, f)$ follows. 
\fi

\item Assume an occurrence of cut in $\Pi$ or linear implication in $\Gamma$.
By Lemma~\ref{LemLeftLinearImplication}, there is the least one among them for $<_\varphi$.
Suppose that it is $S \multimap T$ in $\Gamma$; write $\Gamma = \Delta, S \multimap T, \Theta$. 
For each $\boldsymbol{s} \in \varphi$, we write $\boldsymbol{s} @ S$ for its justified subsequence that consists of the elements $\boldsymbol{s}(i)$ such that the P-view $\lceil \boldsymbol{s}(1)\boldsymbol{s}(2) \dots \boldsymbol{s}(i) \rceil$ has a move in $\llbracket S \rrbracket_{\mathcal{LG}}$, and $\boldsymbol{s} @ T$ for the subsequence of $\boldsymbol{s}$ consisting of the remaining elements. 
Let $\varphi^S \colonequals \{ \, \boldsymbol{s} @ S \mid \boldsymbol{s} \in \varphi \, \}$ and $\varphi^T \colonequals \{ \, \boldsymbol{s} @ T \mid \boldsymbol{s} \in \varphi \, \}$. 
It suffices to show that $\varphi^S$ and $\varphi^T$ do not visit the same element of $\Gamma$ or $\Pi$ because then we can reduce this case to the induction hypotheses by the rules \textsc{$\multimap$L} and \textsc{XL}.
Because the occurrence $S \multimap T$ is the least one, it suffices to focus on elements of $\Gamma$ different from linear implication.
We can assume that $\Gamma$ does not have tensor, and that $\lceil \varphi \rceil$ visits an occurrence of of-course in $\Gamma$ at most once. 
Thus, the disjointness of $\varphi^S$ and $\varphi^T$ holds. 
\if0
First, if $\varphi^S$ and $\varphi^T$ both visit the same cut in $\Pi$, then it violates the alternation axiom (n.b., consider what is played in that cut).
Hence, they cannot visit the same cut. 
Next, by what we have shown above, we can assume that $\Gamma$ does not contain top or tensor, and that a play by $\varphi$ visits an occurrence of of-course in $\Gamma$ at most once as long as Opponent visits each right occurrence of of-course at most once in the play. 
Hence, it suffices to focus an occurrence of with $U \mathbin{\&} V$ in $\Gamma$.
Suppose for a contradiction that $\varphi^S$ and $\varphi^T$ both visit the occurrence $U \mathbin{\&} V$.
If $\varphi^S$ and $\varphi^T$ both visit $U$ or $V$ only, then we can reduce this case to the induction hypotheses by the rule \textsc{$\&$L} and \textsc{XL}; so assume otherwise. 
However, this clearly contradicts the totality of $\varphi$.
\fi

\item If an element of $\Pi$ that is not of-course or with is the least one for $<_\varphi$, then similarly to the above case we reduce this case to the induction hypotheses by the rules \textsc{Cut} and \textsc{XL}.

\end{itemize}

Let us leave the cases on $\Phi$ to the reader since they are just straightforward. 
\end{proof}

\begin{definition}[prime form]
A pair $(F, \varphi)$ of a sigma-sequent $F$ and a 1-cell $\varphi : \llbracket F \rrbracket_{\mathcal{LG}}$ in the bicategory $\mathcal{LG}$ is said to be \emph{\bfseries prime} if it satisfies all conditions of the third clause of Lemma~\ref{LemSliceLemma}.
\end{definition}

We are now ready to prove the main theorem of the present section:
\begin{theorem}[an intensional surjection]
\label{ThmFullCompleteness}
Given a sigma-sequent $F$ and a 1-cell $\varphi : \llbracket F \rrbracket_{\mathcal{LG}}$ in the bicategory $\mathcal{LG}$, there is a sigma-term $t :: F$ such that $\llbracket t \rrbracket_{\mathcal{LG}} = \varphi$.
\end{theorem}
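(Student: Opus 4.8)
The plan is to prove the statement by well-founded induction on the size $|F|$ of the sigma-sequent, constructing a derivation of a sigma-term $t :: F$ whose reading $\mathrm{alg}(t)$ presents $\varphi$. Since $\llbracket \_ \rrbracket_{\mathcal{LG}}$ is injective on sigma-terms (Proposition~\ref{PropSigmaTermsAsFinitePresentations}), it suffices to exhibit \emph{some} sigma-term whose induced finite presentation computes the same extended P-moves as $\varphi$ on each P-view; I would build such a term by peeling off one top-level connective at a time and matching each peeling with the corresponding typing rule of Figure~\ref{FigLinearSigmaCalculus}.

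First I would invoke the prime Lemma~\ref{LemSliceLemma} to replace $(F,\varphi)$ with a finite family of \emph{prime} pairs $(F_i,\varphi_i)$ of strictly smaller size whose joint definability forces that of $\varphi$. This step absorbs, once and for all, every occurrence of top, tensor and linear implication in the domain and every cut in the intensionality that is not of-course or with: the acyclicity Lemma~\ref{LemLeftLinearImplication} guarantees a $<_\varphi$-least linear implication or cut, and peeling it with the rules \textsc{$\multimap$L}/\textsc{Cut} (together with \textsc{XL}) splits $\varphi$ into the disjoint sub-strategies $\varphi^S$ and $\varphi^T$; top and tensor are removed by \textsc{$\top$L} and \textsc{$\otimes$L}, and the at-most-once visit condition on domain of-courses is arranged by \textsc{$\oc$C}. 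By the induction hypothesis each $\varphi_i$ is definable, so it remains to treat a prime pair.

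For a prime pair I would continue the same induction on size, now analysing the \emph{codomain}, which by primeness is built only from top, one, tensor, with and of-course. The cases top, one, tensor and of-course are dispatched by the right rules \textsc{$\top$R}, \textsc{$1$R}, \textsc{$\otimes$R} and \textsc{$\oc$R}: for \textsc{$\otimes$R} I would split the plays of $\varphi$ along the two tensor factors of the codomain (disjoint by linearity together with the prime condition), and for \textsc{$\oc$R} the prime condition that $\lceil \varphi \rceil$ visits each domain of-course at most once is exactly what lets promotion apply with a finitely-presentable witness. When the codomain is empty or bottom I would turn to the domain, stripping negation by \textsc{$\neg$R}/\textsc{$\neg$L} (here pseudo-initial occurrences and the joker axiom are what make $\bot$, $1$ and the inequality $(\bot \multimap \top) \neq \top$ behave correctly), and stripping domain with and of-course by \textsc{$\&$L}, \textsc{$\oc$D} and \textsc{$\oc$W}, before finally reconstructing the intensionality cuts by \textsc{Cut}, \textsc{$\oc$C} and \textsc{$\&$R}. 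Each such step reduces $|F|$, so the induction closes, and Proposition~\ref{PropClosureOfPositionsUnderViews} ensures the views manipulated along the way remain genuine positions.

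The hard part will be the additive (with) connective. When the codomain is a with $S \mathbin{\&} T$, I must recover a \emph{single} sigma-term from the two slices of $\varphi$ selecting $S$ and selecting $T$, and glue them through the rule \textsc{$\&$R}, whose side condition $|\Pi| = 1 = |\Sigma|$ and whose common-domain requirement $\gamma \cup \gamma'$ demand that the two slices agree on the domain and on the intensionality. Verifying this agreement is precisely where the linearity axiom and the two logical-intensionality axioms on 1-cells in $\mathcal{LG}$ must be used in full force: they force the additive branching of $\varphi$ on the codomain to be mirrored by a well-defined external edge of the binary additive structure $\alpha[2]$, so that the choice between $S$ and $T$ is made uniformly and consistently across the shared context. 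Dovetailing this additive gluing with the exponential case — so that a strategy visiting countably many copies of a game is still captured by a finite derivation — is the principal obstacle, and it is the place where the novel structure of combinatorial arenas (switching vertices together with the additive and exponential dynamics) does the essential work.
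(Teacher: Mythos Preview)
Your proposal is correct and follows essentially the same architecture as the paper's proof: induction on $|F|$, reduction to prime pairs via Lemma~\ref{LemSliceLemma} (which absorbs domain top/tensor/linear implication and non-with/non-of-course cuts using Lemma~\ref{LemLeftLinearImplication} for the $<_\varphi$-least element), followed by case analysis on the codomain $\Phi$ with each connective peeled by its right rule and the empty-codomain case handled by looking at the first P-move in the domain. Two small corrections to your outline: in the $\Phi=\boldsymbol\epsilon$ case the paper does not ``reconstruct intensionality cuts'' but instead derives a contradiction if the first step lands in a (prime) cut, and the with case $\Phi=S\mathbin{\&}T$ is in fact dispatched quite briefly once the logical-intensionality axioms force every remaining element of $\Pi$ to be a with aligned with $S\mathbin{\&}T$---the heavy lifting you anticipate there is already done inside the prime lemma.
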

\begin{proof}
Let $F = \Gamma \dashv \Pi \vdash \Phi$. 
We proceed by induction on $|F|$.
By Lemma~\ref{LemSliceLemma}, we assume that the pair $(F, \varphi)$ is prime. 
In what follows, we proceed by case analysis on $\Phi$.
First, assume $\Phi = \boldsymbol{\epsilon}$.
If the first computational step of $\varphi$ happens in an occurrence of cut, then, by $\Phi = \boldsymbol{\epsilon}$ and the definition of 1-cells in $\mathcal{LG}$, the cut does not have an internal additive or an exponential structure, a contradiction.
If the first step arises in an occurrence of negation (respectively, with, of-course) in $\Gamma$, then we reduce this case to the induction hypothesis (or hypotheses) by the rules \textsc{$\neg$L} (respectively, \textsc{$\&$L}, \textsc{$\oc$D}) and \textsc{XL}.
The first step is never in an occurrence of one in $\Gamma$ by the joker axiom.
We have considered all cases for $\Phi = \boldsymbol{\epsilon}$.
The cases $\Phi = \top$ and $\Phi = 1$ are evident.

Next, suppose $\Phi = S \mathbin{\&} T$. 
By the definition of 1-cells in $\mathcal{LG}$, every element of $\Pi$ is with that corresponds to $S \mathbin{\&} T$ via $\varphi$. 
Thus, reduce this case to the induction hypotheses by the rule \textsc{$\&$R}.

Now, assume $\Phi = \oc S$. 
If $S = \top$, then $\oc S = \top$ so that this case coincides with the one $\Phi = \top$.
If $S$ is not top, then the linearity of $\varphi$ together with the definition of 1-cells in $\mathcal{LG}$ implies that $\Pi$ and $\Gamma$ contain only of-course, so we can reduce this case to the one $\Phi = S$ by the rule \textsc{$\oc$R}.

Finally, assume $\Phi = S \otimes T$. 
\if0
If $U = \top$ or $V = \top$, then it is trivial; so assume otherwise. 
We then define
\begin{mathpar}
\sigma_U \colonequals \{ \, \boldsymbol{s} \in \sigma \mid \boldsymbol{s}{\upharpoonright_{\llbracket U \rrbracket_{\mathcal{LG}}}} \neq \boldsymbol{\epsilon} \, \}
\and
\sigma_V \colonequals \{ \, \boldsymbol{s} \in \sigma \mid \boldsymbol{s}{\upharpoonright_{\llbracket V \rrbracket_{\mathcal{LG}}}} \neq \boldsymbol{\epsilon} \, \}
\end{mathpar}
\fi
We reduce this case to the induction hypotheses by the rule \textsc{$\otimes$R} similarly to the case where we apply the rules \textsc{Cut} and \textsc{XL}.
\end{proof}

\begin{corollary}[an intensional biequivalence between logic and combinatorics]
\label{CorIntensionalBijection}
The 2-functor $\llbracket \_ \rrbracket_{\mathcal{LG}}$ of Proposition~\ref{PropInterpretationFunctor} defines a biequivalence $\mathsf{ILL} \simeq \mathcal{LG}$.
\end{corollary}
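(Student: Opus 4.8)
The plan is to verify the two standard conditions for an E-functor to be a biequivalence: essential surjectivity on $0$-cells, and local equivalence on each hom-E-category. The $2$-functor itself is supplied by Proposition~\ref{PropInterpretationFunctor}, so only these two conditions remain. For essential surjectivity on objects, recall that $\llbracket T \rrbracket_{\mathcal{LG}} = \llbracket \underline{T} \rrbracket_{\mathcal{LG}}$ depends only on the underlying formula; by Corollary~\ref{CorCombinatorialArenasAndFormulaeInILL} every combinatorial arena is $\llbracket A \rrbracket_{\mathcal{LG}}$ for some formula $A$, up to the graph isomorphism $\cong$ of Theorem~\ref{ThmFreeCharacterisation}. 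Since such an isomorphism of underlying rooted forests is realised by a (copy-cat-style) invertible $1$-cell in $\mathcal{LG}$, every $0$-cell of $\mathcal{LG}$ is equivalent to one in the image of $\llbracket - \rrbracket_{\mathcal{LG}}$.

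For the local equivalence, fix types $S$ and $T$; I must show that $\llbracket - \rrbracket_{\mathcal{LG}} \colon \mathsf{ILL}(S,T) \to \mathcal{LG}(\llbracket S \rrbracket_{\mathcal{LG}}, \llbracket T \rrbracket_{\mathcal{LG}})$ is an equivalence. \emph{Essential surjectivity on $1$-cells} is essentially Theorem~\ref{ThmFullCompleteness}: given a $1$-cell $(\Pi, \varphi)$, first observe that each combinatorial cut in $\Pi$ has the form $\mathscr{A} \multimap \mathscr{A}$ and hence, by surjectivity of $\llbracket - \rrbracket_{\mathcal{LG}}$ on arenas, equals $\llbracket \mathrm{Cut}(U \mid U') \rrbracket_{\mathcal{LG}}$ for suitable types with $\underline{U} = \underline{U'}$; together with Proposition~\ref{PropFreeChracterisationsOfCombinatorialSequents} this exhibits the underlying combinatorial sequent as $\llbracket F \rrbracket_{\mathcal{LG}}$ for a sigma-sequent $F = (S \dashv \Pi' \vdash T)$. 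Theorem~\ref{ThmFullCompleteness} then provides a sigma-term $t :: F$ with $\llbracket t \rrbracket_{\mathcal{LG}} = \varphi$, so $\llbracket - \rrbracket_{\mathcal{LG}}$ is even surjective on $1$-cells.

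The remaining, and decisive, point is that $\llbracket - \rrbracket_{\mathcal{LG}}$ is \emph{fully faithful on $2$-cells}. In both E-categories there is at most one $2$-cell between parallel $1$-cells, present exactly when they are related by the hiding equivalence; so full faithfulness amounts to showing that $\llbracket - \rrbracket_{\mathcal{LG}}$ both preserves and reflects $\simeq$. Preservation is already part of $\llbracket - \rrbracket_{\mathcal{LG}}$ being an E-functor (Proposition~\ref{PropInterpretationFunctor}). For reflection I would prove the key intertwining lemma that $\llbracket - \rrbracket_{\mathcal{LG}}$ carries syntactic cut-elimination onto the big-step hiding operation, i.e.\ $\mathcal{H}^\omega(\llbracket t \rrbracket_{\mathcal{LG}}) = \llbracket t_\ast \rrbracket_{\mathcal{LG}}$, where $t_\ast$ is the unique cut-free normal form of $t$ guaranteed by Theorem~\ref{ThmSyntacticCutElim} (with confluence and strong normalisation). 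Granting this, $\mathcal{H}^\omega(\llbracket t \rrbracket_{\mathcal{LG}}) = \mathcal{H}^\omega(\llbracket t' \rrbracket_{\mathcal{LG}})$ forces $\llbracket t_\ast \rrbracket_{\mathcal{LG}} = \llbracket t'_\ast \rrbracket_{\mathcal{LG}}$, hence $t_\ast = t'_\ast$ by the injectivity of Proposition~\ref{PropSigmaTermsAsFinitePresentations}, hence $t \simeq t'$ since two sigma-terms are $\simeq$-related iff they share a normal form.

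The hard part will be precisely this intertwining lemma, which is the combinatorial cut-elimination theorem in disguise. I expect to establish it at the level of the step-by-step hiding operation $\mathcal{H}$ refining $\mathcal{H}^\omega$ (Corollary~\ref{CorCombinatorialCutElim}) and to check, rule by rule, that each reduction $\rightarrow_i$ of Figure~\ref{FigLinearBetaReduction} is mirrored by hiding the switching vertices of the corresponding internal cut together with its internal additive and exponential structures: the multiplicative rules $\rightarrow_2$ through $\rightarrow_5$ against the decomposition of an internal linear-implication cut, and the duplicating rules $\rightarrow_6, \rightarrow_7$ against the exponential dynamics of $\oc$. Here the \emph{logical intensionality axioms} on $1$-cells of $\mathcal{LG}$ are exactly what ensures that every internal structure of $\Pi$ tracks a genuine syntactic cut-pair, so that the combinatorial reduction and the term reduction stay in lockstep; passing to the reflexive-transitive closure then yields the big-step statement and completes the argument.
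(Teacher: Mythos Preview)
Your decomposition into essential surjectivity on objects and local equivalence (surjectivity on $1$-cells plus full faithfulness on $2$-cells) is exactly right, and the ingredients you cite---Corollary~\ref{CorCombinatorialArenasAndFormulaeInILL}, Theorem~\ref{ThmFullCompleteness}, and Proposition~\ref{PropSigmaTermsAsFinitePresentations}---are the same ones the paper invokes. The paper's proof is just the terse two-line version of what you spell out.

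However, you have misjudged where the work lies. The ``intertwining lemma'' $\mathcal{H}^\omega(\llbracket t\rrbracket_{\mathcal{LG}}) = \llbracket t_\ast\rrbracket_{\mathcal{LG}}$ that you flag as the hard part is in fact immediate from preservation alone: since $t \simeq_{\mathsf{ILL}} t_\ast$ and $\llbracket-\rrbracket_{\mathcal{LG}}$ is an E-functor (Proposition~\ref{PropInterpretationFunctor}), we get $\mathcal{H}^\omega(\llbracket t\rrbracket_{\mathcal{LG}}) = \mathcal{H}^\omega(\llbracket t_\ast\rrbracket_{\mathcal{LG}})$; but $t_\ast$ is cut-free, so $\llbracket t_\ast\rrbracket_{\mathcal{LG}}$ lives on a combinatorial sequent with empty intensionality, whence $\mathcal{H}^\omega$ acts on it as the identity. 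Your reflection argument then goes through exactly as you wrote, with no further work needed.

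More seriously, your proposed route to the intertwining lemma---establishing it rule by rule via the one-step hiding operation $\mathcal{H}$ of Corollary~\ref{CorCombinatorialCutElim}---would be circular: in the paper, $\mathcal{H}$ is \emph{defined} only after this corollary, precisely by transporting syntactic cut-elimination along the biequivalence $\llbracket-\rrbracket_{\mathcal{LG}}$ and its quasi-inverse $\llbracket-\rrbracket_{\mathcal{LG}}^\flat$. So you cannot appeal to $\mathcal{H}$ here; fortunately, as noted above, you do not need to.
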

\begin{proof}
The 2-functor (or E-functor) is essentially surjective on objects by Corollary~\ref{CorCombinatorialArenasAndFormulaeInILL}, and fully faithful by Proposition~\ref{PropSigmaTermsAsFinitePresentations} and Theorem~\ref{ThmFullCompleteness}. 
\end{proof}

Under this biequivalence, the cut-eliminations on sigma-terms can be read as an operation on 1-cells in the bicategory $\mathcal{LG}$, called the \emph{(one-step) hiding operation}, for which we write $\mathcal{H}$. 

\begin{notation}
Fix a 2-functor $\llbracket \_ \rrbracket_{\mathcal{LG}}^\flat : \mathcal{LG} \rightarrow \mathsf{ILL}$, whose object-map is a right inverse of the object-map of the biequivalence $\llbracket \_ \rrbracket_{\mathcal{LG}}$, and arrow-map is the inverse of the arrow-map of $\llbracket \_ \rrbracket_{\mathcal{LG}}$. 
(It is unique up to E-natural isomorphisms thanks to Corollary~\ref{CorIntensionalBijection}.)
\end{notation}

\begin{definition}[the hiding operation]
The \emph{\bfseries (one-step) hiding operation} is the E-functor $\mathcal{H} : \mathcal{LG} \rightarrow \mathcal{LG}$ given by $\mathcal{H}(\mathscr{A}) \colonequals \mathscr{A}$ on objects $\mathscr{A}$, and by $\mathcal{H}(\Pi, \varphi) \colonequals (\mathcal{H}(\Pi), \mathcal{H}(\varphi)) \colonequals (\Pi', \varphi')$ if and only if $\llbracket \varphi \rrbracket_{\mathcal{LG}}^{\flat} :: (\llbracket \mathscr{A} \dashv \Pi \vdash \mathscr{B} \rrbracket_{\mathcal{LG}}^{\flat}) \rightarrow \llbracket \varphi' \rrbracket_{\mathcal{LG}}^{\flat} :: (\llbracket \mathscr{A} \dashv \Pi' \vdash \mathscr{B} \rrbracket_{\mathcal{LG}}^{\flat})$ on 1-cells $(\Pi, \varphi) : \mathscr{A} \rightarrow \mathscr{B}$.
\end{definition}

The hiding operation $\mathcal{H}$ is much more intricate and cumbersome to describe directly in terms of the combinatorial structures of $\mathcal{LG}$.
This is why we have instead defined it in terms of the cut-elimination $\rightarrow$ for the sigma-calculus by exploiting the biequivalence $\mathsf{ILL} \simeq \mathcal{LG}$.

Theorem~\ref{ThmSyntacticCutElim} is then translated into: 

\begin{corollary}[combinatorial cut-elimination]
\label{CorCombinatorialCutElim}
Each 1-cell $(\Pi, \varphi)$ in the bicategory $\mathcal{LG}$ has a finite sequence $(\Pi, \varphi), \mathcal{H}(\Pi, \varphi), \mathcal{H}^2(\Pi, \varphi), \dots, \mathcal{H}^n(\Pi, \varphi)$ of 1-cells in $\mathcal{LG}$ such that $\mathcal{H}^n(\Pi, \varphi) = (\boldsymbol{\epsilon}, \mathcal{H}^\omega(\varphi))$, and $t \rightarrow t'$ if and only if $\llbracket t \rrbracket_{\mathcal{LG}} \neq \llbracket t' \rrbracket_{\mathcal{LG}}$ and $\mathcal{H}(\llbracket t \rrbracket_{\mathcal{LG}}) = \llbracket t' \rrbracket_{\mathcal{LG}}$ for all sigma-terms $t$.
\end{corollary}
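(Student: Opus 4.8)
The plan is to transport the syntactic cut-elimination result, Theorem~\ref{ThmSyntacticCutElim}, across the intensional biequivalence $\mathsf{ILL} \simeq \mathcal{LG}$ established in Corollary~\ref{CorIntensionalBijection}. The key observation is that the one-step hiding $\mathcal{H}$ is \emph{defined} to be the image of the one-step cut-elimination $\rightarrow$ under $\llbracket \_ \rrbracket_{\mathcal{LG}}$: for a sigma-term $t$ one has $\llbracket t \rrbracket_{\mathcal{LG}}^\flat = t$, so $\mathcal{H}(\llbracket t \rrbracket_{\mathcal{LG}}) = \llbracket t' \rrbracket_{\mathcal{LG}}$ holds exactly when $t \rightarrow t'$, with $\mathcal{H}$ acting as the identity on cut-free terms (values). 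This immediately yields the second assertion. For the forward direction, if $t \rightarrow t'$ then $\mathcal{H}(\llbracket t \rrbracket_{\mathcal{LG}}) = \llbracket t' \rrbracket_{\mathcal{LG}}$ by definition, while a cut-elimination step strictly alters the intensionality, so $t \neq t'$ and hence $\llbracket t \rrbracket_{\mathcal{LG}} \neq \llbracket t' \rrbracket_{\mathcal{LG}}$ by the injectivity of $\llbracket \_ \rrbracket_{\mathcal{LG}}$ on sigma-terms (Proposition~\ref{PropSigmaTermsAsFinitePresentations}). Conversely, the inequality $\llbracket t \rrbracket_{\mathcal{LG}} \neq \llbracket t' \rrbracket_{\mathcal{LG}}$ rules out the trivial case in which $t$ is already cut-free and $\mathcal{H}$ is the identity; thus $t$ carries a cut, $\mathcal{H}$ performs a genuine reduction to some $t''$ with $\llbracket t'' \rrbracket_{\mathcal{LG}} = \llbracket t' \rrbracket_{\mathcal{LG}}$, and injectivity forces $t'' = t'$, i.e.\ $t \rightarrow t'$.

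For the first assertion, I would fix a 1-cell $(\Pi, \varphi)$ and let $t \colonequals \llbracket \varphi \rrbracket_{\mathcal{LG}}^\flat$ be the corresponding sigma-term, which exists and is unique by Theorem~\ref{ThmFullCompleteness} and Proposition~\ref{PropSigmaTermsAsFinitePresentations}. Theorem~\ref{ThmSyntacticCutElim} supplies a finite cut-elimination sequence $t = t_0 \rightarrow t_1 \rightarrow \dots \rightarrow t_n$ with $t_n$ cut-free. Applying the correspondence of the previous paragraph step by step gives $\mathcal{H}^i(\Pi, \varphi) = \llbracket t_i \rrbracket_{\mathcal{LG}}$ for each $i$, and in particular $\mathcal{H}^n(\Pi, \varphi) = \llbracket t_n \rrbracket_{\mathcal{LG}} = (\boldsymbol{\epsilon}, \psi)$ is a value. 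It then remains only to identify $\psi$ with the big-step hiding $\mathcal{H}^\omega(\varphi)$ of Definition~\ref{DefCombinatorialBigStepCutElimination}.

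That final identification is the one genuinely non-formal point, and it follows from the fact that $\llbracket \_ \rrbracket_{\mathcal{LG}}$, being a biequivalence, preserves and reflects 2-cells. Since $t \rightarrow^\ast t_n$ with $t_n$ cut-free, the definition of $\simeq$ in $\mathsf{ILL}$ gives $t \simeq t_n$ (taking the common reduct to be $t_n$ itself). Transporting this 2-cell yields $\llbracket t \rrbracket_{\mathcal{LG}} \simeq \llbracket t_n \rrbracket_{\mathcal{LG}}$, which by the definition of $\simeq$ in $\mathcal{LG}$ means $\mathcal{H}^\omega(\varphi) = \mathcal{H}^\omega(\psi)$. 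As $\psi$ is a value, its intensionality being empty, the big-step hiding leaves it fixed, $\mathcal{H}^\omega(\psi) = \psi$, directly from Definition~\ref{DefCombinatorialBigStepCutElimination}; hence $\mathcal{H}^\omega(\varphi) = \psi = \mathcal{H}^n(\Pi, \varphi)$, as required. This also exhibits the promised fact that the countable iteration of $\mathcal{H}$ coincides with $\mathcal{H}^\omega$.

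The main obstacle I anticipate is the bookkeeping around the non-determinism of $\rightarrow$ together with the fact that $\mathcal{H}$ is only an E-functor, i.e.\ well-defined up to the hiding equivalence. One must check that the two-sided correspondence between $\mathcal{H}$ and $\rightarrow$ can be realised \emph{on representatives}, so that the literal, non-quotiented equalities and inequalities appearing in the statement are meaningful; here the confluence and strong normalisation of the sigma-calculus cut-elimination are precisely what guarantee that distinct one-step reducts of a term share a normal form and are therefore 2-cell-related, so that $\mathcal{H}$ descends to a genuine functor while still matching $\rightarrow$ step for step on chosen representatives.
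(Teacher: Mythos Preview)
Your proposal is correct and takes essentially the same approach as the paper: the paper presents this corollary as an immediate translation of Theorem~\ref{ThmSyntacticCutElim} through the biequivalence of Corollary~\ref{CorIntensionalBijection} and the definition of $\mathcal{H}$, without giving an explicit proof. You have simply unpacked what that translation means, including the identification of the terminal value with $\mathcal{H}^\omega(\varphi)$ via 2-cell preservation, and you correctly flag the bookkeeping around non-determinism of $\rightarrow$ that the paper's terse ``if and only if'' definition of $\mathcal{H}$ glosses over.
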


By construction, the hiding operation corresponds precisely to the cut-elimination; also, by Theorem~\ref{ThmFullCompleteness}, the former is defined on \emph{all} 1-cells in $\mathcal{LG}$.
Consequently, it \emph{completely} captures the cut-elimination: $t \rightarrow t'$ if and only if $\llbracket t \rrbracket_{\mathcal{LG}} \neq \llbracket t' \rrbracket_{\mathcal{LG}}$ and $\mathcal{H}(\llbracket t \rrbracket_{\mathcal{LG}}) = \llbracket t' \rrbracket_{\mathcal{LG}}$ for all sigma-terms $t$.
To the best of our knowledge, this is the first syntax-free, non-inductive recast of cut-elimination. 

Because the bicategories $\mathsf{CLL}$ and $\mathcal{LG}_{\neg\neg}$ (respectively, $\mathsf{IL}$ and $\mathcal{LG}_\oc$, $\mathsf{CL}$ and $\mathcal{LG}_{\oc\wn}$) are obtained from $\mathsf{ILL}$ and $\mathcal{LG}$, respectievly, through the same (co-)Kleisli construction, Corollary~\ref{CorCombinatorialCutElim} extends to this pair of logic and combinatorics too. 
As a result, we have reduced the proof theory of these propositional logics to the study of combinatorics, including their dynamics and intensionality.

Last but not least, another immediate consequence of Corollary~\ref{CorIntensionalBijection}, which is of theoretical interest, is the following technical result:
\begin{corollary}[linear winning implies flat innocence]
\label{CorLinearWinningImpliesFlatInnocence}
Every linearly winning strategy is flatly innocent (and therefore non-circular too).
\end{corollary}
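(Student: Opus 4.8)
The plan is to derive the statement from the flat innocence lemma (Lemma~\ref{LemCircularity}) by unwinding the definition of a linearly winning strategy, and to use the biequivalence of Corollary~\ref{CorIntensionalBijection} only to explain why the corollary belongs here and is ``immediate.'' First I would record the hypothesis: a linearly winning strategy $\varphi : \mathscr{S}$ is linear and winning, and winning means total, innocent and noetherian. Thus $\varphi$ is in particular an innocent strategy that is noetherian, and the whole content reduces to locating it on the ``noetherian $=$ non-circular $=$ flatly innocent'' side of the trichotomy in Lemma~\ref{LemCircularity}. Note that this route applies to \emph{every} linearly winning strategy, not merely to those underlying a 1-cell, since innocence and noetherianity are supplied directly by winning and no intensionality or $1$-cell structure is needed.

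The key step is then a direct appeal to Lemma~\ref{LemCircularity}: since $\varphi$ is innocent and noetherian, the lemma gives that $\varphi$ is flatly innocent and equivalently non-circular, which is exactly the claim together with its parenthetical. Concretely, noetherianity forbids a strictly increasing infinite chain in $\lceil \varphi \rceil$; combined with the finiteness of $\underline{\lceil \varphi \rceil} \subseteq \mathcal{M}_{\mathscr{S}}$ (intrinsic to combinatorial arenas), this forces each P-view $\boldsymbol{s}\vec{m}\vec{n} \in \lceil \varphi \rceil$ to have no repeated occurrence of a move, so the flattened datum $(\underline{\boldsymbol{s}}, \vec{m}, \vec{n})$ recovers the entire P-view. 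Hence $\mathrm{fun}_\varphi$ is single-valued and determines $\lceil \varphi \rceil$, and since $\varphi$ is innocent it determines $\varphi$ itself, which is flat innocence.

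Where the biequivalence enters is in clarifying the conceptual placement. By Corollary~\ref{CorIntensionalBijection} and Proposition~\ref{PropSigmaTermsAsFinitePresentations}, every linearly winning $\varphi$ that underlies a $1$-cell is definable, $\varphi = \llbracket t \rrbracket_{\mathcal{LG}} = \mathrm{st}(\mathrm{alg}(t))$ for a sigma-term $t$, so it is finitely presentable by the finite P-view algorithm $\mathrm{alg}(t)$. Flat innocence is then precisely the sharpening of finite presentability that the noetherian clause of ``winning'' guarantees, which is why the corollary reads as an immediate by-product of the intensional biequivalence.

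The main obstacle, and really the only subtlety, is that finite presentability alone does \emph{not} yield flat innocence: fixed-point strategies (Example~\ref{ExFixedPoints}) are finitely presentable yet circular and not flatly innocent. One must therefore genuinely invoke the noetherian part of ``winning'' — through Lemma~\ref{LemCircularity} — rather than mere definability. Once this is flagged, the argument is immediate; no new combinatorial computation is required, and the linearity hypothesis contributes nothing beyond implying winning.
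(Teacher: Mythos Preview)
Your proof is correct. The chain ``linearly winning $\Rightarrow$ winning $\Rightarrow$ innocent and noetherian $\Rightarrow$ flatly innocent and non-circular'' goes through verbatim via Definition~\ref{DefConstraintsOnStrategies} and Lemma~\ref{LemCircularity}, exactly as you lay it out, and you are right that the linearity clause contributes nothing here.

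Your route differs from the paper's stated one. The paper presents the corollary as an ``immediate consequence of Corollary~\ref{CorIntensionalBijection},'' i.e.\ it invokes the intensional biequivalence: every $1$-cell in $\mathcal{LG}$ is $\mathrm{st}(\mathrm{alg}(t))$ for some sigma-term $t$, hence arises from a P-view algorithm in the specific form produced by the syntax. Your argument bypasses this entirely and appeals only to the flat innocence lemma. This buys two things. First, it is far more elementary: you avoid the full-completeness theorem (Theorem~\ref{ThmFullCompleteness}) and the whole syntactic apparatus. Second, as you explicitly note, it is more general: it applies to \emph{every} linearly winning strategy on an arbitrary combinatorial sequent, whereas the biequivalence route literally speaks only of $1$-cells, i.e.\ linearly winning strategies that additionally satisfy the logical intensionality axioms. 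Your observation that the biequivalence serves here only to explain the corollary's placement, not to prove it, is on point.
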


\section{Combinatorial higher-order computation}
\label{Computation}
This last section shows that the bicategory $\mathcal{G}_\oc$ admits a model computation that can simulate the higher-order functional programming language \emph{PCF} \cite{scott1993type,plotkin1977lcf}.
This \emph{PCF-completeness} of $\mathcal{G}_\oc$ verifies that our combinatorics induces a quite general class of higher-order computation.

The main theorem of Yamada \cite{yamada2019game} states that strategies \emph{presentable by finitely presentable strategies} can simulate PCF.
Our PCF-completeness significantly improves this theorem by proving that finitely presentable ones suffice to simulate PCF. 
This advance is due to our combinatorial recast of games: The recast dispenses with the computation on infinitary tags in \cite{yamada2019game}.

The improvement is not only on the theorem itself but also on the underlying framework: The preceding approach \cite{yamada2019game} entails extremely intricate constructions on finitely presentable strategies that present strategies interpreting PCF; in contrast, our method is not bothered by such low-level constructions at all. 
This simplification is crucial for our framework to be \emph{usable}. 

\begin{remark}
Another, more conceptual advance is that, by dispensing with the finitely presentable strategies that \emph{extrinsically} and \emph{non-canonically} encode strategies interpreting PCF in \cite{yamada2019game}, and more generally by freeing from any syntactic encodings typical in existing models of computation, all structures for computation in $\mathcal{G}_\oc$ are \emph{intrinsic} and thus \emph{canonical} to $\mathcal{G}_\oc$.
Note that $\mathcal{G}_\oc$ consists of semantic, in particular syntax-free and non-inductive, objects. 
Hence, if the standard formalism in mathematics, or the category of sets, is replaced with $\mathcal{G}_\oc$, then the present result resolves the extrinsic, non-canonical nature of computability raised in \cite[\S 1.2]{abramsky2014intensionality}.
\end{remark}

To establish the PCF-completeness of 1-cells in $\mathcal{G}_\oc$, let us begin with showing that strategies modelling \emph{atomic} terms in PCF are all finitely presentable:

\begin{notation}
In the following examples, we use the superscripts $(\_)'$, $(\_)''$, etc., as informal tags on combinatorial arenas and their vertices. 
\end{notation}

\begin{example}
\label{ExSuccessor}
Recall the finitely presentable strategy $\mathrm{succ} : (\mathscr{N} \dashv \vdash \mathscr{N}')$ given in Example~\ref{ExFinitePresentableStrategies}.
This computation also forms a finitely presentable strategy on the combinatorial sequent $\oc \mathscr{N} \dashv \vdash \mathscr{N}'$.
Thus, the pair $(\boldsymbol{\epsilon}, \mathrm{succ})$ is a normalised 1-cell $\mathscr{N} \rightarrow \mathscr{N}$ in the bicategory $\mathcal{G}_\oc$.
It computes the successor $\mathbb{N} \rightarrow \mathbb{N}$ in the sense that $\mathrm{succ}(\underline{n}^\dagger) \simeq \underline{n+1}$ holds for all $n \in \mathbb{N}$.
\end{example}

\begin{example}
\label{ExPredecessor}
It is not hard to see that there are no normalised 1-cells $\mathscr{N} \rightarrow \mathscr{N}$ in $\mathcal{G}_\oc$ that play as the predecessor $\mathbb{N} \rightarrow \mathbb{N}$.
This difference indicates that predecessor is slightly more complex than successor.
This subtlety is owed to the present refinement of the previous work \cite{yamada2019game} in which the complexity-theoretic difference between successor and predecessor dissapears.  

To construct a 1-cell $\mathscr{N} \rightarrow \mathscr{N}$ for the predecessor, we define strategies $\mathrm{pred}_+ : (\oc \mathscr{N} \dashv \vdash \mathscr{N}_{\neg\neg}')$ and $\mathrm{pred}_- : (\oc \mathscr{N}_{\neg\neg} \dashv \vdash \mathscr{N}')$, where $\mathscr{N}_{\neg\neg} \colonequals \oc (\bot_{[q]} \multimap_{[p]} \bot_{[y]}) \otimes \bot_{[n]} \multimap_{[o]} \neg_{[i]} \neg_{[j]} \bot_{[\hat{q}]}$, by
\begin{scriptsize}
\begin{align*}
\mathrm{pred}_+ &\colonequals \mathrm{Pref}(\{ \, {\dashv \vdash} o'i' . \vec{q}_0 . \vec{\mathrm{no}} . \vec{\mathrm{no}}' \, \} \\
&\cup \{ \, {\dashv \vdash}o'i' . \vec{q}_0 . \vec{\mathrm{yes}}_0 . j' . (\vec{q}'_0 . \vec{q}_1 . \vec{\mathrm{yes}}_1 . \vec{\mathrm{yes}}'_0) (\vec{q}'_1 . \vec{q}_2 . \vec{\mathrm{yes}}_2 . \vec{\mathrm{yes}}'_1) \dots (\vec{q}'_n . \vec{q}_{n+1} . \vec{\mathrm{yes}}_{n+1} . \vec{\mathrm{yes}}'_n) . \vec{q}'_{n+1} . \vec{q}_{n+2} . \vec{\mathrm{no}} . \vec{\mathrm{no}}' \mid n \in \mathbb{N} \, \})^{\mathrm{Even}} \\
\mathrm{pred}_- &\colonequals \mathrm{Pref}(\{ \, {\dashv \vdash}\vec{q}'_0 . i . j . \vec{q}_0 . (\vec{\mathrm{yes}}_0 . \vec{\mathrm{yes}}'_0 . \vec{q}'_1 . \vec{q}_1) (\vec{\mathrm{yes}}_1 . \vec{\mathrm{yes}}'_1 . \vec{q}'_2 . \vec{q}_2) \dots (\vec{\mathrm{yes}}_n . \vec{\mathrm{yes}}'_n . \vec{q}'_{n+1} . \vec{q}_{n+1}) . \vec{\mathrm{no}} . \vec{\mathrm{no}}' \mid n \in \mathbb{N} \, \})^{\mathrm{Even}}.
\end{align*}
\end{scriptsize}We then define $\mathrm{pred} \colonequals \mathrm{pred}_+^\dagger \between \mathrm{pred}_-$ on $\oc \mathscr{N} \dashv \oc \mathscr{N}_{\neg\neg} \multimap \oc \mathscr{N}_{\neg\neg} \vdash \mathscr{N}$.
It is easy to see that $\mathrm{pred}_+$ and $\mathrm{pred}_-$ are finitely presentable, and thus so is $\mathrm{pred}$. 
Hence, the pair $(\mathscr{N}_{\neg\neg}, \mathrm{pred})$ is a 1-cell in $\mathcal{G}_\oc$.
It plays as the predecessor: $\mathrm{pred}(\underline{n+1}^\dagger) \simeq \underline{n}$ for all $n \in \mathbb{N}$ and $\mathrm{pred}(\underline{0}^\dagger) \simeq \underline{0}$ hold.
\end{example}

\begin{example}
We define the combinatorial arena $\mathscr{B}$, called the \emph{\bfseries boolean combinatorial arena}, by $\mathscr{B} \colonequals \bot_{[\mathrm{tt}]} \otimes \bot_{[\mathrm{ff}]} \multimap_{[o]} \bot_{[q]}$, and the strategies $\underline{\mathrm{tt}}, \underline{\mathrm{ff}} : \mathscr{B}$ by $\underline{\mathrm{tt}} \colonequals \{ \boldsymbol{\epsilon}, oq.\mathrm{tt} \}$ and $\underline{\mathrm{ff}} \colonequals \{ \boldsymbol{\epsilon}, oq. \mathrm{ff} \}$.

The \emph{\bfseries (binary) conditional} on a given combinatorial arena $\mathscr{A}$ refers to the strategy $\mathrm{case}_{\mathscr{A}} : \oc \mathscr{A}, \oc \mathscr{A}', \oc \mathscr{B}'' \dashv \vdash \mathscr{A}'''$ defined by
\begin{small}
\begin{equation*}
\mathrm{case}(\mathscr{A})_\oc \colonequals \mathrm{Pref}(\{ \, \vec{a}'''_1 . o''q'' . \mathrm{tt}'' . \vec{a}_1 . \boldsymbol{s} \mid \vec{a}'''_1 . \vec{a}_1 . \boldsymbol{s} \in \mathrm{der}_{\mathscr{A}} \, \} \cup \{ \, \vec{a}'''_1 . o''q'' . \mathrm{ff}'' . \vec{a}'_1 . \boldsymbol{t} \mid \vec{a}'''_1 . \vec{a}'_1 . \boldsymbol{t} \in \mathrm{der}_{\mathscr{A}} \, \})^{\mathrm{Even}}.
\end{equation*}
\end{small}It is easy to see that this strategy is finitely presentable, and it plays as the binary conditional: $\mathrm{case}_{\mathscr{A}}(\varphi_1^\dagger \otimes \varphi_2^\dagger \otimes \underline{\mathrm{tt}}^\dagger) \simeq \varphi_1$ and $\mathrm{case}_{\mathscr{A}}(\varphi_1^\dagger \otimes \varphi_2^\dagger \otimes \underline{\mathrm{ff}}^\dagger) \simeq \varphi_2$ for all strategies $\varphi_1, \varphi_2 : (\Gamma \dashv \Pi \vdash \mathscr{A})$.

In addition, the \emph{\bfseries ifzero} refers to the strategy $\mathrm{ifzero} : \oc \mathscr{N} \dashv \vdash \mathscr{B}$ defined by
\begin{small}
\begin{equation*}
\mathrm{ifzero} \colonequals \mathrm{Pref}(\{ \, {\dashv \vdash}oq . \vec{q}_0 . \vec{\mathrm{no}} . \mathrm{tt} \, \} \cup \{ \, {\dashv \vdash}oq . \vec{q}_0 . \vec{\mathrm{yes}}_0 . \mathrm{ff} \, \})^{\mathrm{Even}}.
\end{equation*}
\end{small}It is clear that this strategy is finitely presentable, and its computation checks whether a given winning strategy $\underline{n} : \mathscr{N}$ is $\underline{0}$ by $\mathrm{ifzero}(\underline{n}) \simeq \underline{\mathrm{tt}}$ if $n = 0$, and $\mathrm{ifzero}(\underline{n}) \simeq \underline{\mathrm{ff}}$ otherwise. 
\end{example}

\begin{example}
\label{ExFixedPoints}
The \emph{\bfseries fixed-point combinator} on a combinatorial arena $\mathscr{A}$ is the strategy $\mathrm{fix}_{\mathscr{A}} : \oc (\mathscr{A} \Rightarrow \mathscr{A}') \dashv \vdash \mathscr{A}''$ that computes, loosely speaking, by playing as the dereliction between $\mathscr{A}'$ and $\mathscr{A}''$ as well as between $\mathscr{A}$ and $\mathscr{A}'$; see \cite[\S 2.3.3]{hyland1997game} or \cite[Example~75]{yamada2019game} for the details. 

This strategy calculates the \emph{fixed-point} of a given strategy $\varphi : \mathscr{A} \Rightarrow \mathscr{A}$ in the sense that $\varphi(\mathrm{fix}_{\mathscr{A}}(\varphi^\dagger)^\dagger) \simeq \varphi$ holds.
Because the fixed-point combinator plays essentially as the two derelictions, it is finitely presentable. 
The significance of this finite presentability is that it completely dispenses with the intricate computation on tags for exponentials in \cite[Example~75]{yamada2019game}.
\end{example}

One way to enumerate strategies definable by PCF, up to the hiding equivalence, is to start from $\mathrm{der}_{\mathscr{A}}$, $\underline{0}_{\mathscr{A}}$, $\mathrm{succ}$, $\mathrm{pred}$, $\mathrm{case}_{\mathscr{A}}$, $\mathrm{ifzero}$ and $\mathrm{fix}_{\mathscr{A}}$, called \emph{PCF-atomic} strategies, where $\mathscr{A}$ is generated from $\mathscr{N}$, $\mathscr{B}$ and/or $\top$ by product and/or implication, and apply parallel composition, transpose, pairing and promotion; see the proof of \cite[Theorem~81]{yamada2019game} (n.b., a variant of parallel composition is called \emph{concatenation} and written $\ddagger$ in \cite{yamada2019game}).
Therefore, it suffices to show that PCF-atomic ones are finitely presentable, and the constructions preserve finite presentability. 

We have accomplished the first task by the above examples; the second task has been completed by Propositions~\ref{PropWellDefinedConstructionsOnStrategies} and \ref{PropPreservationOfFlatInnocenceAndFinitePresentability}.
Thus, the main theorem of the present section follows: 

\begin{theorem}[a combinatorial model of higher-order computation]
\label{ThmComputability}
Each strategy $\varphi : (\Gamma \dashv \vdash \mathscr{A})$ definable by PCF has a 1-cell $(\Pi, \kappa)$ in the bicategory $\mathcal{G}_\oc$ that satisfies $\mathcal{H}^\omega(\kappa) = \varphi$.
\end{theorem}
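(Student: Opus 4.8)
The plan is to show that the collection of PCF-definable strategies is generated from a finite set of \emph{PCF-atomic} strategies by a finite set of operations, and that both the atoms and the operations stay within the class of finitely presentable strategies, so that \emph{every} PCF-definable strategy arises as the value $\mathcal{H}^\omega(\kappa)$ of some finitely presentable $\kappa$. Concretely, I would invoke the enumeration scheme from the proof of \cite[Theorem~81]{yamada2019game}: a strategy $\varphi : (\Gamma \dashv \vdash \mathscr{A})$ is definable by PCF exactly when it is obtained, up to the hiding equivalence $\simeq_{\mathcal{H}}^\omega$, from the atoms $\mathrm{der}_{\mathscr{A}}$, $\underline{0}_{\mathscr{A}}$, $\mathrm{succ}$, $\mathrm{pred}$, $\mathrm{case}_{\mathscr{A}}$, $\mathrm{ifzero}$ and $\mathrm{fix}_{\mathscr{A}}$ (where $\mathscr{A}$ ranges over the combinatorial arenas built from $\mathscr{N}$, $\mathscr{B}$ and $\top$ by product and implication) by applying parallel composition, transpose, pairing and promotion.

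The first step is to confirm that each PCF-atomic strategy is finitely presentable. This is precisely what Examples~\ref{ExSuccessor}, \ref{ExPredecessor} and \ref{ExFixedPoints}, together with the boolean and conditional examples, have already established: $\mathrm{succ}$ and $\mathrm{pred}$ are assembled by parallel composition and promotion from the finitely presentable $\mathrm{succ}$ of Example~\ref{ExFinitePresentableStrategies} and the auxiliary $\mathrm{pred}_{\pm}$; $\mathrm{case}_{\mathscr{A}}$ and $\mathrm{ifzero}$ are visibly finite on P-views; and $\mathrm{fix}_{\mathscr{A}}$ is finitely presentable because it plays essentially as two derelictions, which is the key improvement over \cite{yamada2019game}. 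The second step is to confirm that the four generating operations preserve finite presentability. This is exactly the content of Proposition~\ref{PropPreservationOfFlatInnocenceAndFinitePresentability}, which states that parallel composition, tensor, pairing, promotion and transpose all preserve finite presentability (and flat innocence); Proposition~\ref{PropWellDefinedConstructionsOnStrategies} guarantees these operations are well-defined on strategies and respect the hiding equivalence.

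Combining these, I would proceed by induction on the construction of a PCF-definable $\varphi$. In the base case $\varphi$ is one of the PCF-atomic strategies, hence finitely presentable by Step~1, and the intensional refinement $\kappa$ is $\varphi$ itself (with $\Pi = \boldsymbol{\epsilon}$) or, where an atom was itself defined via parallel composition as in $\mathrm{pred}$, the finitely presentable $\kappa$ together with the cut sequence $\Pi$ recording the parallel compositions, so that $\mathcal{H}^\omega(\kappa) = \varphi$ by the definition of the big-step hiding operation (Definition~\ref{DefCombinatorialBigStepCutElimination}). For the inductive step, each of the four operations applied to finitely presentable refinements yields a finitely presentable result by Step~2, and since parallel composition introduces exactly the cuts recorded in $\Pi$ while the other operations leave the cut structure essentially untouched, $\mathcal{H}^\omega$ of the composite refinement equals the composite of the values, which is $\varphi$ up to $\simeq_{\mathcal{H}}^\omega$; here I use that $\mathcal{H}^\omega$ is functorial and commutes with the constructions, as guaranteed by Proposition~\ref{PropWellDefinedConstructionsOnStrategies}. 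The pair $(\Pi, \kappa)$ is then a genuine $1$-cell of $\mathcal{G}_\oc$ because finite presentability is precisely the $1$-cell condition in $\mathcal{G}_\oc$.

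The main obstacle I anticipate is not the finite presentability of the atoms or the preservation lemmas, which are already in hand, but the bookkeeping needed to verify that $\mathcal{H}^\omega$ of the assembled refinement $\kappa$ genuinely recovers $\varphi$ rather than some strategy merely $\simeq_{\mathcal{H}}^\omega$-equivalent to it; one must track how the cut sequence $\Pi$ accumulated by repeated parallel compositions interacts with promotion and transpose, and check that the hiding operation collapses exactly these cuts. Since the theorem asks only for the existence of a $1$-cell $(\Pi,\kappa)$ with $\mathcal{H}^\omega(\kappa)=\varphi$, and since $\mathcal{G}_\oc$ identifies $1$-cells only up to $\simeq$, this obstacle is ultimately a matter of carefully matching the inductive invariant to the statement, and it dissolves once one fixes the refinement to be the literal parallel-composition tree of PCF-atomic strategies underlying $\varphi$'s derivation.
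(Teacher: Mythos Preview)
Your proposal is correct and follows essentially the same approach as the paper: enumerate PCF-definable strategies from the PCF-atomic ones ($\mathrm{der}_{\mathscr{A}}$, $\underline{0}_{\mathscr{A}}$, $\mathrm{succ}$, $\mathrm{pred}$, $\mathrm{case}_{\mathscr{A}}$, $\mathrm{ifzero}$, $\mathrm{fix}_{\mathscr{A}}$) via parallel composition, transpose, pairing and promotion (citing \cite[Theorem~81]{yamada2019game}), then observe that the atoms are finitely presentable by the preceding examples and that the operations preserve finite presentability by Propositions~\ref{PropWellDefinedConstructionsOnStrategies} and~\ref{PropPreservationOfFlatInnocenceAndFinitePresentability}. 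The paper's own argument is exactly this two-step reduction, stated in the paragraph immediately before the theorem; your additional bookkeeping discussion about $\mathcal{H}^\omega$ is more explicit than the paper but not needed for anything beyond what Proposition~\ref{PropWellDefinedConstructionsOnStrategies} already guarantees.
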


Most part of the long article \cite{yamada2019game} is dedicated to its main theorem \cite[Theorem~81]{yamada2019game}, which states that every strategy definable by PCF is presentable by a finitely presentable strategy. 
Our theorem improves this result by showing that in our combinatorial setting finitely presentable strategies suffice to simulate PCF. 
Moreover, by dispensing with the strategies presenting PCF-definable ones, our proof is much simpler and more straightforward than the previous one.

Although there is no doubt about the computability of finitely presentable strategies, one may wonder whether it is \emph{maximal}.
In particular, because a standard idea \cite{hyland2000full} in the literature of game semantics is to define an innocent strategy to be computable or \emph{\bfseries recursive} if its computation is calculable by a Turing machine, does one gain a stronger notion of computability by replacing finite presentability of innocent strategies with recursiveness?
The answer is negative:

\begin{corollary}[completeness of finite presentability]
Every recursive innocent strategy $\varphi$ on a combinatorial sequent $\oc \mathscr{A} \dashv \vdash \mathscr{B}$ has a 1-cell $(\Xi, \kappa)$ in the bicategory $\mathcal{G}_\oc$ such that $\kappa \simeq \varphi$.
\end{corollary}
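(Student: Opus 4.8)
The plan is to reduce the claim to the \emph{universality} of PCF for recursive innocent strategies and then invoke Theorem~\ref{ThmComputability}. First I would observe that the combinatorial sequent $\oc \mathscr{A} \dashv \vdash \mathscr{B}$ carries the empty intensionality, so $\varphi$ is a value and $\mathcal{H}^\omega(\varphi) = \varphi$; hence the desired relation $\kappa \simeq \varphi$, i.e.\ $\mathcal{H}^\omega(\kappa) = \mathcal{H}^\omega(\varphi)$, amounts to $\mathcal{H}^\omega(\kappa) = \varphi$. By Theorem~\ref{ThmComputability} it therefore suffices to prove that $\varphi$ is \emph{definable by PCF}, after which the theorem produces the required $1$-cell $(\Xi, \kappa)$ in $\mathcal{G}_\oc$ directly. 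Thus the entire content of the corollary is the implication ``recursive innocent $\Rightarrow$ PCF-definable.''

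To establish that implication I would argue as in the classical universality theorem of Hyland and Ong \cite{hyland2000full}, transported to combinatorial arenas as in the game-semantic model of \cite{yamada2019game}. Since $\varphi$ is innocent, it is completely determined by its view function, the partial map sending each P-view ending in an (extended) O-move to the P-move that $\varphi$ plays; recursiveness means precisely that this function is computed by a Turing machine on a G\"{o}del-numbering of P-views and moves. Because $\mathscr{N}$ together with the finitely presentable strategies $\mathrm{succ}$, $\mathrm{pred}$, $\mathrm{ifzero}$, $\mathrm{case}_{\mathscr{A}}$ and $\mathrm{fix}_{\mathscr{A}}$ (Examples~\ref{ExSuccessor}, \ref{ExPredecessor} and \ref{ExFixedPoints}) already compute every partial recursive function at first order, PCF has a term $\underline{f_\varphi}$ denoting this view function. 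It then remains to exhibit a PCF-definable \emph{interpreter} $U$ on $\mathscr{A}, \mathscr{B}$ that, fed $\underline{f_\varphi}$, reconstructs and plays $\varphi$: $U$ maintains the current P-view as an encoded natural number, queries $\underline{f_\varphi}$ for the next P-move, and uses $\mathrm{case}_{\mathscr{A}}$, $\mathrm{ifzero}$ and $\mathrm{fix}_{\mathscr{A}}$ to drive the dialogue. Here the visibility and recursive-alternation axioms on positions guarantee that P-views are exactly the data the interpreter needs, and innocence guarantees that the denotation of $U$ applied to $\underline{f_\varphi}$ equals $\varphi$ up to $\simeq$.

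The main obstacle will be the construction and correctness of this interpreter in the combinatorial setting, rather than the bookkeeping with $\underline{f_\varphi}$. Two points need care. First, one must check that the coding of P-views of $\oc \mathscr{A} \dashv \vdash \mathscr{B}$, including the pointer and exponential-tag data, is itself PCF-computable and stable under the exponential dynamics, so that the interpreter genuinely reproduces $\varphi$ and not merely a strategy with the same flattened behaviour. Second, the classical universality result is phrased for \emph{well-bracketed} strategies, so I would isolate as a lemma the fact that the combinatorial arenas interpreting PCF types enforce well-bracketing automatically through their justification structure, ensuring that plain PCF, and hence $\mathcal{G}_\oc$, already suffices rather than a control-enriched extension. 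Once the interpreter is in place, $\varphi$ is exhibited as a PCF-definable strategy, and Theorem~\ref{ThmComputability} closes the argument.
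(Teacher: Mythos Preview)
Your plan---encode the P-view function at first order and feed it to a universal interpreter---matches the paper's, but the reduction you state has a genuine gap. You propose to show that $\varphi$ is PCF-definable and then invoke Theorem~\ref{ThmComputability}; however, $\varphi$ need not be PCF-definable, for two independent reasons.

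First, $\mathscr{A}$ and $\mathscr{B}$ are arbitrary combinatorial arenas, whereas PCF-definable strategies in this paper live only over arenas generated from $\mathscr{N}$, the boolean arena and $\top$ by product and implication (see the paragraph preceding Theorem~\ref{ThmComputability}); so Theorem~\ref{ThmComputability} cannot be applied to $\varphi$ on the nose. Second, and more seriously, positions in combinatorial arenas are constrained by recursive alternation, visibility and joker (Definition~\ref{DefPositionsInCombinatorialSemiArenas}) but \emph{not} by well-bracketing. Your proposed lemma that the justification structure of these arenas enforces well-bracketing is false, and a merely recursive innocent $\varphi$ can exhibit control-like behaviour that no PCF term denotes. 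The classical Hyland--Ong universality argument therefore does not close the gap.

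The paper sidesteps both issues by never claiming that $\varphi$ itself is PCF-definable. It invokes Theorem~\ref{ThmComputability} only for the first-order partial recursive map $\#\lceil\boldsymbol{s}\vec{o}\rceil \mapsto \#\lceil\boldsymbol{s}\vec{o}\vec{p}\rceil$, obtaining a finitely presentable $\kappa_\varphi$ on $\oc\mathscr{N} \dashv \Pi \vdash \mathscr{N}$. The interpreter $\mu$ is then constructed \emph{directly} as a finitely presentable strategy on a sequent mixing the arbitrary $\mathscr{A}$, $\mathscr{B}$ with copies of $\mathscr{N}$: it records each incoming O-move together with its justifier on a fresh copy of $\mathscr{N}'$ inside an internal cut (so that the full P-view, not merely its flattening, is reconstructible despite flat innocence), queries the abstracted view function in $\oc(\mathscr{N}\Rightarrow\mathscr{N})$, and decodes the reply into a P-move in $\mathscr{A}$ or $\mathscr{B}$. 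Finite presentability of $\mu$ holds because the arenas are finite (only finitely many move shapes to encode and decode) and the recursion sits entirely at type $\mathscr{N}$; the composite $\kappa \colonequals \tilde{\kappa}_\varphi^\dagger \between \mu$ is then finitely presentable by Proposition~\ref{PropPreservationOfFlatInnocenceAndFinitePresentability}. No well-bracketing hypothesis is used anywhere. Your argument is easily repaired along these lines: drop the detour through PCF-definability of $\varphi$ and argue finite presentability of the interpreter directly.
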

\begin{proof}[Proof (sketch)]
Fix an effective encoding $\# : \lceil \mathcal{P}_{\oc \mathscr{A} \dashv \vdash \mathscr{B}} \rceil \rightarrowtail \mathbb{N}$ with its range decidable, and its inverse effective. 
Because PCF-completeness implies Turing completeness, Theorem~\ref{ThmComputability} finds a finitely presentable strategy $\kappa_\varphi$ on some combinatorial sequent $\oc \mathscr{N} \dashv \Pi \vdash \mathscr{N}$ that computes the P-view map $f_\varphi$ of $\varphi$ in the sense that $\kappa_\varphi (\underline{\# \lceil \boldsymbol{s}\vec{o} \rceil}) = \underline{\# \lceil \boldsymbol{s}\vec{o}\vec{p} \rceil}$ if and only if $f_\varphi(\lceil \boldsymbol{s}\vec{o} \rceil) =  \lceil \boldsymbol{s}\vec{o}\vec{p} \rceil$ for all odd-length P-views $\lceil \boldsymbol{s}\vec{o} \rceil$ and P-moves $\vec{p}$ in $\oc \mathscr{A} \dashv \vdash \mathscr{B}$.
Let $\tilde{\kappa}_\varphi \colonequals \lambda(\kappa_\varphi) : (\dashv \Pi \vdash \oc \mathscr{N} \multimap \mathscr{N})$.

For the same reason, there is a finitely presentable strategy $\mu$ on some combinatorial sequent $\oc \mathscr{A}, \oc (\mathscr{N} \Rightarrow \mathscr{N}) \dashv \oc \mathscr{N} \multimap \oc \mathscr{N}', \Sigma \vdash \mathscr{B}$ that computes as follows.
At each odd-length position such that the last O-move $\vec{o}$ is in $\oc \mathscr{A}$ or $\mathscr{B}$, $\mu$ first records $\vec{o}$ and its justifier $\vec{j}$ on a new copy of $\mathscr{N}'$ in $\oc \mathscr{N}'$; in this way, $\kappa_\varphi$ sees each P-view in $\oc \mathscr{A} \dashv \vdash \mathscr{B}$ even though it is flatly innocent. 
Next, $\mu$ reads off the current P-view $\lceil \boldsymbol{s}\vec{o} \rceil$ in $\oc \mathscr{A} \dashv \vdash \mathscr{B}$ from what has been played in $\oc \mathscr{N} \multimap \oc \mathscr{N}'$, and computes in $\oc (\mathscr{N} \Rightarrow \mathscr{N})$ by playing as $\underline{\#\lceil \boldsymbol{s}\vec{o} \rceil}$ in the domain of $\mathscr{N} \Rightarrow \mathscr{N}$.
Finally, $\mu$ reads off the output given in the codomain of $\mathscr{N} \Rightarrow \mathscr{N}$ and makes the corresponding P-move in $\oc \mathscr{A}$ or $\mathscr{B}$, if any; it does not make the next P-move if there is no corresponding P-move.
Then, it suffices to define $\Xi \colonequals \Pi, \oc (\mathscr{N} \Rightarrow \mathscr{N}), \oc \mathscr{N} \multimap \oc \mathscr{N}', \Sigma$ and $\kappa \colonequals \tilde{\kappa}_\varphi^\dagger \between \mu$.
\end{proof}

Note that an argument similar to that of the proof implies:
\begin{corollary}[completeness of flat innocence]
If $\varphi$ is an innocent strategy on a combinatorial sequent $\Gamma \dashv \Pi \vdash \Phi$, then there is a flatly innocent strategy $\ell$ on some combinatorial sequent $\Gamma \dashv \Pi, \Sigma \vdash \Phi$ that coincides with $\varphi$ up to the hiding equivalence, i.e., $\varphi \simeq \ell$.
\end{corollary}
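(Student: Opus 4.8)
The plan is to mimic the construction of the history-recording strategy $\mu$ used in the proof of the preceding corollary (completeness of finite presentability), but without the extra layer $\kappa_\varphi$ that computed $\varphi$'s P-view map: here $\varphi$ is \emph{given} as part of the data, so it can be fed directly into a cut rather than recomputed from finite information. Concretely, I would take $\Sigma$ to consist of an auxiliary cut (of the form $\oc \mathscr{N} \multimap \oc \mathscr{N}'$, together with copies of the domain and codomain arenas of $\varphi$ if needed) whose role is to log, on a fresh exponential copy, each O-move together with its justifier as the play in $\Gamma \dashv \Pi \vdash \Phi$ proceeds. I would then define $\ell$ to be the parallel composition $\between$ of this recorder with $\varphi$ itself, placed across the new cut, so that $\ell$ plays exactly as $\varphi$ on the external components $\Gamma$ and $\Phi$ while maintaining on $\Sigma$ an order- and justifier-sensitive transcript of the current P-view.

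The first key step is the reduction afforded by Lemma~\ref{LemCircularity} (flat innocence lemma): an innocent strategy fails to be flatly innocent precisely when it is circular, i.e.\ when some core move recurs within a single P-view, and by the discussion accompanying that lemma this recurrence can only be manufactured by the action of of-course, which is invisible to P-views. Hence the entire difficulty is to render the computation \emph{non-circular}. I would argue that the recorder on $\Sigma$ achieves this by turning each otherwise-invisible reuse of an exponential copy into a visible, distinctly-justified event in the enlarged sequent $\Gamma \dashv \Pi, \Sigma \vdash \Phi$: the transcript built on $\Sigma$ supplies, for each stage of the P-view, a justifier chain that is never repeated, so that the flattened datum recorded by $\mathrm{fun}_\ell$ already determines the next move. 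This is exactly the mechanism illustrated by the intensional refinement $\mathrm{succ}^n(\underline{0})$ of the non-flatly-innocent $\underline{n}$ in Example~\ref{ExFinitePresentableStrategies}, and the task is to lift that example to a construction uniform in $\varphi$.

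Finally I would verify the two required properties. Flat innocence of $\ell$ follows from the non-circularity just described together with Lemma~\ref{LemCircularity}, while Propositions~\ref{PropWellDefinedConstructionsOnStrategies} and~\ref{PropPreservationOfFlatInnocenceAndFinitePresentability} guarantee that $\ell$ is a bona fide strategy on $\Gamma \dashv \Pi, \Sigma \vdash \Phi$ built by legitimate constructions. The hiding-equivalence $\varphi \simeq \ell$ unwinds to $\mathcal{H}^\omega(\varphi) = \mathcal{H}^\omega(\ell)$; since $\Sigma$ is pure bookkeeping that behaves as a copy-cat once its records are projected away, applying $\mathcal{H}^\omega$ to delete the cuts in $\Sigma$ (and $\Pi$) returns exactly the external behaviour of $\varphi$, which is $\mathcal{H}^\omega(\varphi)$. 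The hard part, I expect, is precisely this non-circularity argument: one must show that the recorder interposes a \emph{fresh, never-repeated} distinguishing marker between every pair of moves that $\varphi$ would otherwise confuse under flattening, and that it does so using only the finitely many cuts available in $\Sigma$ even when $\varphi$ is non-noetherian (as for the fixed-point strategies of Example~\ref{ExFixedPoints}). Making this marker genuinely break circularity at the level of move-\emph{cores} — rather than merely at the level of exponential tags, which P-views discard — is the delicate point, and is where the bulk of the argument will lie.
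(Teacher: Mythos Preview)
Your overall plan---adapt the preceding corollary's recorder $\mu$, drop the computability requirement on the oracle, and verify non-circularity---is indeed what the paper means by ``an argument similar to that of the proof implies,'' and you have correctly located the crux in the non-circularity step.

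There is, however, a genuine gap in your concrete construction. You propose to place $\varphi$ \emph{itself} across a cut and compose it with the recorder via $\between$. But P-views of a parallel composition $\sigma \between \tau$ that terminate with an O-move in $\sigma$'s component trace back entirely through $\sigma$'s justifier structure; the bookkeeping moves the recorder makes live on the \emph{other} side of the cut and do not enter those P-views. Hence if $\varphi$ is circular, the P-views of $\ell$ lying inside $\varphi$'s component are circular too, and Lemma~\ref{LemCircularity} fails to apply. Your own closing caveat---that the marker must break circularity at the level of cores, not exponential tags---is exactly the obstruction, and a recorder sitting across a cut from $\varphi$ cannot interpose such markers into $\varphi$'s internal P-views.

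The paper's intended adaptation keeps the architecture of the preceding proof rather than replacing the oracle layer by $\varphi$. The strategy $\mu$ is the one that plays on the external components $\Gamma$, $\Pi$, $\Phi$; it is designed to be flatly innocent (the paper's parenthetical ``even though it is flatly innocent'' refers to $\mu$), and it reconstructs the current P-view from its recordings and passes it, as a natural number, to an oracle for $f_\varphi$. Since computability is no longer required, the existence of a suitable oracle is immediate. The point is that $\varphi$ appears only through its P-view function $f_\varphi$, never as a sub-strategy whose own P-views become P-views of $\ell$.
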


By Church-Turing thesis, the corollaries imply that finite presentability defines the maximal or \emph{canonical} notion of computability of innocent strategies in our setting.
A main advantage of finite presentability over recursiveness is its fine analysis of computational complexity, which is explicitly exhibited in the intensionality of a combinatorial sequent (though again we leave it as future work to develop the present framework as a mathematical foundation of higher-order computational complexity).
Also, note that the former is intrinsic to the structure of strategies and free from the extrinsic manipulation of symbols on tapes.

\section*{Acknowledgments}
The author acknowledges the financial support from Funai Overseas Scholarship, and also he is grateful to Samson Abramsky for fruitful discussions.

\bibliographystyle{amsalpha}
\bibliography{LinearLogic,CategoricalLogic,GamesAndStrategies,HoTT,RecursionTheory,SigmaCalculi}

\newcommand{\etalchar}[1]{$^{#1}$}
\providecommand{\bysame}{\leavevmode\hbox to3em{\hrulefill}\thinspace}
\providecommand{\MR}{\relax\ifhmode\unskip\space\fi MR }
\providecommand{\MRhref}[2]{%
  \href{http://www.ams.org/mathscinet-getitem?mr=#1}{#2}
}
\providecommand{\href}[2]{#2}
\begin{thebibliography}{BBDPH93}

\bibitem[A{\etalchar{+}}97]{abramsky1997semantics}
Samson Abramsky et~al., \emph{Semantics of interaction: An introduction to game
  semantics}, Semantics and Logics of Computation, Publications of the Newton
  Institute (1997), 1--31.

\bibitem[Abr93]{abramsky1993computational}
Samson Abramsky, \emph{Computational interpretations of linear logic},
  Theoretical computer science \textbf{111} (1993), no.~1-2, 3--57.

\bibitem[Abr14]{abramsky2014intensionality}
\bysame, \emph{Intensionality, definability and computation}, Johan van Benthem
  on Logic and Information Dynamics, Springer, 2014, pp.~121--142.

\bibitem[ABS06]{aguiar2006combinatorial}
Marcelo Aguiar, Nantel Bergeron, and Frank Sottile, \emph{Combinatorial hopf
  algebras and generalized dehn--sommerville relations}, Compositio Mathematica
  \textbf{142} (2006), no.~1, 1--30.

\bibitem[Acz86]{aczel1986type}
Peter Aczel, \emph{The type theoretic interpretation of constructive set
  theory: inductive definitions}, Studies in Logic and the Foundations of
  Mathematics, vol. 114, Elsevier, 1986, pp.~17--49.

\bibitem[AHM98]{abramsky1998fully}
Samson Abramsky, Kohei Honda, and Guy McCusker, \emph{A fully abstract game
  semantics for general references}, Logic in Computer Science, 1998.
  Proceedings. Thirteenth Annual IEEE Symposium on, IEEE, 1998, pp.~334--344.

\bibitem[AJ94]{abramsky1994games}
Samson Abramsky and Radha Jagadeesan, \emph{Games and full completeness for
  multiplicative linear logic}, The Journal of Symbolic Logic \textbf{59}
  (1994), no.~02, 543--574.

\bibitem[AJM00]{abramsky2000full}
Samson Abramsky, Radha Jagadeesan, and Pasquale Malacaria, \emph{Full
  abstraction for {PCF}}, Information and Computation \textbf{163} (2000),
  no.~2, 409--470.

\bibitem[AJV15]{abramsky2015games}
Samson Abramsky, Radha Jagadeesan, and Matthijs V{\'a}k{\'a}r, \emph{Games for
  dependent types}, Automata, Languages, and Programming, Springer, Berlin,
  Heidelberg, 2015, pp.~31--43.

\bibitem[AM97]{abramsky1997linearity}
Samson Abramsky and Guy McCusker, \emph{Linearity, sharing and state: a fully
  abstract game semantics for {Idealized Algol} with active expressions},
  Algol-like languages, Springer, 1997, pp.~297--329.

\bibitem[AM99a]{abramsky1999full}
\bysame, \emph{Full abstraction for {Idealized Algol} with passive
  expressions}, Theoretical Computer Science \textbf{227} (1999), no.~1, 3--42.

\bibitem[AM99b]{abramsky1999game}
\bysame, \emph{Game semantics}, Computational logic, Springer, 1999, pp.~1--55.

\bibitem[AM99c]{abramsky1999concurrent}
Samson Abramsky and P-A Mellies, \emph{Concurrent games and full completeness},
  Proceedings. 14th Symposium on Logic in Computer Science (Cat. No. PR00158),
  IEEE, 1999, pp.~431--442.

\bibitem[B{\etalchar{+}}84]{barendregt1984lambda}
Hendrik~Pieter Barendregt et~al., \emph{The lambda calculus}, vol.~3,
  North-Holland, Amsterdam, 1984.

\bibitem[Bac59]{backus1959syntax}
John~W Backus, \emph{The syntax and semantics of the proposed international
  algebraic language of the zurich acm-gamm conference}, Proceedings of the
  International Comference on Information Processing, 1959 (1959).

\bibitem[Bar06]{barr2006autonomous}
Michael Barr, \emph{*-autonomous categories}, vol. 752, Springer, 2006.

\bibitem[BBDPH93]{benton1993term}
Nick Benton, Gavin Bierman, Valeria De~Paiva, and Martin Hyland, \emph{A term
  calculus for intuitionistic linear logic}, International Conference on Typed
  Lambda Calculi and Applications, Springer, 1993, pp.~75--90.

\bibitem[B{\'e}n67]{benabou1967introduction}
Jean B{\'e}nabou, \emph{Introduction to bicategories}, Reports of the midwest
  category seminar, Springer, 1967, pp.~1--77.

\bibitem[Bie95]{bierman1995categorical}
Gavin~M Bierman, \emph{What is a categorical model of intuitionistic linear
  logic?}, International Conference on Typed Lambda Calculi and Applications,
  Springer, 1995, pp.~78--93.

\bibitem[Bie99]{bierman1999classical}
\bysame, \emph{A classical linear $\lambda$-calculus}, Theoretical Computer
  Science \textbf{227} (1999), no.~1-2, 43--78.

\bibitem[BP96]{barber1996dual}
Andrew Barber and Gordon Plotkin, \emph{Dual intuitionistic linear logic},
  University of Edinburgh, Department of Computer Science, Laboratory for~…,
  1996.

\bibitem[CH10]{clairambault2010totality}
Pierre Clairambault and Russ Harmer, \emph{Totality in arena games}, Annals of
  Pure and Applied Logic \textbf{161} (2010), no.~5, 673--689.

\bibitem[Cla21]{clairambault2021tale}
Pierre Clairambault, \emph{A tale of additives and concurrency in game
  semantics}.

\bibitem[Coq95]{coquand1995semantics}
Thierry Coquand, \emph{A semantics of evidence for classical arithmetic}, The
  Journal of Symbolic Logic \textbf{60} (1995), no.~1, 325--337.

\bibitem[CR79]{cook1979relative}
Stephen~A Cook and Robert~A Reckhow, \emph{The relative efficiency of
  propositional proof systems}, The journal of symbolic logic \textbf{44}
  (1979), no.~1, 36--50.

\bibitem[DJS95]{danos1995lkq}
Vincent Danos, Jean-Baptiste Joinet, and Harold Schellinx, \emph{Lkq and lkt:
  sequent calculi for second order logic based upon dual linear decompositions
  of classical implication}, Advances in Linear Logic \textbf{222} (1995),
  211--224.

\bibitem[DJS97]{danos1997new}
\bysame, \emph{A new deconstructive logic: Linear logic}, The Journal of
  Symbolic Logic \textbf{62} (1997), no.~3, 755--807.

\bibitem[DM68]{de1868review}
Augustus De~Morgan, \emph{Review of jm wilson’s ‘elementary geometry’},
  The Athenaeum \textbf{2} (1868), no.~2125, 71--73.

\bibitem[Dum93]{dummett1993logical}
Michael Dummett, \emph{The logical basis of metaphysics}, Harvard university
  press, 1993.

\bibitem[Fre79]{frege1879begriffsschrift}
Gottlob Frege, \emph{Begriffsschrift, a formula language, modeled upon that of
  arithmetic, for pure thought}, From Frege to G{\"o}del: A source book in
  mathematical logic \textbf{1931} (1879), 1--82.

\bibitem[Gen35]{gentzen1935untersuchungen}
Gerhard Gentzen, \emph{Untersuchungen {\"u}ber das logische schlie{\ss}en. i},
  Mathematische zeitschrift \textbf{39} (1935), no.~1, 176--210.

\bibitem[Gen36]{gentzen1936widerspruchsfreiheit}
\bysame, \emph{Die widerspruchsfreiheit der reinen zahlentheorie},
  Mathematische annalen \textbf{112} (1936), no.~1, 493--565.

\bibitem[Gir87]{girard1987linear}
Jean-Yves Girard, \emph{Linear logic}, Theoretical computer science \textbf{50}
  (1987), no.~1, 1--101.

\bibitem[Gir89]{girard1989geometry}
\bysame, \emph{Geometry of interaction 1: Interpretation of system f}, Studies
  in Logic and the Foundations of Mathematics, vol. 127, Elsevier, 1989,
  pp.~221--260.

\bibitem[GL87]{girard1987lazy}
Jean-Yves Girard and Yves Lafont, \emph{Linear logic and lazy computation},
  International Joint Conference on Theory and Practice of Software
  Development, Springer, 1987, pp.~52--66.

\bibitem[G{\"o}d29]{godeluber}
Kurt G{\"o}del, \emph{Uber die vollst{\"a}ndigkeit des logikkalk{\"u}ls
  doctoral thesis, d1. 736 33pp, university of vienna (1929). reprinted in
  feferman, s, ed. g{\"o}del collected works, volume 1, publications
  1929-1936}, 1929.

\bibitem[Gun92]{gunter1992semantics}
Carl~A Gunter, \emph{Semantics of programming languages: Structures and
  techniques}, MIT press, Cambridge, MA, 1992.

\bibitem[Hey30]{heyting1930formalen}
Arend Heyting, \emph{Die formalen regeln der intuitionistischen logik},
  Sitzungsbericht PreuBische Akademie der Wissenschaften Berlin,
  physikalisch-mathematische Klasse II (1930), 42--56.

\bibitem[Hil26]{hilbert1926uber}
David Hilbert, \emph{Uber das unendliche}, Mathematische Annalen \textbf{95}
  (1926), 161--190.

\bibitem[Hil31]{hilbert1931grundlegung}
\bysame, \emph{Die grundlegung der elementaren zahlenlehre}, Mathematische
  Annalen \textbf{104} (1931), no.~1, 485--494.

\bibitem[HO93]{hyland1993fair}
J~Martin~E Hyland and C-H~Luke Ong, \emph{Fair games and full completeness for
  multiplicative linear logic without the mix-rule}, preprint \textbf{190}
  (1993).

\bibitem[HO00]{hyland2000full}
J~Martin~E Hyland and C-HL Ong, \emph{On {F}ull {A}bstraction for {PCF}: {I},
  {II}, and {III}}, Information and computation \textbf{163} (2000), no.~2,
  285--408.

\bibitem[Hop64]{hopf1964topologie}
Heinz Hopf, \emph{{\"U}ber die topologie der gruppen-mannigfaltigkeiten und
  ihrer verallgemeinerungen: Annals of mathematics vol. 42, no. 1, 1941
  received by compositio math., august 23, 1939}, Selecta Heinz Hopf:
  Herausgegeben zu seinem 70. Geburtstag von der Eidgen{\"o}ssischen
  Technischen Hochschule Z{\"u}rich, Springer, 1964, pp.~119--151.

\bibitem[Hug06]{hughes2006proofs}
Dominic~JD Hughes, \emph{Proofs without syntax}, Annals of Mathematics (2006),
  1065--1076.

\bibitem[Hyl97]{hyland1997game}
Martin Hyland, \emph{Game semantics}, Semantics and logics of computation,
  vol.~14, Cambridge University Press, New York, 1997, p.~131.

\bibitem[JR79]{joni1979coalgebras}
SA~Joni and G-C Rota, \emph{Coalgebras and bialgebras in combinatorics},
  Studies in Applied Mathematics \textbf{61} (1979), no.~2, 93--139.

\bibitem[Lai97]{laird1997full}
James Laird, \emph{Full abstraction for functional languages with control},
  Logic in Computer Science, 1997. LICS'97. Proceedings., 12th Annual IEEE
  Symposium on, IEEE, 1997, pp.~58--67.

\bibitem[Lau04]{laurent2004polarized}
Olivier Laurent, \emph{Polarized games}, Annals of Pure and Applied Logic
  \textbf{130} (2004), no.~1-3, 79--123.

\bibitem[Lau05]{laurent2005syntax}
\bysame, \emph{Syntax vs. semantics: a polarized approach}, Theoretical
  Computer Science \textbf{343} (2005), no.~1-2, 177--206.

\bibitem[LS88]{lambek1988introduction}
Joachim Lambek and Philip~J Scott, \emph{Introduction to {H}igher-order
  {C}ategorical {L}ogic}, vol.~7, Cambridge University Press, 1988.

\bibitem[McC98]{mccusker1998games}
Guy McCusker, \emph{Games and full abstraction for a functional metalanguage
  with recursive types}, Springer Science \& Business Media, London, 1998.

\bibitem[Mel05]{mellies2005asynchronous}
P-A Mellies, \emph{Asynchronous games 4: A fully complete model of
  propositional linear logic}, 20th Annual IEEE Symposium on Logic in Computer
  Science (LICS'05), IEEE, 2005, pp.~386--395.

\bibitem[Mel09]{mellies2009categorical}
Paul-Andr{\'e} Mellies, \emph{Categorical semantics of linear logic}, Panoramas
  et syntheses \textbf{27} (2009), 15--215.

\bibitem[MO03]{murawski2003exhausting}
Andrzej~S. Murawski and C-HL Ong, \emph{Exhausting strategies, joker games and
  full completeness for imll with unit}, Theoretical Computer Science
  \textbf{294} (2003), no.~1-2, 269--305.

\bibitem[MT10]{mellies2010resource}
Paul-Andr{\'e} Melli{\`e}s and Nicolas Tabareau, \emph{Resource modalities in
  tensor logic}, Annals of Pure and Applied Logic \textbf{161} (2010), no.~5,
  632--653.

\bibitem[Nic94]{nickau1994hereditarily}
Hanno Nickau, \emph{Hereditarily sequential functionals}, International
  Symposium on Logical Foundations of Computer Science, Springer, 1994,
  pp.~253--264.

\bibitem[Plo77]{plotkin1977lcf}
Gordon~D. Plotkin, \emph{Lcf considered as a programming language}, Theoretical
  computer science \textbf{5} (1977), no.~3, 223--255.

\bibitem[Pra71]{prawitz1971ideas}
Dag Prawitz, \emph{Ideas and results in proof theory}, Studies in Logic and the
  Foundations of Mathematics, vol.~63, Elsevier, 1971, pp.~235--307.

\bibitem[Sch93]{schmitt1993hopf}
William~R Schmitt, \emph{Hopf algebras of combinatorial structures}, Canadian
  Journal of Mathematics \textbf{45} (1993), no.~2, 412--428.

\bibitem[Sco70]{scott1970outline}
Dana Scott, \emph{Outline of a mathematical theory of computation}, Oxford
  University Computing Laboratory, Programming Research Group Oxford, 1970.

\bibitem[Sco93]{scott1993type}
Dana~S Scott, \emph{A type-theoretical alternative to iswim, cuch, owhy},
  Theoretical Computer Science \textbf{121} (1993), no.~1-2, 411--440.

\bibitem[See87]{seely1987linear}
Robert~AG Seely, \emph{Linear logic,*-autonomous categories and cofree
  coalgebras}, Ste. Anne de Bellevue, Quebec: CEGEP John Abbott College, 1987.

\bibitem[Sel01]{selinger2001control}
Peter Selinger, \emph{Control categories and duality: on the categorical
  semantics of the lambda-mu calculus}, Mathematical Structures in Computer
  Science \textbf{11} (2001), no.~2, 207--260.

\bibitem[Tro91]{troelstra1991lectures}
Anne~Sjerp Troelstra, \emph{Lectures on linear logic}.

\bibitem[TS00]{troelstra2000basic}
Anne~Sjerp Troelstra and Helmut Schwichtenberg, \emph{Basic proof theory},
  no.~43, Cambridge University Press, 2000.

\bibitem[Tur37]{turing1937computable}
Alan~M Turing, \emph{On computable numbers, with an application to the
  entscheidungsproblem}, Proceedings of the London mathematical society
  \textbf{2} (1937), no.~1, 230--265.

\bibitem[YA20]{yamada2020dynamic}
Norihiro Yamada and Samson Abramsky, \emph{Dynamic game semantics},
  Mathematical Structures in Computer Science \textbf{30} (2020), no.~8,
  892--951.

\bibitem[Yam19]{yamada2019game}
Norihiro Yamada, \emph{Game semantics of martin-l\"{o}f type theory}, arXiv
  preprint arXiv:1905.00993 (2019), accepted for publication by
  \emph{Mathematical Structures in Computer Science}.

\end{thebibliography}

\if0
\appendix
\section{A key idea: dynamics on graphs}
\label{KeyIdeas}

As stated in the last paragraph of \S\ref{Foreword}, our approach is a combinatorial reformulation of game semantics.
Game semantics is almost perfect for our aim except its use of infinite, nonatomic or extrinsic structures and problems in modelling linear logic; this is why we need to reframe it. 

We overcome these problems by the combinatorial recast of game semantics based on the novel idea to shift from ordinary graphs to \emph{dynamic} ones.  
Since this idea motivates and explains our combinatorics, we sketch how this idea solves the problems of game semantics below.

\begin{remark}
Following Troelstra \cite{troelstra1991lectures} and Bierman \cite{bierman1999classical}, we do not impose \emph{De Morgan laws} of linear logic for the following reason. 
While game semantics of linear logic with the De Morgan laws has to be \emph{concurrent} \cite{abramsky1999concurrent,mellies2005asynchronous}, the discard of them enables our approach to be based on more standard, \emph{sequential} game semantics, which is much simpler. 
This choice is important for the present work since it is unclear how to reframe concurrent game semantics in a finitary fashion. 
Henceforth, game semantics means the sequential one unless stated otherwise. 
\end{remark}

\subsection{Infinity, nonatomicity and extrinsicality in game semantics}
\label{ProblemsInGameSemantics}
A major approach in game semantics pioneered by Abramsky et al. \cite{abramsky2000full} is to define a \emph{game} to be a triple $G = (M_G, \lambda_G, P_G)$ of a set $M_G$, a map $\lambda_G : M_G \rightarrow \{ \mathrm{O}, \mathrm{P} \}$, where $\mathrm{O}$ and $\mathrm{P}$ are any distinct elements, and a nonempty, prefix-closed subset $P_G \subseteq M_G^\ast$ (n.b., $M_G^\ast$ is the set of all finite sequences on $M_G$) such that $\lambda_G^\ast(\boldsymbol{s}) = \mathrm{O}\mathrm{P}\mathrm{O}\mathrm{P}\dots$ for all $\boldsymbol{s} \in P_G$.
We call this approach \emph{AJM}.
Elements of $M_G$ are called \emph{moves}, and each move $m \in M_G$ belongs to Opponent if $\lambda_G(m) = \mathrm{O}$, and to Player otherwise. 
Elements of $P_G$, called \emph{positions}, represent what can be \emph{played} in $G$.
By convention, Opponent always makes the first move, and then Player and Opponent alternately play, i.e., $\lambda_G^\ast(\boldsymbol{s}) = \mathrm{O}\mathrm{P}\mathrm{O}\mathrm{P}\dots$ for all $\boldsymbol{s} \in P_G$.
We call the last condition on positions \emph{alternation}.

A \emph{strategy} on $G$ is a partial map that sends odd-length positions $\boldsymbol{s}a$ in $G$ to Player's moves $b$ such that the concatenation $\boldsymbol{s}ab$ is a position in $G$.
Because Player makes the next move at an odd-length position, strategies on $G$ are algorithms for Player on how to play on $G$.

For instance, standard game semantics of the set $\mathbb{N}$ of all natural numbers is the game $N = (\{ q \} \cup \mathbb{N}, \{ (q, \mathrm{O}) \} \cup \{ \, (n, \mathrm{P}) \mid n \in \mathbb{N} \, \}, \mathrm{Pref}(\{ \, qn \mid n \in \mathbb{N} \, \}))$, where $q$ is an arbitrarily fixed element not contained in $\mathbb{N}$, and $\mathrm{Pref}(S)$ is the set of all prefixes of elements of a given set $S$ of sequences.
Its positions can be depicted as the (upside-down) rooted tree
\begin{mathpar}
\begin{tikzcd}
&&\arrow[dll] \arrow[dl] q \arrow[d] \arrow[dr] && \\
0 & 1 & 2 & 3 & \dots
\end{tikzcd}
\end{mathpar}
in which a play begins with Opponent's question $q$ (`What is your number?') followed by Player's answer $n$ (`It is $n$!').
The strategy on $N$ that corresponds to each $n \in \mathbb{N}$ is the map $q \mapsto n$.

Nevertheless, $N$ is \emph{infinite} in width.
It is also \emph{nonatomic} or \emph{extrinsic} to combinatorics since it assumes natural numbers as given, which are, however, completely exotic to graphs. 
Moreover, standard game semantics of arithmetic such as $1 + 1 = 2$ has to rest on the ordinary arithmetic on $\mathbb{N}$ because it is based on $N$, which itself does not define what natural numbers are, let alone how to add or subtract them. 
In other words, AJM game semantics is \emph{not foundational} because it does not define natural numbers or arithmetic and instead assumes that they are given.

Even if one starts with finite games $G$ only, the AJM game semantics of of-course $\oc$ brings infinity: The game $\oc G$, which is roughly the countably infinite copy of $G$, is defined by
\begin{mathpar}
M_{\oc G} \colonequals \{ \, (i, m) \mid i \in \mathbb{N}, m \in M_G \, \}
\and
\lambda_{\oc G} : (i, m) \mapsto \lambda_G(m)
\and
P_{\oc G} \colonequals \{ \, \boldsymbol{s} \in M_{\oc G}^\ast \mid \forall i \in \mathbb{N} . \, \boldsymbol{s} {\upharpoonright_i} \in P_G \, \},
\end{mathpar}
where the sequence $\boldsymbol{s} {\upharpoonright_i}$ is obtained from $\boldsymbol{s}$ by taking the subsequence that consists of moves of the form $(i, m)$ and replacing them with $m$.
Clearly, the set $P_{\oc G}$ is infinite unless $P_G = \emptyset$.

Also, the AJM approach imposes \emph{history-freeness} on strategies for achieving the \emph{full completeness} of the interpretation (functor). 
While this works for the interpretation of PCF \cite{abramsky2000full}, it makes an interpretation of linear logic hopeless: The \emph{linear category} of games whose morphisms $G \rightarrow H$ are history-free strategies on the game $G \multimap H$ does not have (co)products (which interpret additive conjunction $\&$ and disjunction $\oplus$) as history-free ones cannot form (co)pairings (see \cite[p.~8]{abramsky2000axioms} and \cite[pp.~57--58]{mccusker1998games}). 
Thus, AJM does not achieve full completeness for linear logic though it does for the fragment without additives and exponentials \cite{abramsky1994games}.

Another major approach pioneered by Hyland and Ong \cite{hyland2000full} is to first define an auxiliary rooted dag $A$, called an \emph{arena}, and derive positions as a class of walks on $A$, where the positions induce a game in the sense of AJM.
Specifically, they require that each vertex of $A$ has a unique root, and any two paths from the root to the vertex in $A$ have the same length so that each vertex has the unique \emph{depth}. 
This enables them to assign $\mathrm{O}$ to vertices of even depth (n.b., roots have depth $0$), and $\mathrm{P}$ to those of odd depth.
Then, they define \emph{positions} of $A$ to be finite sequences $\boldsymbol{s}$ of vertices of $A$ that satisfy alternation, in which each non-initial element is either a root of $A$ or equipped with an edge of $A$ from a previous element in $\boldsymbol{s}$ to that element.
The edges are part of the structure of positions in this \emph{HO} approach.
Up to this difference in positions, the triple of the set of all vertices of $A$, the parity \textrm{O}/\textrm{P} assigned to vertices and positions defines a game in the sense of AJM.
A charm of this HO approach is that it dispenses with sets of positions and instead \emph{derives} positions, which is more \emph{canonical} than having sets of positions explicitly as part a structure.
Also, if an arena is finite, then the HO approach requires nothing infinite. 

For instance, one can regard the rooted tree displayed above as an arena, whose positions are
\begin{mathpar}
\boldsymbol{\epsilon}
\and
q
\and 
q \rightarrow n_1
\and 
q \rightarrow n_1 q
\and
q \rightarrow n_1 q \rightarrow n_2
\and
\dots
\and
q \rightarrow n_1 q \rightarrow n_2 \dots q \rightarrow n_k 
\and
\dots
\end{mathpar}
where $\boldsymbol{\epsilon}$ is the empty sequence, and $n_1, n_2, \dots, n_k \in \mathbb{N}$.
As we can see in this example, however, a downside of the HO approach is that it entails infinitely many \emph{irrelevant} positions $q \rightarrow n_1 q$, $q \rightarrow n_1 q \rightarrow n_2$, etc., while the relevant ones are only $\boldsymbol{\epsilon}$, $q$ and $q \rightarrow n_1$.
Another downside of HO is that it cannot capture constructions in linear logic such as linear implication $\multimap$, of-course $\oc$ and additive conjunction $\&$ because arenas do not have positions as part of their structure, while AJM can because games explicitly contain positions.
For instance, AJM interprets additive conjunction $\&$ by \emph{product} (when one drops history-freeness on strategies, giving up full completeness): The product $G \mathbin{\&} H$ of games $G$ and $H$ is their disjoint union.
In contrast, if one takes the disjoint union of arenas $A$ and $B$, then its positions contain \emph{both} positions of $A$ and $B$ (since it does not take the disjoint union of positions), i.e., it is an interpretation of \emph{multiplicative} conjunction $\otimes$.
In this way, there is no construction on arenas that properly captures additive conjunction $\&$.

We have seen that game semantics is unsatisfactory for our aim as it uses infinite, nonatomic or extrinsic structures or it cannot interpret linear logic bijectively.
For this reason, Abramsky \cite[p.~8]{abramsky2000axioms} states that 
\begin{quote}
[W]e do not yet have a definitive formulation of game semantics.
\end{quote}
This fundamental issue of game semantics has been open for more than 20 years. 

\begin{remark}
There are other variants of game semantics, but they also rely on infinite structures. 
For instance, a combination of AJM and HO \cite{mccusker1998games} has potentially infinite sets of positions as part of games just like AJM.
\emph{Concurrent} games achieve \emph{fully complete} semantics of linear logic \cite{abramsky1999concurrent,mellies2005asynchronous}, but they necessitate infinity again for the interpretation of exponentials. 
\end{remark}

\subsection{Dynamics on graphs}
\label{DynamicsOnGraphs}
We address this problem by shifting from arenas to \emph{dynamic} ones.
The idea is to regard edges of an arena as \emph{resources} so that some of them can be \emph{unavailable} along a play proceeds: 
\begin{itemize}

\item We disjointly add a new root $\star$ to each arena $A$ and an edge from $\star$ to each root of $A$, so that the roots of $A$ also depend on the availability of edges, which by the way turns the rooted forest $A$ into a rooted tree with the new root $\star$, temporality written $\star A$;


\item All edges of $\star A$ are initially \emph{available}, but an addition of a vertex to the end of a position turns some edges \emph{unavailable}, where the pointer of the vertex itself becomes unavailable; 

\item We then define \emph{positions} of $\star A$ as in HO except that the empty sequence $\boldsymbol{\epsilon}$ is replaced with the singleton one $\star$, and each element of a position except $\star$ is connected to a previous element of the position by an edge \emph{available} at that moment.

\end{itemize}

We realise this idea as follows.
We first focus on a class of arenas $\mathscr{A} = (V_{\mathscr{A}}, E_{\mathscr{A}})$ \emph{with only one root}, where $V_{\mathscr{A}}$ is the set of all vertices, and $E_{\mathscr{A}}$ is the set of all edges, together with a map $D_{\mathscr{A}} : E_{\mathscr{A}} \rightarrow \wp(E_{\mathscr{A}})$, where $\wp(X)$ is the power set of a given set $X$, such that $e \in D_{\mathscr{A}}(e)$ for all $e \in E_{\mathscr{A}}$. 
We then define a \emph{valid} pair $(S, \boldsymbol{s})$ of a set $S \in \wp(E_{\mathscr{A}})$ whose elements are called \emph{available} and a \emph{position} $\boldsymbol{s}$ of $\mathscr{A}$ by induction:
\begin{itemize}

\item The pair $(E_{\mathscr{A}}, \star)$ is valid;

\item If a pair $(S, \boldsymbol{s})$ is valid with $\boldsymbol{s}$ of odd (respectively even) length, and if an even (respectively odd) depth vertex $v \in V_{\mathscr{A}}$ is the target of some available edge $e \in S$ whose source occurs in $\boldsymbol{s}$, then the pair $(S \setminus D_{\mathscr{A}}(e), \boldsymbol{s}v)$ is valid, where $e$ is part of the position $\boldsymbol{s}v$ as in HO. 

\end{itemize}
We note that valid pairs are only an auxiliary concept for defining positions in $\mathscr{A}$.
For convenience, we temporally call these modified arenas \emph{dynamic arenas}.

For instance, the rooted tree 
\begin{mathpar}
\begin{tikzcd}
&& \star \arrow[d, "e"] && \\
&&\arrow[dll, "f_0"'] \arrow[dl, "f_1"] q \arrow[d, "f_2"] \arrow[dr, "f_3"] && \\
0 & 1 & 2 & 3 & \dots
\end{tikzcd}
\end{mathpar}
together with the map $e \mapsto \{ e \}$ for all edges $e$ forms a dynamic arena.
Its valid pairs are
\begin{mathpar}
(\emptyset, \star)
\and
(\{ e \}, \star q)
\and
(\{ e, f_n \}, \star q n),
\end{mathpar}
so its positions are $\star$, $\star q$ and $\star q n$, for which the map $e \mapsto \{ e \}$ plays a crucial role.
If one ignores the root $\star$, then they are $\boldsymbol{\epsilon}$, $q$ and $qn$ as desired for a game of natural numbers, solving the first problem of HO described above. 
Abusing notation, we also write $N$ for this dynamic arena.

The additional \emph{dynamics} of arenas also enables an interpretation of linear logic \emph{without having (potentially infinite) sets of positions} as part of the structure.
For instance, given dynamic arenas $\mathscr{A}$ and $\mathscr{B}$, we define their additive conjunction or \emph{product} $\mathscr{A} \mathbin{\&} \mathscr{B}$ to be the rooted tree that is the disjoint union of the underlying rooted trees of $\mathscr{A}$ and $\mathscr{B}$, except that their roots are unified into one $\star$, together with the map
\begin{equation*}
D_{\mathscr{A} \mathbin{\&} \mathscr{B}} : (e \in E_{\mathscr{A} \mathbin{\&} \mathscr{B}}) \mapsto \begin{cases} D_{\mathscr{A}}(e) \cup E^{0}_{\mathscr{B}} &\text{if $e \in E_{\mathscr{A}}$ and $\mathrm{src}(e) = \star$} \\ D_{\mathscr{B}}(e) \cup E^{0}_{\mathscr{A}} &\text{if $e \in E_{\mathscr{B}}$ and $\mathrm{src}(e) = \star$} \\ D_{\mathscr{A}}(e) &\text{if $e \in E_{\mathscr{A}}$ and $\mathrm{src}(e) \neq \star$} \\ D_{\mathscr{B}}(e) &\text{if $e \in E_{\mathscr{B}}$ and $\mathrm{scr}(e) \neq \star$} \end{cases}
\quad (\mathrm{src}(e) \colonequals \text{the source of $e$}),
\end{equation*}
where $E^{0}_{\mathscr{A}}$ is the set of all edges of $\mathscr{A}$ whose source is $\star$, and similarly for $E^{0}_{\mathscr{B}}$.
The idea is that if Opponent makes the first move of $\mathscr{A}$ (respectively $\mathscr{B}$) in $\mathscr{A} \mathbin{\&} \mathscr{B}$, then all elements of $E^{0}_{\mathscr{B}}$ (respectively $E^{0}_{\mathscr{A}}$) become unavailable so that nothing may happen in $\mathscr{B}$ (respectively $\mathscr{A}$). 
As a result, a position in $\mathscr{A} \mathbin{\&} \mathscr{B}$ is that in $\mathscr{A}$ or $\mathscr{B}$.
For instance, maximal positions of $N_{[1]} \mathbin{\&} N_{[2]}$ are $q_{[1]} n_{[1]}$ and $q_{[2]} n_{[2]}$, where the subscripts $(\_)_{[i]}$ ($i = 1, 2$) are informal \emph{tags} to distinguish the two copies of $N$.

Schematically, one may depict the product $\mathscr{A} \mathbin{\&} \mathscr{B}$ as the rooted tree
\begin{mathpar}
\begin{tiny}
\begin{tikzcd}
&&&& \arrow[lllldd, dash] \arrow[lddd, dash] \arrow[ld] \fbox{$\star$} \arrow[rd] \arrow[rddd, dash] \arrow[rrrrdd, dash] &&&& \\
&&& a_1 && b_1 &&& \\
\bullet && \mathscr{A} &&&& \mathscr{B} && \bullet \\
&&& \arrow[lllu, dash] \bullet && \bullet \arrow[rrru, dash] &&&
\end{tikzcd}
\end{tiny}
\end{mathpar}
where the square box on the root $\star$ indicates that it is the last element of the current position. 
If Opponent makes the first move $a_1$, then the map $D_{\mathscr{A} \mathbin{\&} \mathscr{B}}$ transforms this tree into
\begin{mathpar}
\begin{tiny}
\begin{tikzcd}
&&&& \arrow[lllldd, dash] \arrow[lddd, dash] \arrow[ld, dashed] \star \arrow[rd, dashed] \arrow[rddd, dash] \arrow[rrrrdd, dash] &&&& \\
&&& \fbox{$a_1$} && b_1 &&& \\
\bullet && \mathscr{A} &&&& \mathscr{B} && \bullet \\
&&& \arrow[lllu, dash] \bullet && \bullet \arrow[rrru, dash] &&&
\end{tikzcd}
\end{tiny}
\end{mathpar}
where the dashed arrows indicate that all edges in $E_{\mathscr{B}}^{0}$ and the one $\star \rightarrow a_1$ become unavailable. 

On the other hand, the multiplicative conjunction or \emph{tensor} $\mathscr{A} \otimes \mathscr{B}$ of $\mathscr{A}$ and $\mathscr{A}$ consists of the same underlying rooted tree as that of $\mathscr{A} \mathbin{\&} \mathscr{B}$ and the map 
\begin{equation*}
D_{\mathscr{A} \otimes \mathscr{B}} : (e \in E_{\mathscr{A} \mathbin{\&} \mathscr{B}}) \mapsto \begin{cases} D_{\mathscr{A}}(e) &\text{if $e \in E_{\mathscr{A}}$} \\ D_{\mathscr{B}}(e) &\text{if $e \in E_{\mathscr{B}}$.} \end{cases}
\end{equation*}
As a result, a position in $\mathscr{A} \mathbin{\&} \mathscr{B}$ is an interleaving mixture of positions in $\mathscr{A}$ and $\mathscr{B}$, in which only Opponent can switch between $\mathscr{A}$ and $\mathscr{B}$.
For instance, maximal positions in $N_{[1]} \otimes N_{[2]}$ are $q_{[1]} n_{[1]}q_{[2]} n_{[2]}$ and $q_{[2]} n_{[2]}q_{[1]} n_{[1]}$ as in the case of AJM.

Similarly, dynamic arenas can capture the additive disjunction or \emph{plus} $\mathscr{A} \oplus \mathscr{B}$ of $\mathscr{A}$ and $\mathscr{B}$ as follows.
Based on the plus in the AJM style \cite{laurent2004polarized}, we define the underlying rooted tree of $\mathscr{A} \oplus \mathscr{B}$ to be the one obtained by taking the disjoint union of $\mathscr{A}$ and $\mathscr{B}$, and then by unifying their edges from the roots into their pairs.
We then define the map
\begin{equation*}
D_{\mathscr{A} \oplus \mathscr{B}} : (e \in E_{\mathscr{A} \oplus \mathscr{B}}) \mapsto \begin{cases} D_{\mathscr{A}}(f) \cup D_{\mathscr{B}}(g) &\text{if $e = (f, g) \in E_{\mathscr{A} \oplus \mathscr{B}}^{0}$} \\ D_{\mathscr{A}}(e) \cup E^{(1)}_{\mathscr{B}} &\text{if $e \in E_{\mathscr{A}}^{(1)}$} \\ D_{\mathscr{B}}(e) \cup E^{(1)}_{\mathscr{A}} &\text{if $e \in E_{\mathscr{B}}^{(1)}$} \\ D_{\mathscr{A}}(e) &\text{if $e \in E_{\mathscr{A}} \setminus (E_{\mathscr{A} \oplus \mathscr{B}}^{0} \cup E_{\mathscr{A}}^{(1)})$} \\ D_{\mathscr{B}}(e) &\text{if $e \in E_{\mathscr{B}} \setminus (E_{\mathscr{A} \oplus \mathscr{B}}^{0} \cup E_{\mathscr{B}}^{(1)})$} \end{cases}
\end{equation*}
where $E_{\mathscr{A}}^{1}$ is the set of all edges of $\mathscr{A}$ whose source is of depth $1$, and similarly for $E_{\mathscr{B}}^{1}$.

Schematically, writing $\mathscr{A}_a$ ($a \in V_{\mathscr{A}}$) for the rooted subtree of $\mathscr{A}$ whose root is $a$, and similarly for $\mathscr{B}_b$ ($b \in V_{\mathscr{B}}$), one may depict the plus $\mathscr{A} \oplus \mathscr{B}$ as
\begin{mathpar}
\begin{tiny}
\begin{tikzcd}
&&&&&&&&&&&& \arrow[lllld, "{(f_1, g_1)}"'] \fbox{$(\star, \star)$} \arrow[d, "{(f'_1, g'_1)}"] &&&& \\
&&&&&&&& \arrow[llllllld, dash] \arrow[lllldd, dash] \arrow[ldd, dash] (a_1, b_1) \arrow[rdd, dash] &&&& \arrow[ldd, dash] (a'_1, b'_1) \arrow[rdd, dash] \arrow[rrrd, dash] \\
&\bullet \arrow[rrrd, dash] &&& \mathscr{A}_{a_1} & &&& \mathscr{B}_{b_1} & \bullet \arrow[rrd, dash] \arrow[rrru, dash] && \mathscr{A}_{a'_1} && \mathscr{B}_{b'_1} && \bullet & \dots \\
&&&& \bullet && & \bullet \arrow[rr, dash] & & \bullet && \bullet && \bullet \arrow[rru, dash] &&& 
\end{tikzcd}
\end{tiny}
\end{mathpar}
For simplicity, assume that $\mathscr{A} \oplus \mathscr{B}$ has no dynamics between the edges whose source is the root $(\star, \star)$.
Then, one possible pair of the next two consecutive moves are
\begin{mathpar}
\begin{tiny}
\begin{tikzcd}
&&&&&&&&&&&& \arrow[lllld] (\star, \star) \arrow[d, dashed] &&&& \\
&&&&&&&& \arrow[llllllld, dash] \arrow[lllldd, dash] \arrow[ldd, dash] (a_1, b_1) \arrow[rdd, dash] &&&& \arrow[ld] \arrow[ldd, dash] \fbox{$(a'_1, b'_1)$} \arrow[rd] \arrow[rdd, dash] \arrow[rrrd, dash] \\
&\bullet \arrow[rrrd, dash] &&& \mathscr{A}_{a_1} &&&& \mathscr{B}_{b_1} & \bullet \arrow[rrd, dash] \arrow[rrru, dash] & \mathscr{A}_{a'_1} & a'_2 && b'_2 & \mathscr{B}_{b'_1} & \bullet & \dots \\
&&&& \bullet && & \bullet \arrow[rr, dash] && \bullet && \bullet && \bullet \arrow[rru, dash] &&& 
\end{tikzcd}
\end{tiny}
\end{mathpar}
and
\begin{mathpar}
\begin{tiny}
\begin{tikzcd}
&&&&&&&&&&&& \arrow[lllld] (\star, \star) \arrow[d, dashed] &&&& \\
&&&&&&&& \arrow[llllllld, dash] \arrow[lllldd, dash] \arrow[ldd, dash] (a_1, b_1) \arrow[rdd, dash] &&&& \arrow[ld, dashed] \arrow[ldd, dash] (a'_1, b'_1) \arrow[rd, dashed] \arrow[rdd, dash] \arrow[rrrd, dash] \\
&\bullet \arrow[rrrd, dash] &&& \mathscr{A}_{a_1} &&&& \mathscr{B}_{b_1} & \bullet \arrow[rrd, dash] \arrow[rrru, dash] & \mathscr{A}_{a'_1} & \fbox{$a'_2$} && b'_2 & \mathscr{B}_{b'_1} & \bullet & \dots \\
&&&& \bullet && & \bullet \arrow[rr, dash] && \bullet && \bullet && \bullet \arrow[rru, dash] &&& 
\end{tikzcd}
\end{tiny}
\end{mathpar}
As a result, a position in $\mathscr{A} \oplus \mathscr{B}$ is essentially that in $\mathscr{A}$ or $\mathscr{B}$.
For instance, maximal positions in $N_{[1]} \oplus N_{[2]}$ are $(\star, \star) (q, q) n_{[1]}$ and $(\star, \star) (q, q) n_{[2]}$ ($n \in \mathbb{N}$), which correspond to the one $\star_{[1]} q_{[1]} n_{[1]}$ in $N_{[1]}$ and the other $\star_{[2]} q_{[2]} n_{[2]}$ in $N_{[2]}$.

Similarly to the relation between the product $\&$ and tensor $\otimes$ on dynamic arenas, we define the multiplicative disjunction or \emph{par} $\mathscr{A} \invamp \mathscr{B}$ of $\mathscr{A}$ and $\mathscr{B}$ to be the dynamic arena that consists of the same underlying rooted tree as that of the plus $\mathscr{A} \oplus \mathscr{B}$ and the map $D_{\mathscr{A} \invamp \mathscr{B}}$ obtained from the map $D_{\mathscr{A} \oplus \mathscr{B}}$ by disabling the dynamics between $E_{\mathscr{A}}^1$ and $E_{\mathscr{B}}^1$.

In this way, dynamic arenas enable the distinction between additives and multiplicatives like AJM and unlike HO, yet unlike AJM and like HO without specifying sets of positions at all.

Also, a similar idea can recast of-course $\oc$ on games in a \emph{finitary} fashion too. 
A key observation is that a new copy of a game $G$ in the of-course $\oc G$ is only necessary \emph{on demand}, i.e., only when Opponent makes a move in the new copy. 
This is related to the distinction between actual copies and the \emph{potentiality} of copying \cite[p.~12]{girard1995linear}.
As reviewed above, AJM interprets of-course $\oc$ by actual copies, which necessitates infinity.
In contrast, our idea is instead to take the potentiality of copying for the interpretation of of-course $\oc$: We define the of-course $\oc \mathscr{A}$ of a dynamic arena $\mathscr{A}$ to be $\mathscr{A}$ itself except that a new copy of $\mathscr{A}$, whose edges are initially all available, is adjoined as soon as a move of depth $1$ is made by Opponent.
Schematically, this idea is illustrated by
\begin{mathpar}
\begin{tiny}
\begin{tikzcd}
& \arrow[lddd, dash] \fbox{$\star$} \arrow[d] \arrow[rddd, dash] & \\
& a_1 & \\
& \mathscr{A} & \\
\bullet \arrow[rr, dash] && \bullet
\end{tikzcd}
\end{tiny}
\and
\stackrel{D_{\oc \mathscr{A}}}{\longrightarrow}
\and
\begin{tiny}
\begin{tikzcd}
&&&& \arrow[lllldd, dash] \arrow[lddd, dash] \arrow[ld, dashed] \star \arrow[rd] \arrow[rddd, dash] \arrow[rrrrdd, dash] &&&& \\
&&& \fbox{$a_1$} && a_1 &&& \\
\bullet && \mathscr{A} &&&& \mathscr{A} && \bullet \\
&&& \arrow[lllu, dash] \bullet && \bullet \arrow[rrru, dash] &&&
\end{tikzcd}
\end{tiny}
\end{mathpar}

In this way, we have an access to any finite number of copies of $\mathscr{A}$ during a play in $\oc \mathscr{A}$, which suffices since each position is finite, but we never need an infinite number of copies of $\mathscr{A}$.

To implement this idea, we first modify the concept of the dynamics $D_{\mathscr{A}}$ into the form
\begin{equation*}
D_{\mathscr{A}} \colonequals \langle \varphi_{\mathscr{A}}, \epsilon_{\mathscr{A}} \rangle : E_{\mathscr{A}} \rightarrow \wp(E_{\mathscr{A}}) \times \wp(E_{\mathscr{A}})^\ast
\end{equation*}
and define valid pairs of $\mathscr{A}$ as before except that if $D_{\mathscr{A}}(e) = (S, \boldsymbol{\epsilon})$, then all elements of $S$ become unavailable (as before), but also the subtrees of the underlying tree of $\mathscr{A}$ corresponding to the elements of the finite sequence $\boldsymbol{\epsilon}$ are adjoined to the underlying tree.
Note that $\boldsymbol{\epsilon}$ is a \emph{sequence} on $\wp(E_{\mathscr{A}})$, not an element, since of-course $\oc$ can be applied more than once to the same edge. 

Now, it is clear how to define $\oc \mathscr{A}$: Let $\oc \mathscr{A}$ be the tree of $\mathscr{A}$ together with the map 
\begin{equation*}
D_{\oc \mathscr{A}} : (e \in E_{\oc \mathscr{A}}) \mapsto \begin{cases} (\varphi_{\mathscr{A}}(e), E_{\mathscr{A}}^0 . \epsilon_{\mathscr{A}}(e)) &\text{if $\mathrm{src}(e) = \star$} \\ D_{\mathscr{A}}(e) &\text{otherwise,} \end{cases}
\end{equation*}
where $E_{\mathscr{A}}^0 . \epsilon_{\mathscr{A}}(e)$ is the concatenation of the sequences $(E_{\mathscr{A}}^0)$ and $\epsilon_{\mathscr{A}}(e)$.

Dually, we define the \emph{why-not} $\wn \mathscr{A}$ of $\mathscr{A}$ to be what consists of the underlying rooted tree of $\mathscr{A}$ together with the map $D_{\wn \mathscr{A}}$ that copies edges whose sources are of depth $1$ on demand. 

Finally, because this finitary reframe of exponentials captures the potentiality of copying \emph{ad infinitum}, it also enables one to define \emph{finite} dynamic arenas for natural numbers and arithmetic.
In this way, we establish the theorems sketched in \S\ref{MainResults} without assuming infinitary objects. 
\fi
\end{document}